\definecolor{Green}{RGB}{30, 150, 30}
\newtheorem{thm}{Theorem}[section]
\newtheorem{claim}[thm]{Claim}
\newtheorem{prop}[thm]{Proposition}
\newtheorem{lem}[thm]{Lemma}
\newtheorem{cor}[thm]{Corollary}
\theoremstyle{definition}
\newtheorem{defn}[thm]{Definition}
\newtheorem{rem}[thm]{Remark}
\newcommand*\supp{\operatorname{supp}}
\newcommand*\cusp{\operatorname{cusp}}
\newcommand*\gate{\mathfrak{g}}
\newcommand*\stab{\operatorname{Stab}}
\newcommand*\diam{\operatorname{diam}}
\newcommand*\nest{\sqsubseteq}
\newcommand*\propnest{\sqsubsetneq}
\newcommand*\mf[1]{\mathfrak {#1}}
\newcommand*\mc[1]{\mathcal {#1}}
\newcommand*\trans{\pitchfork}
\def\P{\mathbf{P}}
\newcommand\coset[1]{P(#1)}
\newcommand*\gp[2]{({#1} \mid {#2})_{x_0}}
\newcommand*\gprel[3]{({#1} \mid {#2})_{#3}}
\newcounter{list}
\def\F{\mathbf{F}}
\newcounter{jcomments}
\newcounter{jrcomments}
\newcounter{ccomments}
\newcommand{\s}{\mf{S}}
\title[Relative hyperbolicity, thickness, and the hierarchically 
hyperbolic boundary]{Relative hyperbolicity, thickness,\\ and the hierarchically hyperbolic boundary}
\author{Carolyn Abbott}
\address{Brandeis University, Waltham, MA, USA}
\email{carolynabbott@brandeis.edu}
\author{Jason Behrstock}
\address{Lehman College and The Graduate Center, CUNY, New York, New York, USA}
\email{jason.behrstock@lehman.cuny.edu}
\author{Jacob Russell}
	\address{Department of Mathematics, Rice University, Houston, TX}
\email{jacob.russell@rice.edu}
\begin{document}

\begin{abstract}We study the boundaries of relatively hyperbolic HHGs.
Using the simplicial structure on the hierarchically hyperbolic
boundary, we characterize both relative hyperbolicity and being thick
of order 1 among HHGs.  In the case of relatively hyperbolic HHGs, we
show that the Bowditch boundary of the group is the quotient of
the HHS boundary obtained by collapsing the limit sets of the
peripheral subgroups to a point.  In establishing this, we give a
construction that allows one to modify an HHG structure by including 
a collection of hyperbolically
embedded subgroups into the HHG structure.\end{abstract}

\maketitle

\section{Introduction}
Boundaries play a central role in  the coarse geometry of groups and spaces exhibiting aspects of non-positive curvature. For example, the dynamics of the action on the Gromov boundary and the Bowditch boundary completely characterize hyperbolic and relative hyperbolic groups respectively \cite{bowditch_uniform_hyperbolicity,Yaman_rel_hyp_dynamics}. Moreover, the quasi-conformal  structure of  these boundaries completely determines the coarse geometry of these groups \cite{paulin,bourdon_quasiconformal,bonk_schramm_quasiconformal}. CAT(0) groups,  particularly cubulated  groups, have a variety of different boundaries that capture different aspects of the geometry of these groups at infinity; see, e.g., \cite{Hagen_simplicial_boundary,Beyrer_Fioravanti_cross_ratio,Mihalik_Ruane_non-locally_connected}.

In this paper, we examine the connection between the boundary of
\emph{hierarchically hyperbolic groups} and relative hyperbolicity.
Hierarchical hyperbolicity is a coarse notion of non-positive
curvature introduced by Behrstock, Hagen, and Sisto, which is
enjoyed by a large number of groups including mapping class groups,
virtually special groups, most $3$--manifold groups, and extra
large type Artin groups
\cite{BHS_HHSI,BHS_HHSII,HagenMartinSisto:XLartinHHS}.  The main idea
behind hierarchical hyperbolicity is that the geometry of the group
$G$ can be well understood via a collection of projection maps
$\mf{S}=\{\pi_W \colon G \to \mc{C} W\}$ of the group onto various
hyperbolic spaces $\mc{C}W$.

Durham, Hagen, and Sisto introduce a boundary for hierarchically
hyperbolic groups \cite{HHS_Boundary}.  The boundary combines the
Gromov boundaries, $\{\partial \mc{C}W\}$, of the various hyperbolic
spaces into a simplicial complex---denoted
$\partial_\Delta(G,\mf{S})$---that captures naturally occurring
product regions in the group.  This simplicial structure is analogous
to the simplicial boundary of a CAT(0) cube complex introduced by
Hagen \cite{Hagen:quasi_arboreal}.  

Our first result uses this
simplicial complex to characterize when a hierarchically hyperbolic
group is relatively hyperbolic. This type of result has a long 
history. One of the first such results was by Hruska and Kleiner who 
proved a classification for CAT(0) spaces with isolated flats 
\cite{Hruska_Kleiner_Isolated_Flats};   
our formulation below is a direct analogue of a result
of Behrstock and Hagen characterizing relative hyperbolicity in
cubical groups using the simplicial boundary
\cite{BehrstockHagen:cubulated1}.

\begin{thm}\label{thm:intro_rel_hyp+boundary}
	Let $(G,\mf{S})$ be a hierarchically hyperbolic group. The group $G$ is hyperbolic relative to a collection of infinite index subgroups $\{H_1,\dots, H_k\}$ if and only if   each $H_i$ is hierarchically quasiconvex and there is a  collection  $\{\Lambda_1,\dots, \Lambda_k\}$  of subcomplexes of $\partial_\Delta(G,\mf{S})$ so that 
	\begin{enumerate}
		\item each $\Lambda_i$ is the limit set of $H_i$;
		\item any two translates $g\Lambda_i$ and $h\Lambda_j$ are either disjoint or equal;
		\item the complement of the orbit of $\Lambda_1 \cup \dots \cup \Lambda_k$ is  a non-empty set of isolated vertices.
	\end{enumerate}
\end{thm}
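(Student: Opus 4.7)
The proof splits into the two implications, and both will lean heavily on the HHG-modification construction advertised in the abstract, which incorporates a collection of hyperbolically embedded subgroups into the HHG structure as new domains. The forward direction begins with the classical relatively hyperbolic data and uses this construction to make the peripheral structure visible in the HHG, while the backward direction begins with the boundary data and recovers the relatively hyperbolic structure.

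\textbf{Forward direction.} Assume $G$ is hyperbolic relative to $\{H_1,\dots,H_k\}$. By Dahmani--Groves--Osin, each $H_i$ is hyperbolically embedded, so the incorporation construction produces an HHG structure $\mf{S}'$ on $G$ in which each $H_i$ coarsely stabilizes a specific domain $V_i\in\mf{S}'$ whose standard product region is coarsely $H_i$ itself. Hierarchical quasiconvexity of $H_i$ (condition (1)) is immediate from being the coarse stabilizer of such a product region. Define $\Lambda_i$ to be the limit set of $H_i$ in $\partial_\Delta(G,\mf{S}')$; it is identified with the sub-join of $\partial_\Delta$ arising from $\partial\mc{C}V_i$ together with its orthogonal complement. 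Condition (2) follows from almost malnormality of peripherals: two distinct cosets $gH_i$ and $hH_j$ have uniformly bounded coarse intersection, which forces the corresponding limit sets in $\partial_\Delta$ to be disjoint. For (3), one must verify that every positive-dimensional simplex of $\partial_\Delta$ is supported on some $g\Lambda_i$; this will follow by showing that any nontrivial orthogonality between domains in $\mf{S}'$ is, up to the equivalence relation giving simplices, already witnessed by the $V_i$, since outside the peripherals $G$ has no large-scale product behavior.

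\textbf{Backward direction.} Assume conditions (1)--(3). The plan is to verify a standard criterion for relative hyperbolicity---either Bowditch's dynamical criterion after producing the correct compactum, or that the coned-off Cayley graph of $G$ with respect to the cosets of the $H_i$'s is hyperbolic and satisfies bounded coset penetration. Hierarchical quasiconvexity (1) and the disjoint-or-equal condition (2), combined with the fact that an $H_i$ with bounded limit set has finite index (contradicting infiniteness of $[G:H_i]$), will yield the almost malnormality needed for bounded coset penetration. The crux is then to use (3) to show hyperbolicity: because the complement of the orbit of the $\Lambda_i$ consists of isolated vertices in $\partial_\Delta$, all nontrivial simplicial structure of the boundary comes from the peripheral subcomplexes. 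Translating this back to $\mf{S}$, every domain of $\mf{S}$ whose coordinate space has unbounded product region must be carried by some peripheral coset, so after coning off the cosets of the $H_i$, what remains is controlled by the remaining (hyperbolic) factors.

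\textbf{Main obstacle.} The backward direction is the substantive one. While the intuition---isolated vertices in $\partial_\Delta$ outside the peripheral limit sets means no non-peripheral product regions---is transparent, the rigorous translation from a combinatorial statement about simplices in $\partial_\Delta(G,\mf{S})$ to a geometric statement about orthogonality and product regions in the domain set $\mf{S}$ is delicate. In particular, a boundary simplex can be supported on a subtle collection of orthogonal domains that do not correspond to a single product region in $G$, so one must carefully match simplices with standard product regions. I expect to need to first apply the HHG-modification construction to replace $\mf{S}$ by a better-adapted structure in which the candidate peripheral domains appear explicitly, and only then read off from (3) that all remaining domains contribute only isolated vertices, so that coning off yields a hyperbolic graph.
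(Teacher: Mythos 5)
Your forward-direction outline matches the paper's: both invoke the fact that peripheral subgroups of a relatively hyperbolic group are hyperbolically embedded, then apply Construction~\ref{con:HHG+hyp_embedded} (after maximizing) to obtain an HHG structure $\mf{H}$ in which the cosets are domains, and read conditions (1)--(3) off the resulting isolated orthogonality and Lemma~\ref{lem:limit_set_support}. You should note explicitly that what makes this argument transferrable back to the original structure $\mf{S}$ is the boundary invariance of Theorem~\ref{thm:maximization_invariance} and Corollary~\ref{cor:adding_hyp_embedded+boundary}; otherwise you have only analyzed $\partial_\Delta(G,\mf{S}')$, not $\partial_\Delta(G,\mf{S})$.

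The backward direction, however, has a genuine gap that is also a logical circularity. You propose to ``first apply the HHG-modification construction to replace $\mf{S}$ by a better-adapted structure,'' but Construction~\ref{con:HHG+hyp_embedded} takes a \emph{hyperbolically embedded} collection as input. Nothing in the hypotheses tells you that the $H_i$ are hyperbolically embedded: you are given hierarchical quasiconvexity, which is strictly weaker than strong quasiconvexity, and the two differ by exactly the orthogonal projection dichotomy (Theorem~\ref{thm:SQC_in_HHG}). By the characterization in Theorem~\ref{thm:hyperbolically_embedded_in_HHGs}, you need \emph{both} almost malnormality \emph{and} strong quasiconvexity before the construction can be applied. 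Your sketch addresses almost malnormality (via the disjoint limit sets and infinite index) but entirely omits strong quasiconvexity, which is the most substantial step. In the paper this occupies Step 2 of the proof of Theorem~\ref{thm:boundary_criteria_for_rel_hyp}: one argues via Claims~\ref{claim:suppport_closed_under_orthogonality}--\ref{claim:support_and_product_regions} that if $H_i$ had unboundedly large but finite projections to some domain $W$ outside $\supp(\Lambda_i)$, then a hierarchy path between far-apart points of $H_i$ would develop a long active interval inside a product region $\P_W$ lying in a coset $gH_j \ne H_i$, forcing infinite coarse intersection with $H_i$ and contradicting almost malnormality via Lemma~\ref{lem:intersection_of_almost_malnormal}. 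Without this step, the orthogonal projection dichotomy is not available, you cannot invoke the construction, and the passage from condition (3) to hyperbolicity of the coned-off space does not go through. Once hyperbolic embeddedness is established, the paper does not use BCP or the Bowditch dynamical criterion; it shows $\mf{H}$ has orthogonality isolated by $\mf{Q}$ (Corollary~\ref{cor:rel_hyp_structure}) and applies Theorem~\ref{thm:isolated_orthogonality_implies_rel_hyp}, which is a cleaner mechanism than either of the classical criteria you mention.
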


Complementary to Theorem \ref{thm:intro_rel_hyp+boundary}, we use the
simplicial structure on the boundary to understand when a
hierarchically hyperbolic group is \emph{thick of order 0 or 1} relative to
hierarchically quasiconvex subsets.  Thickness is a powerful
obstruction to relative hyperbolicity that can also provide upper
bounds on the divergence of a space.

\begin{thm}\label{thm:intro_thickness}
	Let $(G,\mf{S})$ be a hierarchically hyperbolic group.
	\begin{enumerate}
		\item   A hierarchically quasiconvex subgroup of $G$ is thick of order 0 if and only if its limit set in $\partial_\Delta(G,\mf{S})$ is a join.
		\item If $G$ is thick of order 1 with respect to a finite collection of hierarchically quasiconvex subgroups, then $\partial_\Delta(G,\mf{S})$ is disconnected and contains a positive dimensional $G$--invariant connected component.
		\item If $\partial_\Delta(G,\mf{S})$ is disconnected and contains a positive dimensional $G$--invariant connected component, then $G$ is  thick of order $1$ with respect to hierarchically quasiconvex subsets. 
	\end{enumerate}
\end{thm}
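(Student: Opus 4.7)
The strategy for all three parts rests on the principle that edges of $\partial_\Delta(G,\mf{S})$ record pairs of orthogonal unbounded domains of $\mf{S}$, so joins in the simplicial boundary correspond to product decompositions of hierarchically quasiconvex subregions, which is precisely the coarse geometric content of thickness of order $0$. For part (1), let $H$ be a hierarchically quasiconvex subgroup. If $H$ is thick of order $0$, then $H$ is unconstricted, and the structure theory of unconstricted HHGs produces orthogonal domains $U \perp V$ in $\mf{S}$ and a product region $P_{UV}$ that $H$ lies uniformly close to, with unbounded projection of $H$ to both $\mc{C}U$ and $\mc{C}V$. Then $\Lambda(H)$ contains vertices in both $\partial \mc{C}U$ and $\partial \mc{C}V$, and orthogonality forces every such pair to be joined by an edge, making $\Lambda(H)$ a join. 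Conversely, a join decomposition $\Lambda(H) = A * B$ places each vertex of $A$ (from some $\partial \mc{C}U$) adjacent in $\partial_\Delta(G,\mf{S})$ to every vertex of $B$ (from some $\partial \mc{C}V$); by the definition of the simplicial structure this forces $U \perp V$ with $H$ projecting unboundedly into both factors, yielding the product decomposition that witnesses thickness of order $0$.

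For part (2), suppose $G$ is thick of order $1$ with respect to $\{H_1,\dots,H_n\}$. Each $H_i$ is thick of order $0$, so by (1) each $\Lambda(H_i)$ is a positive-dimensional join. The chain condition for thickness, combined with the fact that unbounded coarse intersection of hierarchically quasiconvex subgroups lifts to a shared vertex of $\partial_\Delta(G,\mf{S})$, shows that $C := \bigcup_{g \in G,\, i} g \Lambda(H_i)$ is a connected, positive-dimensional, $G$-invariant subcomplex. For disconnectedness, the key observation is that thickness of order exactly $1$ (as opposed to $0$) means $G$ itself is not unconstricted; combining this with (1) applied to $G$ forces $\partial_\Delta(G,\mf{S})$ not to fit inside a single join, and an analysis of what is missing produces vertices of $\partial_\Delta(G,\mf{S})$ outside $C$, necessarily lying in distinct connected components.

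For part (3), suppose $\partial_\Delta(G,\mf{S})$ is disconnected and contains a positive-dimensional $G$-invariant connected component $C$. Each edge $e \subseteq C$ produces, via the converse direction of (1) together with the HHS product-region construction, a hierarchically quasiconvex subset $Y_e \subseteq G$ that is thick of order $0$. The $G$-invariant family $\mc{F} := \{g Y_e : g \in G,\ e \subseteq C\}$ satisfies the thick chain condition: connectedness of $C$ allows one to pass between any two of its edges through shared vertices, and each shared vertex realizes a common limit-set point, hence unbounded coarse intersection, for the associated members of $\mc{F}$. A coarse covering argument using HHS machinery shows $\mc{F}$ covers $G$ coarsely. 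Finally, disconnectedness of $\partial_\Delta(G,\mf{S})$ prevents $G$ itself from being thick of order $0$ by (1), so $G$ is thick of order exactly $1$ with respect to $\mc{F}$.

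The principal obstacle is the forward direction of (1): converting the combinatorial datum of a join limit set into honest orthogonal unbounded domains that realize the subgroup coarsely as a product. This requires delicate interplay between the induced HHS structure on the hierarchically quasiconvex subgroup and the ambient simplicial boundary, ensuring that each vertex of $\Lambda(H)$ is witnessed by a domain into which $H$ projects unboundedly rather than one seen only via accumulation. A secondary challenge is upgrading unbounded coarse intersection of hierarchically quasiconvex subgroups to a shared simplicial (not merely topological) vertex of $\partial_\Delta(G,\mf{S})$, and, in (3), verifying that the family $\mc{F}$ coarsely exhausts all of $G$ rather than a proper thick subregion.
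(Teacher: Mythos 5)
Your proposal has the right overall shape, but the central technical content of the paper's argument is missing, and you have (honestly) flagged exactly the place where it is missing without supplying it.

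For part (1), the paper's proof hinges on the notion of \emph{eyries}: Theorem~\ref{thm:eyries} (from Petyt--Spriano) produces, for any hierarchically quasiconvex $H$, a finite pairwise-orthogonal set $\{W_1,\dots,W_n\}$ such that \emph{every} domain onto which $H$ projects unboundedly is nested into some $W_i$, and Lemma~\ref{lem:bounded_proj_outside_eyries} upgrades this to a \emph{uniform} bound on $\diam(\pi_V(H))$ for all $V$ not nested into an eyrie. These two facts together are what make Proposition~\ref{prop:wide=mult_eyries} work: with a single eyrie, $H$ admits an induced HHS structure with a $\nest$--maximal domain on which it acts acylindrically (hence is virtually cyclic or acylindrically hyperbolic, so not wide); with multiple eyries, the distance formula plus the uniform bound show $H$ is QI to a product region. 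Your sketch invokes ``the structure theory of unconstricted HHGs'' and asserts that a join decomposition of $\Lambda(H)$ ``forces $U\perp V$ with $H$ projecting unboundedly into both factors, yielding the product decomposition,'' but this last step is exactly the gap: knowing $H$ has unbounded projections to two orthogonal domains does not by itself give a product decomposition of $H$ --- one must control \emph{all} of $H$'s unbounded projections, which is what eyries and Lemma~\ref{lem:bounded_proj_outside_eyries} provide. You correctly identified this as the principal obstacle, but no mechanism is proposed to overcome it.

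The same ingredient is load-bearing in part (2). Your ``analysis of what is missing produces vertices outside $C$, necessarily lying in distinct connected components'' is a placeholder: the paper's actual argument is that $G$ not wide implies $G$ has a \emph{single} eyrie $W$, and Theorem~\ref{thm:eyries} then guarantees $\mc{C}W$ is unbounded yet no unbounded domain is orthogonal to $W$, so the vertices of $\partial_\Delta(G,\mf{S})$ lying in $\partial\mc{C}W$ are isolated --- that is the source of disconnectedness. Without the single-eyrie fact this step does not go through. Two smaller imprecisions: in (2), the shared boundary point coming from the infinite-diameter coarse intersection need not be a \emph{vertex} (the paper produces a point $\eta_k$ via compactness and uses Lemma~\ref{lem:bounded_difference_convergence} to identify limit sets of neighborhoods with limit sets of cosets, then connects $\eta_k$ to $\eta_{k+1}$ through the join); and in (3), the coarse covering is not generic ``HHS machinery'' but a short equivariance computation --- for $g\in G$, $\xi\in\Omega^{(0)}$, and $h\in\P_{U_\xi}$ one has $g\in\P_{U_{gh^{-1}\xi}}$ with $gh^{-1}\xi\in\Omega^{(0)}$ by $G$--invariance, giving constant $C=0$. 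The paper also indexes its wide pieces by vertices ($\P_{U_\xi}$) rather than edges ($Y_e$), which is cleaner for the chain and covering arguments.
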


In addition to the simplicial structure, Durham, Hagen, and Sisto
equip the hierarchically hyperbolic boundary with a more sophisticated
topology.  Using this topology, we show that the
Bowditch boundary of a relatively hyperbolic HHG is a natural quotient
of the hierarchically hyperbolic boundary.  Analogous results have
been shown for relatively hyperbolic CAT(0) groups by Tran \cite{Tran_relation_between_boundaries} and for 
relatively hyperbolic structures on hyperbolic groups by Spriano \cite{Spriano_hyperbolic_I}  and Manning \cite{Manning_bowditch_boundary}.

\begin{thm}\label{thm:intro_quotient}
	Let $G$ be a hierarchically hyperbolic group that is hyperbolic relative to a finite collection of subgroups $\mc{P}$. The Bowditch boundary of the $G$ relative to $\mc{P}$ is the quotient of the HHS boundary of $G$ obtained by collapsing the limit set of each coset of a peripheral subgroup to a point.
\end{thm}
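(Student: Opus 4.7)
The plan is to construct a continuous $G$-equivariant surjection
$f : \partial_\Delta(G,\mf{S}) \to \partial(G,\mc{P})$ onto the Bowditch boundary whose fibers are precisely the peripheral limit sets $g\Lambda_i$ (collapsed to parabolic fixed points) and singletons of isolated vertices (sent to conical limit points), and then deduce the theorem from the fact that a continuous surjection from a compact space to a Hausdorff space is automatically a quotient map.

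First, I would invoke Theorem \ref{thm:intro_rel_hyp+boundary} to produce the subcomplexes $\Lambda_i$ as the limit sets in $\partial_\Delta(G,\mf{S})$ of the hierarchically quasiconvex peripheral subgroups $H_i$; by its third clause, every positive-dimensional simplex of $\partial_\Delta$ lies in some $G$-translate $g\Lambda_i$, while all other points are isolated vertices. I would then import the Groves--Manning cusped space $X$ for $(G,\mc{P})$, whose Gromov boundary is canonically identified with the Bowditch boundary $\partial(G,\mc{P})$, along with the natural $G$-equivariant Lipschitz inclusion $\iota : G \hookrightarrow X$.

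The map $f$ would be defined via representing sequences. A boundary point $\xi \in \partial_\Delta(G,\mf{S})$ is realized by a sequence $(g_n) \subset G$ converging to $\xi$ in the HHS topology. If $\xi$ is an isolated vertex, then $(g_n)$ cannot accumulate inside any peripheral coset, else $\xi$ would lie in the corresponding $g\Lambda_i$; consequently $\iota(g_n)$ eventually exits every horoball of $X$ and converges Gromov-theoretically to a conical limit point in $\partial X$, which we declare to be $f(\xi)$. If instead $\xi \in g\Lambda_i$, then hierarchical quasiconvexity of $H_i$ forces $(g_n)$ to remain uniformly close to the coset $gH_i$, so $\iota(g_n)$ descends arbitrarily deep into the horoball based at $gH_i$ and converges to the associated parabolic fixed point; we set $f(\xi)$ to be that point. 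The collapsing behavior on fibers and $G$-equivariance are built into this construction.

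The main obstacle is proving that $f$ is well-defined and continuous. Well-definedness reduces to the claim that any two sequences converging to the same $\xi \in \partial_\Delta$ have images in $X$ whose Gromov products tend to infinity; in the non-peripheral case, the HHS distance formula should let one control Gromov products in $X$ by those in the relevant $\mathcal{C}W$ away from the horoballs, while in the peripheral case the claim follows from the bounded-distance behavior of the cusped-space metric along a horoball. Continuity is the most delicate step, as the HHS boundary topology involves simultaneous convergence in many hyperbolic factors; the strategy is to show that if $\xi_n \to \xi$ in $\partial_\Delta$, then representing sequences can be chosen compatibly so that their images in $X$ Gromov-converge to $f(\xi)$, treating the parabolic case separately by confining the relevant sequences to appropriate horoball neighborhoods. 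Once $f$ is shown to be continuous and surjective, compactness of $\partial_\Delta(G,\mf{S})$ and Hausdorffness of $\partial(G,\mc{P})$ immediately upgrade the fiber description to the claimed homeomorphism of the quotient with the Bowditch boundary.
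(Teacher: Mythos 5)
Your high-level strategy---build a continuous surjection with the right fibers and use compactness plus Hausdorffness to get a quotient map---matches the paper's. But the execution diverges and has real gaps.

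First, the claim that if $\xi\in g\Lambda_i$ then a representing sequence $(g_n)$ must ``remain uniformly close to the coset $gH_i$'' is false and cannot be made true by appealing to hierarchical quasiconvexity. A sequence converging in the HHS boundary to a point supported inside $\supp(\Lambda(gH_i))$ need not stay in a bounded neighborhood of $gH_i$; what is true, and what the paper actually uses, is that the \emph{gate images} $\gate_{gH_i}(g_n)$ go to infinity inside $gH_i$, after which Lemma~\ref{lem:horoball_base_distance_to_boundary} on distances to the horoball's boundary point takes over. Without the gate map, the peripheral case of your map is not correctly defined.

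Second, you do not say what $f$ does on points whose support is neither $\{S\}$ nor contained in a single peripheral coset. The paper handles this by a reduction you have omitted: maximize the HHS structure, then add the peripheral cosets as new domains via Construction~\ref{con:HHG+hyp_embedded} and Theorem~\ref{thm:adding_in_hyperbolically_embedded_subgroups}. This produces a structure with orthogonality isolated by the peripheral domains (Corollary~\ref{cor:rel_hyp_structure}), so that \emph{every} boundary point has support either equal to $\{S\}$ or entirely nested in a single peripheral domain. Theorem~\ref{thm:maximization_invariance} and Corollary~\ref{cor:adding_hyp_embedded+boundary} then guarantee these changes do not affect the boundary. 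Without this reduction, your dichotomy ``isolated vertex versus point in some $g\Lambda_i$'' does not cleanly pin down the map.

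Third, the well-definedness and continuity you describe as ``the most delicate step'' are the bulk of the proof, and the proposal is largely aspirational there. The paper sidesteps the representing-sequence comparison entirely by putting an explicit HHS structure $\mf{R}$ (from Russell's work on isolated orthogonality) on the cusped space $\cusp(\mc{X})$, with hyperbolic spaces $\mc{C}S$ and the horoballs $\mc{H}(\P_I)$. The target is then itself an HHS boundary, the map $\Phi$ is a formula on supports (so well-definedness is automatic), and continuity is proved by explicit comparison of the neighborhood bases $\mc{A}_r(p)$ and $\widehat{\mc{A}}_r(\Phi(p))$, using boundary projections and downward relative projections. Working directly with Gromov products of $\iota(g_n)$ in the cusped space, as you propose, is substantially harder, and the suggestion that ``the HHS distance formula should let one control Gromov products in $X$'' would require a careful argument relating the electrified distance and the cusped distance that you have not supplied.
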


A particularly interesting case of Theorem \ref{thm:intro_quotient} is the case of a closed, irreducible, non-geometric  $3$--manifold with at least one hyperbolic piece in its JSJ decomposition. The fundamental group of such a manifold is hyperbolic relative to the fundamental groups of the maximal tori and graph manifold pieces. While these groups are not always CAT(0), they are always hierarchically hyperbolic \cite{BHS_HHSII,HRSS_3-manifolds}. Hence we have the following. 
\begin{cor}
	Let $M$ be an irreducible, non-geometric closed $3$--manifold with at least one hyperbolic piece in  its JSJ decomposition. Let $N_1,\dots,N_k$ be the maximal graph manifold and tori pieces of the JSJ decomposition. The Bowditch boundary of $\pi_1(M)$ relative to $\pi_1(N_1),\dots, \pi_1(N_k)$ is the quotient of the hierarchically hyperbolic boundary of $\pi_1(M)$ obtained by collapsing the limit set of each coset of each $\pi_1(N_i)$ to a point.
\end{cor}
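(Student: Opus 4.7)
The plan is to realize the corollary as an immediate instance of Theorem~\ref{thm:intro_quotient}, which requires checking two hypotheses for the group $G=\pi_1(M)$: that it carries the structure of a hierarchically hyperbolic group, and that it is hyperbolic relative to the finite collection $\{\pi_1(N_1),\dots,\pi_1(N_k)\}$.

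First, I would invoke the existence of an HHG structure on $\pi_1(M)$ directly from the literature: the work of Behrstock--Hagen--Sisto \cite{BHS_HHSII} together with the refinement by Hagen--Russell--Sisto--Spriano \cite{HRSS_3-manifolds} shows that any closed, irreducible, non-geometric $3$--manifold group is hierarchically hyperbolic, irrespective of whether it is CAT(0). This gives us a fixed HHS index set $\mf{S}$ to apply the main theorem to.

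Next, I would recall that for an irreducible, non-geometric closed $3$--manifold with at least one hyperbolic piece in its JSJ decomposition, the fundamental group is hyperbolic relative to the peripheral collection consisting of the fundamental groups of the maximal graph manifold and torus pieces. This is classical and can be extracted, for instance, from the Dru\c{t}u--Sapir perspective on tree-graded spaces combined with the hyperbolicity of the hyperbolic JSJ pieces, or equivalently from Bowditch's dynamical characterization applied to the action on the coned-off Bass--Serre tree. The hypothesis that at least one piece is hyperbolic is precisely what guarantees the peripherals $\pi_1(N_i)$ are proper and of infinite index, so that Theorem~\ref{thm:intro_quotient} genuinely applies and produces a nontrivial quotient.

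With both hypotheses in hand, Theorem~\ref{thm:intro_quotient} applies verbatim and yields the stated description of the Bowditch boundary. The main obstacle I anticipate is a bookkeeping one: one must make sure that the HHG structure provided by \cite{BHS_HHSII,HRSS_3-manifolds} is compatible with, or can be adjusted to, the peripheral subgroups $\pi_1(N_i)$ being hierarchically quasiconvex, which is implicit in the hypotheses of Theorem~\ref{thm:intro_quotient}. This compatibility should be immediate from the fact that the HHG structures on $3$--manifold groups are built along the JSJ decomposition, so each $\pi_1(N_i)$ is stabilized by a natural subfamily of domains; if it is not already explicit in those references, the construction in the main body of this paper allowing one to modify an HHG structure to incorporate a collection of hyperbolically embedded subgroups (advertised in the abstract) can be used to enforce it. No novel argument is needed beyond this verification and the appeal to Theorem~\ref{thm:intro_quotient}.
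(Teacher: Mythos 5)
Your proposal is correct and mirrors the paper's own (very short) justification: cite \cite{BHS_HHSII,HRSS_3-manifolds} for the HHG structure on $\pi_1(M)$, cite the classical fact that such a manifold group is hyperbolic relative to the maximal graph-manifold and torus pieces, and apply Theorem~\ref{thm:intro_quotient}. Your extra worry about hierarchical quasiconvexity of the $\pi_1(N_i)$ is unnecessary, since Theorem~\ref{thm:intro_quotient} does not list it as a hypothesis (it is automatic for peripheral subgroups of a relatively hyperbolic HHG by Theorem~\ref{thm:hyperbolically_embedded_in_HHGs}, and the maximization/Construction~\ref{con:HHG+hyp_embedded} machinery inside the proof of Theorem~\ref{thm:intro_quotient} already handles the compatibility).
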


Both Theorem \ref{thm:intro_rel_hyp+boundary} and Theorem
\ref{thm:intro_quotient} are facilitated by a pair of technical
results that allow us to ensure compatibility of the relatively
hyperbolic and hierarchically hyperbolic structures we are considering
on our group.  The first is our previous work in
\cite{ABR:maximization}, which shows that performing a particular
``maximization procedure'' on the projection structure of a
hierarchically hyperbolic group does not change the simplicial or
topological structure of the boundary.  The second is the following
result, which shows that given a hierarchically hyperbolic group 
one can augment the hierarchically hyperbolic structure by adding in 
any hyperbolically embedded subgroup; see
Section \ref{sec:add_hyp_embedded} for a more precise statement.

\begin{thm}\label{thm:intro_hyp_embed}
	Let $G$ be a hierarchically hyperbolic group and
	$\{H_1,\dots,H_k\}$ be a hyperbolically embedded collection of
	subgroups of $G$.  There exists a hierarchically hyperbolic
	structure for $G$ so that the cosets of the $H_i$ index hyperbolic
	spaces whose associated product regions are the cosets of the
	$H_i$.
\end{thm}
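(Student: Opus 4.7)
The plan is to build an augmented hierarchically hyperbolic structure $(G,\mf{S}')$ on $G$ by adjoining to $\mf{S}$ one new domain for each coset of each $H_i$, together with a new $\sqsubseteq$-maximal domain whose associated hyperbolic space records the geometry of the coned-off Cayley graph of $G$. Since $\{H_1,\dots,H_k\}$ is hyperbolically embedded in $G$, this coned-off Cayley graph $\hat{G}$ is Gromov hyperbolic, and the closest-point projection onto any coset $gH_i$ is coarsely well-defined with uniform bounds on the diameter of pairwise projections between distinct cosets. These are the technical inputs I would leverage throughout.

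First I would set $\mf{S}' = \mf{S} \sqcup \{S'\} \sqcup \{W_{gH_i} : g \in G,\ 1 \leq i \leq k\}$, where $S'$ is the new top-level domain. The hyperbolic space $\mc{C}S'$ is defined to be $\hat{G}$, and each $\mc{C}W_{gH_i}$ is taken to be a hyperbolic space on which the stabilizer $gH_ig^{-1}$ acts naturally---either the coset itself equipped with a suitable metric when this can be arranged, or more generally a bounded space together with a careful choice of nesting lattice whose realization recovers $gH_i$ as the coarse product region. The relations would be set by declaring $S'$ to be $\sqsubseteq$-maximal, each $W_{gH_i}$ nested into $S'$, distinct $W_{gH_i}$ and $W_{g'H_j}$ mutually transverse, and each original domain $U \in \mf{S}$ either nested into some $W_{gH_i}$ when the coarse image of its product region in $G$ lies in a uniform neighborhood of $gH_i$, or otherwise transverse to every $W_{gH_i}$. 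The $G$-action permutes the new domains through its left action on cosets, so there are only finitely many new $G$-orbits and cofiniteness of the action on $\mf{S}'$ is preserved.

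Projections would be defined by letting $\pi_{S'}$ be the inclusion $G \hookrightarrow \hat{G}$, letting $\pi_{W_{gH_i}}$ be the closest-point projection onto $gH_i$, and defining the relative projections $\rho^{W_{gH_i}}_{W_{g'H_j}}$ as the coarse projection between distinct cosets. Verifying the HHG axioms for $(G,\mf{S}')$ then breaks into: consistency of the new projections, which follows from the Behrstock-type inequality for coset projections in the hyperbolic space $\hat{G}$; bounded geodesic image, which is precisely the statement that geodesics in $\hat{G}$ avoiding the cone point over $gH_i$ project to a bounded subset, a standard consequence of hyperbolic embedding; uniqueness, which is proved by combining the original distance formula on $(G,\mf{S})$ restricted to each $gH_i$ with the hyperbolic distance estimate on $\hat{G}$ controlling between-coset geometry; and the remaining axioms of partial realization, large link, and finite complexity, which are inherited from $\mf{S}$ together with the one extra level introduced by $S'$.

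The main obstacle I expect is the interaction between the old and new domains, particularly in the uniqueness axiom. One must show that a group element is coarsely determined by the combined tuple of its projections to the hyperbolic spaces of $\mf{S}$, to $\hat{G}$, and to each coset $gH_i$; a priori the original HHG structure need have no compatibility at all with the new coset decomposition. The key input will be the \emph{proper relative metric} property of hyperbolically embedded subgroups, which forces the closest-point projections between distinct cosets to be uniformly bounded and guarantees that distances inside each coset are visible at finite scale by the original $\mf{S}$-projections, allowing the old and new distance formulas to combine consistently. A secondary difficulty is verifying that the declared nesting between old domains $U \in \mf{S}$ and new domains $W_{gH_i}$ is consistent and $G$-equivariant; this reduces to establishing that the $U$-projections of $gH_i$ are either uniformly bounded or cobounded, a dichotomy I would extract from the interaction between geodesics in $\mc{C}U$ and the cone points of $\hat{G}$.
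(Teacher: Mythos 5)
Your architecture differs from the paper's in a way that introduces a fatal flaw: you retain the old $\nest$--maximal domain $S$ of $\mf{S}$ and place a brand new domain $S'$ (carrying the coned-off Cayley graph $\hat G$) above it, so that $S\propnest S'$. This breaks the bounded geodesic image and consistency axioms. Since $\pi_S$ is coarsely onto and $\mc{C}S$ has infinite diameter, one can choose $x,y\in G$ with $d_S(x,y)\geq E$ while $\pi_{S'}(x),\pi_{S'}(y)$ lie arbitrarily far (in $\hat G$) from any fixed bounded set; BGI would then force every $\hat G$--geodesic between such pairs to pass near the diameter-$\leq E$ set $\rho_{S'}^S$, which is impossible. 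Likewise, consistency would require $\rho_{S'}^U$ to lie uniformly close to $\rho_{S'}^S$ for \emph{every} $U\nest S$, i.e.\ for all of $\mf{S}$, which fails because the product regions $\P_U$ are coarsely dense in $G$ and hence their images in $\hat G$ are spread throughout $\hat G$. The paper avoids this entirely: it keeps $S$ as the unique maximal domain and modifies $\mc{C}S$ by electrifying the sets $\pi_S(\coset{Q})$, rather than stacking a new top on the old one.

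There are two further gaps. First, the hyperbolic spaces assigned to the new coset domains: the coset $gH_i$ with its induced metric is generally not hyperbolic (think $H_i\cong\mathbb Z^2$), and your fallback of a bounded space would strip the new domain of any role in the boundary. The paper's choice is the convex hull of $\pi_S(\coset{Q})$ inside $\mc{C}_\mf{S}S$, which is automatically hyperbolic (quasiconvex subset of a hyperbolic space, using hierarchical quasiconvexity of $\coset{Q}$), and it is this specific choice that makes the proof of BGI and large links go through via Spriano's factor-system structure on $\mc{C}_\mf{S}S$. Relatedly, your projection to $W_{gH_i}$ is described as closest-point projection onto $gH_i\subset G$, which targets $G$ rather than a hyperbolic space; the paper instead defines $\tau_Q=\mf p_{\pi_S(\coset{Q})}\circ\pi_S$, which lands in the hyperbolic space just described. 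Second, you omit the maximization step. The paper works throughout with a maximized structure $\mf{S}$, and this hypothesis is load-bearing: it is what makes Lemma \ref{lem:h_nesting_and_product_regions} produce the clean characterization of the nesting $V\nest Q$ (equivalently, $\P_V\subseteq\mc N_B(\coset{Q})$), and it is what makes $\mc{C}_\mf{S}S$ the electrification of $G$ by all product regions, which is essential for the large links argument for the top domain. Your nesting dichotomy for old domains is morally the same as the paper's, but without maximization the equivalences in that lemma are not available. The correct intuitions you do have---one domain per coset, bounded pairwise projections coming from hyperbolic embedding, and a nesting dichotomy detected by coarse containment of product regions in cosets---are present in the paper, but the structural choices (electrify the existing top domain, take convex hulls in $\mc{C}S$ as the new hyperbolic spaces, project through $\mc{C}S$, maximize first) are the nontrivial content needed to make the construction satisfy the axioms.
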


Readers familiar with hierarchically hyperbolic groups will know that
every hierarchically hyperbolic group admits many different
hierarchically hyperbolic structures.  It is an open question whether
or not different hierarchically hyperbolic structures produce
topologically distinct boundaries.  However, as a consequence of our
work in \cite{ABR:maximization}, Theorems
\ref{thm:intro_rel_hyp+boundary}, \ref{thm:intro_thickness}, and
\ref{thm:intro_quotient} all apply regardless of which 
hierarchically hyperbolic structure is being considered.

\subsection{Organization of the paper}
In Section \ref{background}, we define relatively hyperbolic and
hierarchically hyperbolic spaces and collect some result from the
literature.  In Section \ref{sec:add_hyp_embedded}, we prove our main
technical tool (Theorem \ref{thm:intro_hyp_embed}) showing that
hyperbolically embedded subgroups can be added into a hierarchically
hyperbolic structure.  Sections \ref{sec:boundary_rel_hyp} and
\ref{sec:bowditch_boundary} establish our theorems
on the HHS boundary of relatively hyperbolic group.  In Section
\ref{sec:boundary_rel_hyp}, we characterize relative hyperbolicity via
the simplicial structure on the hierarchically hyperbolic boundary
(Theorem \ref{thm:intro_rel_hyp+boundary}), and in Section
\ref{sec:bowditch_boundary}, we show the Bowditch boundary is a
quotient of the HHS boundary (Theorem \ref{thm:intro_quotient}).  In
Section \ref{sec:thick_case}, we recall the notion of a thick metric
space and establish the connection between the HHS boundary and being
thick of order 0 or 1 (Theorem~\ref{thm:intro_thickness}).

\subsection*{Acknowledgments}
We thank Davide Spriano for explaining how to apply the results of
\cite{PS_Unbounded_domains} in the setting of Section
\ref{sec:thick_case}.

Abbott was supported by NSF grants DMS-1803368 and 
DMS-2106906. Behrstock was supported by the Simons Foundation as a 
Simons Fellow. Behrstock thanks the Barnard/Columbia Mathematics 
department for their hospitality. Russell was supported by NSF grant DMS-2103191.

\section{Background on hierarchical  and relative hyperbolicity}\label{background}

\subsection{Coarse Geometry}
Let $(X,d_X)$ be a metric space. For $Y\subseteq X$ and any constant $C\geq 0$,  we denote the closed $C$--neighborhood of $Y$ in $X$ by
\[\mathcal N_C(Y)=\{x\in X : d_X(x,Y)\leq C\}.\]     Two  subsets $Y,Z \subseteq X$ are \emph{$C$--coarsely equal}, for some $C\geq 0$, if $Y \subseteq \mc{N}_C(Z)$ and $Z  \subseteq \mc{N}_C(Y)$. 
When $Y$ and $Z$ are $C$--coarsely equal, we write $Y\asymp_C Z$.  
 
A function $f \colon X \to 2^{Y}$ is a $C$--\emph{coarse map} if $f(x)$ is a non-empty set of diameter at most $C$  for all $x \in X$. The $C$--coarse map $f \colon  X \to 2^{Y}$ is $C$--\emph{coarsely onto} if $Y \subseteq \mc{N}_C(f(X))$. 

A \emph{$(\lambda,\varepsilon)$--quasi-geodesic} is a
$(\lambda,\varepsilon)$--quasi-isometric embedding of a closed interval
$I\subseteq \mathbb R$ into $X$, and a \emph{geodesic} is an isometric
embedding of $I$ into $X$.   In the case of quasi-geodesics, we allow $f$ to be a coarse map.

A (coarse) map $f\colon
[0,T]\to X$ is an \emph{unparametrized $(\lambda,\varepsilon)$--quasi-geodesic}
if there exists a non-decreasing function $g\colon
[0,T']\to[0,T]$ such that the following hold:
\begin{itemize} 
	\item $g(0)=0$, 
	\item $g(T')=T$,  
	\item $f\circ g\colon [0,T']\to X$ is a $(\lambda,\varepsilon)$--quasi-geodesic, and	
	\item for each $j\in[0,T']\cap \mathbb N$,  the diameter of
	$f(g(j))\cup f(g(j+1))$ is at most $\varepsilon$.  
\end{itemize}

A  geodesic metric space $X$ is \emph{$\delta$--hyperbolic} if  any geodesic triangle with sides $\gamma_1,\gamma_2,\gamma_3$ satisfies $\gamma_3 \subseteq \mc{N}_\delta(\gamma_1\cup \gamma_2)$. A subset $Y$ of a $\delta$--hyperbolic space $X$ is \emph{$\mu$--quasiconvex} if every geodesic in $X$ between points in $Y$ is contained in $\mc{N}_\mu(Y)$.  When $Y$ is $\mu$--quasiconvex, there is a well defined, coarsely Lipschitz coarse map $\mf{p}_Y \colon X \to Y$ with constants depending only on $\delta$ and $\mu$ so
that $$\mf{p}_{Y}(x) = \{y \in Y : d_X(x,y) \leq d_X(x,Y) +1\}.$$ We call the map
$\mf{p}_{Y}$ the \emph{closest point projection onto $Y$}. 

Given any subset $Y$ of a $\delta$--hyperbolic space $X$, the \emph{convex hull} $H(Y)$ of $Y$ is the union of all geodesics between pairs of points in $Y$. For any subset, the convex hull is $\mu$--quasiconvex for some $\mu$ depending only on $\delta$. If $Y$ is itself $\mu'$--quasiconvex, then $Y$ and $H(Y)$ are coarsely equal with constant depending only on $\delta$ and $\mu'$. In this case,  $\mf{p}_{Y}(x)$ is uniformly coarsely equal to $\mf{p}_{H(Y)}(x)$ for all $x \in X$, and the path metric on $H(Y)$ is a geodesic metric that is quasi-isometric to the subset metric on $Y$.

\subsection{Hierarchical hyperbolicity}

\begin{defn}[HHS]\label{defn:HHS}
	Let $E>0$ and $\mc{X}$ be an $(E,E)$--quasi-geodesic space.  A
	\emph{hierarchically hyperbolic space (HHS) structure with constant $E$}
	for $\mc{X}$ is an index set $\mathfrak S$ and a set $\{ \mc{C}W :
	W\in\mathfrak S\}$ of $E$--hyperbolic spaces $(\mc{C}W,d_W)$ such
	that the following axioms are satisfied.  \begin{enumerate}
		
		\item\textbf{(Projections.)}\label{axiom:projections} For each $W \in \mf{S}$, there exists a \emph{projection} $\pi_W \colon \mc{X} \rightarrow 2^{\mc{C}W}$  that is a $(E,E)$--coarsely Lipschitz, $E$--coarsely onto, $E$--coarse map.

		\item \textbf{(Nesting.)} \label{axiom:nesting} If $\mathfrak S \neq \emptyset$, then $\mf{S}$ is equipped with a  partial order $\nest$ and contains a unique $\nest$--maximal element. When $V\nest W$, we say $V$ is \emph{nested} in $W$.  For each
		$W\in\mathfrak S$, we denote by $\mathfrak S_W$ the set of all $V\in\mathfrak S$ with $V\nest W$.  Moreover, for all $V,W\in\mathfrak S$ with $V\propnest W$ there is a specified non-empty subset $\rho^V_W\subseteq \mc{C}W$ with $\diam(\rho^V_W)\leq E$.

		\item \textbf{(Orthogonality.)} 
		\label{axiom:orthogonal} $\mathfrak S$ has a symmetric relation called \emph{orthogonality}. If $V$ and $W$ are orthogonal, we write $V\perp
		W$ and require that $V$ and $W$ are not $\nest$--comparable. Further, whenever $V\nest W$ and $W\perp
		U$, we require that $V\perp U$. We denote by $\mf{S}_W^\perp$ the set of all $V\in \mf{S}$ with $V\perp W$.

		\item \textbf{(Transversality.)}
		\label{axiom:transversality} If $V,W\in\mathfrak S$ are not
		orthogonal and neither is nested in the other, then we say $V$ and $W$ are
		\emph{transverse}, denoted $V\trans W$.  Moreover, for all $V,W \in \mathfrak{S}$ with $V\trans W$ there are non-empty
		sets $\rho^V_W\subseteq \mc{C}W$ and
		$\rho^W_V\subseteq \mc{C} V$ each of diameter at most $E$.

		\item \textbf{(Finite complexity.)} \label{axiom:finite_complexity} Any set of pairwise $\nest$--comparable elements has cardinality at most $E$.
		
		\item \textbf{(Containers.)} \label{axiom:containers}  For each $W \in \mf{S}$ and $U \in \mf{S}_W$ with $ \mf{S}_W\cap \mf{S}_U^\perp \neq \emptyset$, there exists $Q \in\mf{S}_W$ such that $V \nest Q$ whenever $V \in\mf{S}_W \cap \mf{S}_U^\perp$.  We call $Q$ the \emph{container of $U$ in $W$}.
		
		\item\textbf{(Uniqueness.)} There exists a function 
		$\theta \colon [0,\infty) \to [0,\infty)$ so that for all $r \geq 0$, if $x,y\in\mc X$ and
		$d_\mc{X}(x,y)\geq\theta(r)$, then there exists $W\in\mathfrak S$ such
		that $d_W(\pi_W(x),\pi_W(y))\geq r$. \label{axiom:uniqueness}
		
		\item \textbf{(Bounded geodesic image.)} \label{axiom:bounded_geodesic_image} 
		For all  $V,W\in\mathfrak S$ and for all $x,y \in \mc{X}$, if  $V\propnest W$ and $d_V(\pi_V(x),\pi_V(y)) \geq E$, then every $\mc{C}W$--geodesic from $\pi_W(x)$ to $\pi_W(y)$ must intersect $\mc{N}_E(\rho_W^V)$.

		\item \textbf{(Large links.)} \label{axiom:large_link_lemma} 
		For all $W\in\mathfrak S$ and  $x,y\in\mc X$, there exists $\{V_1,\dots,V_m\}\subseteq\mathfrak S_W -\{W\}$ such that $m$ is at most $E d_{W}(\pi_W(x),\pi_W(y))+E$, and for all $U\in\mathfrak
		S_W - \{W\}$, either $U\in\mathfrak S_{V_i}$ for some $i$, or $d_{U}(\pi_U(x),\pi_U(y)) \leq E$.  
		
		\item \textbf{(Consistency.)}
		\label{axiom:consistency} For all $x \in\mc X$ and $V,W,U \in\mf{S}$:
		\begin{itemize}
			\item  if $V\trans W$, then $\min\left\{d_{W}(\pi_W(x),\rho^V_W),d_{V}(\pi_V(x),\rho^W_V)\right\}\leq E$,
			\item if $U\nest V$ and either $V\propnest W$ or $V\trans W$ and $W\not\perp U$, then $d_W(\rho^U_W,\rho^V_W)\leq E$.
		\end{itemize}

		\item \textbf{(Partial realization.)} \label{axiom:partial_realisation}  If $\{V_i\}$ is a finite collection of pairwise orthogonal elements of $\mathfrak S$ and $p_i\in  \mc{C}V_i$ for each $i$, then there exists $x\in \mc X$ so that:
		\begin{itemize}
			\item $d_{V_i}(\pi_{V_{i}}(x),p_i)\leq E$ for all $i$;
			\item for each $i$ and 
			each $W\in\mathfrak S$, if $V_i\propnest W$ or $W\trans V_i$, we have 
			$d_{W}(\pi_W(x),\rho^{V_i}_W)\leq E$.
		\end{itemize}
	\end{enumerate}

	We use $\mf{S}$ to denote the hierarchically hyperbolic space structure, including the index set $\mf{S}$, spaces $\{\mc{C}W : W \in \mf{S}\}$, projections $\{\pi_W : W \in \mf{S}\}$, and relations $\nest$, $\perp$, $\trans$. We call the elements of $\mf{S}$ the \emph{domains} of $\mf{S}$ and call the maps $\rho_W^V$ the \emph{relative projections} from $V$ to $W$. The number $E$ is called the \emph{hierarchy constant} for $\mf{S}$.
	
	A quasi-geodesic space $\mc{X}$ is a  \emph{hierarchically 
		hyperbolic space with constant $E$} if there exists a 
	hierarchically hyperbolic structure on $\mc{X}$ with constant $E$. The pair $(\mc{X},\mf{S})$  denotes a  hierarchically hyperbolic space equipped with the specific  HHS structure $\mf{S}$.
\end{defn}	

When writing the distances in the hyperbolic spaces $\mc{C}W$ between images of points under $\pi_W$, we will frequently suppress the $\pi_W$ notation. That is, we will use $d_W(x,y)$ to denote $d_{W}(\pi_W(x),\pi_W(y))$ for $x,y\in\mc{X}$. 

When two domains are nested, $V \propnest W$, the above axioms only
require an ``upward'' \emph{relative projection} $\rho^V_W$.  However,
the coarse surjectivity of the projection maps plus the bounded
geodesic image axiom allows us to define a ``downward'' relative
projection that is well behaved away from the $\rho_W^V$.  This
downward relative projection is used in defining the topology on the
HHS boundary in Section \ref{sec:bowditch_boundary}.

\begin{lem}[{\cite[Proposition 1.11]{BHS_HHSII}}]\label{lem:downward_rhos} 
	Let $(\mc{X},\mf{S})$ be a hierarchically hyperbolic space with constant $E$. For all $W,V \in \mf{S}$ with $V \propnest W$, there exists a map $\rho_V^W \colon \mc{C}W \to \mc{C}V$ and a constant $E' \geq 0$, depending only on $E$, so that 
	\begin{itemize}
		\item if a $\mc{C} W$--geodesic $\gamma$ does not intersect $\mc{N}_{E'}(\rho_W^V)$, then $\diam_{\mc{C}V}(\rho_V^W(\gamma)) \leq E'$; and
		\item   for all $x \in \mc{X}$, $\min\left\{ d_W(\pi_W(x), \rho_V^W), \diam\bigl(\pi_V(x) \cup \rho_W^V(\pi_V(x))\bigr) \right\} \leq E'$.
	\end{itemize}
\end{lem}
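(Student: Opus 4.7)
The plan is to define $\rho_V^W$ by leveraging the coarse surjectivity of $\pi_W$ and then establish both bullets using the bounded geodesic image axiom together with thin triangles in $\mc{C}W$. Concretely, for each $p \in \mc{C}W$, the coarse surjectivity of $\pi_W$ (with constant $E$) guarantees a point $x_p \in \mc{X}$ with $d_W(\pi_W(x_p), p) \leq E$; set $\rho_V^W(p)$ to be any point of $\pi_V(x_p) \subseteq \mc{C}V$. This depends on the choice of $x_p$, but I would show that the ambiguity is coarsely bounded away from $\rho_W^V$, which is enough for both bullets.

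First I would establish the following key estimate: if $x,y \in \mc{X}$ satisfy $d_W(\pi_W(x), \pi_W(y)) \leq 2E$ and every $\mc{C}W$--geodesic $[\pi_W(x), \pi_W(y)]$ misses $\mc{N}_E(\rho_W^V)$, then by the contrapositive of bounded geodesic image (Axiom \ref{axiom:bounded_geodesic_image}) $d_V(\pi_V(x), \pi_V(y)) < E$. This immediately gives well-definedness of $\rho_V^W(p)$ up to a uniform additive error, provided $p$ is sufficiently far from $\rho_W^V$; for $p$ within a bounded neighborhood of $\rho_W^V$, well-definedness is not needed because neither bullet makes a nontrivial statement about such $p$.

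Next I would handle the first bullet. Let $\gamma$ be a $\mc{C}W$--geodesic disjoint from $\mc{N}_{E'}(\rho_W^V)$, and pick $p_1, p_2 \in \gamma$ with chosen preimages $x_1, x_2$. Any $\mc{C}W$--geodesic from $\pi_W(x_1)$ to $\pi_W(x_2)$ has endpoints within $E$ of $\gamma$, and so by $E$--hyperbolicity of $\mc{C}W$ (thin triangles plus stability of geodesics) stays within a constant of $\gamma$. Choosing $E'$ large enough compared to $E$ and the hyperbolicity constant, this connecting geodesic also avoids $\mc{N}_E(\rho_W^V)$, so bounded geodesic image yields $d_V(\pi_V(x_1), \pi_V(x_2)) < E$. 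Since $\rho_V^W(p_i)$ lies in $\pi_V(x_i)$, this bounds $\diam(\rho_V^W(\gamma))$.

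For the second bullet, given $x \in \mc{X}$, consider $p = \pi_W(x)$ and the chosen preimage $x_p$ defining $\rho_V^W(p)$. If $d_W(p, \rho_W^V) > E'$ for $E'$ sufficiently large, then any $\mc{C}W$--geodesic from $\pi_W(x)$ to $\pi_W(x_p)$ is short and stays far from $\rho_W^V$, so bounded geodesic image bounds $d_V(\pi_V(x), \pi_V(x_p))$, giving the desired coarse equality with $\rho_V^W(\pi_W(x))$. The main obstacle is bookkeeping: one must fix a single constant $E'$ depending only on $E$ that simultaneously absorbs the coarse surjectivity error, the Morse-type fellow-traveler constant, and the output constant of bounded geodesic image, and then verify that the chosen thresholds in both bullets are consistent. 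No deep new idea is required beyond these ingredients; the argument is essentially that bounded geodesic image, coarse surjectivity, and hyperbolicity together force any reasonable candidate for a downward projection to behave as claimed.
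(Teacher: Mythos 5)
The paper does not prove this lemma---it is cited verbatim from \cite[Proposition~1.11]{BHS_HHSII}---so there is no in-paper argument to compare against. Evaluated on its own, your reconstruction is correct and follows the standard route taken in the literature: you define $\rho_V^W$ by lifting $p\in\mc{C}W$ to $\mc{X}$ via coarse surjectivity of $\pi_W$ and projecting to $\mc{C}V$, and you then verify both bullets by combining the contrapositive of bounded geodesic image with the thin-triangle/fellow-traveling property of $\mc{C}W$. The key insight---that well-definedness of $\rho_V^W$ is only needed, and only achievable, at points uniformly far from $\rho_W^V$, which is exactly the regime both bullets care about---is exactly right, and your constant-tracking sketch (absorbing the coarse-surjectivity error, the fellow-traveler constant, and the BGI output into a single $E'$) is the correct bookkeeping. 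One tiny inefficiency: the hypothesis $d_W(\pi_W(x),\pi_W(y))\leq 2E$ in your ``key estimate'' is not needed for the contrapositive of BGI itself; it only matters later, when you need the connecting geodesic to stay near $\gamma$ or near $p$. This does no harm, but stating the contrapositive cleanly (one geodesic missing $\mc{N}_E(\rho_W^V)$ forces $d_V<E$) and invoking the distance bound only where it is used would tighten the exposition.
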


For a hierarchically hyperbolic space $(\mc{X},\mf{S})$, we are often most concerned with the domains $W \in \mf{S}$ whose associated hyperbolic spaces $\mc{C}W$ have infinite diameter. Hence, we often also restrict to HHSs with the following regularity condition.

\begin{defn}[Bounded domain dichotomy]
	Given an HHS $(\mc{X},\mf{S})$, we let $\mf{S}^\infty$ denote the
	set $\{W \in\mf{S}: \diam(\mc{C}W) = \infty\}$.  We refer to the
	domains in $\mf{S}^\infty$ as \emph{unbounded domains} and the
	domains not in $\mf{S}^\infty$ as \emph{bounded domains}.  We say
	that $(\mc{X},\mf{S})$ has the \emph{bounded domain dichotomy} if 
	the diameter of 
	each $\mc{C}W$ is either infinite or uniformly bounded, i.e., 
	there is some $D \geq 0$ such that for all $W\in\mf{S} -
	\mf{S}^{\infty}$ we have $\diam(\mc{C}W) \leq D$.
\end{defn}

The bounded domain dichotomy is a natural condition as it is satisfied
by all \emph{hierarchically hyperbolic groups (HHG)}, which is a condition 
requiring equivariance of the HHS structure.  In this paper, we work 
with a class of finitely generated groups that is slightly more 
general than being an HHG (see Remark~\ref{rem:HHGvsNHHG}); these are groups that have an HHS structure compatible
with the action of the group in the following way.

\begin{defn}[$G$--HHS]\label{defn:nearlyHHG}
	Let $G$ be a finitely generated group.  A hierarchically
	hyperbolic space $(\mc{X},\mf{S})$ with constant $E$ that has the bounded domain dichotomy is a 
	\emph{$G$--HHS} if the following hold.
	\begin{enumerate}
		\item $\mc{X}$ is a proper metric space with a proper and cocompact action of $G$ by isometries.
		\item  $G$ acts on $\mf{S}$ by a $\nest$--, $\perp$--, and $\trans$--preserving bijection, and $\mf S^\infty$ has finitely many $G$--orbits.
		\item For each $W \in \mf{S}$ and $g\in G$, there exists an isometry $g_W \colon \mc{C}W \rightarrow \mc{C}gW$ satisfying the following for all $V,W \in \mf{S}$ and $g,h \in G$.
		\begin{itemize}
			\item The map $(gh)_W \colon \mc{C}W  \to \mc{C}ghW$ is equal to the map $g_{hW} \circ h_W \colon \mc{C}W \to \mc{C}ghW$.
			\item For each $x \in \mc{X}$, $g_W(\pi_W(x)) \asymp_E \pi_{gW}(g \cdot x)$.
			\item If $V \trans W$ or $V \propnest W$, then $g_W(\rho_W^V)  \asymp_E \rho_{gW}^{gV}$.
		\end{itemize}
	\end{enumerate}
	We can and will assume that $\mc X$ is $G$ equipped with a finitely 
	generated word metric. We say that $\mf{S}$ is a \emph{$G$--HHS structure} for  the group $G$ and use the pair $(G,\mf{S})$ to denote the group $G$ equipped with the specific $G$--HHS structure $\mf{S}$.
\end{defn} 

\begin{rem}[$G$--HHS versus HHG]\label{rem:HHGvsNHHG}
	The difference between the above definition of a $G$--HHS and a hierarchically hyperbolic group (HHG) is that a hierarchically hyperbolic group is required to act with finitely many orbits on $\mf{S}$ instead of $\mf{S}^\infty$. In particular, each HHG is also a $G$--HHS. As the definition of the hierarchically hyperbolic boundary does not involve the uniformly bounded diameter domains, it is natural for us to work in the slightly more general $G$--HHS setting. Moreover, many of our arguments will rely upon a ``maximization procedure'' introduced in \cite{ABD} to transform a given hierarchically hyperbolic structure into one with desirable properties; see Section \ref{subsec:maximiztion}. The  maximization procedure introduces a large number of uniformly bounded domains into the HHS structure, and the result of maximizing an HHG is a $G$--HHS and not necessarily an HHG. Working with $G$--HHSs from the outset is therefore simpler as they are closed under this maximization procedure.
\end{rem}

One of the most prominent features of hierarchically hyperbolic spaces is that every pair of points can be joined by a \emph{hierarchy path}---a quasi-geodesic that projects to an unparametrized quasi-geodesic in each hyperbolic space $\mc{C}W$.

\begin{defn}
	A \emph{$\lambda$--hierarchy path} in a hierarchically hyperbolic space $(\mc{X},\mf{S})$ is a $(\lambda,\lambda)$--quasi-geodesic $\gamma$ in  $\mc{X}$ so that   $\pi_W \circ \gamma$ is an unparametrized $(\lambda,\lambda)$--quasi-geodesic for all $W \in \mf{S}$.
\end{defn}

\begin{thm}[{\cite[Theorem~4.4]{BHS_HHSII}}]
	For all $E \geq0$, there exists $\lambda \geq 1$ so that every pair of points in a hierarchically hyperbolic space with constant $E$ is joined by a $\lambda$--hierarchy path.
\end{thm}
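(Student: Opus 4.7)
The plan is to induct on the complexity of $\mathfrak{S}$, where by \emph{complexity} I mean the maximum length of a $\nest$-chain, which is at most $E$ by the finite complexity axiom. In the base case of complexity one, the only domain is the $\nest$-maximal $S$, and the uniqueness axiom combined with the coarse Lipschitz and coarse surjectivity properties of $\pi_S$ forces $\pi_S \colon \mc{X} \to \mc{C}S$ to be a quasi-isometry. Any geodesic in $\mc{C}S$ from $\pi_S(x)$ to $\pi_S(y)$ then lifts pointwise via partial realization to a quasi-geodesic in $\mc{X}$ which is trivially a hierarchy path, since there is only one projection to check.

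For the inductive step, I fix $x,y \in \mc{X}$ and a $\mc{C}S$-geodesic $\alpha_S$ from $\pi_S(x)$ to $\pi_S(y)$. First I would apply the large links axiom to obtain a finite list $V_1,\dots,V_m \propnest S$ of proper sub-domains capturing all ``big'' relative projection distances: every $U \in \mathfrak{S}_S - \{S\}$ with $d_U(x,y) > E$ is nested in some $V_i$, and $m$ is controlled linearly by $d_S(x,y)$. By bounded geodesic image, each such $V_i$ forces $\alpha_S$ to pass through $\mc{N}_E(\rho_S^{V_i})$. I would order the $V_i$ by the first time $\alpha_S$ enters this neighborhood, and subdivide $\alpha_S$ into segments alternating between ``travel in $\mc{C}S$'' and ``excursions at $\rho_S^{V_i}$''.

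Next I would construct the hierarchy path as a concatenation. On each subinterval of $\alpha_S$ that stays uniformly far from every $\rho_S^{V_j}$, I use partial realization at sampled points to produce a quasi-geodesic segment in $\mc{X}$. At each visit to $\rho_S^{V_i}$ I switch into the standard $V_i$-region $P_{V_i}$, which inherits a hierarchically hyperbolic structure indexed by $\mathfrak{S}_{V_i} \cup \mathfrak{S}_{V_i}^\perp$ of strictly smaller complexity. The inductive hypothesis, applied in $P_{V_i}$, produces a hierarchy path joining the appropriate entry and exit points of the excursion. Partial realization and consistency ensure that the endpoints of consecutive segments are uniformly close in $\mc{X}$, so the concatenation is itself a quasi-geodesic in $\mc{X}$ with constants depending only on $E$.

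The main obstacle will be checking that the resulting concatenated path projects to an unparametrized $(\lambda,\lambda)$-quasi-geodesic in \emph{every} $\mc{C}W$, with $\lambda$ depending only on $E$. For each $W \in \mathfrak{S}$ one has to split into cases based on how $W$ sits relative to each $V_i$ (nested in, containing, orthogonal to, or transverse to) and argue, using consistency together with the second bullet of Lemma~\ref{lem:downward_rhos}, that the $\mc{C}W$-projection of the constructed path either tracks the image of $\alpha_S$ (when $W \nest$-contains a $V_i$ whose $\rho_S^{V_i}$-neighborhood was crossed), remains coarsely constant during $P_{V_i}$-excursions that do not affect $W$, or is absorbed into the inductively guaranteed hierarchy path in $P_{V_i}$ when $W \nest V_i$ or $W \perp V_i$. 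Uniformity of $\lambda$ follows because large links bounds $m$ linearly in $d_S(x,y)$, the inductively supplied constants for the smaller $P_{V_i}$ depend only on $E$ (since they inherit the hierarchy constant from $\mathfrak{S}$), and each excursion contributes only bounded additive error per unit of $\mc{C}S$-progress, so the total deviation from a quasi-geodesic in any $\mc{C}W$ is absorbed into the additive constants.
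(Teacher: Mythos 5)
The paper does not prove this statement; it quotes it as Theorem~4.4 of \cite{BHS_HHSII}. Evaluating your sketch on its own terms: the strategy is the classical Masur--Minsky ``resolution'' approach (walk along a $\mc{C}S$--geodesic, detour into a product region $\P_{V_i}$ whenever a large link is crossed, recurse), which is the right family of ideas, but the steps you defer are precisely where the difficulty sits, and at least two of them are incorrect as stated.

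Your induction does not terminate. The recursive call is made in $\P_{V_i}$, whose induced index set is essentially $\mf{S}_{V_i}\cup\mf{S}^{\perp}_{V_i}$ together with a $\nest$--maximal element. A maximal $\nest$--chain lying entirely in $\mf{S}^{\perp}_{V_i}$, capped by that top element, can have the same length as a maximal chain in $\mf S$; concretely, if $\mf{S}=\{S,V,W\}$ with $V\perp W$ and $V,W\propnest S$, then $\P_V$ again carries a two-level structure. So ``strictly smaller complexity'' is not available. One way to repair this is to make the excursion only in the factor $\F_{V_i}$, whose index set is $\mf{S}_{V_i}$ and which genuinely drops a level, and handle the $\mf{S}_{V_i}^\perp$ directions by a separate mechanism; another is to induct on a different quantity, such as the number of domains where $d_W(x,y)$ exceeds a threshold.

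There are two further gaps. Ordering the $V_i$ ``by the first time $\alpha_S$ enters $\mc{N}_E(\rho_S^{V_i})$'' does not by itself produce a subdivision of $\alpha_S$ into disjoint travel segments and excursions: these neighborhoods can interleave or coincide, and establishing a coherent time order on the large links is the serious combinatorial content of a Masur--Minsky style argument. Relatedly, you never specify the entry and exit points of the excursion in $\P_{V_i}$; they should be gate images of the adjacent travel endpoints, and one must check via consistency and the downward relative projections that these coarsely match what partial realization hands you on either side. Finally, the closing uniformity claim, ``each excursion contributes only bounded additive error per unit of $\mc{C}S$--progress,'' is false: an excursion in $\P_{V_i}$ has $\mc{X}$--length on the order of $d_{V_i}(x,y)$, which is not controlled by $d_S(x,y)$. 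This is not fatal, since hierarchy paths only need \emph{unparametrized} quasi-geodesic projections, but it means the quasi-geodesic constant in $\mc{X}$ must come from comparing the total length of the concatenation to the distance formula rather than from a per-unit-$\mc{C}S$-progress estimate, and that comparison needs to be written out.
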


\subsection{Hierarchical quasiconvexity and standard product regions}

The analogue of quasiconvex subsets of a hyperbolic space in the
setting of hierarchical hyperbolicity are the following
\emph{hierarchically quasiconvex} subsets. We refer the reader to 
\cite[Section 5]{BHS_HHSII} for details on any of the background material 
in this subsection.

\begin{defn}
	Let $k \colon [0,\infty) \to [0,\infty)$. A subset $\mc{Y}$ of an HHS $(\mc{X},\mf{S})$ is \emph{$k$--hierarchically quasiconvex} if 
	\begin{enumerate}
		\item  $\pi_W(\mc{Y})$ is a $k(0)$--quasiconvex subset of $\mc{C}W$ for each $W \in \mf{S}$; and
		\item if $x \in \mc{X}$ satisfies $d_W(x,\mc{Y}) \leq r$ for each $W\in\mf{S}$, then $d_{\mc{X}}(x,\mc{Y}) \leq k(r)$.
	\end{enumerate}
	A subgroup $H$ of a $G$--HHS $(G,\mf{S})$ is \emph{hierarchically quasiconvex} if $H$ is a hierarchically quasiconvex subset of $G$ equipped with a finitely generated word metric.
\end{defn}

Whether or not a subset is hierarchically quasiconvex can depend on which HHS structure is put on the space, hence $\mc{Y}$ is a hierarchically quasiconvex subset of $(\mc{X},\mf{S})$ and not just $\mc{X}$.

 Hierarchical quasiconvexity is equivalent to the property that every hierarchy path with endpoints on the subset stays uniformly close to the subset.
 
 \begin{prop}[{\cite[Proposition 5.7]{RST_Quasiconvexity}}]\label{prop:HQC_and_hp}
 	A subset $\mc{Y}$ of an HHS $(\mc{X},\mf{S})$ is $k$--hierarchically quasiconvex if and only if there is a function $Q \colon [0,\infty) \to [0,\infty)$ so that for each $\lambda \geq 1$, every $\lambda$--hierarchy path with end points on $\mc{Y}$ is contained in the $Q(\lambda)$--neighborhood of $\mc{Y}$. Moreover, the functions $k$ and $Q$ each determine the other.
 \end{prop}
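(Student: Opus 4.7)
The plan is to prove the two implications separately, using the Morse stability of quasi-geodesics in hyperbolic spaces together with the Uniqueness and Realization machinery of HHSs.

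For the forward direction, suppose $\mc{Y}$ is $k$--hierarchically quasiconvex and let $\gamma\colon [0,T]\to\mc{X}$ be a $\lambda$--hierarchy path with endpoints $x,y \in \mc{Y}$. Fix an interior point $p=\gamma(t)$. For each $W\in\mf{S}$, the map $\pi_W\circ\gamma$ is an unparametrized $(\lambda,\lambda)$--quasi-geodesic in the $E$--hyperbolic space $\mc{C}W$ joining $\pi_W(x),\pi_W(y) \in \pi_W(\mc{Y})$. Since $\pi_W(\mc{Y})$ is $k(0)$--quasiconvex, the Morse lemma in $\mc{C}W$ gives a constant $R = R(\lambda,E,k(0))$ with $d_W(\pi_W(p),\pi_W(\mc{Y}))\leq R$. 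Applying part (2) of hierarchical quasiconvexity to $p$ with the constant $R$ then yields $d_{\mc{X}}(p,\mc{Y})\leq k(R)$, so we may take $Q(\lambda):=k(R(\lambda))$.

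For the reverse direction, assume that every $\lambda$--hierarchy path with endpoints in $\mc{Y}$ lies in $\mc{N}_{Q(\lambda)}(\mc{Y})$. To verify (1), fix $W\in\mf{S}$ and take $p,q\in\pi_W(\mc{Y})$ realized by $a,b\in\mc{Y}$. A $\lambda$--hierarchy path $\gamma$ from $a$ to $b$ (for a uniform $\lambda$) lies in $\mc{N}_{Q(\lambda)}(\mc{Y})$ by hypothesis, and since $\pi_W$ is $(E,E)$--coarsely Lipschitz, $\pi_W\circ\gamma$ lies in a uniform neighborhood of $\pi_W(\mc{Y})$. As $\pi_W\circ\gamma$ is also an unparametrized quasi-geodesic joining $p$ and $q$, the Morse lemma shows that every $\mc{C}W$--geodesic from $p$ to $q$ lies in a uniform neighborhood of $\pi_W(\mc{Y})$, giving the desired $k(0)$.

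To verify (2) and control the constants, suppose $x\in\mc{X}$ satisfies $d_W(x,\mc{Y})\leq r$ for all $W\in\mf{S}$. The plan is to construct an explicit point of $\mc{Y}$ close to $x$ via a gate construction: for each $W$ let $g_W$ be a closest point of $\pi_W(\mc{Y})$ to $\pi_W(x)$, and use the quasiconvexity from part~(1) together with bounded geodesic image and consistency to show that the tuple $(g_W)_{W\in\mf{S}}$ is coarsely consistent. By the realization theorem (a consequence of the HHS axioms already in force), there is a point $z\in\mc{X}$ with $d_W(z,g_W)\leq K$ for all $W$, where $K$ depends only on $E$. Combined with the assumption on $x$, this yields $d_W(x,z)\leq r+E+K$ for all $W$, so the Uniqueness axiom gives $d_{\mc{X}}(x,z)\leq\theta(r+E+K)$. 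Finally, to replace $z$ by a genuine point of $\mc{Y}$, join any fixed $y_0\in\mc{Y}$ to $z$ by a hierarchy path $\eta$ and argue that the terminal segment of $\eta$ has projections that track $\pi_W(\mc{Y})$, so it can be prolonged by a uniformly short hierarchy path to some $y\in\mc{Y}$; the hypothesis applied to the resulting hierarchy path between points of $\mc{Y}$ bounds how far $z$ can be from $\mc{Y}$. The resulting function $k$ is built from $\theta$, $K$, $E$, and $Q$.

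The main obstacle I anticipate is the last step of (2): producing an actual point of $\mc{Y}$ near the realization $z$. The tuple $(g_W)$ is realized in $\mc{X}$ by general HHS theory, but promoting this to a point of $\mc{Y}$ is precisely where the hierarchy-path hypothesis must be leveraged, and care is needed to ensure the consistency of $(g_W)$ comes from (1) without assuming (2) in a circular way. Once the consistency is established using only the projection-quasiconvexity conclusions of part (1), the rest is a bookkeeping exercise tying together Uniqueness, Realization, and the hierarchy path hypothesis.
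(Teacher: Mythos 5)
The paper cites this proposition from \cite{RST_Quasiconvexity} rather than proving it, so there is no internal proof to compare against; I will therefore assess your argument on its own merits.

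Your forward direction is correct: Morse stability of the unparametrized quasi-geodesics $\pi_W\circ\gamma$ against the $k(0)$--quasiconvex sets $\pi_W(\mc{Y})$, followed by part~(2) of the definition of hierarchical quasiconvexity applied at each point of $\gamma$, is the right argument and is complete. Likewise, part~(1) of the reverse direction is fine: coarse Lipschitzness of $\pi_W$ pushes the neighborhood containment of the hierarchy path into $\mc{C}W$, and Morse stability transfers it from the unparametrized quasi-geodesic to the genuine geodesic.

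The gap is in the last step of part~(2) of the reverse direction, and you have correctly identified its location yourself. After realizing the tuple $(g_W)$ to get a point $z$ with $d_W(z,\mc{Y})$ uniformly bounded and $d_{\mc{X}}(x,z)$ bounded via Uniqueness, you still need a genuine point of $\mc{Y}$ close to $z$. This is exactly the content of part~(2) of hierarchical quasiconvexity for the special point $z$, so nothing has been gained yet: the statement ``$d_W(z,\mc{Y})$ bounded for all $W$ implies $d_{\mc{X}}(z,\mc{Y})$ bounded'' is precisely the thing being proved. Your proposed fix --- join $y_0\in\mc{Y}$ to $z$ by a hierarchy path and ``prolong'' its terminal segment by a uniformly short hierarchy path to a point $y\in\mc{Y}$ --- is where the circularity hides. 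There is no justification offered for why such a short prolongation into $\mc{Y}$ exists; if $d_{\mc{X}}(z,\mc{Y})$ were large, no such short prolongation would exist, and ruling this out is the entire point. Knowing that $\pi_W\circ\eta$ tracks $\pi_W(\mc{Y})$ in every coordinate is exactly the hypothesis of part~(2) along the whole path, not a stronger fact. To close the argument you need some additional mechanism --- for instance, working with the hierarchically quasiconvex hull of $\mc{Y}$ and showing the hierarchy-path hypothesis forces that hull to lie in a bounded neighborhood of $\mc{Y}$, or more directly expressing $z$ coarsely as a coarse median of boundedly many points of $\mc{Y}$ so that the hierarchy-path hypothesis can be invoked on hulls of finite subsets of $\mc{Y}$. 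As written, the proposal does not supply such a mechanism, and the final sentence of your Case~(2) is a restatement of the goal rather than a proof of it.
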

 
 Proposition \ref{prop:HQC_and_hp} implies that the definition of a hierarchically quasiconvex subgroup is independent of the choice of finite generating set for the ambient group.  Moreover, by mimicking the proofs in the case of quasiconvex subgroups of hyperbolic groups (with hierarchy paths replacing geodesics), we have that hierarchically quasiconvex subgroups are finitely generated and undistorted.

\begin{lem}\label{lem:HQC_undistorted}
	Let $(G,\mf{S})$ be a $G$--HHS. If $H<G$ is hierarchically quasiconvex, then $H$ is finitely generated and undistorted.
\end{lem}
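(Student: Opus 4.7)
The plan is to mimic the classical proof that a quasiconvex subgroup of a hyperbolic group is finitely generated and undistorted, using $\lambda$--hierarchy paths in place of geodesics and Proposition~\ref{prop:HQC_and_hp} in place of standard quasiconvexity.

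Equip $G$ with a finite symmetric generating set $S$ and consider the Cayley graph $(G,d_S)$. Let $\lambda$ be the constant such that every pair of points of $G$ is joined by a $\lambda$--hierarchy path, and let $Q := Q(\lambda)$ be the constant from Proposition~\ref{prop:HQC_and_hp}, so that every $\lambda$--hierarchy path between points of $H$ lies in $\mc{N}_Q(H)$. Set $R := 2Q + \lambda$ and $T := H \cap B_R(1)$; the set $T$ is finite because balls in a Cayley graph of a finitely generated group are finite.

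Given $h \in H$, fix a $\lambda$--hierarchy path $\gamma$ from $1$ to $h$, and select a sequence $1 = x_0, x_1, \ldots, x_n = h$ along $\gamma$ with $d_S(x_i,x_{i+1}) \leq \lambda$ for each $i$. Since $\gamma$ is a $(\lambda,\lambda)$--quasi-geodesic, we may take $n \leq A \cdot d_S(1,h) + B$ for constants $A,B$ depending only on $\lambda$. By Proposition~\ref{prop:HQC_and_hp}, each $x_i$ lies within distance $Q$ of some $h_i \in H$, and we may choose $h_0 = 1$ and $h_n = h$. Then $d_S(h_i,h_{i+1}) \leq 2Q + \lambda = R$, so $s_i := h_i^{-1} h_{i+1}$ lies in $T$. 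Writing $h = s_0 s_1 \cdots s_{n-1}$ exhibits $h$ as a product of $n$ elements of $T$, which both shows that $T$ generates $H$ and gives $d_T(1,h) \leq n \leq A\cdot d_S(1,h) + B$. Conversely, every $t \in T$ has $d_S(1,t) \leq R$, so $d_S(1,h) \leq R \cdot d_T(1,h)$. These two estimates combine to show that the inclusion $H \hookrightarrow G$ is a quasi-isometric embedding, i.e., $H$ is undistorted.

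The crux is the translation from quasiconvexity to hierarchical quasiconvexity, which is handled entirely by Proposition~\ref{prop:HQC_and_hp}: hierarchy paths with endpoints in $H$ stay uniformly close to $H$. Once this is in hand the proof is a direct adaptation of the hyperbolic case, and the only bookkeeping required is tracking the constants $\lambda$, $Q$, and $R$ when discretizing $\gamma$ and snapping its points to nearby elements of $H$.
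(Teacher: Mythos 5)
Your proof is correct and is exactly the argument the paper alludes to: the paper gives no explicit proof of this lemma, saying only that it follows ``by mimicking the proofs in the case of quasiconvex subgroups of hyperbolic groups (with hierarchy paths replacing geodesics),'' and your write-up carries out precisely that translation, using Proposition~\ref{prop:HQC_and_hp} as the replacement for quasiconvexity. (The only cosmetic issue is the claim that samples along the quasi-geodesic can be taken with $d_S(x_i,x_{i+1})\leq\lambda$; a $(\lambda,\lambda)$--quasi-geodesic at unit parameter spacing gives a bound more like $2\lambda$, so $R$ should be adjusted accordingly, but this changes nothing in the argument.)
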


Each hierarchically quasiconvex subset $\mc Y$ comes equipped with a \emph{gate map} denoted $\gate_{\mc Y} \colon \mc{X} \to \mc{Y}$. While this map might not be the coarse closest point projection, it has a number of nice properties that we summarize below.

\begin{lem}[{\cite[Lemma 5.5]{BHS_HHSII}}]\label{lem:hierarch_path_through_the_gate}
	Let $(\mc{X},\mf{S})$ be an HHS with constant $E$. Suppose $\mc{Y} \subseteq \mc{X}$ is $k$--hierarchically quasiconvex. There is a coarse map $\gate_{\mc{Y}}\colon \mc{X} \to \mc{Y}$ and a constant $\kappa \geq 1$ depending only on $k$ and $E$, so that the following hold.
	\begin{itemize}
		\item For all $y \in \mc{Y}$, we have  $d_\mc{X}(y, \gate_{\mc{Y}}(y)) \leq \kappa$. 
		\item The map $\gate_{\mc{Y}}$ is $(\kappa,\kappa)$--coarsely Lipschitz.
		\item For each $x \in \mc{X}$ and $W \in \mf{S}$, we have \[\pi_W(\gate_{\mc{Y}}(x)) \asymp_\kappa \mf{p}_{\pi_W(\mc{Y})}(\pi_W(x)).\]
	\end{itemize}
\end{lem}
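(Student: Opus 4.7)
\medskip
\noindent\textbf{Proof proposal.} My plan is to construct $\gate_{\mc{Y}}(x)$ by first specifying its coordinates in each $\mc{C}W$ and then using the realization/uniqueness axioms to promote these coordinates to a point in $\mc{X}$. Concretely, for each $x \in \mc{X}$ and each $W \in \mf{S}$, set $p_W(x) := \mf{p}_{\pi_W(\mc{Y})}(\pi_W(x))$, where $\mf{p}_{\pi_W(\mc{Y})}$ is the closest point projection onto the $k(0)$--quasiconvex subset $\pi_W(\mc{Y})$ of the $E$--hyperbolic space $\mc{C}W$. This map is well defined up to a bounded error depending only on $E$ and $k(0)$, and it is coarsely Lipschitz with constants depending only on $E$ and $k(0)$.

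The main step is to verify that the tuple $(p_W(x))_{W \in \mf{S}}$ satisfies the consistency inequalities needed to apply realization. For $V \trans W$, I would argue as follows: by the consistency axiom applied to $x$, at least one of $d_W(x,\rho^V_W)$ or $d_V(x,\rho^W_V)$ is at most $E$; the same holds for every $y \in \mc{Y}$. Using this, together with the bounded geodesic image axiom and the fact that $p_W(x)$ lies on a geodesic from $\pi_W(x)$ toward a point of $\pi_W(\mc{Y})$, one checks that the analogous bound holds for $p_W(x)$ and $p_V(x)$. The nested case $V \propnest W$ is handled similarly using bounded geodesic image, Lemma \ref{lem:downward_rhos}, and the fact that either the geodesic from $\pi_W(x)$ to $\pi_W(\mc{Y})$ passes near $\rho^V_W$ (so $p_W(x)$ is close to $\rho^V_W$) or it does not (so $\pi_V(x)$ and $p_V(x)$ are both close to the same point of $\pi_V(\mc{Y})$). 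This consistency verification is where I expect the main technical work to live.

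Once consistency is established, the realization theorem (a standard consequence of the uniqueness, partial realization, and bounded geodesic image axioms in an HHS, see \cite{BHS_HHSII}) produces a point $z_x \in \mc{X}$, unique up to bounded error, with $\pi_W(z_x) \asymp \pi_W(\gate_{\mc{Y}}(x))$ for every $W$. By hierarchical quasiconvexity applied with the uniform bound $r = \diam(\pi_W(\mc{Y}) \cup \{p_W(x)\}) + \mathrm{const}$, the point $z_x$ lies within bounded distance of $\mc{Y}$, and I define $\gate_{\mc{Y}}(x)$ to be a choice of nearest point in $\mc{Y}$. The final bullet of the lemma then holds by construction, up to the bounded error from realization.

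For the first bullet, when $y \in \mc{Y}$, the point $\pi_W(y)$ already lies in $\pi_W(\mc{Y})$, so $p_W(y)$ is within $O(E)$ of $\pi_W(y)$ in every $\mc{C}W$; the uniqueness axiom then forces $\gate_{\mc{Y}}(y)$ to be within bounded distance of $y$ in $\mc{X}$. For the second bullet (coarse Lipschitz), if $d_\mc{X}(x,x') \leq 1$, then each $\pi_W$ moves $x$ by at most $E$, so $\mf{p}_{\pi_W(\mc{Y})}$ moves $p_W$ by a uniform constant depending on $E$ and $k(0)$; uniqueness then gives a uniform bound on $d_\mc{X}(\gate_{\mc{Y}}(x),\gate_{\mc{Y}}(x'))$, and chaining this along a geodesic gives coarsely Lipschitz. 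Absorbing every constant above into a single $\kappa = \kappa(E,k)$ completes the argument.
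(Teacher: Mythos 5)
The paper does not prove this lemma; it is quoted verbatim from \cite[Lemma 5.5]{BHS_HHSII}, so there is no in-paper proof to compare against. Your construction — define coordinates $p_W(x) = \mf{p}_{\pi_W(\mc{Y})}(\pi_W(x))$, verify consistency, realize the tuple, and then use hierarchical quasiconvexity to land on $\mc{Y}$ — is exactly the construction used in the cited source, and the outline is sound. One minor slip: the expression $r = \diam(\pi_W(\mc{Y}) \cup \{p_W(x)\}) + \mathrm{const}$ does not make sense (that diameter is typically infinite); what you want is simply that $d_W(z_x, \pi_W(\mc{Y}))$ is uniformly bounded because $p_W(x)$ lies in $\pi_W(\mc{Y})$ by definition of closest-point projection and $\pi_W(z_x)$ is within the realization error of $p_W(x)$, so $r$ can be taken uniform in $W$ and $x$. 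The consistency verification you flag as the main technical step is indeed where the real work lives in \cite{BHS_HHSII}, and you correctly identify the tools (consistency axiom, bounded geodesic image, and the downward projection of Lemma \ref{lem:downward_rhos}) that go into it.
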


Each domain in an hierarchically hyperbolic space has an associated hierarchically quasiconvex subset $\P_W$: 

\begin{defn}\label{defn:product_region}
	Let $(\mc{X},\mf{S})$ be an hierarchically hyperbolic space with constant $E$. For each $W \in \mf{S}$, define the \emph{standard product region} for $W$ to be the set \[ \P_W =\{x \in \mc{X} : d_V(x,\rho_V^W) \leq E \text{ for all } V \trans W \text{ or } W \propnest V\}.\]
\end{defn}

The main properties of $\P_W$ that we shall need are given in the following proposition.

\begin{prop}\label{prop:product_regions}
	Let $(\mc{X},\mf{S})$ be a hierarchically hyperbolic space with constant $E$.
	\begin{enumerate}
		\item \label{P_prop:HQC} There exists  $k \colon [0,\infty) \to [0,\infty)$ depending only on $E$ so that $\P_W$ is $k$--hierarchically quasiconvex for all $W \in \mf{S}$.
		\item \label{P_prop:large_projection} For all $W,V \in \mf{S}$, if $\diam(\pi_V(\P_W)) > 3E$, then $W \in \mf{S}_V \cup \mf{S}_V^\perp$.
		\item \label{P_prop:minimal_HQC} Suppose $\mc{Y} \subseteq \mc{X}$ is $k$--hierarchically quasiconvex  and $W \in \mf{S}$.
		For all $C \geq0$ there exists $\nu=\nu(C,E,k) \geq 0$ so that if $\pi_W
		\vert_{\mc{Y}}$ is $C$--coarsely onto for all $W\in \mf{S}_V \cup
		\mf{S}_V^{\perp}$, then $\P_V \subseteq \mc{N}_{\nu}(\mc{Y})$.
		\item \label{P_prop:wide} If $\mf{S}_W \cap \mf{S}^\infty$ and $\mf{S}_W^\perp \cap \mf{S}^\infty$ are both non-empty, then $\P_W$ is uniformly quasi-isometric to the direct product of two infinite diameter, quasi-geodesic metric spaces.
	\end{enumerate}
\end{prop}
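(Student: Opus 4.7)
The four claims collect essentially standard properties of the standard product region; the plan is to derive each from the axioms of Definition \ref{defn:HHS}, using partial realization, consistency, and the uniqueness axiom, together with the gate properties of Lemma \ref{lem:hierarch_path_through_the_gate}. Parts (1), (2), and (4) follow quickly once one unwinds the definitions, while (3) is the main technical step.

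For (2), the definition of $\P_W$ directly forces $\pi_V(x)$ to lie in the $E$-neighborhood of $\rho_V^W$ whenever $V \trans W$ or $W \propnest V$, so $\pi_V(\P_W)$ lies in an $E$-neighborhood of a set of diameter at most $E$, giving total diameter at most $3E$; taking the contrapositive yields the claim. For (1), quasiconvexity of $\pi_V(\P_W)$ in $\mc{C}V$ is automatic in the bounded case; in the remaining cases one applies partial realization to the singleton $\{V\}$ to produce a point whose $V$-coordinate is any given $p\in \mc{C}V$, then verifies via consistency that this point really lies in $\P_W$, so $\pi_V(\P_W)$ is coarsely all of $\mc{C}V$. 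The second clause of hierarchical quasiconvexity --- that any $x \in \mc{X}$ with $d_W(x,\P_W) \leq r$ for every $W$ lies uniformly close to $\P_W$ in $\mc{X}$ --- is then an application of the uniqueness axiom after assembling a tuple of coordinates realized by a point of $\P_W$ close to those of $x$.

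Part (3) is the heart of the proposition. Given $z \in \P_V$, set $y := \gate_{\mc{Y}}(z)$; the goal is to bound $d_W(z,y)$ uniformly over every $W \in \mf{S}$ and then invoke the uniqueness axiom. For $W \in \mf{S}_V \cup \mf{S}_V^\perp$, Lemma \ref{lem:hierarch_path_through_the_gate} gives $\pi_W(y) \asymp_\kappa \mf{p}_{\pi_W(\mc{Y})}(\pi_W(z))$; coarse surjectivity of $\pi_W|_{\mc{Y}}$ places $\pi_W(z)$ within $C$ of $\pi_W(\mc{Y})$, so this closest-point projection sits within a uniform constant of $\pi_W(z)$ itself, yielding the required bound. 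For $W \notin \mf{S}_V \cup \mf{S}_V^\perp$, we have $V \trans W$ or $V \propnest W$, and then $z \in \P_V$ gives $d_W(z, \rho_W^V) \leq E$; one must also show $\pi_W(y)$ is uniformly close to $\rho_W^V$. The main obstacle will be this second case: establishing that $\pi_W(\mc{Y})$ itself stays near $\rho_W^V$. The standard argument runs a hierarchy path between two hypothetical far-apart points of $\pi_W(\mc{Y})$, uses Proposition \ref{prop:HQC_and_hp} to keep that path in $\mc{Y}$, and then combines bounded geodesic image with consistency to produce a long $\pi_U$-excursion for some $U \in \mf{S}_V \cup \mf{S}_V^\perp$, contradicting the hypothesis.

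Finally, (4) is a consequence of the factored HHS structure at $W$: the sub-collections $\mf{S}_W$ and $\mf{S}_W^\perp$, each augmented with a maximal container as necessary, inherit HHS structures, and a standard realization argument provides a uniform quasi-isometry from $\P_W$ to the direct product of their associated standard spaces. The hypothesis that both $\mf{S}_W \cap \mf{S}^\infty$ and $\mf{S}_W^\perp \cap \mf{S}^\infty$ are non-empty guarantees each factor contains an unbounded hyperbolic domain, and the uniqueness axiom promotes this to infinite diameter of each factor.
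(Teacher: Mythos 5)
The paper treats this proposition as background (pointing to \cite[Section~5]{BHS_HHSII}) rather than giving a proof, so your sketch can only be checked against what a correct argument would require.

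Your treatment of (2) and (4) is fine, and the global strategy for (3) --- take $z\in\P_V$, form $y=\gate_{\mc Y}(z)$, bound $d_W(z,y)$ for every $W$, and finish with uniqueness --- is the right one; the case $W\in\mf S_V\cup\mf S_V^\perp$ is also handled correctly. But the case $W\notin\mf S_V\cup\mf S_V^\perp$ contains a genuine error. You need $d_W(z,y)$ bounded, where $\pi_W(z)\asymp\rho_W^V$ and $\pi_W(y)\asymp\mf p_{\pi_W(\mc Y)}(\rho_W^V)$; what this actually requires is that $\rho_W^V$ lie within a uniform distance of $\pi_W(\mc Y)$. You instead reformulate the goal as the much stronger (and false) statement that ``$\pi_W(\mc Y)$ itself stays near $\rho_W^V$.'' Taking $\mc Y=\mc X$ --- which is hierarchically quasiconvex with all projections surjective --- gives $\pi_W(\mc Y)=\mc C W$, which certainly does not stay near $\rho_W^V$, yet the conclusion $\P_V\subseteq\mc N_\nu(\mc Y)$ holds trivially. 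Your proposed argument for that claim also does not close: running a hierarchy path between far-apart points of $\pi_W(\mc Y)$ to ``produce a long $\pi_U$-excursion for some $U\in\mf S_V\cup\mf S_V^\perp$, contradicting the hypothesis'' gives no contradiction, since a long $\pi_U$-excursion is exactly what $C$-coarse surjectivity of $\pi_U|_{\mc Y}$ permits rather than forbids. A workable route for this case goes the other way: use coarse surjectivity onto $V$ (or onto some $U\nest V$ with $\mc C U$ of large diameter) to find $y_1,y_2\in\mc Y$ with $d_U(y_1,y_2)$ large, then apply consistency (when $V\trans W$) or bounded geodesic image along a hierarchy path near $\mc Y$ (when $V\propnest W$, using Proposition~\ref{prop:HQC_and_hp}) to place a point of $\pi_W(\mc Y)$ in a uniform neighborhood of $\rho_W^V$.

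Separately, your step in (1) applying partial realization to the singleton $\{V\}$ does not by itself produce a point of $\P_W$: the axiom only controls the coordinates of the realized point on domains transverse to or properly containing $V$, leaving the coordinates on domains orthogonal to $V$ but transverse to $W$ uncontrolled, so the realized point need not satisfy the defining inequalities of $\P_W$. One should instead invoke the realization theorem \cite[Theorem~3.1]{BHS_HHSII} on a tuple prescribed to equal $\rho_\ast^W$ on every domain transverse to or properly containing $W$.
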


While we will not need this structure directly, there are two additional hierarchically quasiconvex subsets, $\F_W$ and $\mathbf{E}_W$, so that $\P_W$ is naturally quasi-isometric to the product $\F_W \times \mathbf{E}_W$ (this is the quasi-isometry in Item \eqref{P_prop:wide}).

\subsection{The boundary of a hierarchically hyperbolic space}
Durham, Hagen, and Sisto defined a boundary for an HHS
$(\mc{X},\mf{S})$ that is built from the boundaries of the hyperbolic
spaces in $\mf{S}$; \cite{HHS_Boundary} is the reference for this 
subsection.  

	We first recall the construction of the boundary of a hyperbolic space. Let $X$ be a $\delta$--hyperbolic metric space. For any $x,y,z \in X$, the \emph{Gromov product of $x$ and $y$ with respect to $z$} is \[ \gprel{x}{y}{z} := \frac{1}{2} \left(d_X(x,z) + d_X(y,z) - d_X(x,y) \right).\]

Given a fixed basepoint $x_0$ of $X$, a sequence of points $(x_n)$ in $X$  \emph{converges to infinity} if $$\gp{x_n}{x_k}  \to  \infty$$ as
$n,k \to \infty$. Two sequences $(x_n)$ and $(y_n)$ are \emph{asymptotic} if $\gp{x_n}{y_n} \to \infty$ as $n \to \infty$. Note, this is equivalent to requiring that $\gp{x_n}{y_k} \to \infty$ as $n,k \to \infty$.  The \emph{Gromov boundary} $\partial X$ of  $X$ is the set of sequences in $X$ that converge to infinity modulo the equivalence relation of being asymptotic. 

The Gromov product extends to $x,y \in X \cup \partial X$ and $z \in X$ by taking the supremum of $$\liminf\limits_{n,k}\gprel{x_n}{y_k}{z}$$ over all sequences $(x_n)$ and $(y_k)$ that are either asymptotic to $x$ or $y$ when they are boundary points or converge to $x$ or $y$ when they are points in $X$. We topologize $X \cup \partial X$ by declaring a sequence $(x_n)$ in $X \cup \partial X$ to converge to $x \in \partial X$ if and only if $$\lim_{n \to \infty} \gp{x_n}{x} = \infty.$$	

\begin{defn}\label{def:nbhdbasis}
	For each $p \in \partial X$, the sets 
	\[
	M(r;p) = \left\{ x \in X \cup \partial X : \gp{p}{x} > r\right\} \] where
	$r >0$ form a neighborhood basis for $p$ in $X \cup \partial X$.  Note that
	if $r \leq r'$, then $M(r';p) \subseteq M(r;p)$.
\end{defn}

Despite the presence of the basepoint in the above definitions: convergence to infinity, being asymptotic, the Gromov boundary, and the topology of $X \cup \partial X$ are all independent of the choice of basepoint.

We now describe the boundary of a hierarchically hyperbolic space. The points in the HHS boundary are organized in a simplicial
complex that we denote $\partial_\Delta(\mc{X},\mf{S})$.  The
vertex set of $\partial_\Delta(\mc{X},\mf{S})$ is the set of all
boundary points of all the hyperbolic spaces $\mc{C}W$ for $W \in
\mf{S}^{\infty}$.  That is, the set of vertices is the set of points 
$\bigcup_{W\in \mf{S}^{\infty}} \partial \mc{C}W$.  The vertices
$p_1,\dots,p_n$ of $\partial_\Delta(\mc{X},\mf{S})$ will form an
$n$--simplex if each $p_i \in \partial \mc{C}W_i$ and $W_i \perp W_j$
for each $i \neq j$.  This means the set of points making up the HHS
boundary can equivalently be described as the set of all linear
combinations $\sum_{W\in\mf{W}} a_W p_W$ where
\begin{itemize}
	\item $\mf{W}$ is a pairwise orthogonal subset of $\mf{S}^{\infty}$,
	\item$p_W \in \partial \mc{C}W$  for each $W \in\mf{W}$,  and
	\item $\sum_{W\in\mf{W}} a_W = 1$ and each $a_W > 0$.
\end{itemize}

\begin{defn}\label{def:suppset}
	For each $p \in \partial_\Delta (\mc{X},\mf{S})$, we define
	$\supp(p)$, the \emph{support of $p$}, to be the pairwise orthogonal
	set $\mf{W} \subseteq \mf{S}$ so that $p = \sum_{W\in\mf{W}} a_W p_W$.
	Equivalently, the support of $p$ is the pairwise orthogonal set
	$\mf{W} \subseteq \mf{S}$ so that the smallest dimensional simplex of
	$\partial_\Delta (\mc{X},\mf{S})$ that contains $p$ has exactly one
	vertex from $\partial \mc{C}W$ for each $W \in\mf{W}$.
\end{defn}

Durham, Hagen, and Sisto equip the HHS boundary with a topology beyond that coming from the simplicial complex described above. We use $\partial (\mc{X},\mf{S})$ to denote the HHS boundary equipped with this topology, while $\partial_\Delta(\mc{X},\mf{S})$ will denote the simplicial complex that is the underlying set of boundary points.

  The definition of the topology on $\partial (\mc{X},\mf{S})$  is quite involved, combining the standard topology on the boundaries of the hyperbolic spaces $\mc{C}W$ with projections of boundary points onto certain domains of the HHS structure. When $\mc{X}$ happens to be hyperbolic, this topology is naturally homeomorphic to the Gromov boundary $\partial \mc{X}$.  As we will not need the full definition of the boundary, we will  cite the relevant properties as we need them and direct the curious reader to \cite{HHS_Boundary} for the definition of the topology.
  
  The topology on $\partial (\mc{X},\mf{S})$ can be extended to a topology on $\mc{X} \cup \partial(\mc{X},\mf{S})$ so that sequences in $\mc{X}$ can converge to points in $\partial(\mc{X},\mf{S})$. This allows us to define the limit set of a subset of $\mc{X}$. 
    
  \begin{defn}
	Let $(\mc{X},\mf{S})$ be an HHS and $\mc{Y} \subseteq \mc{X}$. Define the \emph{limit set} of  $\mc{Y}$ in $\partial (\mc{X},\mf{S})$ to be \[ \Lambda(\mc{Y}) := \{ p \in \partial(\mc{X},\mf{S}) : \text{there is a sequence } (y_n) \subseteq \mc{Y} \text{ converging to } p\}.\]
\end{defn}
   
  As with the topology on the boundary, we will forgo a complete description of the topology  on $\mc{X} \cup \partial (\mc{X},\mf{S})$ in favor of citing specific properties that we will need. For example, one immediate consequences of the definition of the topology is that sequences that converge to boundary points in $\mc{X}$ will project to sequences that converge to boundary points in the hyperbolic spaces $\mc{C}W$:
  
  \begin{lem}\label{lem:convergence_of_interior_points}
  	Let $(\mc{X},\mf{S})$ be an HHS. If $(x_n)$ is a sequence of points in $\mc{X}$ that converges to a point $p = \sum a_W p_W \in \partial (\mc{X},\mf{S})$, then for each $W \in \supp(p)$ and $x_n' \in \pi_W(x_n)$, the sequence $x_n'$ converges to $p_W$ in $\mc{C} W \cup \partial \mc{C}W$. 
  \end{lem}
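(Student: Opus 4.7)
The plan is to directly unpack the definition of the topology on $\mc{X} \cup \partial(\mc{X},\mf{S})$ given by Durham, Hagen, and Sisto in \cite{HHS_Boundary}. Although the paper has chosen not to reproduce that definition, the key feature is that a basic neighborhood of a boundary point $p = \sum_{W \in \mf{W}} a_W p_W$, with $\mf{W} = \supp(p)$, is designed to control the projections to each $W \in \mf{W}$: for every $r > 0$ there is a neighborhood $U$ of $p$ in $\mc{X} \cup \partial(\mc{X},\mf{S})$ such that for every $W \in \supp(p)$ and every $x \in U \cap \mc{X}$, one has $\pi_W(x) \subseteq M(r; p_W)$, where $M(r; p_W) \subseteq \mc{C}W \cup \partial \mc{C}W$ is the Gromov neighborhood basis set from Definition~\ref{def:nbhdbasis}. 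The first step of the proof is to quote this compatibility property from the definition of basic neighborhoods in \cite{HHS_Boundary}.

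Given this, the lemma follows quickly. Fix $W \in \supp(p)$ and $r > 0$, and let $U$ be the neighborhood of $p$ produced above. Since $x_n \to p$ in $\mc{X} \cup \partial(\mc{X},\mf{S})$, there exists $N$ such that $x_n \in U$ for all $n \geq N$, so any $x_n' \in \pi_W(x_n)$ satisfies $x_n' \in M(r; p_W)$ for all $n \geq N$. As $r$ was arbitrary, this says $\gp{x_n'}{p_W} \to \infty$ with respect to any basepoint in $\mc{C}W$, which is exactly convergence of $x_n'$ to $p_W$ in $\mc{C}W \cup \partial \mc{C}W$.

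The main obstacle is purely expository: verifying that the definition in \cite{HHS_Boundary} indeed packages coordinate-projection control into the basic neighborhoods at $p$. Concretely, neighborhoods of $p$ are built by requiring (a) that the sequence be eventually in a neighborhood of a \emph{product region} $\P_V$ for some $V$ with $\supp(p) \subseteq \mf{S}_V \cup \mf{S}_V^{\perp}$, and (b) that within $\P_V$, the coordinate projections to each $W \in \supp(p)$ accumulate on $p_W$ in $\partial \mc{C}W$. The content of the compatibility property we need is exactly condition (b), so it is immediate from the construction; one only needs to check that the other conditions do not interfere, which they do not since they only restrict projections to domains outside $\supp(p)$. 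With this verified, no further HHS machinery is required.
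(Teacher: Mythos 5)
Your overall strategy is correct and essentially the same as the paper's: the paper states this lemma without proof, treating it as an immediate consequence of the Durham--Hagen--Sisto definition of the topology on $\mc{X}\cup\partial(\mc{X},\mf{S})$, and unpacking that definition is exactly the intended justification. The feature you invoke --- that for an interior point $x$ in a basic neighborhood of $p$, the projection $\pi_W(x)$ lies in $M(r;p_W)$ for each $W\in\supp(p)$ --- is indeed built into the definition, and your concluding step (arbitrary $r$ forces $\gprel{x_n'}{p_W}{z}\to\infty$) is precisely convergence in $\mc{C}W\cup\partial\mc{C}W$.

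One inaccuracy worth flagging: the detailed account in your final paragraph of how the basic neighborhoods are constructed does not match \cite{HHS_Boundary}. Their neighborhoods of $p$ do not impose a proximity condition to a product region $\P_V$. Rather, for an interior point $x$ the definition directly imposes (i) the Gromov-product control $\pi_W(x)\subseteq M(r;p_W)$ for each $W\in\supp(p)$, (ii) a constraint that the ratios $d_W(x_0,x)/\sum_{V\in\supp(p)} d_V(x_0,x)$ be close to the coefficients $a_W$, and (iii) a boundedness condition on projections to domains that are not orthogonal to the support. Your argument uses only (i), which is correct, so the proof goes through; but the product-region formulation in part (a) of your paragraph is not what one would find in the reference, and if this proof were written out in full it should quote the actual conditions rather than invent a paraphrase that happens to have the right consequence.
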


   Just as in the Gromov boundary, pairs of sequence  in $\mc{X}$  at uniformly bounded distance  will converge to the same point in the boundary.
  
  \begin{lem}[{\cite[Lemma 3.20]{ABR:maximization}}]\label{lem:bounded_difference_convergence}
  	Let $(\mc{X},\mf{S})$ be an HHS. Let $(x_n)$ be a sequence of points in $\mc{X}$ that converges to  $p \in \partial (\mc{X},\mf{S})$. If $(y_n)$ is a  sequence in $\mc{X}$ with $ d_\mc{X}(x_n,y_n)$ uniformly bounded for all $n \in\mathbb{N}$, then $y_n$ also converges to $p$.
  \end{lem}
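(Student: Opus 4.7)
The plan is to combine the coarse Lipschitz nature of the projections with the characterization of convergence in the hyperbolic factors provided by Lemma \ref{lem:convergence_of_interior_points}, and then check that every neighborhood-basis condition in the Durham--Hagen--Sisto topology on $\mc{X} \cup \partial(\mc{X},\mf{S})$ is preserved under uniformly bounded perturbation. Let $B$ be a uniform upper bound on $d_{\mc{X}}(x_n,y_n)$. Since each $\pi_W$ is $(E,E)$--coarsely Lipschitz, there is a constant $B' = EB + E$ such that
\[d_W(\pi_W(x_n),\pi_W(y_n)) \leq B' \qquad \text{for every } W \in \mf{S} \text{ and every } n.\]

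First I would handle the support of $p$. Write $p = \sum_{W \in \mf{W}} a_W p_W$ with $\mf{W} = \supp(p)$. By Lemma \ref{lem:convergence_of_interior_points}, for each $W \in \mf{W}$ we have $\pi_W(x_n) \to p_W$ in $\mc{C}W \cup \partial \mc{C}W$. Since $\mc{C}W$ is $E$--hyperbolic and $d_W(\pi_W(x_n),\pi_W(y_n)) \leq B'$, the standard inequality for Gromov products in a hyperbolic space (namely $\gprel{y_n}{p_W}{\ast} \geq \gprel{x_n}{p_W}{\ast} - B' - O(\delta)$) forces $\gprel{y_n}{p_W}{\ast} \to \infty$, so $\pi_W(y_n) \to p_W$ as well. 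This establishes the component of convergence coming from $\supp(p)$.

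Next I would verify the remaining conditions in the Durham--Hagen--Sisto topology on $\mc{X} \cup \partial(\mc{X},\mf{S})$. Each such condition has the form ``for all $V$ in a specified relation to some $W \in \supp(p)$, certain projected distances or Gromov products involving $\pi_V(x_n)$ and $\rho_V^W$ (or the relative projection $\rho_W^V$, when $W \propnest V$) stabilize or go to infinity.'' Every such condition is phrased in terms of distances or Gromov products in the fixed hyperbolic space $\mc{C}V$, and hence is stable under perturbing $\pi_V(x_n)$ by the uniformly bounded amount $B'$: a Gromov product in $\mc{C}V$ changes by at most $B' + O(E)$ under such a perturbation, and coarse distance conditions change by at most $B'$. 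Since the condition holds for $(x_n)$, it holds (with updated constants that still tend to infinity, or remain bounded, as appropriate) for $(y_n)$.

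The main obstacle is not conceptual but bookkeeping: the Durham--Hagen--Sisto topology is defined by combining several kinds of conditions (convergence in each $\mc{C}W$ for $W \in \supp(p)$, ``remote'' boundedness conditions on domains transverse to or containing elements of $\supp(p)$, and Gromov-product conditions involving downward relative projections as in Lemma \ref{lem:downward_rhos}), and one must check each case separately. In every case, however, the verification reduces to the same mechanism: the uniform bound $d_V(\pi_V(x_n),\pi_V(y_n)) \leq B'$ in $\mc{C}V$ transfers the required asymptotic behavior from $(x_n)$ to $(y_n)$. Once each case is individually checked, the conclusion that $y_n \to p$ follows directly from the definition of convergence in $\mc{X} \cup \partial(\mc{X},\mf{S})$.
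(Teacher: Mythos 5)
The paper does not give its own proof of this lemma; it is cited verbatim from \cite[Lemma 3.20]{ABR:maximization}, so there is no in-paper argument to compare against. Judged on its own terms, your approach is the natural one and the mechanism you identify is correct: convergence of interior sequences in the Durham--Hagen--Sisto topology is defined by membership in a neighborhood basis whose conditions are expressed via distances, Gromov products, and relative masses in the hyperbolic spaces $\mc{C}W$, each of which is coarsely Lipschitz in the interior point with constants controlled by the hierarchy constant. A perturbation by $B$ in $\mc{X}$ becomes a perturbation by $B' = EB + E$ in each $\mc{C}W$, and none of the asymptotic conditions in the topology can distinguish points that differ uniformly boundedly in every $\mc{C}W$.

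Two small imprecisions worth flagging. First, a Gromov product $\gprel{\cdot}{\cdot}{z}$ in a hyperbolic space changes by at most $B'$ (not $B' + O(\delta)$) when one argument moves by $B'$, since the Gromov product is a purely metric quantity; the $\delta$ does not enter. Second, and more substantively, your sentence ``Every such condition is phrased in terms of distances or Gromov products in the fixed hyperbolic space $\mc{C}V$'' glosses over the coefficient conditions in the DHS topology: for $p = \sum_{W} a_W p_W$ with several unbounded domains in $\supp(p)$, the basic neighborhoods impose a constraint that the relative sizes of the projections $d_W(x_0, x)$ across the $W \in \supp(p)$ approximate the coefficients $a_W$. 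This is not literally a single distance or Gromov product in one $\mc{C}V$, but rather a family of ratios of such quantities. The argument still goes through, because $|d_W(x_0,x_n) - d_W(x_0,y_n)| \leq B'$ uniformly while $d_W(x_0,x_n)\to\infty$ for each $W\in\supp(p)$, so the ratios are asymptotically unchanged; but you should state this separately rather than sweep it into the same clause as the distance and Gromov-product conditions. With that case spelled out explicitly, the ``bookkeeping'' you describe is genuinely routine and your proof is complete.
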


 When $\mc{X}$ is proper, the space $\mc{X}\cup \partial (\mc{X},\mf{S})$ is compact and Hausdorff \cite[Proposition 2.17 and Theorem 3.4]{HHS_Boundary}.  When $\mf{S}$ is a $G$--HHS structure, the action of $G$ on $(\mc{X},\mf{S})$ extends continuously to an action on $\partial(\mc{X},\mf{S})$ by homeomorphisms and simplicial automorphisms \cite[Corollary 6.1]{HHS_Boundary}.

\subsection{Maximization of HHS structures}\label{subsec:maximiztion}

The authors of \cite{ABD} described a process that takes an HHS structure $\mf{S}$ and produces a new HHS structure $\mf{T}$ with the following desirable properties. 

\begin{thm}[{\cite[Theorem 3.7]{ABD}}]\label{thm:maximization}
Let $(\mc{X},\mf{S})$ be an HHS with the unbounded domain dichotomy. There exists another HHS structure $\mf{T}$ for $\mc{X}$ so that
\begin{enumerate}
	\item \label{max:unbounded_domains} $\mf{T}$ has the unbounded domain dichotomy.
	\item \label{max:unbounded_products} For all $W \in \mf{T}$, both $\mf{T}_W \cap \mf{T}^\infty$ and $\mf{T}_W^\perp \cap \mf{T}^\infty$ are non-empty.
	\item \label{max:wide} For all $W \in \mf{T}$, the standard product region $\P_W$ is quasi-isometric to the product of two infinite diameter, quasi-geodesic spaces.
	\item\label{max:electrification} If $T \in \mf{T}$ is the $\nest$--maximal domain, then $\mc{C}T$ is the space obtained from $\mc{X}$ by adding edges $e_{xy}$ of length 1 between every pair of points $x,y$ with $x,y \in \P_W$ for some $W \in \mf{T}-\{T\}$.
\end{enumerate}
Moreover, if $\mf{S}$ is a $G$--HHS structure for some finitely generated group $G$, then $\mf{T}$ will also be a $G$--HHS structure.
\end{thm}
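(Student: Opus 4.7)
The plan is to build $\mf{T}$ by modifying $\mf{S}$ so that the new $\nest$-maximal hyperbolic space is the electrification described in item (4), while retaining enough structure from $\mf{S}$ to satisfy the HHS axioms and to acquire the orthogonal partner property in item (2).

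First, I would define the candidate top-level hyperbolic space $\mc{C}T$ exactly as in item (4): the graph obtained from $\mc{X}$ by adding a length-$1$ edge between every pair of points lying in a common standard product region $\P_W$ with $W \in \mf{S}$ non-maximal. The first substantive step is to verify that $\mc{C}T$ is hyperbolic. The natural strategy is to observe that for each non-maximal $W\in\mf{S}$ the product region $\P_W$ has uniformly bounded-diameter image in the top-level hyperbolic space $\mc{C}S$ of $\mf{S}$ (by Proposition~\ref{prop:product_regions}\eqref{P_prop:large_projection} applied to $V=S$), so $\mc{C}T$ is quasi-isometric to $\mc{C}S$ with coned-off uniformly bounded subsets. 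A Bowditch-style cone-off argument then yields hyperbolicity.

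Next, I would take the new index set to be $\mf{T} = \bigl(\mf{S}\setminus\{S\}\bigr)\cup\{T\}$, extended if necessary by auxiliary domains to secure orthogonal partners. Concretely, for each $W \in \mf{S}\setminus\{S\}$ with $\mf{S}_W^{\perp}\cap\mf{S}^{\infty} = \emptyset$, I would introduce a new domain $W^{\perp}$, declare $W \perp W^{\perp}$ and $W^{\perp} \nest T$, and set $\mc{C}(W^{\perp})$ to be an infinite-diameter hyperbolic space associated to the container of $W$ in $S$ (for instance, a line built from a loxodromic element). The projection $\pi_W$ is inherited from $\mf{S}$ for $W\neq T$, the projection $\pi_T$ is the quotient map $\mc{X}\to \mc{C}T$, and $\pi_{W^{\perp}}$ is defined via the coordinate projection coming from the product structure on $\P_W$ supplied by Proposition~\ref{prop:product_regions}\eqref{P_prop:wide} once $W$ has been paired with $W^{\perp}$.

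The bulk of the work is verifying the HHS axioms for $(\mc{X},\mf{T})$. Nesting, orthogonality, transversality, finite complexity, containers, partial realization, and bounded geodesic image follow fairly routinely from the corresponding axioms in $\mf{S}$ since the change in hyperbolic space occurs only at the top level and the new auxiliary domains are explicitly orthogonal to most of $\mf{S}$. The axiom I expect to be the main obstacle is Uniqueness: one must show that two points $x,y\in\mc{X}$ with small projection in every $\mc{C}W$ for $W\in\mf{T}$ are close in $\mc{X}$. This is where the precise form of the electrification is crucial, since a $\mc{C}T$-geodesic may take ``shortcuts'' through product regions. The key point to establish is that each such shortcut through $\P_W$ either has small length in $\mc{C}T$ or forces a large projection to some $\mc{C}V$ with $V\in\mf{S}_W\cup \mf{S}_W^{\perp}$; the distance formula for $(\mc{X},\mf{S})$ is the natural tool.

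Once the axioms are verified, items (1)--(4) follow: (1) is immediate from the design; (2) is built into the choice of $\mf{T}$; (3) follows from (2) via Proposition~\ref{prop:product_regions}\eqref{P_prop:wide}; and (4) is the definition of $\mc{C}T$. Equivariance under $G$ is preserved because every part of the construction is canonical---the electrification is $G$-invariant since $G$ permutes standard product regions, and the orbits of the auxiliary domains $W^{\perp}$ can be chosen $G$-equivariantly using the action on $\mf{S}$.
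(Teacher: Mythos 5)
Your auxiliary-domain device $W^\perp$ is a genuine gap, and I do not think it can be repaired as written. You define $\pi_{W^\perp}$ via ``the coordinate projection coming from the product structure on $\P_W$ supplied by Proposition~\ref{prop:product_regions}\eqref{P_prop:wide} once $W$ has been paired with $W^\perp$,'' but that proposition produces a nontrivial product structure only when $\mf{S}_W^\perp\cap\mf{S}^\infty\neq\emptyset$---precisely the hypothesis you are trying to circumvent. More fundamentally, $\P_W$ is an intrinsic subset of $\mc{X}$, cut out by the transverse and properly-nested-above relations, which your construction does not alter by declaring $W\perp W^\perp$. If $\mf{S}_W^\perp\cap\mf{S}^\infty=\emptyset$, the orthogonal factor $\E_W$ of $\P_W$ has uniformly bounded diameter, so $\P_W$ is simply not quasi-isometric to a product of two infinite-diameter spaces; item~(3) cannot hold for such a $W$ no matter what index-set domains are added. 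And since HHS projections are required to be coarsely surjective, taking $\mc{C}(W^\perp)$ to be an unbounded line would force $\pi_{W^\perp}$ to register unbounded $\mc{X}$-distance in a direction orthogonal to $W$, but no such direction exists.

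The construction in \cite{ABD} does the opposite: rather than inventing an orthogonal partner for an unpaired $W$, it \emph{discards} $W$ from the new index set, retaining only those non-maximal domains that already have infinite-diameter orthogonal (and nested) companions, together with the new top domain $T$. No distance information is lost because the product regions of the discarded domains are \emph{not} coned off when forming $\mc{C}T$---item~(4) electrifies only along $\P_W$ with $W\in\mf{T}-\{T\}$---so large projections to a discarded $W$ are still detected in $\mc{C}T$. This also affects your hyperbolicity argument: once the index set is correctly pared down, $\mc{C}T$ is generally \emph{not} quasi-isometric to $\mc{C}S$ (strictly fewer product regions are collapsed), so the cone-off comparison with $\mc{C}S$ does not suffice and one instead needs the ``factored space'' machinery from \cite{BHS_HHSII} applied to the selected collection. (This theorem is cited from \cite{ABD} and not proved in the present paper, so the comparison here is against that reference.)
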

 
We  call the structure $\mf{T}$ produced from $\mf{S}$ in Theorem \ref{thm:maximization} the \emph{maximization of $\mf{S}$}. We will say that an HHS structure on $\mc{X}$ is \emph{maximized} if it is obtained by applying Theorem \ref{thm:maximization} to some HHS structure.

In \cite{ABR:maximization}, we showed that the maximization process in Theorem \ref{thm:maximization} does not change the HHS boundary nor which subsets are  hierarchically quasiconvex.

\begin{thm}[{\cite[Theorem 4.1 and Proposition 4.9]{ABR:maximization}}]\label{thm:maximization_invariance}
	Let $(\mc{X},\mf{S})$ be an HHS with the unbounded domain dichotomy, and let $\mf{T}$ be the maximization of $\mf{S}$.
	\begin{enumerate}
		\item\label{invar:boundary} If $\mc{X}$ is proper, then the identity map $\mc{X} \to \mc{X}$ continuously extends to a map $\partial(\mc{X},\mf{S}) \to \partial(\mc{X},\mf{T})$ that is both a homeomorphism and a simplicial automorphism.
			\item \label{invar:HQC} A subset $\mc{Y} \subseteq \mc{X}$ is hierarchically quasiconvex with respect to $\mf{S}$ if and only if it is hierarchically quasiconvex with respect to $\mf{T}$. Moreover, the function of
		hierarchical quasiconvexity in either $\mf{S}$ or $\mf{T}$ will
		determine the function in the other.
	\end{enumerate}

\end{thm}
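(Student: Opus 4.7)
My strategy is to exploit the fact that both $\mf{S}$ and $\mf{T}$ are HHS structures on the \emph{same} underlying space $\mc{X}$, so the maximization only reorganizes the domain structure rather than altering the geometry of $\mc{X}$. For part (1), I would first establish a natural correspondence between $\mf{S}^{\infty}$ and $\mf{T}^{\infty}$: since the maximization of \cite{ABD} is built by clustering existing domains (apart from the maximal domain), each unbounded domain of $\mf{T}$ should arise from a canonical $\perp$--equivalence class of unbounded domains of $\mf{S}$ whose hyperbolic spaces are uniformly quasi-isometric. This identification passes to the Gromov boundaries, yielding a bijection on the vertex set of $\partial_\Delta$. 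To upgrade this to a simplicial isomorphism, I would check that the correspondence preserves pairwise orthogonality among unbounded domains, using property (2) of Theorem \ref{thm:maximization} to argue that no orthogonality is gained or lost at infinity.

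For the topological statement of part (1), I would use that $\mc{X}\cup\partial(\mc{X},\mf{S})$ is compact Hausdorff when $\mc{X}$ is proper, so it suffices to prove the extension is continuous. Given $(x_n)\subseteq\mc{X}$ with $x_n\to p\in\partial(\mc{X},\mf{S})$, Lemma \ref{lem:convergence_of_interior_points} forces $\pi_W(x_n)\to p_W$ in $\mc{C}W\cup\partial\mc{C}W$ for each $W\in\supp(p)$. Transferring this along the quasi-isometry between corresponding hyperbolic spaces of $\mf{S}$ and $\mf{T}$, and invoking Lemma \ref{lem:bounded_difference_convergence} to absorb bounded error, I would conclude that $x_n$ converges in $\mc{X}\cup\partial(\mc{X},\mf{T})$ to the image of $p$ under the vertex correspondence. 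A continuous bijection between compact Hausdorff spaces is a homeomorphism.

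For part (2), I would appeal to Proposition \ref{prop:HQC_and_hp}, which characterizes hierarchical quasiconvexity by ``hierarchy paths with endpoints on $\mc{Y}$ stay uniformly close to $\mc{Y}$.'' The key step is to show that every $\lambda$--hierarchy path in $(\mc{X},\mf{S})$ is a $\lambda'$--hierarchy path in $(\mc{X},\mf{T})$ and vice versa, for uniform constants. Quasi-geodesicity in $\mc{X}$ is automatic because both structures describe the same space. The content is that the unparametrized quasi-geodesic property of the projections transfers: for unbounded non-maximal domains this follows from the correspondence built in part (1); for the $\nest$--maximal domain $T\in\mf{T}$, whose $\mc{C}T$ is the electrification of $\mc{X}$ described in property (4) of Theorem \ref{thm:maximization}, one must use the description of $\mc{C}T$ as a cone-off together with the fact that standard product regions are hierarchically quasiconvex in $\mf{S}$ to show that hierarchy paths of $\mf{S}$ project to unparametrized quasi-geodesics in $\mc{C}T$.

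The main obstacle, I expect, is handling the maximal domain $T$ of $\mf{T}$, whose hyperbolic space is not a direct descendant of any $\mf{S}$--hyperbolic space but rather the electrification of $\mc{X}$ along standard product regions. One must carefully relate the Gromov boundary of this cone-off space to the portion of $\partial(\mc{X},\mf{S})$ ``outside'' all standard product regions, and ensure both the simplicial identification and the neighborhood bases of Definition \ref{def:nbhdbasis} agree in this regime. This cone-off analysis, together with the bookkeeping of which unbounded $\mf{S}$--domains merge into a single $\mf{T}$--domain, is where I anticipate the bulk of the technical work to lie.
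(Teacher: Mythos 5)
The paper does not prove this theorem; it is imported verbatim from the authors' earlier work \cite{ABR:maximization}, so there is no in-paper argument for you to match. Evaluating the proposal on its own terms, the central premise is off. You describe the maximization as ``clustering existing domains'' into ``$\perp$--equivalence classes of unbounded domains of $\mf{S}$ whose hyperbolic spaces are uniformly quasi-isometric,'' and you plan to get a vertex bijection by transporting Gromov boundaries across those quasi-isometries. But orthogonality is not an equivalence relation, and more importantly the maximization does not work by merging $\mf{S}$--domains. By Theorem~\ref{thm:maximization}\eqref{max:unbounded_products} every $W \in \mf{T}$ must have an unbounded orthogonal complement, so an unbounded $W \in \mf{S} - \{S\}$ with no unbounded orthogonal in $\mf{S}$ cannot survive as an unbounded $\mf{T}$--domain; the boundary directions it contributed must be reassigned, and the natural receptacle is the electrification $\mc{C}T$ from Theorem~\ref{thm:maximization}\eqref{max:electrification}, which is a genuinely new space built from $\mc{X}$, not a quasi-isometric copy of $\mc{C}_\mf{S}S$. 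So the vertex map is not a simple reindexing, and the hardest part of the argument --- showing which $\mf{S}$--boundary points become isolated vertices supported on $T$ and identifying $\partial \mc{C}T$ with them --- is exactly the step you defer as an ``obstacle'' without giving a mechanism.

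There is also a topological gap in the continuity argument. You verify that a sequence $x_n \to p$ in $\mc{X} \cup \partial(\mc{X},\mf{S})$ has $\iota(x_n)$ converging to the intended image of $p$, and then invoke the fact that a continuous bijection between compact Hausdorff spaces is a homeomorphism. Sequential continuity does not imply continuity on an arbitrary compact Hausdorff space, and first countability of $\mc{X} \cup \partial(\mc{X},\mf{S})$ is not something you can assume without argument. A correct proof must work with the neighborhood bases directly (the sets $\mc{A}_r(p)$ of Definition~\ref{defn:"basic"_set}, or the DHS basis), showing that preimages of basic opens in $\partial(\mc{X},\mf{T})$ contain basic opens in $\partial(\mc{X},\mf{S})$; this is also where the cone-off analysis of $\mc{C}T$ must actually be carried out rather than flagged. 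Part (2), by contrast, is on the right track --- characterizing hierarchical quasiconvexity via Proposition~\ref{prop:HQC_and_hp} and comparing hierarchy paths in the two structures is a reasonable strategy --- but its key step (that $\mf{S}$--hierarchy paths project to unparametrized quasi-geodesics in the electrified $\mc{C}T$) is again stated as a goal rather than argued.
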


\noindent In light of Theorem \ref{thm:maximization_invariance}, we will frequently assume that the HHS structures we are working with are maximized. When working with maximized structures, we will commonly make use of the  properties in Theorem \ref{thm:maximization}, particularly Item \eqref{max:unbounded_products}, without comment.

\subsection{Relative hyperbolicity}

Several equivalent formulations of (strong) relatively hyperbolicity exist in the literature. We will work with one in terms of the addition of combinatorial horoballs. The equivalence of this definition with other common definitions is shown in \cite{SistoMetRel}.

We first establish our model for horoballs.

\begin{defn}
	Let $\Gamma$ be a connected graph with vertex set $V$ and edge set $E$. Suppose each edge of $\Gamma$ has length 1. The \emph{combinatorial horoball} $\mc{H}(\Gamma)$ is the graph with vertex set $V \times \mathbb{Z}_{\geq 0}$ and two types of edges:
	\begin{itemize}
		\item for each  $n \in \mathbb{Z}_{\geq 0}$ and $v \in V$, there is an edge of length 1 between $(v,n)$ and $(v,n+1)$;
		\item for each  $n \in \mathbb{Z}_{\geq 0}$ and  $v,w \in V$ with $(v,w) \in E$, there is an edge of length $e^{-n}$ between $(v,n)$ and $(w,n)$.
	\end{itemize}
\end{defn}

The combinatorial horoball $\mc{H}(\Gamma)$ is always a hyperbolic space with a single boundary point. The constant of hyperbolicity is independent of $\Gamma$.

Since our horoballs are only defined for graphs, we use the following approximation graphs to construct horoballs for arbitrary subsets.
\begin{defn}
	A subset $P$ of a geodesic metric space $X$ is 
	\emph{$C$--coarsely connected} if every pair of points in $P$ can 
	be joined by a path that is contained in $\mc{N}_C(P)$. For a  $C$--coarsely connected subset $P$, a \emph{$C$--net} $N$ in $P$ is a subset of points of $P$ so that every point of $P$ is within $2C$ of a point in $N$ and every pair of points in $N$ are at least $C$ apart. An \emph{approximation graph} for $P$ is the graph whose vertex set is a $C$--net in $P$ with an edge of length 1 between two points if they are $2C$ apart.
\end{defn}

Finally, we define a relatively hyperbolic space as one that produces a hyperbolic space after attaching a collection of horoballs to subsets.

\begin{defn}
	Let $X$ be a geodesic metric space and $\mc{P}$ a collection of $C$--coarsely connected subsets of $X$. For each $P\in \mc{P}$, let $N_P$ be a $C$--net for $P$, and let  $\Gamma_P$ be the approximation graph for $P$ whose vertex set is $N_P$. A \emph{cusped spaced} for $X$ relative to $\mc{P}$ is the space obtained from $X \sqcup \bigsqcup_{P \in \mc{P}} \mc{H}(\Gamma_P)$ by adding an edge of length one between each point $v \in N_P$ and the vertex $(v,0) \in \mc{H}(\Gamma_P)$. We say $X$ is \emph{hyperbolic relative to $\mc{P}$} if some (hence any) cusped space for $X$ relative to $\mc{P}$ is Gromov hyperbolic. 
	
	We use $\cusp(X,\mc{P})$ to denote the cusped space of $X$ relative to $\mc{P}$. Up to quasi-isometry, this space does not depend on the choice of approximation graph for elements of $\mc{P}$. When $X$ is hyperbolic relative to $\mc{P}$, we use $\mc{H}(P)$ to denote the union of the horoball $\mc{H}(\Gamma_P)$, the subset $P$, and the edges between them. As with the cusp space, up to quasi-isometry, the horoballs are independent of the choice of approximation graph for $P$. The subsets of $\mc{P}$ are called the \emph{peripheral subsets} of $X$.
\end{defn}

In the case of finitely generated groups, we will require that 
the peripheral subsets of a relatively hyperbolic group are the cosets
of a collection of subgroups.  While a priori this appears to be a strong
condition, Dru\c{t}u showed in \cite[Theorem 1.5]{Drutu_Rel_Hyp_is_Geometric} 
that every finitely generated group which is a
relatively hyperbolic space is in fact hyperbolic relative to the
cosets of a finite collection of subgroups as described in the next
definition.

\begin{defn}
	A group $G$ is \emph{hyperbolic relative to subgroups 
	$H_1,\dots,H_k$} if some (hence any) Cayley graph of $G$ with
	respect to a finite generating set is hyperbolic relative to the
	collection of coset of $H_1,\dots,H_k$.  The subgroups
	$H_1,\dots,H_k$ are the \emph{peripheral subgroups} of $G$.  In this
	case, we use $\cusp(G, \{H_1,\dots,H_k\})$ to denote the space
	obtained by attaching  combinatorial horoballs to each coset of a
	peripheral subgroup in the Cayley graph of $G$.
\end{defn}

The basic idea of relative hyperbolicity is that all of the non-negative curvature must lie inside the individual peripheral subsets. This next result makes that explicit for subsets that are quasi-isometric to products.

\begin{thm}[{\cite[Corollary 5.8]{Drutu_Sapir_Rel_hyp}}]\label{thm:thick_subsets_of_rel_hyp}
	Let $X$ be a geodesic metric space that is hyperbolic relative to
	a collection of subsets $\mc{P}$.  If $Y$ is a subset of $X$ so that $Y$, equipped with the subset metric, is quasi-isometric to a product of two infinite diameter metric spaces, then $Y$ is contained in the $C$--neighborhood of some $P \in
	\mc{P}$,  where $C$ depends only on $X$, $\mathcal{P}$, and the quasi-isometry constants.
\end{thm}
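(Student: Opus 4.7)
The plan is to argue via asymptotic cones, following the original approach of Dru\c{t}u--Sapir. Recall that a geodesic space $X$ is hyperbolic relative to $\mc{P}$ precisely when every asymptotic cone of $X$ is tree-graded with pieces given by the ultralimits of sequences in $\mc{P}$. Because the desired bound $C$ is to depend only on $X$, $\mc{P}$, and the quasi-isometry constants, the natural route is proof by contradiction carried out in a suitable asymptotic cone.

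Suppose no such $C$ exists. Fix the quasi-isometry constants $(\lambda,\varepsilon)$ for the product decomposition. Then there is a sequence of subsets $Y_n \subseteq X$, each $(\lambda,\varepsilon)$--quasi-isometric to a product $A_n \times B_n$ with both factors of diameter at least $n$, such that no single $P \in \mc{P}$ satisfies $Y_n \subseteq \mc{N}_n(P)$. Choose basepoints $y_n \in Y_n$ and a scaling sequence $s_n \to \infty$ so that, after rescaling $X$ by $1/s_n$, the images of an $A_n$--fiber and a $B_n$--fiber through $y_n$ both have positive, finite diameters. Passing to an ultralimit with respect to a non-principal ultrafilter yields an asymptotic cone $X_\omega$ which is tree-graded with pieces $P_\omega$ equal to ultralimits of sequences drawn from $\mc{P}$, together with a subset $Y_\omega \subseteq X_\omega$ that is bi-Lipschitz equivalent to a product $A_\omega \times B_\omega$ of two non-degenerate geodesic metric spaces.

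The essential topological input is that in a tree-graded space every embedded topological circle lies inside a single piece, since cut-points separate pairs that do not share a piece. A bi-Lipschitz copy of $A_\omega \times B_\omega$ contains embedded circles through any two points with distinct coordinates in both factors (traverse the boundary of a ``rectangle'' both ways), and these circles collectively cover $Y_\omega$. Hence $Y_\omega$ is contained in a single piece $P_\omega$ of $X_\omega$. By the defining property of ultralimits this translates back to $X$ as the assertion that, for $\omega$--almost every $n$, the set $Y_n$ lies in a neighborhood of some $P_n \in \mc{P}$ whose radius is small compared to $s_n$; choosing $s_n$ comparable to the diameter we needed to fail, we contradict the choice of $Y_n$. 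The uniformity of $C$ in the stated parameters is automatic because those are the only data entering the ultralimit construction.

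The step I expect to be the main obstacle is selecting the scaling $s_n$ correctly so that both factors $A_\omega$ and $B_\omega$ survive as non-degenerate in the ultralimit; an inattentive scaling could collapse one factor to a point, in which case $Y_\omega$ would look like a single arc and the circle argument would be unavailable. One resolves this by preselecting witness points in both $A_n$ and $B_n$ whose pairwise distances in $X$ dictate a common scale, so that corresponding ``fiber markers'' remain distinct but bounded in the cone, guaranteeing two non-degenerate factors for the tree-graded contradiction.
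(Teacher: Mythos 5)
The paper does not prove this statement: it is quoted as a citation to Dru\c{t}u--Sapir, so there is no in-paper proof to compare against. Your approach (pass to an asymptotic cone, use that it is tree-graded, show a bi-Lipschitz product has no cut points and hence must lie inside a single piece, translate back) is indeed the Dru\c{t}u--Sapir strategy, and the tree-graded/circle observation is correct. However, there is a genuine gap in the last step, the ``translation back.''

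The subset $Y_n$ is quasi-isometric to a product of two \emph{infinite-diameter} spaces, so $Y_n$ is unbounded. The asymptotic cone $X_\omega = \lim_\omega (X, y_n, d/s_n)$ only records the geometry of $Y_n$ at distance $O(s_n)$ from the chosen basepoint $y_n$; points of $Y_n$ at distance $\gg s_n$ from $y_n$ simply do not appear in $Y_\omega$. Consequently, the conclusion ``$Y_\omega \subseteq P_\omega$'' translates back only to the statement that, for $\omega$-a.e.\ $n$, the bounded set $Y_n \cap B(y_n, R s_n)$ lies in an $o(s_n)$--neighborhood of $P_n$ (for any fixed $R$, after a diagonal argument). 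It does \emph{not} yield the assertion you wrote, that all of $Y_n$ lies in a neighborhood of $P_n$ of radius small compared to $s_n$, and it is precisely that global assertion which would contradict the hypothesis $Y_n \not\subseteq \mc{N}_n(P)$ for all $P$. Your phrase ``choosing $s_n$ comparable to the diameter we needed to fail'' also does not parse, because $Y_n$ has infinite diameter; the only finite scale in the contradiction hypothesis is $n$, and neither choice of $s_n$ by itself resolves the locality issue.

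To close the gap one must globalize. A standard route: first prove, by the cone argument you sketch, that there is a uniform $C_0$ so that every \emph{bounded} subset of $X$ that is $(\lambda,\varepsilon)$--quasi-isometric to a product of two spaces of diameter at least $C_0$ lies in the $C_0$--neighborhood of a (unique, by the bounded coarse intersection of peripheral subsets) element of $\mc{P}$; then cover the unbounded set $Y$ by such bounded sub-products and use the uniqueness together with coarse connectedness of $Y$ to see they all select the same $P \in \mc{P}$. Either this two-step reduction or an equivalent patching argument is needed; as written, the proposal jumps from local control in the cone to global control in $X$ without justification. You are right to flag the scaling of the two factors as a point requiring care, but the more serious issue is the one above.
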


For hierarchically hyperbolic spaces, the following criterion can be 
used to verify relative
hyperbolicity.

\begin{defn}\label{defn:isolated_orthogonality}
	Let $(\mc{X},\mf{S})$ be an HHS with the unbounded domain
	dichotomy.  We say $\mf{S}$ has \emph{orthogonality isolated by $\mf{I}
	\subseteq \mf{S}$} if
	\begin{enumerate}
		\item $\mf{I}$ does not contain the $\nest$--maximal element of $\mf{S}$;
		\item if $V,W \in \mf{S}$ and $V \perp W$, then there exists $I \in \mf{I}$ so that $V,W \propnest I$; and
		\item if $W \in \mf{S}$ and there exist $I_1,I_2 \in \mf{I}$ so that $W \nest I_1,I_2$, then $I_1 = I_2$.
	\end{enumerate}
\end{defn}

\begin{thm}[{\cite[Theorem 4.2]{Russell_Relative_Hyperbolicity}}]\label{thm:isolated_orthogonality_implies_rel_hyp}
	Let $(\mc{X},\mf{S})$ be an HHS with the bounded domain dichotomy. If $\mf{S}$ has orthogonality isolated by $\mf{I} \subseteq \mf{S}$, then $\mc{X}$ is hyperbolic relative to $\{\P_I : I \in \mf{I}\}$.
\end{thm}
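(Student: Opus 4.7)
The plan is to verify directly that the cusped space $\cusp(\mc{X}, \{\P_I : I \in \mf{I}\})$ is Gromov hyperbolic, which is the definition of $\mc{X}$ being hyperbolic relative to $\{\P_I : I \in \mf{I}\}$. The core strategy is to exploit the isolation hypothesis to build a new hierarchically hyperbolic structure on the cusped space that contains \emph{no orthogonal pairs}, and then invoke the general fact that an HHS without orthogonality is hyperbolic.

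First, I would extract structural consequences of the isolation hypothesis. By Proposition \ref{prop:product_regions}(\ref{P_prop:HQC}), each $\P_I$ is hierarchically quasiconvex with uniform constants. From Definition \ref{defn:isolated_orthogonality}, distinct elements $I, J \in \mf{I}$ must be transverse: condition (3) forbids $I \propnest J$ (since $I \nest I$ and $I \nest J$ would then force $I=J$), and if $I \perp J$ then condition (2) would supply an $I' \in \mf{I}$ strictly containing both, again violating (3). Transversality together with the consistency axiom and hierarchical quasiconvexity of the $\P_I$ forces the coarse intersection $\P_I \cap \P_J$ to be uniformly $\mc{X}$-bounded for $I \neq J$, giving the almost-disjointness required of peripheral sets.

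Second, I would construct a modified structure $\widehat{\mf{S}}$ whose ambient space is $\cusp(\mc{X}, \{\P_I : I \in \mf{I}\})$. Take $\widehat{\mf{S}}$ to be $\mf{S}$ with every domain strictly nested in some $I \in \mf{I}$ deleted, and for each $I \in \mf{I}$ replace $\mc{C}I$ by the space obtained by attaching the combinatorial horoball $\mc{H}(\Gamma_{\P_I})$ to $\mc{C}I$ along $\pi_I(\P_I)$. Projections from the cusped space to the surviving domains extend the original projections via closest-point projection in the horoballs; for the modified $\mc{C}I$, a point at depth $n$ in the horoball projects to the corresponding level in $\mc{H}(\Gamma_{\P_I})$. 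The decisive observation is that $\widehat{\mf{S}}$ has no orthogonal pair: every orthogonal pair in $\mf{S}$ was strictly nested below some $I \in \mf{I}$ by Definition \ref{defn:isolated_orthogonality}(2), and those domains have been removed.

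The main obstacle will be verifying that $\widehat{\mf{S}}$ really is an HHS structure on the cusped space, particularly consistency, bounded geodesic image, uniqueness, and that the new projections into the horoball-augmented $\mc{C}I$'s remain coarsely Lipschitz and coarsely onto. The isolation hypothesis is precisely the tool that makes these axioms go through: no surviving domain is orthogonal to anything below an element of $\mf{I}$, so consistency inequalities relating surviving domains are inherited unchanged from $\mf{S}$; meanwhile the horoball structure on each $\mc{C}I$ encodes exactly the ``depth'' of excursions into $\P_I$, so the distance formula for $\widehat{\mf{S}}$ computes distances in the cusped space rather than in $\mc{X}$. Once $\widehat{\mf{S}}$ is established as an HHS structure without orthogonality, the distance formula collapses to a single maximum over nested chains, the partial realization axiom becomes vacuous, and a standard argument (or direct appeal to a known HHS--to--hyperbolic result) shows the cusped space is Gromov hyperbolic, completing the proof.
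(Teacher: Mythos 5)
The theorem in question is cited from Russell's paper rather than reproved here, but the paper does sketch the construction Russell uses in Section~\ref{subsec:isolated_orthogonality_case}: the cusped space is equipped with an HHS structure $\mf{R}$ with index set $\{S\}\cup\mf{I}$, where the hyperbolic space assigned to each $I\in\mf{I}$ is the horoball $\mc{H}(\P_I)$ over the \emph{product region} $\P_I$, and the projection $\widehat{\pi}_I$ is the gate map $\gate_{\P_I}$. Your overall strategy---put an HHS structure on $\cusp(\mc{X},\{\P_I\})$ with no orthogonal pairs and then invoke the fact that such an HHS is hyperbolic---is exactly Russell's, and your structural deductions from isolation (pairwise transversality of $\mf{I}$; no orthogonality survives once everything strictly nested below some $I$ is deleted) are correct.

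There is, however, a genuine gap in your choice of hyperbolic space for $I$. You propose to form $\widehat{\mc{C}}I$ by gluing $\mc{H}(\Gamma_{\P_I})$ onto $\mc{C}I$ along $\pi_I|_{\P_I}\colon\P_I\to\mc{C}I$. That map is typically very far from a quasi-isometry: it collapses every direction in $\P_I$ carried by a domain properly nested in $I$. For a $\mathbb{Z}^2$-type peripheral, for instance, $\mc{C}I$ and $\pi_I(\P_I)$ are bounded, so your $\widehat{\mc{C}}I$ is quasi-isometric to a ray, whereas in $\cusp(\mc{X})$ two points of $\P_I$ at $\mc{X}$-distance $d$ are at distance roughly $2\log d$. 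Because you have deleted all domains strictly nested in $I$, the domain $I$ is the only surviving one that could detect this geometry, so the uniqueness axiom and the distance formula fail for $\widehat{\mf{S}}$. The fix is exactly what Russell's $\mf{R}$ does: take the hyperbolic space for $I$ to be $\mc{H}(\P_I)$, the horoball over the full product region (discarding $\mc{C}I$), and define $\widehat{\pi}_I$ via $\gate_{\P_I}$ rather than via $\pi_I$. This retains, logarithmically compressed, all of the geometry previously spread across the domains nested in $I$, and with that substitution the rest of your plan goes through.
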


When $G$ is a relatively hyperbolic $G$--HHS, not every $G$--HHS structure for
$G$ must have isolated orthogonality.   However Corollary
\ref{cor:rel_hyp_structure} will show that every relatively
hyperbolic $G$--HHS has at least one $G$--HHS structure with isolated
orthogonality. Russell originally established this result for 
hierarchically hyperbolic groups satisfying the additional hypothesis of \emph{clean containers}; see \cite[Section 5]{Russell_Relative_Hyperbolicity}.

\subsection{Hyperbolically embedded subgroups}	

A key feature of the peripheral subgroups of relatively hyperbolic groups is that they are \emph{hyperbolically embedded}. As we will not need the precise definition of a hyperbolically embedded subgroup,  we forgo it in favor of Theorem \ref{thm:hyperbolically_embedded_in_HHGs} below, which provides a characterization of hyperbolically embedded subgroups of $G$--HHSs.

\begin{defn}
	A collection of subgroups $\{H_1,\dots,H_k\}$ of a group $G$ is
	\emph{almost malnormal} if \[|gH_ig^{-1} \cap H_j| = \infty
	\implies i =j \text{ and } g \in H_i.\] 
\end{defn}

\begin{defn}
	A subset $Y$ of a metric
	space $X$ is \emph{$M$--strongly quasiconvex} if there exists a
	function $M \colon [1,\infty) \times [0,\infty) \to [0,\infty)$ so
	that every $(\lambda,\varepsilon)$--quasi-geodesic with endpoints in
	$Y$ is contained in the $M(\lambda,\varepsilon)$--neighborhood of
	$Y$.  A subgroup $H$ of a finitely generated group $G$ is
	\emph{strongly quasiconvex} if $H$ is a strongly quasiconvex
	subset of the Cayley graph of $G$ with respect to a finite
	generating set.
\end{defn}

\begin{thm}[{\cite{DGO_rotating_families,Sisto_quasiconveity_of_hyperbolically_embedded}, \cite[Theorem
8.1]{RST_Quasiconvexity}}]\label{thm:hyperbolically_embedded_in_HHGs}
Let $\{H_1,\dots,H_k\}$ be a collection of subgroups of a finitely generated group $G$.  If  $\{H_1,\dots,H_n\}$ 
is hyperbolically embedded, then it is an almost malnormal and each $H_i$ is strongly quasiconvex.
	Moreover, the converse holds when $G$ is a $G$--HHS.\footnote{In
	\cite{RST_Quasiconvexity}, this result is stated for HHGs, but the
	proof goes through as is for $G$--HHSs.} 
\end{thm}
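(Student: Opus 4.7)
The plan is to treat the forward implication and the converse separately, citing well-established results for the first and relying on the geometry of the HHS structure for the second.

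For the forward implication, suppose $\{H_1,\dots,H_k\}$ is hyperbolically embedded in $G$. Almost malnormality is a direct consequence of the Dahmani--Guirardel--Osin framework: the relative Cayley graph $\Gamma(G, X \sqcup \bigsqcup H_i)$ is hyperbolic and the induced relative metric on each $H_i$ is locally finite, which prevents two distinct cosets $gH_ig^{-1}$ and $H_j$ from sharing an infinite subgroup without forcing $i=j$ and $g \in H_i$. For strong quasiconvexity, I would cite Sisto's theorem showing that each hyperbolically embedded subgroup is a Morse (equivalently, contracting) subset of a Cayley graph of $G$; a short argument then upgrades Morseness to the strong quasiconvexity defined here, since any $(\lambda,\varepsilon)$--quasigeodesic with endpoints in $H_i$ is Morse and therefore confined to a neighborhood of $H_i$ whose size depends only on $\lambda$ and $\varepsilon$.

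For the converse, assume $(G,\mf{S})$ is a $G$--HHS and $\{H_1,\dots,H_k\}$ is almost malnormal with each $H_i$ strongly quasiconvex. The central input is the RST characterization of strong quasiconvexity in an HHS: $H_i$ is strongly quasiconvex precisely when it is hierarchically quasiconvex and the set of domains on which $H_i$ has unbounded projection is closed under orthogonality in a uniform way. I would combine this with almost malnormality to show that distinct cosets of the $H_i$ have uniformly bounded coarse intersection in $G$. From here the plan is to verify the two defining conditions of hyperbolic embedding: that the cone-off $\widehat{\mathrm{Cay}}(G, \bigsqcup H_i)$ is hyperbolic and that each $H_i$ carries a locally finite relative metric. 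Hyperbolicity of the cone-off follows from the Morse/contracting properties supplied by strong quasiconvexity inside the HHS, while local finiteness of the relative metric is a consequence of the uniform boundedness of coarse intersections together with properness of $G$.

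The main obstacle is the converse direction, specifically verifying the bounded coset penetration property that is packaged into hyperbolic embedding. In a general finitely generated group, strong quasiconvexity and almost malnormality are not sufficient; the role of the $G$--HHS hypothesis is to provide a canonical collection of hyperbolic coordinate spaces $\mc{C}W$ via the projections $\pi_W$, so that strong quasiconvexity can be detected and manipulated through projections. The delicate step will be to show that two distinct cosets of the $H_i$ cannot fellow-travel arbitrarily long in any $\mc{C}W$: strong quasiconvexity gives bounded projections onto suitable domains, and almost malnormality rules out coincident long fellow-travelling, together producing the uniform bounds needed to conclude hyperbolic embedding.
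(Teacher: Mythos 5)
Your proposal is essentially the paper's approach: the paper does not give a proof of this theorem at all, but simply cites Dahmani--Guirardel--Osin and Sisto for the forward direction (almost malnormality and Morse/strong quasiconvexity of hyperbolically embedded subgroups) and \cite[Theorem 8.1]{RST_Quasiconvexity} for the converse, with the only added remark being the footnote that the RST proof goes through unchanged for $G$--HHSs because it does not use finiteness of $G$-orbits on all of $\mf{S}$. Your sketch of the converse is a reasonable unpacking of what RST Theorem 8.1 does (hierarchical quasiconvexity plus the orthogonal projection dichotomy, combined with almost malnormality to control coarse intersections of cosets), so the route is the same; the one small point worth tightening is that "bounded coset penetration" is Farb's terminology for relative hyperbolicity rather than one of the two defining conditions of hyperbolic embedding, though the content you describe (hyperbolicity of the coned-off graph together with local finiteness of the relative metric) is the correct target.
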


The next definition and result describe how  strong quasiconvexity can be detected using the hierarchically hyperbolic structure.

\begin{defn}\label{defn:ortho_proj_dich}
	A subset $\mc{Y}$ of a hierarchically hyperbolic space
	$(\mc{X},\mf{S})$ has the \emph{$B$--orthogonal projection dichotomy} if
	whenever there exists $W \in \mf{S}$ satisfying $\diam(\pi_W(\mc{Y})) > B$, the projection
	$\pi_V\vert_{\mc{Y}}$ is $B$--coarsely onto for all $V \in
	\mf{S}_W^\perp$.
\end{defn}

\begin{thm}[{\cite[Theorem 6.2]{RST_Quasiconvexity}}]\label{thm:SQC_in_HHG}
	Let $(\mc{X},\mf{S})$ be an HHS with the bounded domain dichotomy.
	\begin{enumerate}
		\item Given $k \colon [0,\infty) \to [0,\infty)$ and $B \geq 0$, there exists $M \colon [1,\infty) \times [0,\infty) \to [0,\infty)$, so that if $\mc{Y} \subseteq \mc{X}$ is $k$--hierarchically quasiconvex and has the $B$--orthogonal projection dichotomy, then $\mc{Y}$ is $M$--strongly quasiconvex.
		\item Given $M \colon [1,\infty) \times [0,\infty) \to [0,\infty)$, there exists $k \colon [0,\infty) \to [0,\infty)$ and $B \geq 0$, so that if $\mc{Y} \subseteq \mc{X}$ is $M$--strongly quasiconvex, then $\mc{Y}$ is $k$--hierarchically quasiconvex and has the $B$--orthogonal projection dichotomy.
	\end{enumerate}
\end{thm}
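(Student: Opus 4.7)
The plan is to prove the two implications separately by controlling projections $\pi_W$ for each $W \in \mf{S}$. For part (1), I let $\gamma$ be a $(\lambda,\varepsilon)$--quasi-geodesic with endpoints $x,y \in \mc{Y}$ and fix $z \in \gamma$. By $k$--hierarchical quasiconvexity, to bound $d_\mc{X}(z,\mc{Y})$ it suffices to produce a uniform bound $R=R(\lambda,\varepsilon,k,B,E)$ on $d_W(z,\mc{Y})$ for every $W\in\mf{S}$. Fix $W$ and split on the diameter of $\pi_W(\mc{Y})$.

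When $\diam(\pi_W(\mc{Y})) \le B$, I would connect $x$ and $y$ by a $\lambda'$--hierarchy path $\eta$ (with $\lambda'$ depending only on $E$); by Proposition~\ref{prop:HQC_and_hp}, $\eta$ lies in a uniform neighborhood of $\mc{Y}$, so the unparameterized quasi-geodesic $\pi_W\circ\eta$ in the $E$--hyperbolic space $\mc{C}W$ has $B$--bounded endpoints in $\pi_W(\mc{Y})$ and hence bounded image. To transfer this bound to $\gamma$, I would invoke the distance formula and the bounded geodesic image axiom: $\gamma$ and $\eta$ share endpoints, so any domain in which their projections diverge must appear in the distance formula for both, contradicting the quasi-geodesic property. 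When $\diam(\pi_W(\mc{Y})) > B$, the orthogonal projection dichotomy gives that $\pi_V\vert_{\mc{Y}}$ is $B$--coarsely onto for every $V\in\mf{S}_W^\perp$; together with the hypothesis on $\pi_W$ itself (and iterating down to all of $\mf{S}_W\cup\mf{S}_W^\perp$ using large links), Proposition~\ref{prop:product_regions}\eqref{P_prop:minimal_HQC} places $\P_W$ in a uniform neighborhood of $\mc{Y}$. Then $\pi_W(z)$ is realized coarsely by a point of $\P_W$ via partial realization, and its gate onto $\mc{Y}$ has the same $\pi_W$--image up to a bounded error by Lemma~\ref{lem:hierarch_path_through_the_gate}.

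For part (2), I would first use the fact that hierarchy paths are uniform quasi-geodesics: strong quasiconvexity of $\mc{Y}$ forces any hierarchy path with endpoints on $\mc{Y}$ to stay in a uniform neighborhood of $\mc{Y}$, which by Proposition~\ref{prop:HQC_and_hp} yields a function $k$ witnessing hierarchical quasiconvexity. For the quasiconvexity of each $\pi_W(\mc{Y})$ in $\mc{C}W$, I take $a,b\in\pi_W(\mc{Y})$ with preimages $y_1,y_2\in\mc{Y}$, join them by a hierarchy path $\eta$ which lies close to $\mc{Y}$, and observe that $\pi_W\circ\eta$ is an unparameterized quasi-geodesic from $a$ to $b$ in a uniform neighborhood of $\pi_W(\mc{Y})$; hyperbolicity of $\mc{C}W$ then forces geodesics to track this image. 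For the orthogonal projection dichotomy, suppose $\diam(\pi_W(\mc{Y}))$ is sufficiently large, fix $V\in\mf{S}_W^\perp$ and $p\in\mc{C}V$. I choose $y_1,y_2\in\mc{Y}$ with $\pi_W(y_1),\pi_W(y_2)$ far apart and use partial realization to produce $z\in\mc{X}$ with $\pi_V(z)\asymp p$ and $\pi_W(z)$ between $\pi_W(y_1),\pi_W(y_2)$. The consistency axiom (Behrstock inequality) then forces any $\lambda$--hierarchy path from $y_1$ to $y_2$ to pass uniformly near $z$ in every relevant projection; strong quasiconvexity of $\mc{Y}$ puts a point of $\mc{Y}$ uniformly close to $z$, and in particular with $\pi_V$ near $p$.

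I expect the main obstacle to be the large-projection case of part (1): an arbitrary $(\lambda,\varepsilon)$--quasi-geodesic in $\mc{X}$ need not be a hierarchy path and may make long excursions through $\P_W$ in directions orthogonal to $W$, so hierarchical quasiconvexity alone does not constrain $\gamma$. The orthogonal projection dichotomy is exactly what is needed to ensure that these excursions stay near $\mc{Y}$, but only after propagating the $B$--coarse surjectivity across the entire poset $\mf{S}_W\cup\mf{S}_W^\perp$ and carefully bookkeeping the constants under this iteration. Controlling this propagation using the large links axiom together with Proposition~\ref{prop:product_regions}\eqref{P_prop:minimal_HQC} is the delicate technical heart of the argument.
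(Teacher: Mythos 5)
This statement is a citation, not a theorem proved in the paper: the bracketed label \(\text{\cite[Theorem 6.2]{RST\_Quasiconvexity}}\) indicates the result is imported from Russell--Spriano--Tran, and no proof appears in the present source. So there is no proof of the authors' to compare yours against; I will instead assess the sketch on its own merits.

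There is a genuine gap in the small-diameter case of part (1). You claim that because the hierarchy path $\eta$ between $x,y\in\mc{Y}$ has bounded $\pi_W$--image when $\diam(\pi_W(\mc{Y}))\le B$, the arbitrary $(\lambda,\varepsilon)$--quasi-geodesic $\gamma$ with the same endpoints also has bounded $\pi_W$--image, citing the distance formula and bounded geodesic image. This does not follow. The distance formula controls the total of the coordinate distances $d_W(x,y)$, not the pointwise behavior of $\pi_W\circ\gamma$ for $z$ in the interior of $\gamma$: a quasi-geodesic can make an arbitrarily long excursion in $\mc{C}W$ provided it is ``paid for'' by a comparably long path in $\mc{X}$, which costs nothing against the quasi-geodesic inequality when $d_\mc{X}(x,y)$ is large because of other domains. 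Ruling out such excursions is precisely the content of strong quasiconvexity and is not a formal consequence of the two endpoints being close in $\mc{C}W$; it is exactly where the orthogonal projection dichotomy must be used in a nontrivial way. In the RST proof this is handled by passing through a contracting property of $\mc{Y}$ rather than controlling $\pi_W(z)$ domain-by-domain for points $z$ on $\gamma$, and that detour seems unavoidable. There is also a secondary issue in your large-diameter case: propagating $B$--coarse surjectivity from $\mf{S}_W^\perp$ to $\mf{S}_W$ requires some $V\in\mf{S}_W^\perp\cap\mf{S}^\infty$ to bootstrap off, and you have not addressed what happens when $\mf{S}_W^\perp$ contains only bounded domains; the large-links axiom is not the right tool for this propagation in any case (iterating the dichotomy itself is).

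Part (2) is closer to viable: deducing the hierarchically quasiconvex function $k$ from strong quasiconvexity via Proposition~\ref{prop:HQC_and_hp} is correct, and the use of partial realization to produce $z$ with prescribed $\pi_V$-- and $\pi_W$--coordinates is the right idea. However, the assertion that consistency ``forces any $\lambda$--hierarchy path from $y_1$ to $y_2$ to pass uniformly near $z$'' is not accurate; what one actually shows is that a specific concatenation of hierarchy paths $y_1\to z\to y_2$ is a uniform quasi-geodesic (using the distance formula and the fact that $z$'s coordinates lie coarsely between those of $y_1,y_2$), so that strong quasiconvexity places $z$ near $\mc{Y}$. You should make this mechanism explicit rather than appealing to a single hierarchy path.
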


Lastly, we record a simple but handy fact about the intersection of
cosets of almost malnormal collections of subgroups.  Since every
hyperbolically embedded collection of subgroups is  almost
malnormal, this lemma applies to any hyperbolically embedded
collection, which is how we will apply it.

\begin{lem}[{\cite[Proposition 9.4]{Hruska_relative_quasiconvexity}}]\label{lem:intersection_of_almost_malnormal}
	Let $G$ be a finitely generated group and $\{H_1,\dots, H_k\}$ be an almost malnormal collection of subgroups. For each $C \geq 0$ and any two cosets $gH_i$ and $hH_j$, we have $$ \diam \left( \mc{N}_C(gH_i) \cap \mc{N}_C(hH_j)\right) = \infty \implies gH_i = hH_j.$$
\end{lem}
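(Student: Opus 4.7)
The plan is to exploit the double description of each point in the intersection $A := \mc{N}_C(gH_i) \cap \mc{N}_C(hH_j)$. Any such point can be written both as $a \cdot s$ with $a \in gH_i$ and $|s| \leq C$, and as $b \cdot t$ with $b \in hH_j$ and $|t| \leq C$. Since the $C$-ball around the identity in $G$ (with any fixed word metric) is finite, if $A$ contains infinitely many points we can use pigeonhole to arrange that $s$ and $t$ are constant along a subsequence; the resulting algebraic identity then forces an infinite intersection of the form $H_i \cap k H_j k^{-1}$, and almost malnormality finishes the job.

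To carry this out, I would first translate by $g^{-1}$ on the left to reduce to the case $g = 1$, so the claim becomes: if $\diam\bigl(\mc{N}_C(H_i) \cap \mc{N}_C(hH_j)\bigr) = \infty$, then $hH_j = H_i$. Since $G$ is a proper metric space, infinite diameter implies unboundedness, so for each $n \in \mathbb N$ we may choose $x_n \in A$ with $d(1, x_n) \geq n$. Write $x_n = a_n s_n = h b_n t_n$ with $a_n \in H_i$, $b_n \in H_j$, and $s_n, t_n$ in the (finite) ball of radius $C$ about the identity. Passing to a subsequence, we may assume $s_n = s$ and $t_n = t$ are independent of $n$.

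Now compute: for any $n,m$, the identity $a_n s = h b_n t$ gives $a_n = h b_n t s^{-1}$, and hence
\[
a_n a_m^{-1} = h b_n t s^{-1} \bigl(h b_m t s^{-1}\bigr)^{-1} = h b_n b_m^{-1} h^{-1}.
\]
The left side lies in $H_i$ and the right side in $hH_j h^{-1}$, so $a_n a_m^{-1} \in H_i \cap hH_j h^{-1}$. Since $d(1, a_n) \geq d(1, x_n) - C \geq n - C$, the elements $a_n$ are pairwise distinct for large $n$, and therefore the set $\{a_n a_1^{-1}\}$ is infinite. Thus $|H_i \cap hH_j h^{-1}| = \infty$, so by the almost malnormality of $\{H_1,\dots,H_k\}$ we conclude $i = j$ and $h \in H_i$, whence $hH_j = H_i$ as required.

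The only mild subtlety is making sure the sequence $(x_n)$ can be chosen with $d(1,x_n) \to \infty$; this is where properness (i.e.\ $G$ being finitely generated with the word metric) is essential, since it promotes infinite diameter to unboundedness. Everything else is a direct pigeonhole plus the definition of almost malnormality, so I expect no real obstacle.
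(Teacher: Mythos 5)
Your proof is correct and is essentially the standard argument (the paper itself does not prove this lemma but cites it to Hruska, and Hruska's Proposition~9.4 uses the same pigeonhole-on-a-finite-ball strategy). The computation $a_n a_m^{-1} = h b_n b_m^{-1} h^{-1}$ is right, it lands in $H_i \cap h H_j h^{-1}$, and almost malnormality finishes the argument.

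One small inaccuracy: you invoke properness to pass from infinite diameter to unboundedness, but that implication holds in any metric space (if $A \subseteq \mc{N}_R(p)$ then $\diam A \leq 2R$). The place where finite generation (equivalently, local finiteness of the Cayley graph) is genuinely indispensable is the pigeonhole step, where you need the ball of radius $C$ about the identity to be a finite set so that $s_n$ and $t_n$ can be taken constant along a subsequence. You do note the finiteness of the $C$--ball earlier in the argument, so the proof goes through; the remark at the end just misplaces where that hypothesis is doing its work.
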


\section{Adding hyperbolically embedded subgroups to a structure}\label{sec:add_hyp_embedded}
\newtheorem{con}[thm]{Construction}

In this section, we show that any collection of hyperbolically embedded
subgroups of a maximized $G$--HHS can be naturally associated to a set of domains 
in an $G$--HHS structure on the group. 
We begin by describing the structure.

\begin{con}\label{con:HHG+hyp_embedded}
	Let $\mf{S}$ be a maximized $G$--HHS structure for the finitely generated group $G$.  Let $S \in \mf{S}$ be the
	$\nest$--maximal element of $\mf{S}$.  Let $\{H_1,\dots, H_k\}$ be
	a collection of hyperbolically embedded subgroups of $G$.  Let
	$\mf{Q}$ be a set indexing the set of cosets of $H_1,\dots, H_k$.
	For each $Q \in \mf{Q}$, we will use $\coset{Q}$ to denote the
	coset in $G$ that is indexed by $Q$.  We  describe a new $G$--HHS
	structure for $G$ whose index set includes $\mf Q$.
	\begin{itemize}
		\item Index set: $ \mf{H} = \mf{S} \cup\mf{Q}$.
		\item Hyperbolic spaces: For $S$, the space $\mc{C}_\mf{H}S$ is  obtained from $\mc{C}_\mf{S}S$ by adding an edge between every pair of points in   $\pi_S(\coset{Q})$ for each $Q \in \mf{Q}$.   Following \cite{Farb_Rel_Hyp}, we call this the \textit{electrified space}.  For $V \in \mf{S}-\{S\}$, define $\mc{C}_{\mf{H}}V := \mc{C}_\mf{S} V$. For $Q \in \mf{Q}$, let $\mc{C}_\mf{H}Q$ be the convex hull of $\pi_S(\coset{Q})$ in the space $\mc{C}_\mf{S}S$. 
		
		\item Projection maps: We use $\tau_\ast$ to denote the projection maps in $\mf{H}$ and $\pi_\ast$ to denote the projection maps in $\mf{S}$. For $V \in \mf{S}-\{S\}$, let $\tau_V := \pi_V$. For $S$, the map $\tau_S$ is the composition of $\pi_S$ with the inclusion $\mc{C}_\mf{S}S \to \mc{C}_\mf{H}S$.  For $Q \in \mf{Q}$, the map $\tau_{Q}$ is the composition $\mf{p}_{\pi_S(\coset{Q})} \circ \pi_S$. 
		
		\item Relations: For all $V,W \in \mf{S}$, the relation in $\mf{H}$ between $V$ and $W$ is the same as the relation between $V$ and $W$ in $\mf{S}$. Each $Q \in \mf{Q}$ is properly $\mf{H}$--nested into $S$. For $V \in \mf{S} -\{S\}$ and $Q \in \mf{Q}$, we define $V \propnest Q$ if there exist $W \in \mf{S}^\infty \cap \mf{S}_V^\perp$ so that $\pi_W\vert_{\coset{Q}}$ is coarsely  onto; otherwise $Q \trans V$. If $Q,R \in\mf{Q}$ are not equal, then $ Q \trans R$.
		
		\item Relative projections: We use $\beta_\ast^\ast$ to denote the relative projections in $\mf{H}$ and $\rho_\ast^\ast$ to denote them in $\mf{S}$. For all $V,W \in \mf{S}$, if $V \propnest W$ or $V \trans W$, then $\beta_W^V := \rho_W^V$.  For $Q \in \mf{Q}$, the relative projection $\beta_S^{Q}$ is the electrified subset $\tau_S(\coset{Q})$ in $\mc{C}_\mf{H}S$. For $V \in \mf{S}$ and $Q \in \mf{Q}$, if $V \propnest Q$ or $V \trans Q$, then the relative projection $\beta_{Q}^V$ is $\mf{p}_{\pi_S(\coset{Q})}(\rho_S^V)$. If $Q \trans W$ for any $W \in \mf{H}$, then $\beta_W^{Q} := \tau_W(\coset{Q})$.
	\end{itemize}
\end{con}
	
	While the reader should think of the set $\mf{Q}$ as the set of
all coset of $H_1, \dots, H_k$, we note again that formally,  
the element $Q \in \mf{Q}$ is
an element of the index set $\mf{Q} \subset \mf{H}$ while
$\coset{Q}$ refers to the actual coset of a $H_i$ in the group
$G$.  We choose this notation because the coset $\coset{Q}$ 
coarsely coincide with the product region $\mathbf{P}_Q$ in $\mf{H}$ as follows.

\begin{rem}[Product regions for $\mf{H}$]\label{rem:products_in_h} 
	For each non-$\nest$--maximal $V \in \mf{S}$, the set $\mf{S}_V$ (resp. $\mf{S}_V^{\perp}$) and the corresponding collection of hyperbolic spaces and projection maps  is identical to the set $\mf{H}_V$ (resp. $\mf{H}_V^{\perp}$) and its corresponding collection of hyperbolic spaces and projection maps. Hence, the product regions for $V$ with respect to both $\mf{H}$ and $\mf{S}$ are identical. For $Q \in \mf{Q}$, the product region $\mathbf P_Q$ with respect to $\mf{H}$ is finite Hausdorff distance from the coset $\coset{Q}$, because 
	\begin{itemize}
		\item $\coset{Q}$ is uniformly hierarchically quasiconvex with respect to $\mf{H}$ (Corollary \ref{cor:properties_of_hyp_embed_cosets});
		\item $\mf{H}_Q^\perp =\emptyset$ by construction;
		\item  the projection of $\coset{Q}$ to every domain of $\mf{H}_Q$ is uniformly coarsely onto (Lemma \ref{lem:h_nesting_and_product_regions}); and
		\item the projection of $\coset{Q}$ to every domain of $\mf{H} - \mf{H}_Q$ is uniformly bounded (shown in the proof of Theorem \ref{thm:adding_in_hyperbolically_embedded_subgroups}) .
	\end{itemize}
\end{rem}

We now collect some results we will need to show that the structure $\mf{H}$ is in fact a $G$--HHS structure. We will frequently use the following properties of the cosets of the hyperbolically embedded subgroups. The first is a  direct consequence of Theorems \ref{thm:hyperbolically_embedded_in_HHGs} and \ref{thm:SQC_in_HHG}, while the second was   shown during the proof of Theorem \ref{thm:hyperbolically_embedded_in_HHGs}; see  \cite[Proposition 8.6]{RST_Quasiconvexity}.  

\begin{cor}\label{cor:properties_of_hyp_embed_cosets}
	Let $(G,\mf{S})$ be a $G$--HHS and $\{H_1,\dots, H_k\}$ a
	hyperbolically embedded collection of subgroups.  Let $\mf{Q}$ be
	the set indexing the cosets of the $H_i$ as in Construction
	\ref{con:HHG+hyp_embedded}.  There exists $k \colon [0,\infty) \to
	[0,\infty)$ and $B \geq 0$ so that
	\begin{enumerate}
		\item for each $Q \in \mf{Q}$, the coset $\coset{Q}$ is $k$--hierarchically quasiconvex  and has the $B$--orthogonal projection dichotomy; and
		\item for distinct $Q,R \in \mf{Q}$, the diameter of $\mf{p}_{\pi_S(F(Q))}(\pi_S(\coset{R}))$ is at most $B$.
	\end{enumerate}
\end{cor}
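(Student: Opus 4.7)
The plan is that both items fall out of the previously cited results, with some care taken to ensure that the constants can be chosen uniformly across the (finitely many) subgroups $H_i$ and all of their cosets. For item (1), Theorem \ref{thm:hyperbolically_embedded_in_HHGs} tells us that, since $\{H_1,\dots,H_k\}$ is hyperbolically embedded in the $G$--HHS $(G,\mf{S})$, each $H_i$ is $M_i$--strongly quasiconvex for some function $M_i\colon[1,\infty)\times[0,\infty)\to[0,\infty)$.  Because $G$ acts on itself by isometries, every left coset $\coset{Q}$ of $H_i$ is again $M_i$--strongly quasiconvex.  Setting $M(\lambda,\varepsilon):=\max_i M_i(\lambda,\varepsilon)$ gives a single function witnessing the strong quasiconvexity of every coset $\coset{Q}$.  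Applying the second part of Theorem \ref{thm:SQC_in_HHG} to this uniform $M$ then produces a common hierarchical quasiconvexity function $k$ and a common orthogonal projection dichotomy constant $B$ that work for all $Q\in\mf{Q}$.

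For item (2), the key input is that a hyperbolically embedded collection is almost malnormal (again Theorem \ref{thm:hyperbolically_embedded_in_HHGs}), so by Lemma \ref{lem:intersection_of_almost_malnormal} there is, for every $C\geq 0$, a uniform bound on $\diam\bigl(\mc{N}_C(\coset{Q})\cap\mc{N}_C(\coset{R})\bigr)$ as $Q\neq R$ range over $\mf{Q}$.  The remaining task is to upgrade this bounded coarse intersection in $G$ into bounded projection diameter in $\mc{C}S$.  I would argue by contradiction: suppose the closest-point projection diameter $\diam\bigl(\mf{p}_{\pi_S(\coset{Q})}(\pi_S(\coset{R}))\bigr)$ is not uniformly bounded.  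Then we can find $r_1,r_2\in\coset{R}$ whose images $\mf{p}_{\pi_S(\coset{Q})}\bigl(\pi_S(r_i)\bigr)$ are arbitrarily far apart in $\mc{C}S$.  By Lemma \ref{lem:hierarch_path_through_the_gate} applied to the hierarchically quasiconvex set $\coset{Q}$ (using item (1)), these closest-point projections coarsely agree with $\pi_S\bigl(\gate_{\coset{Q}}(r_i)\bigr)$, so the gate points $q_i=\gate_{\coset{Q}}(r_i)\in\coset{Q}$ have large $\mc{C}S$--distance.  A hierarchy path joining $q_1$ to $q_2$ then stays uniformly close to $\coset{Q}$, while a hierarchy path joining $r_1$ to $r_2$ stays uniformly close to $\coset{R}$; comparing the two using the coarse Lipschitz property of $\gate_{\coset{Q}}$ produces an arbitrarily long subsegment lying in $\coset{Q}\cap \mc{N}_C(\coset{R})$ for a uniform $C$, contradicting Lemma \ref{lem:intersection_of_almost_malnormal}.

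The main obstacle in this outline is ensuring that the jump from ``large $\mc{C}S$--distance'' to ``large coarse intersection in $G$'' actually goes through; in general, distance in $\mc{C}S$ does not control distance in $G$, so one has to exploit the strong quasiconvexity of both $\coset{Q}$ and $\coset{R}$ (so that gates behave like honest closest-point projections and hierarchy paths stay close) in an essential way.  This is exactly the argument carried out inside the proof of Theorem \ref{thm:hyperbolically_embedded_in_HHGs} and recorded as \cite[Proposition 8.6]{RST_Quasiconvexity}, so the cleanest presentation is to simply invoke that proposition for item (2) rather than reprove it.
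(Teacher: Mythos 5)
Your proposal is correct and in the end follows the same route as the paper: for item~(1), combine Theorem~\ref{thm:hyperbolically_embedded_in_HHGs} (hyperbolically embedded implies strongly quasiconvex) with Theorem~\ref{thm:SQC_in_HHG}(2) (strongly quasiconvex implies hierarchically quasiconvex with the orthogonal projection dichotomy), taking a maximum over the finitely many $H_i$ and using the isometric $G$--action to transfer the constants to all cosets; for item~(2), cite \cite[Proposition 8.6]{RST_Quasiconvexity}, which is exactly what the paper does. The sketched ``from scratch'' argument for item~(2) does contain the gap you flag -- almost malnormality controls the coarse intersection $\mc{N}_C(\coset{Q}) \cap \mc{N}_C(\coset{R})$ in $G$, but translating a large closest-point-projection diameter in $\mc{C}S$ into a long segment lying in that coarse intersection requires more than quasiconvexity of gates and hierarchy paths (the behavior of the distance formula and the orthogonal projection dichotomy both enter), and this is precisely what Proposition 8.6 of \cite{RST_Quasiconvexity} handles. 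Since your final recommendation is to invoke that result, the proposal as stated is sound.
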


The nesting relation in $\mf{H}$ is defined in order to facilitate the following key lemma.

\begin{lem}\label{lem:h_nesting_and_product_regions}
	Let $(G,\mf{S})$ be a maximized $G$--HHS. Suppose $\{H_1,\dots, H_k\}$ is a hyperbolically embedded collection of subgroups of $G$, and let $\mf{H}$ be the structure described in Construction \ref{con:HHG+hyp_embedded}. There exists $B \geq0$ so that for each $V \in \mf{S}$ and $Q \in \mf{Q}$, the following are equivalent.
	\begin{enumerate}
		\item $V \nest Q$ in $\mf{H}$. \label{item:h_nesting}
		\item There is $W \in \mf{S}_V^{\perp}$ with $\diam(\pi_W(\coset{Q})) > B$. \label{item:uniform_h_nesting}
		\item There is $U \in \mf{S}_V \cap \mf{S}^\infty$ with $\pi_U \vert_{\coset{Q}}$ coarsely onto. \label{item:onto_nested}
		\item There is $U \in \mf{S}_V$ with $\diam(\pi_U(\coset{Q}) > B$. \label{item:uniform_onto-nested}
		\item The product region $\mathbf{P}_V$ is contained in the 
		$B$--neighborhood of the coset $\coset{Q}$. \label{item:uniform_P_containment}
		\item The product region $\mathbf{P}_V$ is contained in a finite neighborhood of the coset $\coset{Q}$. \label{item:P_containment}
	\end{enumerate}
\end{lem}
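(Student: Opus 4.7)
The plan is to establish all six conditions as equivalent via two overlapping cycles: $(1)\Leftrightarrow(2)\Leftrightarrow(3)\Leftrightarrow(4)$ among the projection-based conditions, and $(4)\Leftrightarrow(5)\Leftrightarrow(6)$ among the product-region conditions. First I would fix the constant $B$ large enough to simultaneously (a) serve as the $B$--orthogonal projection dichotomy constant from Corollary~\ref{cor:properties_of_hyp_embed_cosets} for every coset $\coset{Q}$, and (b) dominate the $\nu$ produced by Proposition~\ref{prop:product_regions}\eqref{P_prop:minimal_HQC} when applied with that dichotomy constant and the uniform hierarchical quasiconvexity function from Corollary~\ref{cor:properties_of_hyp_embed_cosets}. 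Both quantities are uniform across $Q \in \mf{Q}$, so such a $B$ exists.

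The first cycle is straightforward once one notes the orthogonality-inheritance axiom: if $U \nest V$ and $V \perp W$, then $U \perp W$, i.e.\ $\mf{S}_V \subseteq \mf{S}_W^\perp$ whenever $V \perp W$, and symmetrically $\mf{S}_V^\perp \subseteq \mf{S}_U^\perp$ whenever $U \nest V$. Implication $(1)\Rightarrow(2)$ holds because the witnessing $W$ in the definition of $\propnest$ in $\mf{H}$ lies in $\mf{S}^\infty$, so $\pi_W|_{\coset{Q}}$ being coarsely onto forces $\diam(\pi_W(\coset{Q}))=\infty>B$. For $(2)\Rightarrow(3)$, apply the orthogonal projection dichotomy at $W$ to conclude $\pi_U|_{\coset{Q}}$ is $B$-coarsely onto for every $U \in \mf{S}_W^\perp \supseteq \mf{S}_V$; picking any $U \in \mf{S}_V \cap \mf{S}^\infty$, which exists by maximization, produces $(3)$. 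Implication $(3)\Rightarrow(4)$ is immediate, and $(4)\Rightarrow(1)$ is the mirror of $(2)\Rightarrow(3)$: the dichotomy applied at $U$ gives coarsely-onto projections on $\mf{S}_U^\perp \supseteq \mf{S}_V^\perp$, and maximization produces $V' \in \mf{S}_V^\perp \cap \mf{S}^\infty$, which is exactly the definitional witness for $V \propnest Q$ in $\mf{H}$.

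The second cycle trades on a double application of the dichotomy. For $(4)\Rightarrow(5)$, start with $U \in \mf{S}_V$ satisfying $\diam(\pi_U(\coset{Q})) > B$. The first application of orthogonal projection dichotomy at $U$ gives coarsely-onto projections on $\mf{S}_V^\perp$, so by maximization there is $V' \in \mf{S}_V^\perp \cap \mf{S}^\infty$ with $\diam(\pi_{V'}(\coset{Q}))=\infty$. A second application of orthogonal projection dichotomy, now at $V'$, produces coarsely-onto projections on $\mf{S}_{V'}^\perp \supseteq \mf{S}_V$. Together, $\pi_U|_{\coset{Q}}$ is $B$-coarsely onto for every $U \in \mf{S}_V \cup \mf{S}_V^\perp$, so Proposition~\ref{prop:product_regions}\eqref{P_prop:minimal_HQC} delivers $\mathbf{P}_V \subseteq \mc{N}_B(\coset{Q})$, which is $(5)$ after possibly enlarging $B$. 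Implication $(5)\Rightarrow(6)$ is immediate, while $(6)\Rightarrow(4)$ follows by fixing $U \in \mf{S}_V \cap \mf{S}^\infty$, using the standard fact that $\pi_U|_{\mathbf{P}_V}$ is coarsely onto (so $\pi_U(\mathbf{P}_V)$ is unbounded in $\mc{C}U$), and transferring the unboundedness to $\pi_U(\coset{Q})$ via the coarse Lipschitzness of $\pi_U$ and the finite-neighborhood hypothesis on $\mathbf{P}_V$.

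The main technical point is the need to apply the orthogonal projection dichotomy \emph{twice} in the $(4)\Rightarrow(5)$ direction to upgrade control from one side ($\mf{S}_V^\perp$) to both sides ($\mf{S}_V \cup \mf{S}_V^\perp$), because the dichotomy only outputs information on the orthogonal complement of the witness domain. Once the orthogonality-inheritance axiom and the nonemptiness of $\mf{S}_V \cap \mf{S}^\infty$ and $\mf{S}_V^\perp \cap \mf{S}^\infty$ (from maximization) are in hand, each individual implication reduces to a single well-placed application of the dichotomy together with a suitable choice of witness.
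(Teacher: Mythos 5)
Your proof is correct and follows essentially the same route as the paper. The key ingredients — using the orthogonal projection dichotomy at a witness domain, exploiting maximization to find unbounded domains nested into $V$ or orthogonal to $V$, double-applying the dichotomy to upgrade from one-sided to two-sided coarse surjectivity, and then invoking Proposition~\ref{prop:product_regions}\eqref{P_prop:minimal_HQC} — are all the same as the published argument. The only cosmetic difference is the logical bookkeeping: the paper runs a single chain $(1)\Rightarrow(2)\Rightarrow(3)\Rightarrow(4)\Rightarrow(5)\Rightarrow(6)\Rightarrow(1)$, while you close two shorter cycles sharing condition~(4); both are equally valid. One small omission shared (implicitly) by the paper: when $V$ is the $\nest$--maximal domain $S$, condition~(1) cannot hold by construction while~(3) can still be satisfied, so the equivalence is really intended for non-$\nest$--maximal $V$; the paper's $(6)\Rightarrow(1)$ step flags this by noting $\coset{Q}$ does not coarsely fill $G$, and you could similarly dispose of the $V=S$ case at the outset.
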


\begin{proof} By Corollary \ref{cor:properties_of_hyp_embed_cosets}, for each $Q \in \mf{Q}$ the coset $\coset{Q}$ has the $B_0$--orthogonal
	projection dichotomy for some $B_0$ determined by $(G,\mf{S})$
	and $\{H_1, \dots, H_k\}$. Moreover, we can assume $B_0$ is large enough that for all $V \in \mf{S}$,  if $\diam(\mc{C}_\mf{S}V) >B_0 $, then $ \diam(\mc{C}_\mf{S}V) = \infty$.
	
	We will first prove that (\ref{item:h_nesting}) implies  (\ref{item:uniform_h_nesting}) through (\ref{item:uniform_onto-nested}) for any $B > B_0$.
	\begin{claim}
		Item (\ref{item:h_nesting}) $\implies$ Item (\ref{item:uniform_h_nesting}) $\implies$ Item  (\ref{item:onto_nested}) $\implies$ Item  (\ref{item:uniform_onto-nested}).
	\end{claim}
	
	\begin{proof}
		A domain $V\in\mf{S}$ nests into $Q \in \mf{Q}$ in the structure $\mf{H}$ if and only if there exists $W \in
		\mf{S}_V^{\perp} \cap \mf{S}^{\infty}$ so that
		$\pi_W\vert_{\coset{Q}}$ is coarsely onto.  Since
		$\diam(\mc{C}_\mf{S} W) =\infty$, Item
		(\ref{item:uniform_h_nesting}) holds.
		
	  Now, if $\diam(\pi_W(\coset{Q})) \geq B_0$ for
		some $W \in \mf{S}_V^{\perp} \cap \mf{S}^\infty$, then
		$\pi_{U} \vert_{\coset{Q}}$ is $B_0$--coarsely onto for any 
		domain $U$ orthogonal to $W$, and, in particular, for all  $U
		\in \mf{S}_V$.  Since $\mf{S}$ is maximized, we know $\mf{S}_V
		\cap \mf{S}^{\infty} \neq \emptyset$.  Thus  Item
		(\ref{item:onto_nested}) 
		follows from (\ref{item:uniform_h_nesting}).  Item  (\ref{item:uniform_onto-nested}) follows immediately from Item (\ref{item:onto_nested}), as $U\in \frak S^{\infty}$.	\end{proof}
	
	Next we show that Item (\ref{item:uniform_onto-nested}) implies that $\mathbf{P}_V$ is contained in  the $B_1$--neighborhood of $\coset{Q}$ for some $B_1$ determined by $B_0$ and the hierarchy constant for $\mf{S}$.
	
	\begin{claim}
		Item \eqref{item:uniform_onto-nested} $\implies$ Item \eqref{item:uniform_P_containment}.
	\end{claim}

	\begin{proof}
		Let $V \in \mf{S}$ and $Q \in \mf{Q}$, and assume $\diam(\pi_U(\coset{Q})) > B_0$ for some $U \in \mf{S}_V$. By Corollary \ref{cor:properties_of_hyp_embed_cosets}, $\coset{Q}$ is uniformly hierarchically quasiconvex. By Proposition \ref{prop:product_regions}\eqref{P_prop:minimal_HQC},  if we can show $\pi_W \vert_{\coset{Q}}$ is $B_0$--coarsely onto for each $W \in \mf{S}_V \cup \mf{S}_V^{\perp}$, then there will be a constant $B_1\geq 0$ depending on $B_0$ so that  $\P_V$ is  contained in the $B_1$--neighborhood of $\coset{Q}$.   
		
		First suppose that $W \in \mf{S}_V^\perp$. Since $U \nest V$, we have $U \perp W$. By the $B_0$--orthogonal projection dichotomy,  $\diam(\pi_U(\coset{Q})) > B_0$ implies $\pi_W \vert_{\coset{Q}}$ is $B_0$--coarsely onto. 
				
		Now consider $W \in \mf{S}_V$. Since $\mf{S}$ is maximized, there must exist $Z \in \mf{S}^\infty \cap \mf{S}_V^\perp$. As shown in the proceeding paragraph, $\pi_Z \vert_{\coset{Q}}$ is $B_0$--coarsely onto. However, since $\diam(\mc{C}_\mf{S}Z) =\infty$ and $W \perp Z$, the $B_0$--orthogonal projection dichotomy implies that $\pi_W \vert_{\coset{Q}}$ is $B_0$--coarsely onto, as well.
	\end{proof}

	Since Item (\ref{item:uniform_P_containment}) automatically implies Item (\ref{item:P_containment}), it remains to show  Item (\ref{item:P_containment}) implies  $V \nest Q$.
	
	\begin{claim}
		Item \eqref{item:P_containment} $\implies$ Item \eqref{item:h_nesting}.
	\end{claim}
	
	\begin{proof}
		Let $V \in \mf{S}$ and $Q \in \mf{Q}$.  Assume that
		$\mathbf{P}_V$ is contained in a regular neighborhood of
		$\coset{Q}$.  Since $\coset{Q}$ does not coarsely equal all of $G$, it must be the case that $V$ is not $\nest$--maximal. By Lemma \ref{lem:h_nesting_and_product_regions}, 
		the restriction of $\pi_W$ to $\P_V$ is coarsely onto for all $W \in \mf{S}_V \cup \mf{S}_V^{\perp}$.  In
		particular, $\pi_W \vert_{\coset{Q}}$ must also be coarsely
		onto, because $\pi_W$ is coarsely Lipschitz and $\P_V$ is
		contained in a regular neighborhood of $\coset{Q}$.  Because
		$\mf{S}$ is maximized, we know $\mf{S}^\infty \cap \mf{S}_V^\perp
		\neq \emptyset$.
		Hence $V \nest Q$ because there must exist $W
		\in \mf{S}^{\infty} \cap \mf{S}_V^{\perp}$ with $\pi_W
		\vert_{\coset{Q}}$ coarsely onto.
	\end{proof}

Lemma \ref{lem:h_nesting_and_product_regions} now holds with $B = \max\{B_0,B_1\}$.
\end{proof}

We now turn to the main result of this section, in which  we establish that the structure in Construction \ref{con:HHG+hyp_embedded} is a $G$--HHS structure. 

\begin{thm}\label{thm:adding_in_hyperbolically_embedded_subgroups}
	Let $(G,\mf{S})$ be a maximized $G$--HHS. Let $S \in \mf{S}$ be
	the $\nest$--maximal element of $\mf{S}$ and $\{H_1,\dots,H_k\}$
	be a hyperbolically embedded collection of subgroups of $G$.  The
	structure $\mf{H}$ described in Construction
	\ref{con:HHG+hyp_embedded} is a $G$--HHS structure.
	
	Moreover, if $(G,\mf{S})$ is a hierarchically hyperbolic group 
	for which $\mf{S}$ is a maximized structure, then 
	$\mf{H}$ is a hierarchically hyperbolic group structure for
	$G$. 
\end{thm}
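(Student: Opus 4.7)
The plan is to verify each of the HHS axioms for $\mf{H}$ in turn, leveraging the corresponding axiom for $\mf{S}$ together with the quasiconvexity properties of the cosets $\coset{Q}$ from Corollary~\ref{cor:properties_of_hyp_embed_cosets} and the nesting characterization in Lemma~\ref{lem:h_nesting_and_product_regions}. I would first establish uniform hyperbolicity of the new spaces: for $V \in \mf{S} \setminus \{S\}$ this is immediate; for $Q \in \mf{Q}$ the space $\mc{C}_\mf{H}Q$ is the convex hull of the uniformly quasiconvex set $\pi_S(\coset{Q}) \subseteq \mc{C}_\mf{S}S$, hence is uniformly hyperbolic and uniformly quasi-isometric to $\pi_S(\coset{Q})$; and for $S$, hyperbolicity of the electrification $\mc{C}_\mf{H}S$ follows from the standard theory of electrifying a hyperbolic space along a family of uniformly quasiconvex subsets with uniformly bounded coarse intersections, where the bounded coarse intersection is provided by Corollary~\ref{cor:properties_of_hyp_embed_cosets}(2) together with the almost malnormality afforded by Lemma~\ref{lem:intersection_of_almost_malnormal}.

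Projection maps are then routine: $\tau_V = \pi_V$ for $V \in \mf{S} \setminus \{S\}$; $\tau_S$ is $\pi_S$ followed by a $1$-Lipschitz inclusion into an electrification; and $\tau_Q = \mf{p}_{\pi_S(\coset{Q})} \circ \pi_S$ is coarsely Lipschitz as the composition of a coarsely Lipschitz map with a closest point projection onto a uniformly quasiconvex subset of a hyperbolic space, and is coarsely onto by construction of $\mc{C}_\mf{H}Q$. The combinatorial axioms (unique $\nest$-maximum, finite complexity, containers, partial realization) need only light bookkeeping: $S$ remains the sole $\nest$-maximal element, complexity grows by at most one, and since no element of $\mf{Q}$ is orthogonal to anything in $\mf{H}$, the containers from $\mf{S}$ suffice; partial realization for a pairwise orthogonal subset of $\mf{H}$ reduces to partial realization in $\mf{S}$ (since any such subset lies in $\mf{S}$), with the required $\beta_Q$-conditions supplied by coarsely Lipschitz gate maps.

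The substantive content lies in the consistency, bounded geodesic image, and large links axioms for the new domains. Consistency for distinct transverse $Q,R \in \mf{Q}$ uses Corollary~\ref{cor:properties_of_hyp_embed_cosets}(2): the diameter of $\mf{p}_{\pi_S(\coset{Q})}(\pi_S(\coset{R}))$ is uniformly bounded, which, via the standard closest-point-projection dichotomy in a hyperbolic space, produces the required bound on $\min\{d_Q(x,\beta_Q^R), d_R(x,\beta_R^Q)\}$. Consistency between $V \in \mf{S} \setminus \{S\}$ and $Q \in \mf{Q}$ combines $\mf{S}$-consistency with the coarsely Lipschitz gate onto $\pi_S(\coset{Q})$, using Lemma~\ref{lem:h_nesting_and_product_regions} to turn the nesting-versus-transverse dichotomy into uniformly bounded projections. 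For bounded geodesic image with $V \propnest Q$, if $d_V(x,y) \geq E$ then bounded geodesic image in $\mf{S}$ forces every $\mc{C}_\mf{S}S$-geodesic from $\pi_S(x)$ to $\pi_S(y)$ near $\rho_S^V$, and applying the coarsely Lipschitz gate yields that the corresponding $\mc{C}_\mf{H}Q$-geodesic passes near $\beta_Q^V = \mf{p}_{\pi_S(\coset{Q})}(\rho_S^V)$. Large links at $Q \in \mf{Q}$ reduces to large links applied to the hierarchically quasiconvex subgroup $\coset{Q}$, using strong quasiconvexity and Theorem~\ref{thm:SQC_in_HHG} to pass between $\mc{C}_\mf{H}Q$-distance and the induced sub-HHS structure on $\coset{Q}$.

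Finally, uniqueness in $\mf{H}$ is inherited from uniqueness in $\mf{S}$ since all $\mf{S}$-distances are recoverable from $\mf{H}$-distances up to an additive error controlled by the $\tau_Q$-data (electrification can only produce bounded collapses that are measured by $\tau_Q$). Equivariance is straightforward: $G$ acts on the cosets of the $H_i$ with exactly $k$ orbits, and all new hyperbolic spaces, projections, and relative projections are defined $G$-equivariantly from the $G$-equivariant data of $\mf{S}$, so $(G, \mf{H})$ is a $G$-HHS; if $\mf{S}$ is an HHG (that is, $\mf{S}$ itself has finitely many $G$-orbits), then $\mf{H} = \mf{S} \sqcup \mf{Q}$ also has finitely many $G$-orbits, yielding the HHG conclusion. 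I expect the main obstacle to be the consistency and bounded geodesic image axioms in the three new cases (two $\mf{Q}$-domains, a $\mf{Q}$-domain with an $\mf{S}$-domain, and the interaction of the new $\beta$-projections with existing $\rho$-projections), each of which requires a careful translation between the hyperbolic geometry of $\mc{C}_\mf{S}S$ and the gated subspace $\pi_S(\coset{Q})$.
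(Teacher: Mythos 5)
Your outline follows the paper's broad strategy of verifying the HHS axioms one by one, and several steps (the combinatorial axioms, consistency between distinct elements of $\mf{Q}$, equivariance, the HHG "moreover" clause) line up well with what the paper actually does. However, there is a genuine gap in your case coverage for the two hardest axioms, Bounded Geodesic Image and Large Links, which stems from not engaging with the fact that the $\nest$--maximal hyperbolic space itself changes: $\mc{C}_\mf{H}S$ is the electrification of $\mc{C}_\mf{S}S$, so its geodesics are not $\mc{C}_\mf{S}S$--geodesics.

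Concretely, you verify BGI only for $V \propnest Q$ with $Q \in \mf{Q}$, but the paper must also handle $Q \propnest S$ and $V \propnest S$ for $V \in \mf{S} - \{S\}$. The latter case is subtle precisely because a large $\mc{C}V$--distance only forces a $\mc{C}_\mf{S}S$--geodesic near $\rho_S^V$; one then needs to know that this geodesic remains a uniform quasi-geodesic in the electrified space $\mc{C}_\mf{H}S$ to conclude anything about $\mc{C}_\mf{H}S$--geodesics. The paper gets this from Spriano's factor system structure $\mf{F} = \{S\}\cup\mf{Q}$ on $\mc{C}_\mf{S}S$, under which $\mc{C}_\mf{S}S$--geodesics are uniform hierarchy paths (hence unparametrized quasi-geodesics in both the electrified space and each $\mc{C}_\mf{H}Q$). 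Your proposal replaces the factor system with a general-purpose "standard electrification theory" citation for hyperbolicity, which is fine for hyperbolicity alone, but you then lose the machinery that the paper uses to control geodesic behavior in $\mc{C}_\mf{H}S$ and $\mc{C}_\mf{H}Q$. This also affects your BGI argument for $V\propnest Q$: the gate $\mf{p}_{\pi_S(\coset{Q})}$ being coarsely Lipschitz is not enough; you need that closest-point projection onto a quasiconvex set takes (unparametrized) quasi-geodesics to unparametrized quasi-geodesics, which is where the hierarchy-path fact is actually used.

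The larger omission is Large Links at $S$. This is arguably the most delicate part of the paper's proof. Because $\mf{S}$ is maximized, $\mc{C}_\mf{S}S$ is the graph on $G$ with edges joining points in a common $\P_W$, and $\mc{C}_\mf{H}S$ adds edges joining points in a common $\coset{Q}$; the paper walks the vertices of a $\mc{C}_\mf{H}S$--geodesic and uses BGI (already established) together with Proposition~\ref{prop:product_regions}\eqref{P_prop:large_projection} and Lemma~\ref{lem:h_nesting_and_product_regions} to produce the link domains, together with containers $U_i$ to absorb the $W \perp V_i$ branch. None of this appears in your proposal, and it is not a case that reduces to Large Links at a $Q \in \mf{Q}$ or to the factor-system structure alone, because the count $m$ must be controlled by $d_{\mc{C}_\mf{H}S}(\tau_S(x),\tau_S(y))$, a distance in the new, electrified space. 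Until Large Links at $S$ and BGI for $V\propnest S$ are supplied, the proposal does not establish that $\mf{H}$ is an HHS structure.
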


\begin{rem}
	The moreover clause applies to a number of natural examples,  
	including 
	the standard HHG structures on RAAGs and on mapping class 
	groups, since these are maximized HHG structures.
\end{rem}

Before proving Theorem \ref{thm:adding_in_hyperbolically_embedded_subgroups} we record two short observations. First, adding the hyperbolically embedded subgroups to the structure does not change the HHS boundary. Second, when $G$ is hyperbolic relative to the $H_i$, the structure $\mf{H}$ has isolated orthogonality.

\begin{cor}\label{cor:adding_hyp_embedded+boundary}
	Let $(G,\mf{S})$ be a maximized $G$--HHS, then let $\mf{H}$ be the HHG structure from Construction \ref{con:HHG+hyp_embedded} for a collection of hyperbolically embedded subgroups $\{H_1,\dots, H_k\}$. There is a homeomorphism $\Phi \colon G \cup \partial (G,\mf{S}) \to G \cup \partial (G,\mf{H})$ so that $\Phi$ restricts to the identity on $G$ and to both a homeomorphism and a simplicial isomorphism  $\partial(G,\mf{S}) \to \partial(G,\mf{H})$.
\end{cor}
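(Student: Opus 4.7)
The approach exploits the fact that Construction \ref{con:HHG+hyp_embedded} modifies $\mf{S}$ in a tightly controlled way: for every $V \in \mf{S}-\{S\}$, the hyperbolic space, projection map, and all relations are identical in $\mf{S}$ and $\mf{H}$. Only the top-level space is altered (by electrifying each $\pi_S(\coset{Q})$ to form $\mc{C}_\mf{H}S$), and new domains $Q \in \mf{Q}$ are added with $\mc{C}_\mf{H}Q$ equal to the convex hull of $\pi_S(\coset{Q})$ in $\mc{C}_\mf{S}S$. The task therefore reduces to showing that $\partial \mc{C}_\mf{S}S$ is naturally redistributed across $\partial \mc{C}_\mf{H}S$ and the various $\partial \mc{C}_\mf{H}Q$, and that this redistribution respects both the simplicial structure and the topology.

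First I would define $\Phi$ on vertices. For $V \in \mf{S}^\infty - \{S\}$, $\Phi$ is the identity on $\partial \mc{C}_\mf{S}V = \partial \mc{C}_\mf{H}V$. For $p \in \partial \mc{C}_\mf{S}S$, the hyperbolic embeddedness of $\{H_1,\dots,H_k\}$ gives almost malnormality (Theorem \ref{thm:hyperbolically_embedded_in_HHGs}); combined with Corollary \ref{cor:properties_of_hyp_embed_cosets} and Lemma \ref{lem:intersection_of_almost_malnormal}, this forces the sets $\Lambda(\pi_S(\coset{Q}))$ in $\partial \mc{C}_\mf{S}S$ to be pairwise disjoint (otherwise distinct cosets would have infinite-diameter coarse intersection). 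Define $\Phi(p)$ to be the corresponding vertex of $\partial \mc{C}_\mf{H}Q$ when $p \in \Lambda(\pi_S(\coset{Q}))$, and $p$ itself viewed in $\partial \mc{C}_\mf{H}S$ otherwise. Bijectivity follows from the classical fact that electrifying a malnormal family of quasiconvex subsets of a hyperbolic space removes exactly their limit sets from the Gromov boundary, which I would verify directly using sequences and the definition of the electrified metric.

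Next I would check that $\Phi$ is a simplicial isomorphism. The $\nest$--maximal element $S$ is orthogonal to nothing in either structure, and by construction each $Q \in \mf{Q}$ has empty orthogonal complement in $\mf{H}$. Thus vertices from $\partial \mc{C}_\mf{S}S$, $\partial \mc{C}_\mf{H}S$, and each $\partial \mc{C}_\mf{H}Q$ appear only as isolated $0$--simplices in the respective simplicial boundaries. All higher-dimensional simplices involve only elements of $\mf{S}-\{S\}$, whose orthogonality relations are identical in $\mf{S}$ and $\mf{H}$, so simplices of every dimension match.

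The main obstacle is the topological step: extending $\Phi$ via the identity on $G$ and verifying it is a homeomorphism. The plan is to use Lemmas \ref{lem:convergence_of_interior_points} and \ref{lem:bounded_difference_convergence}, which characterize convergence of sequences to boundary points via projections. Since the projections to $V \in \mf{S}-\{S\}$ are unchanged, and since any boundary point $p$ with $S \in \supp(p)$ must have $\supp(p) = \{S\}$ (as $S$ is orthogonal to nothing), the only case requiring analysis is $p \in \partial \mc{C}_\mf{S}S$. When $p \notin \bigcup_Q \Lambda(\pi_S(\coset{Q}))$, the sequence $\pi_S(x_n)$ continues to diverge in the electrified metric, so $\tau_S(x_n) \to \Phi(p) \in \partial \mc{C}_\mf{H}S$. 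When $p \in \Lambda(\pi_S(\coset{Q}))$, the sequence $\pi_S(x_n)$ eventually enters a neighborhood of $\pi_S(\coset{Q})$, so $\tau_S(x_n)$ is bounded in $\mc{C}_\mf{H}S$ while $\tau_Q(x_n) = \mf{p}_{\pi_S(\coset{Q})}(\pi_S(x_n))$ converges to $\Phi(p)$ in $\partial \mc{C}_\mf{H}Q$. Comparing neighborhoods of $\Phi(p)$ in $\partial(G,\mf{H})$ to those of $p$ in $\partial(G,\mf{S})$ via this projection-based description yields continuity of $\Phi$, and compactness of $G \cup \partial(G,\mf{S})$ (which uses that $G$ is proper) upgrades a continuous bijection to a homeomorphism.
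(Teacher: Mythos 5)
Your proposal is correct in outline but takes a genuinely different — and much more laborious — route than the paper. The paper's proof is two lines: since $\mf{H}$ and $\mf{S}$ have identical orthogonality relations, their maximizations (in the sense of \cite[Theorem 3.7]{ABD}) coincide, and Theorem~\ref{thm:maximization_invariance}\eqref{invar:boundary} then immediately gives the desired homeomorphism and simplicial isomorphism. You instead build the map $\Phi$ by hand: redistributing $\partial\mc{C}_{\mf S}S$ across $\partial\mc{C}_{\mf H}S$ and the $\partial\mc{C}_{\mf H}Q$, using almost malnormality and Corollary~\ref{cor:properties_of_hyp_embed_cosets} to see the $\Lambda(\pi_S(\coset{Q}))$ are disjoint, and observing that $S$ and each $Q\in\mf Q$ have empty orthogonal complement so only isolated vertices change. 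That outline is sound, and the factor-system perspective (via Spriano) confirms the boundary decomposition $\partial\mc{C}_{\mf S}S = \partial\mc{C}_{\mf H}S \sqcup \bigsqcup_Q \partial\mc{C}_{\mf H}Q$. The place where your proposal is materially under-detailed is the continuity step: the sentence ``Comparing neighborhoods of $\Phi(p)$ in $\partial(G,\mf H)$ to those of $p$ in $\partial(G,\mf S)$ \dots\ yields continuity'' compresses what would be a substantial argument involving the $\mc{A}_r(\cdot)$ neighborhood bases and the boundary projections $\partial\pi_U$ — exactly the kind of case analysis carried out explicitly in Proposition~\ref{prop:isolated_case_Phi_is_cont} for the Bowditch quotient. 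The compactness trick (continuous bijection of compact Hausdorff spaces is a homeomorphism) is the right way to avoid verifying continuity of the inverse, so only one direction needs the work. In short: the paper buys brevity by investing in the maximization machinery from \cite{ABR:maximization}; your approach buys independence from that machinery at the cost of redoing the hard topological comparison, which as written remains a sketch rather than a proof.
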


\begin{proof}
	Since $\mf{H}$ has the same orthogonality relations as $\mf{S}$, the maximization of $\mf{H}$ is identical to the maximization of $\mf{S}$; see \cite[Theorem 3.7]{ABD}. The corollary is therefore a consequence of Theorem \ref{thm:maximization_invariance}\eqref{invar:boundary}.
\end{proof}

\begin{cor}\label{cor:rel_hyp_structure}
	Let $(G,\mf{S})$ be a maximized $G$--HHS that is hyperbolic
	relative to a finite collection of subgroups $\{H_1,\dots, H_k\}$.
	Let $\mf{H}$ be the $G$--HHS structure of Construction
	\ref{con:HHG+hyp_embedded} obtained by adding the cosets of the
	subgroups $\{H_1,\dots, H_k\}$.  The structure $\mf{H}$ has
	orthogonality isolated by $\mf{Q}$, and every non-$\nest$--maximal
	domain in $\mf H$ nests into some $Q\in\mf Q$.
\end{cor}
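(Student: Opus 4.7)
The plan is to verify the three defining conditions of isolated orthogonality for $\mf{H}$ with isolator $\mf{Q}$, plus the ``moreover'' clause, leveraging the characterization of $\nest$ in $\mf{H}$ from Lemma \ref{lem:h_nesting_and_product_regions} together with the relatively hyperbolic structure on $G$. Condition (1) of Definition \ref{defn:isolated_orthogonality} is immediate from Construction \ref{con:HHG+hyp_embedded}, since each $Q \in \mf{Q}$ is properly nested into the $\nest$--maximal domain $S$, so $S \notin \mf{Q}$.

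For condition (2), suppose $V \perp W$ in $\mf{H}$. Inspecting the relations in Construction \ref{con:HHG+hyp_embedded}, orthogonality only occurs among elements of $\mf{S}$, so $V, W \in \mf{S}$ with $V \perp W$ in $\mf{S}$. Because $\mf{S}$ is maximized, Theorem \ref{thm:maximization}(3) makes $\P_W$ quasi-isometric to a product of two infinite-diameter quasi-geodesic spaces; since $G$ is hyperbolic relative to $\{H_1, \dots, H_k\}$, Theorem \ref{thm:thick_subsets_of_rel_hyp} then places $\P_W$ in a uniform neighborhood of some coset $\coset{Q}$ with $Q \in \mf{Q}$. The implication (6)$\Rightarrow$(1) of Lemma \ref{lem:h_nesting_and_product_regions} gives $W \nest Q$ in $\mf{H}$. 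To place $V$ into the same $Q$, I would pick an unbounded $V' \nest V$ (available because $\mf{S}$ is maximized); since $V' \nest V$ and $V \perp W$, orthogonality descends to give $V' \perp W$, so $V' \in \mf{S}_W^\perp \cap \mf{S}^\infty$. A partial-realization argument (or the product structure of Proposition \ref{prop:product_regions}(4)) then shows $\pi_{V'}(\P_W)$ is coarsely onto $\mc{C}V'$, whence $\pi_{V'}(\coset{Q})$ is unbounded. The implication (4)$\Rightarrow$(1) of Lemma \ref{lem:h_nesting_and_product_regions} now yields $V \nest Q$, and proper nesting follows because $V, W \in \mf{S}$ while $Q \in \mf{Q}$.

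For condition (3), I would split on whether $W \in \mf{Q}$ or $W \in \mf{S}$. If $W \in \mf{Q}$, then since distinct elements of $\mf{Q}$ are transverse in $\mf{H}$, $W \nest Q_i$ forces $W = Q_1 = Q_2$. If $W \in \mf{S}$, then Lemma \ref{lem:h_nesting_and_product_regions}(5) gives $\P_W \subseteq \mc{N}_B(\coset{Q_1}) \cap \mc{N}_B(\coset{Q_2})$, and $\P_W$ has infinite diameter thanks to maximization (noting $W \neq S$, since $S$ is $\nest$--maximal in $\mf{H}$). Since Theorem \ref{thm:hyperbolically_embedded_in_HHGs} guarantees that $\{H_1, \dots, H_k\}$ is almost malnormal, Lemma \ref{lem:intersection_of_almost_malnormal} then forces $\coset{Q_1} = \coset{Q_2}$, and hence $Q_1 = Q_2$. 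The moreover clause is handled in the same spirit: for $V \in \mf{Q}$ take $Q = V$, and for $V \in \mf{S} - \{S\}$, maximization produces $W \in \mf{S}_V^\perp \cap \mf{S}^\infty$, so $V \perp W$ and condition (2) delivers some $Q \in \mf{Q}$ with $V \propnest Q$.

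I expect the main obstacle to be the synchronization step in condition (2): naively applying Theorem \ref{thm:thick_subsets_of_rel_hyp} to $\P_V$ and $\P_W$ separately would yield cosets $Q_V$ and $Q_W$ that could a priori differ. The fix is to run the argument once on $\P_W$ and then transfer nesting information to $V$ through an unbounded $V' \nest V$ orthogonal to $W$, so that a single coset $\coset{Q}$ simultaneously sees large projections from both $\mc{C}W$ and $\mc{C}V'$, forcing the common peripheral.
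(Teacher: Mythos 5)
Your proposal is correct and follows essentially the same route as the paper's proof: use maximization plus Theorem \ref{thm:thick_subsets_of_rel_hyp} to place $\P_W$ in a neighborhood of a single coset $\coset{Q}$, then propagate nesting to both $V$ and $W$ via Lemma \ref{lem:h_nesting_and_product_regions}, and use almost malnormality together with infinite diameter of $\P_W$ for the uniqueness condition. The only difference is cosmetic: for $V \propnest Q$ you pass through an unbounded $V' \nest V$ with $V' \perp W$ and invoke implication (4)$\Rightarrow$(1), while the paper selects an unbounded $U \nest W$ (hence $U \perp V$, with $\pi_U|_{\coset{Q}}$ coarsely onto) and reads off $V \propnest Q$ directly from the definition of nesting in $\mf{H}$; both are two sides of the same coin.
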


\begin{proof}
	Let $W,V \in \mf{H}$ with $W \perp V$. Since no two elements of $\mf{Q}$ are orthogonal, $W$ and $V$ must both be in $\mf{S}$. Since $\mf{S}$ is maximized, each $\P_W$ is uniformly quasi-isometric to the product of two infinite diameter quasi-geodesic spaces (Theorem \ref{thm:maximization}\eqref{max:wide}). Hence, Theorem \ref{thm:thick_subsets_of_rel_hyp} says each $\P_W$ must then be contained in a regular neighborhood of a coset $\coset{Q}$ for some $Q \in \mf{Q}$. Thus, $\pi_{U} \vert_{\coset{Q}}$ is coarsely onto for all $U \in \mf{S}_W$. Since $\mf{S}_W \cap \mf{S}^{\infty} \neq \emptyset$ and every element of $\mf{S}_W$ is orthogonal to $V$,  this implies that $W,V \propnest Q$ by Lemma \ref{lem:h_nesting_and_product_regions}.
	
	Now suppose $W \in \mf{H}$ is nested into both $Q, R \in \mf{Q}$.  Since all elements of $\mf{Q}$ are transverse, $W$ must be in $\mf{S}$. By Lemma \ref{lem:h_nesting_and_product_regions}, this implies $\P_W$ is contained in a regular neighborhood of both $\coset{Q}$ and $\coset{R}$. Because $\diam(\P_W) = \infty$, Lemma \ref{lem:intersection_of_almost_malnormal} says $\coset{Q} = \coset{R}$. Hence $Q = R$.
	
	For the last clause, note that because $\mf{S}$ is maximized, every non-$\nest$--maximal element of $\mf{S}$ is orthogonal to some domain of $\mf{S}$. Thus, the first paragraph above shows that every non-$\nest$--maximal $W \in \mf{S}$ nests into some $Q \in \mf{Q}$. 
\end{proof}

We now prove Theorem
\ref{thm:adding_in_hyperbolically_embedded_subgroups}.  A reader 
focused on the applications to the boundary, may skip it without a
loss of continuity for the remainder of the paper.

\begin{proof}[Proof of Theorem \ref{thm:adding_in_hyperbolically_embedded_subgroups}]
	The desired equivariance and finite orbit properties in Definition \ref{defn:nearlyHHG} of a $G$--HHS are satisfied for $\mf{H}$ by a combination of the fact that $\mf{S}$ is a $G$--HHS, the closet point projection in a hyperbolic spaces is coarsely equivariant under isometries, and that $\mf{Q}$ indexes a collection of cosets of a finite number of subgroups. Thus, it  suffices to prove that $\mf{H}$ is an HHS structure for $G$.

	We start by observing that $\mc{C}_{\mf S} S$ can be equipped with an HHS structure using the subsets $\pi_S(\coset{Q})$.  For each $Q \in \mf{Q}$, the set $\pi_S(\coset{Q})$ is uniformly quasiconvex in $\mc{C}_{\mf S} S$ because each $\coset{Q}$ is uniformly hierarchically quasiconvex in $(G,\mf{S})$. Further, if $Q \neq R$, then the closest point projection of $\pi_S(\coset{Q})$ onto $\pi_S(\coset{R})$ is uniformly bounded in $\mc{C}_{\mf S} S$  by Corollary \ref{cor:properties_of_hyp_embed_cosets}. Hence, the collection $\{\pi_S(\coset{Q}) : Q\in\mathfrak Q\}$ forms  what Spriano calls a  \textit{factor system} of $\mc{C}_{\mf S} S$; see \cite[Section 3]{Spriano_hyperbolic_I}. In particular, Spriano proves that  $\mc{C}_\mf{S}S$ has a hierarchically hyperbolic structure with index set $\mf{F} = \{S\} \cup \mf{Q}$, where the hyperbolic spaces are either the electrified space $\mc{C}_\mf{H}S$ or  $\mc{C}_\mf{H}Q$, the convex hull of $\pi_S(\coset{Q})$.  Each  element of $\mf{Q}$ is nested into $S$ and every pair of elements of $\mf{Q}$ are transverse. The projections and relative projections are all given by either inclusion or closest point projection in $\mc{C}_{\mf{S}}S$. This proves that $\mc{C}_\mf{H}S$ and each $\mc{C}_\mf{H} Q$ are uniformly hyperbolic, and will be useful when verifying the remaining axioms for $\mf{H}$ to be an $G$--HHS structure for $G$.

	Since $\mf{H}$ inherits many of the spaces, projection, and relations from $\mf{S}$, we only need to verify the HHS axioms for the domains in $\{S\} \cup \mf{Q}$. Let $B$ be larger than the constant from the bounded domain dichotomy for $\mf{S}$ and the constants from Corollary \ref{cor:properties_of_hyp_embed_cosets} and Lemma \ref{lem:h_nesting_and_product_regions}. Let $E \geq 1$ be the maximum of the hierarchy constants from both $\mf{S}$ and $\mf{F}$.
	
	\textbf{Hyperbolic spaces and projections:}  The hyperbolicity of $\mc{C}_{\mf H} S$ and each $\mc{C}_{\mf H} Q$ are shown above, and  $\tau_Q$ is uniformly coarsely Lipschitz because the maps $\pi_S$ and $\mf{p}_{\pi_S(\coset{Q})}$ are.
	
	\textbf{Nesting and finite complexity:} We need to verify that $\nest$ is still a partial order. It suffices to check that $\nest$ is still transitive when $V \propnest W$ in $\mf{S}$ and $W \propnest Q$ in $\mf{H}$ for some $Q\in\mf Q$.  In this case, there exists $U\in\mf{S}^\perp_W \cap \mf{S}^{\infty}$ so that $\pi_{U}\vert_{\coset{Q}}$ is coarsely onto. Since $V \propnest W$, we have $V \perp U$ as well. Hence $V \nest Q$ as desired. The maximal length of a $\propnest$--chain in $\mf{H}$ is at most 1 longer than the maximal length of a $\propnest$--chain in $\mf{S}$.
	
	The new upward relative projection are all bounded diameter, as they are either electrified subsets or  the closest point projection of a bounded diameter subset of $\mc{C}_{\mf S}S$.  
	
	\textbf{Orthogonality and containers:} Since the orthogonality relations in $\mf{S}$ and $\mf{H}$ are identical, these axioms are inherited from $\mf{S}$.
	
	\textbf{Transversality:} We only need to verify that $\beta_{Q}^{R}$, $\beta_V^{Q}$, and $\beta_{Q}^V$ have uniformly bounded diameter whenever $Q,R \in \mf{Q}$ and $Q \trans R$ or  $Q \in\mf{Q}$, $V\in\mf{S}$, and $Q \trans V$. 
	\begin{itemize}
		\item 	Since $\diam(\rho_S^V) \leq E$, the coarse Lipschtizness of $\mf{p}_{\pi_S(\coset{Q})}$ ensures $\beta_{Q}^V = \mf{p}_{\pi_S(\coset{Q})}\left(\rho_S^V \right)$ is uniformly bounded. 
		\item 	For $\beta_V^{Q} = \tau_V(\coset{Q}) =\pi_V(\coset{Q})$, observe that because $B$ is larger than the constant from Lemma \ref{lem:h_nesting_and_product_regions},  $\diam(\pi_V(\coset{Q})) >B$ would imply  $V \propnest Q$. Hence $\diam(\beta_V^Q) = \diam(\pi_V(\coset{Q})) \leq B$ when $V \trans Q$.
		\item By Corollary \ref{cor:properties_of_hyp_embed_cosets}, $\diam(\beta^R_Q)= \diam(\mf{p}_{\pi_{S}(\coset{Q})}(\pi_S(\coset{R})) \leq B$.
	\end{itemize}

	\textbf{Uniqueness:}  Let $x,y \in G$,   and suppose there exists $D\geq 0$ so that $d_V(\tau_V(x),\tau_V(y))\leq D$ for each $V \in \mf{H}$. By the uniqueness axiom in $(\mc{C}_{\mf{S}}S,\mf{F})$, there exists a bound $D' = D'(D,\mf{F})$ on the $\mc{C}_{\mf{S}}S$--distance between $\pi_S(x)$ and $\pi_S(y)$. Since $\tau_V = \pi_V$ for all $V \in \mf{S}-\{S\}$, the uniqueness axiom  for $(G,\s)$ then implies there exists a $D''=D''(D,\mf{S})$  bounding the distance between $x$ and $y$ in  $G$.
	
	\textbf{Bounded Geodesic Image:} We only need to verify the axiom when one of the two domains involved is either $S$ or $Q \in \mf{Q}$. Let $x,y \in G$. 
	
	We first handle the case of $ Q \nest S$ for some $Q \in \mf{Q}$. Assume that $d_{Q}(\tau_{Q}(x),\tau_{Q}(y)) >E$. By the bounded geodesic image axiom in $(\mc{C}_{\mf S}S,\mf{F})$, the $\mc{C}_{\mf H} S$--geodesic from $\tau_S(x)$ to $\tau_S(y)$ passes $E$--close to the electrified subset $\tau_S(\coset{Q}) = \beta_S^{Q}$.
	
	Next we verify the axiom when $V \in \mf{S}$ and $V \propnest S$ in $\mf{H}$. Assume that $d_V(\tau_V(x),\tau_V(y)) >E$. The bounded geodesic image axiom in $(G,\mf{S})$ implies the $\mc{C}_{\mf S} S$--geodesic from $\pi_S(x)$ to $\pi_S(y)$ intersects the $E$--neighborhood of $\rho_S^V$. Since $\mc{C}_{\mf S} S$ is hyperbolic, every geodesic in $ \mc{C}_{\mf S} S$ is a uniform hierarchy path in $(\mc{C}_{\mf S} S, \mf{F})$; see \cite[Proposition 3.5]{Spriano_hyperbolic_II}. Thus this geodesic, when viewed as a path in $\mathcal C_{\mf H} S$,  is a uniform quality quasi-geodesic connecting $\tau_S(x)$ and $\tau_S(y)$, and it intersects the $E$--neighborhood of $\beta_S^V = \rho_S^V$ as the map $\mc C_\frak S S \to \mc C_\frak H S$ is 1--Lipschitz.   Again using that $\mc C_{\mf H} S$ is hyperbolic, this implies every $\mc{C}_{\mf H} S$--geodesic from $\tau_S(x)$ to $\tau_S(y)$ will intersect a uniform neighborhood of $\beta_S^V$. 
	
	The last case is when $V \propnest Q$ for some $V\in\mf{S}$ and $Q \in \mf{Q}$. Assume $d_V(\tau_V(x),\tau_V(y)) >E$. Let $\gamma$ be a $\mc{C}_{\mf S} S$--geodesic from $\pi_S(x)$ to $\pi_S(y)$. As described in the previous paragraph, $\gamma$ intersects the $E$--neighborhood of $\rho_S^V$. Since geodesics in $ \mc{C}_{\mf S} S$ are  uniform hierarchy paths in $(\mc{C}_{\mf S} S, \mf{F})$,  the path $\mf{p}_{\pi_S(\coset{Q})}\circ \gamma = \tau_Q \circ \gamma$ is a uniform quality unparametrized quasi-geodesic in $\mc{C}_{\mf{H}}Q$.  As $\mf p$ is uniformly Lipschitz,  the projection $\mf{p}_{\pi_S(\coset{Q})}\circ\gamma$ passes through a uniform neighborhood of $\beta_{Q}^V = \mf{p}_{\pi_S(\coset{Q})}(\rho_S^V)$. Since $\mc{C}_{\mf{H}}Q$ is hyperbolic,  this implies every $\mc{C}_{\mf{H}}Q$--geodesic from $\tau_{Q}(x)$ to $\tau_{Q}(y)$ passes through a uniform neighborhood of $\beta_{Q}^V$.
	
	\textbf{Large Links:} For all  $W \in \mf{S} -\{S\}$, this axioms follows immediately from the large link axiom in $(G,\mf{S})$. Thus, we only need to verify the axiom for $S$ and domains in $\mf{Q}$.
	
	Let $x,y \in G$ and consider first $Q \in \mf{Q}$. 	
	 Since $\coset{Q}$ is hierarchically quasiconvex in $(G,\mf{S})$, there exists a gate map $\gate_{\coset{Q}} \colon G \to \coset{Q}$. Let $x' = \gate_{\coset{Q}}(x)$ and $y' = \gate_{\coset{Q}}(y)$.
	  For all $W \in \mf{S} -\{S\}$, if $W \nest Q$, then $\tau_{W} \vert_{\coset{Q}} = \pi_W \vert_{\coset{Q}}$ is coarsely onto by Lemma \ref{lem:h_nesting_and_product_regions}.
	   Hence there exists $C \geq 0$, depending only on $\mf{S}$ and $\{H_1, \dots, H_k\}$, so that $$\diam(\tau_W(x') \cup \tau_W(x)) \leq C \text{ and } \diam(\tau_W(y') \cup \tau_W(y)) \leq C$$ for each $W  \in \mf{H}_{Q}$. We can further assume that  $$\diam(\tau_{Q}(x) \cup \tau_{Q}(x')) \leq C \text{ and} \diam(\tau_{Q}(y) \cup \tau_{Q}(y'))\leq C$$ because $\pi_S \circ \gate_{\coset{Q}}$ uniformly coarsely agrees with $\tau_{Q} =\mf{p}_{\pi_S(\coset{Q})} \circ \pi_S$.
	
	By applying the large links axiom of $\mf{S}$ to $x'$ and $y'$, we produce $V_1,\dots,V_m \in \mf{S}-\{S\}$ so that $m \leq Ed_{\mc{C}_\mf{S}S}(\pi_S(x'),\pi_S(y'))+E$ and, for all $W \in \mf{S}-\{S\}$, either $W \nest V_i$ for some $i \in \{1,\dots,m\}$ or   $$d_W(\pi_W(x'),\pi_W(y')) \leq E+B.$$
	Without loss of generality, we can assume that for each $i \in \{1,\dots,m\}$, there exist $W \nest V_i$ so that $d_W(\pi_W(x'),\pi_W(y')) > E+B$. 
	In particular,  by Lemma \ref{lem:h_nesting_and_product_regions}, we may assume each $V_i$ is nested into $Q$ in $\mf{H}$.  Since $$d_W(\tau_W(x),\tau(y)) \geq d_W(\pi_W(x'),\pi_W(y')) - 2C,$$ for every $W \in \mf{H}_{Q}$, either $d_W(\tau_W(x),\tau_W(y)) \leq E+B+2C$ or $W \nest V_i$.	 
	Since $$d_{Q}(\tau_{Q}(x'),\tau_{Q}(y')) = d_{\mc{C}_\mf{S}S}(\pi_S(x'),\pi_S(y'))$$ and $$d_{Q}(\tau_{Q}(x),\tau_{Q}(y)) \geq d_{Q}(\tau_{Q}(x'),\tau_{Q}(y')) - 2C,$$  we have $m \leq Ed_{Q}(\tau_{Q}(x),\tau_{Q}(y)) +E +2C$, which completes the proof of the large links axiom for $Q \in \mf{Q}$.
	
	Now consider the domain $S$. 	Since $\mf{S}$ is maximized, $\mc{C}_{\mf{S}}S$ is the graph that has the elements of $G$ as vertices with edges between two vertices $x_1$ and $x_2$ if $x_1,x_2 \in \P_W$ for some $W\in\mf{S}-\{S\}$; see Theorem \ref{thm:maximization}\eqref{max:electrification}. Moreover, $\mc{C}_{\mf{H}}S$ is a copy of this graph $\mc{C}_{\mf{S}}S$ with additional edges between two vertices $x_1$ and $x_2$ if $x_1,x_2 \in \coset{Q}$ for some $Q \in \mf{Q}$.
	
	Let $x,y \in \mc{X}$ and let $\tau_S(x)= v_0, v_1,\dots, v_m = \tau_S(y)$ be the vertices of the $\mc{C}_\mf{H}S$--geodesic from $\tau_S(x)$ to $\tau_S(y)$. Each edge between $v_{i-1}$ and $v_{i}$ then corresponds to either a coset $\coset{Q}$ or a product region $\P_W$. Let $V_i$ be the elements of $\mf{H}$ corresponding to the edge between $v_{i-1}$ and $v_i$.  If $V_i \in \mf{S}$, let $U_i$ be a container for $V_i$ in $\mf{S}$ (note, $\mf{S}_{V_i}^\perp \neq \emptyset$ because $\mf{S}$ is maximized). By construction $2m =2d_{\mc{C}_\mf{H}S}(\tau_S(x),\tau_S(y))$. We will show that for every $W \in \mf{H}- \{S\}$, either $W$ is nested into some $V_i$ or $U_i$, or $d_W(\tau_W(x),\tau_W(y))$ is uniformly bounded. 
	
	Since we have already verified that $\mf{H}$ satisfies the bounded geodesic image axiom, let $C \geq 0$ be the maximum of the constant from the bounded geodesic image axiom for $\mf{H}$ and the bound on the diameters of $\beta_S^W$ for each $W \in \mf{H} -\{S\}$. Let $W \in \mf{H} -\{S\}$. Since $\mf{S}$ has the bounded domain dichotomy, we can assume $W \in \mf{S}^{\infty}$. 
	If $d_{\mc{C}_\mf{H}S}(v_i,\beta_S^W) > C+3$ for all $v_i \in \{v_0,\dots,v_m\}$, then $d_W(\tau_W(x),\tau_W(y)) \leq C$ by the bounded geodesic image axiom. 
	Otherwise, let $j$ be the minimal element of $\{0,\dots,m\}$ so that $d_{\mc{C}_\mf{H}S}(v_j,\beta_S^W) \leq C+3$. By construction, if $i < j$ or $i \geq j+3C +6$, then $d_{\mc{C}_\mf{H}S}(v_i,\beta_S^W) >C+3$. 
	Hence, the bounded geodesic image axiom says \[ \diam(\tau_W(x) \cup \tau_W(v_i)) \leq C \text{ for } i < j \] and \[\diam(\tau_W(y) \cup \tau_W(v_i)) \leq C \text{ for } i \geq j+3C+6.\]  
	Thus we have \[d_W(\tau_W(x),\tau_W(y)) \leq \sum_{i=j}^{j+3C +6} \diam(\tau_W(v_i) \cup \tau_W(v_{i+1}) ) +2C.\] 
	Hence, there exists $C' \geq 0$ depending only on $C$ and $\mf{S}$ so that if $d_W(\tau_W(x),\tau_W(y)) >C'$, then  for at least one $i \in\{j,\dots,j+3C+6\}$, we have $\diam(\tau_W(v_i) \cup \tau_W(v_{i+1} ) >3E + B$.

	If $V_{i+1} = Q \in \mf{Q}$, then $v_i$ and $v_{i+1}$ are in the coset $\coset{Q}$, implying $\diam(\tau_W(\coset{Q})) \geq B$. By Lemma \ref{lem:h_nesting_and_product_regions}, this implies $W \nest Q = V_{i+1}$. On the other hand,  if $V_{i+1} \in \mf{S}$, then  $v_i,v_{i+i} \in \P_{V_{i+1}}$. Hence $\diam(\tau_W(\P_{V_{i+1}}))> 3E$, which implies $W \nest V_{i+1}$ or $W \perp V_{i+1}$ by Proposition \ref{prop:product_regions}\eqref{P_prop:large_projection}.    Thus, for all $W \in \mf{H}- \{S\}$, either $d_W(\tau_W(x),\tau_W(y)) <C'$ or there is $i \in \{1,\dots,m\}$ so that $W \in \mf{H}_{V_{i+1}} \cup \mf{H}^\perp_{V_{i+1}}$. Since $U_{i+1}$ is a container for $V_{i+1}$ when $\mf{H}^\perp_{V_{i+1}} \neq \emptyset$, this means $W$ is nested into either $V_{i+1}$ or $U_{i+1}$ whenever  $d_W(\tau_W(x),\tau_W(y)) >C'$.

	\textbf{Consistency:} Because many of the relative projections in $\mf{H}$ are the same as the relative projections in either $\mf{S}$ or $\mf{F}$, we only need to verify the first inequality for $V \in \mf{S} -\{S\}$ and $Q \in \mf{Q}$ with $V \trans Q$. Suppose $x \in \mathcal X$ with $d_V(\tau_V(x), \beta_V^{Q}) >E$. Let $y$ be any point in $\coset{Q}$. Since $\beta_V^{Q} = \tau_{V}(\coset{Q})$, we have $d_V(\tau_V(x), \tau_V(y)) = d_V(\pi_V(x),\pi_V(y)) >E$.  By the bounded geodesic image axiom in $\mf{S}$, this implies every $\mc{C}_{\mf S}S$--geodesic from $\pi_S(x)$ to a point in $\pi_S(\coset{Q})$ passes $E$--close to $\rho_S^V$. Hence $\mf{p}_{\pi_S(\coset{Q})}(\rho_S^V) = \beta_{Q}^V$ is uniformly close to  $\mf{p}_{\pi_S(\coset{Q})}(x)=\tau_{Q}(x)$, and the first inequality holds.
	
	For the second inequality, we only need to check the case where $V \propnest Q$ and  there is a domain $W \in \mf{H}$ so that either $Q \propnest W$ or $Q  \trans  W$ and $W \not \perp V$.  By Lemma \ref{lem:h_nesting_and_product_regions},  $\P_V$ is contained in a regular neighborhood of $\coset{Q}$ as $V \nest Q$. Now, the only way for $Q \propnest W$ is if $W = S$. In this case, $\beta_S^V = \rho_S^V$ and $\beta_S^{Q} = \tau_S(\coset{Q})$ are uniformly close in $\mc{C}_{\mf H} S$ because   $\P_V$ is contained in a regular neighborhood of $\coset{Q}$. If instead $Q  \trans  W$, then $\tau_W(\P_V)$ is contained in a uniform neighborhood of $\beta_{W}^{Q} = \tau_W(\coset{Q})$. Since $\rho_W^V = \beta_W^V$ is uniformly close to $\pi_W(\P_V) = \tau_W(\P_V)$ this implies $\beta_W^V$ and $\beta_W^{Q}$ are uniformly close. 
	
	\textbf{Partial Realization:} Since $\mf{H}$ has no new orthogonality, we only need to verify this axiom for a single domain in $\mf{Q}$. However, the definition of  $\tau_*$  plus the relations on $\mf{H}$ make this axiom   automatically satisfied for these domains.	
\end{proof}

\section{The boundary of relatively hyperbolic $G$--HHSs}\label{sec:boundary_rel_hyp}

In this section, we characterize the simplicial structure of the boundaries of relatively hyperbolic $G$--HHSs. We start with the more straightforward part, which describes the boundary of a relatively hyperbolic $G$--HHS.   We will then show that whenever this description of the boundary of a $G$--HHS holds, the group is relatively hyperbolic (Theorem~\ref{thm:boundary_criteria_for_rel_hyp}).  Recall that $\Lambda(\cdot)$ denotes the limit set of a subset of an HHS in the HHS boundary.

\begin{thm}\label{thm:boundary_of_rel_hyp_HHG}
	Let $(G,\mf{S})$ be a $G$--HHS. If $G$ is hyperbolic relative to a
	finite collection of infinite index subgroups $\{H_1,\dots,
	H_k\}$, then there exist disjoint subcomplexes $\Lambda_1,\dots,
	\Lambda_k$ of $\partial_\Delta(G,\mf{S})$ so that
	\begin{enumerate}
		\item each $H_i$ is hierarchically quasiconvex and $\Lambda_i$ is the limit set of $H_i$ in $\partial(G,\mf{S})$;
		\item for all $1 \leq i < j\leq k$ and $g,h \in G$ we have $ g\Lambda_i \cap h\Lambda_j =\emptyset$ unless $i=j$ and $g^{-1}h \in H_i$; and
		\item $\partial_{\Delta}(G,\mf{S}) - G \cdot \left( \bigsqcup\limits_{i=1}^{k} \Lambda_i \right)$ is  a non-empty set of isolated vertices.
	\end{enumerate}
\end{thm}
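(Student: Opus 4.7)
The plan is to invoke Construction \ref{con:HHG+hyp_embedded} to augment the HHS structure with the peripheral cosets as new domains, and then read off the desired subcomplexes from the resulting isolated orthogonality. Since $G$ is hyperbolic relative to $\{H_1,\dots,H_k\}$, these subgroups form a hyperbolically embedded collection, so by Theorem \ref{thm:hyperbolically_embedded_in_HHGs} each $H_i$ is strongly quasiconvex, and Theorem \ref{thm:SQC_in_HHG} then yields that each $H_i$ is hierarchically quasiconvex with respect to $\mf{S}$, giving the quasiconvexity half of item~(1). By Theorem \ref{thm:maximization_invariance} we may replace $\mf{S}$ by its maximization without altering either $\partial(G,\mf{S})$ or the class of hierarchically quasiconvex subsets, so assume $\mf{S}$ is maximized. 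Applying Theorem \ref{thm:adding_in_hyperbolically_embedded_subgroups}, we obtain a new $G$--HHS structure $\mf{H}$ in which each coset $\coset{Q}$ of each $H_i$ is recorded by a domain $Q \in \mf{Q}$; Corollary \ref{cor:adding_hyp_embedded+boundary} then provides a simplicial homeomorphism $\partial(G,\mf{S}) \to \partial(G,\mf{H})$, so it suffices to prove the conclusions for $(G,\mf{H})$. Crucially, Corollary \ref{cor:rel_hyp_structure} says $\mf{H}$ has orthogonality isolated by $\mf{Q}$, and every non-$\nest$--maximal domain of $\mf{H}$ nests into a unique element of $\mf{Q}$.

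For each $Q \in \mf{Q}$, define $\Lambda_Q$ to be the subcomplex of $\partial_\Delta(G,\mf{H})$ spanned by those vertices in $\bigcup_{W \nest Q,\, W\in \mf{H}^\infty} \partial \mc{C}_\mf{H} W$, and take $\Lambda_i := \Lambda_{Q_i}$, where $Q_i \in \mf{Q}$ indexes the identity coset of $H_i$. The $G$--equivariance of $\mf{H}$ gives $g\Lambda_i = \Lambda_{gQ_i}$, and because $gQ_i = hQ_j$ in $\mf{Q}$ exactly when $gH_i = hH_j$ as cosets in $G$, item~(2) reduces to disjointness of $\Lambda_Q$ and $\Lambda_R$ for $Q \neq R$; a common vertex would come from a domain simultaneously nested in $Q$ and $R$, forcing $Q = R$ by Definition \ref{defn:isolated_orthogonality}(3). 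The same reasoning confines every simplex of dimension $\geq 1$ of $\partial_\Delta(G,\mf{H})$ to a single $\Lambda_Q$, so any vertex outside $\bigcup_{Q\in\mf{Q}} \Lambda_Q$ must come from $\partial \mc{C}_\mf{H} S$ (where $S$ is the $\nest$--maximal element) and is automatically isolated, since $S$ has no orthogonal partner.

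The main work is verifying $\Lambda_Q = \Lambda(\coset{Q})$. In one direction, given $(x_n) \subseteq \coset{Q}$ converging to $p$ with $\supp(p) = \mf{W}$, Lemma \ref{lem:convergence_of_interior_points} forces $\pi_W(x_n) \to p_W \in \partial \mc{C}_\mf{H} W$ for each $W \in \mf{W}$, while Lemma \ref{lem:h_nesting_and_product_regions} guarantees that $\pi_W(\coset{Q})$ is uniformly bounded whenever $W \not\nest Q$; hence $\mf{W} \subseteq \mf{H}_Q$ and $p \in \Lambda_Q$. Conversely, Remark \ref{rem:products_in_h} shows $\coset{Q}$ is Hausdorff close to the product region $\P_Q$, so by Lemma \ref{lem:bounded_difference_convergence} the two sets have identical limit sets. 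To realize an arbitrary $p \in \Lambda_Q$ with support $\mf{W}\subseteq \mf{H}_Q$, one picks sequences in each $\mc{C}_\mf{H} W$ tending to $p_W$ and invokes the partial realization axiom to assemble a sequence $(y_n)$ in $\P_Q$ whose projection to each $W\in\mf{W}$ limits to $p_W$ and whose projections to other unbounded domains stay bounded; the HHS topology then gives $y_n \to p$. This passage from the hyperbolic factors to a single convergent sequence in the ambient HHS is the technical heart of the proof, leaning essentially on the fact that the HHS structure on $\P_Q$ is governed by $\mf{H}_Q$.

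Finally, the complement set of isolated vertices is non-empty because $\partial \mc{C}_\mf{H} S \neq \emptyset$: the hyperbolic space $\mc{C}_\mf{H} S$ is quasi-isometric to the Cayley graph of $G$ with each coset of each $H_i$ coned off, which, by classical relative hyperbolicity and the infinite index of each $H_i$ in $G$, has infinite diameter and non-trivial Gromov boundary. Combining this with the previous paragraphs establishes all three items of the theorem.
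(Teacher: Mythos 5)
Your proposal is correct and follows essentially the same route as the paper: reduce to a maximized structure via Theorem \ref{thm:maximization_invariance}, augment with the peripheral cosets via Construction \ref{con:HHG+hyp_embedded} and Theorem \ref{thm:adding_in_hyperbolically_embedded_subgroups}, transfer the boundary via Corollary \ref{cor:adding_hyp_embedded+boundary}, and use isolated orthogonality. The one organizational difference is that you define $\Lambda_Q$ as a combinatorial subcomplex and then prove it equals the limit set, whereas the paper defines $\Lambda_i$ as the limit set outright and isolates the characterization ``$p\in\Lambda(\coset{Q})$ iff $\supp(p)\subseteq\mf{H}_Q$'' as a separate lemma (Lemma \ref{lem:limit_set_support}); these are two ways of packaging the same argument. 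For item (2) you invoke condition (3) of Definition \ref{defn:isolated_orthogonality} (as established in Corollary \ref{cor:rel_hyp_structure}) where the paper re-derives it from almost malnormality (Lemma \ref{lem:intersection_of_almost_malnormal}), but these are the same mechanism. You also explicitly justify non-emptiness of the complement via the infinite index assumption, which the paper leaves implicit. The spot you flag as the ``technical heart'' --- upgrading convergence of the factor projections to convergence in the HHS boundary topology when realizing a point of $\Lambda_Q$ --- is indeed where both your write-up and the paper's proof of Lemma \ref{lem:limit_set_support} are terse, but the intended argument (gate partial-realization points onto $\P_Q$, which is Hausdorff-close to $\coset{Q}$, then use Lemma \ref{lem:bounded_difference_convergence}) is the same.
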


The proof of Theorem \ref{thm:boundary_of_rel_hyp_HHG} will rely on the following classification of the limit sets of
hyperbolically embedded subgroups in the HHS boundary for the
structure $\mf{H}$ from Construction \ref{con:HHG+hyp_embedded}.

\begin{lem}\label{lem:limit_set_support}
		Let $(G,\mf{S})$ be a maximized $G$--HHS and $\{H_1,\dots,H_k\}$ be a hyperbolically embedded collection of subgroups. Let $\mf{H}$ be the $G$--HHS structure from Construction \ref{con:HHG+hyp_embedded} such that $\mf{Q} \subseteq \mf{H}$ is the set indexing the cosets of the $H_i$. For all $Q \in \mf{Q}$, a point $p \in \partial_\Delta(G,\mf{H})$ is in the limit set of the coset $\coset{Q}$ if and only if every element of  $\supp(p)$ is nested into $Q$ in $\mf{H}$.
\end{lem}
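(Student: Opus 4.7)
The proof naturally splits into two directions. The plan is to prove the forward direction by a direct case analysis on the types of domains that can appear in $\supp(p)$, and the backward direction by using a gate map to pull a convergent sequence into $\coset{Q}$.

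For the forward direction, assume $p$ is the limit of a sequence $(y_n)\subseteq \coset{Q}$. Lemma \ref{lem:convergence_of_interior_points} says $\tau_W(y_n) \to p_W$ in $\partial \mc{C}_{\mf H} W$ for every $W \in \supp(p)$. I will then check that each $W \in \supp(p)$ is nested into $Q$ in $\mf H$ by eliminating each other possibility for $W$. If $W = S$, then $\tau_S(\coset{Q})$ is electrified to a clique of diameter $1$ in $\mc{C}_{\mf H} S$, so $\tau_S(y_n)$ is bounded and cannot tend to infinity --- contradiction. If $W = R \in \mf Q$, then since elements of $\mf Q$ are pairwise transverse and are not orthogonal to any domain of $\mf S$, we must have $\supp(p) = \{R\}$; but Corollary \ref{cor:properties_of_hyp_embed_cosets}(2) shows that if $R \neq Q$ then $\tau_R(\coset{Q})$ is uniformly bounded, forcing $R = Q$, which trivially nests into itself. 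Finally, if $W \in \mf S \setminus \{S\}$, then $\tau_W = \pi_W$, and the convergence $\pi_W(y_n) \to p_W \in \partial \mc{C}_{\mf S} W$ forces $\diam(\pi_W(\coset{Q})) > B$, so Lemma \ref{lem:h_nesting_and_product_regions} gives $W \nest Q$ in $\mf H$.

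For the backward direction, suppose every $W \in \supp(p)$ satisfies $W \nest Q$ in $\mf H$. Choose any sequence $(x_n)\subseteq G$ with $x_n \to p$, and set $y_n := \gate_{\coset{Q}}(x_n)$; each $y_n$ lies in (a uniform neighborhood of) $\coset{Q}$. The goal is to show $y_n \to p$ in the topology on $G \cup \partial(G,\mf H)$, which would place $p$ in the limit set of $\coset{Q}$. For each $W \in \supp(p)$, Lemma \ref{lem:hierarch_path_through_the_gate} gives $\tau_W(y_n) \asymp_\kappa \mf p_{\tau_W(\coset{Q})}(\tau_W(x_n))$. For $W \in \mf S \setminus \{S\}$ with $W \nest Q$ in $\mf H$, Lemma \ref{lem:h_nesting_and_product_regions} shows $\pi_W(\coset{Q})$ is coarsely onto $\mc{C}_{\mf H} W$, so $\mf p_{\pi_W(\coset{Q})}$ is uniformly close to the identity and $\tau_W(y_n) \asymp \tau_W(x_n) \to p_W$. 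For $W = Q$, applying $\mf p_{\pi_S(\coset{Q})}$ to both sides of the gate comparison yields $\tau_Q(y_n) \asymp \tau_Q(x_n) \to p_Q$.

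The main obstacle is that convergence to a point in $\partial(G,\mf H)$ in the DHS topology is not defined purely by convergence of the projections indexed by $\supp(p)$; one must also verify a ``remote point'' control condition on projections to domains transverse to or containing elements of $\supp(p)$. I expect this to follow from the fact that the $y_n$ lie in the hierarchically quasiconvex subset $\coset{Q}$, whose restricted HHS structure has index set coarsely equal to $\{Q\}\cup \mf H_Q$ (by Remark \ref{rem:products_in_h} and Lemma \ref{lem:h_nesting_and_product_regions}), so any domain $V$ outside $\mf H_Q \cup \{Q\}$ has $\tau_V(\coset{Q})$ uniformly bounded and hence $\tau_V(y_n)$ bounded. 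Combining this with the convergence of projections on $\supp(p)$ established above and the continuity of the extended gate map should verify all the conditions in the definition of convergence in the DHS topology, completing the backward direction.
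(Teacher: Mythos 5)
Your forward direction is essentially the paper's, with a more explicit case analysis.  The paper's forward direction is terse --- it simply says that $p\in\Lambda(\coset Q)$ forces $\diam(\pi_W(\coset Q))=\infty$ for each $W\in\supp(p)$ and then applies Lemma~\ref{lem:h_nesting_and_product_regions}, implicitly restricting attention to $W\in\mf S$ --- whereas you spell out the exclusion of $W=S$ (electrified, hence bounded) and $W=R\in\mf Q$ with $R\ne Q$ (bounded by Corollary~\ref{cor:properties_of_hyp_embed_cosets}(2)).  That is a genuine improvement in rigor, not a divergence of strategy.

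The backward direction is where you and the paper part ways.  The paper never introduces a gate map: it observes that if $W\propnest Q$ then $W\in\mf S$ and $\P_W$ lies in a bounded neighborhood of $\coset Q$ by Lemma~\ref{lem:h_nesting_and_product_regions}, so $\pi_V\vert_{\coset Q}$ is coarsely onto for every $V\in\mf S_W\cup\mf S_W^\perp$; from this it deduces $\partial\mc C_{\mf H}V\subseteq\Lambda(\coset Q)$ and then concludes that the join of the $\partial\mc C_{\mf H}W_\ell$, which contains $p$, lies in $\Lambda(\coset Q)$.  That route leans on the structural fact that the limit set of a hierarchically quasiconvex subset is the full subcomplex of $\partial_\Delta(G,\mf H)$ spanned by the domains on which its projection is unbounded, which bypasses any need to unwind the DHS topology explicitly.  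Your gate-map approach, by contrast, takes an arbitrary sequence $(x_n)\to p$ and tries to show $\gate_{\coset Q}(x_n)\to p$ directly, which is a more hands-on argument but commits you to checking the convergence conditions in the DHS topology by hand.

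That commitment is exactly where the gap lies, and I want to flag it as a genuine one rather than a cosmetic omission.  You correctly identify that showing $\tau_W(y_n)\to p_W$ for $W\in\supp(p)$ is not enough; the topology on $G\cup\partial(G,\mf H)$ imposes control on the projections to domains $V$ that are transverse to or nest elements of $\supp(p)$.  However, your proposed fix --- ``any domain $V$ outside $\mf H_Q\cup\{Q\}$ has $\tau_V(\coset Q)$ uniformly bounded and hence $\tau_V(y_n)$ bounded'' --- does not close the gap: the required condition is not that $\tau_V(y_n)$ be bounded, but that it eventually lie near $\partial\pi_V(p)$.  What actually saves your argument is the consistency axiom: for every $W\in\supp(p)$ one has $W\nest Q$, so $\beta^W_V=\rho^W_V$ is uniformly close to $\beta^Q_V$, which is uniformly close to $\tau_V(\coset Q)$; since $\tau_V(y_n)\subseteq\mc N_\kappa(\tau_V(\coset Q))$, the projections $\tau_V(y_n)$ land near $\partial\pi_V(p)=\bigcup_{W\in\supp(p)}\rho^W_V$, not merely in a bounded set.  (The case $V=S$, where $\partial\pi_S(p)$ is close to $\beta_S^Q=\tau_S(\coset Q)$ by the same chain of consistency inequalities, should be treated separately.)  With that correction in place, together with the continuity statement for the gate map you invoke, your argument can be made to work --- but as written the step asserting ``I expect this to follow'' is substituting hope for the actual verification, and the verification you gesture at (boundedness) is targeting the wrong condition.
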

	
	\begin{proof}
		We use the notation of Construction \ref{con:HHG+hyp_embedded} for $\mf{S}$ and $\mf{H}$.
		
		 If $p\in \Lambda(\coset{Q})$ and $W\in\supp(p)$, then $\diam(\pi_W(\coset{Q}))=\infty$.  By Lemma~\ref{lem:h_nesting_and_product_regions}\eqref{item:uniform_onto-nested}, this implies that $W\nest Q$.

		For the other direction, recall that  $W \propnest Q$ implies $W \in \mf{S}$ and $\P_W$ is contained in a regular neighborhood of $\coset{Q}$ by Lemma \ref{lem:h_nesting_and_product_regions}. In particular, $\pi_V \vert_{\coset{Q}}$ is coarsely onto for all $V \in \mf{S}_W \cup \mf{S}_W^\perp$, and so $\partial\mc C_{\frak H}V\subseteq \Lambda(\coset{Q})$. Thus, if  $\supp(p) =\{W_1,\dots, W_m\}$ and each $W_{\ell}$ is nested into $Q$, then $p \in \Lambda(\coset{Q})$ because the join of all the $\partial \mc{C}_\mf{H}W_{\ell}$ is contained in $\Lambda(\coset{Q})$.
	\end{proof}

\begin{proof}[Proof of Theorem \ref{thm:boundary_of_rel_hyp_HHG}]
	If $G$ is hyperbolic relative to $\{H_1,\dots, H_k\}$, then  $\{H_1,\dots, H_k\}$ is  a hyperbolically embedded collection of subgroups. In particular, each $H_i$ is hierarchically quasiconvex in every $G$--HHS structure for $G$ by Theorem \ref{thm:hyperbolically_embedded_in_HHGs}. 
	
	If $\mf{T}$ is the maximization of $\mf{S}$, then Theorem \ref{thm:maximization_invariance}\eqref{invar:boundary} says that  there is a homeomorphism  $$\Phi \colon G \cup  \partial (G,\mf{S}) \to G\cup \partial (G,\mf{T})$$  that restricts to the identity on $G$ and is both a homeomorphism and simplicial isomorphism on the boundary.   In particular, the limit set of each $H_i$ in $\partial(G,\mf{S})$ is mapped homeomorphically by $\Phi$ to the limit set of $H_i$ in $\partial(G,\mf{T})$. Hence, we can assume $\mf{S}$ is a maximized HHS structure.
	
	Now that $\mf{S}$ has been maximized, we can apply   Theorem \ref{thm:adding_in_hyperbolically_embedded_subgroups} to produce the $G$--HHS structure $\mf{H}$ for $G$ as described in Construction \ref{con:HHG+hyp_embedded}. By Corollary \ref{cor:adding_hyp_embedded+boundary}, there is a homeomorphism $$\Psi \colon G \cup  \partial (G,\mf{H}) \to G\cup \partial (G,\mf{S})$$  that restricts to the identity on $G$ and is both a homeomorphism and simplicial isomorphism on the boundary. As before, the limit set of each $H_i$ in $\partial(G,\mf{H})$ is mapped homeomorphically by $\Psi$ to the limit set of $H_i$ in $\partial(G,\mf{S})$. Taken together, this means it suffices to prove the result for $\partial (G,\mf{H})$ instead of $\partial (G,\mf{S})$.

	Let $\Lambda_i$ be the limit set of $H_i$ in $\partial (G,\mf{H})$, so that  $g\Lambda_i$ is the limit set of the coset $gH_i$ in $\partial (G,\mf{H})$. Let $\mf{Q} \subseteq \mf{H}$ be the set indexing the cosets of the $H_i$ as in Construction \ref{con:HHG+hyp_embedded}. We will continue to use $\coset{Q}$ to denote the coset in $G$ indexed by $Q$.	We use Lemma \ref{lem:limit_set_support} to verify the conclusions of Theorem \ref{thm:boundary_of_rel_hyp_HHG}.
	
	To see that each $\Lambda_i$ is a subcomplex, let $Q \in \mf{Q}$ with $\coset{Q} = H_i$,  and let $p,q$ be vertices of $\Lambda_i$ that are joined by an edge, $e_{pq}$, of $\partial_\Delta(G,\mf{H})$. This means there are domains $V,W\in\frak H$ such that $\supp(p) = \{W\}$, $\supp(q) =\{V\}$, and $W \perp V$.  Since no element of $\mf{Q}$ is  orthogonal to any other domain, we have $W,V \in\mf{S}$.  Thus $W,V \propnest Q$ by Lemma \ref{lem:limit_set_support}. The support of any point on the edge $e_{pq}$ is contained in $\{W,V\}$. Thus, Lemma \ref{lem:limit_set_support} says $e_{pq} \subseteq \Lambda_i$. 
	
	For the second item, Lemma \ref{lem:limit_set_support} says  that if $ g\Lambda_i \cap h\Lambda_j \neq \emptyset$, then there is $W \in \mf{H}_{Q} \cap  \mf{H}_{R}$, where $\coset{Q} = gH_i$ and $\coset{R} = hH_j$. However, this would imply $\P_W$ is contained in a regular neighborhood of both $\coset{Q}= gH_i$ and $\coset{R} = hH_j$ by Lemma \ref{lem:h_nesting_and_product_regions}. Since $\diam(\P_W) = \infty$ because $\mf{S}$ is maximized, this implies $i=j$ and $g^{-1}h \in H_i$ by Lemma \ref{lem:intersection_of_almost_malnormal}.

	Finally, because $\mf{H}$ has orthogonality isolated by $\mf{Q}$ (Corollary \ref{cor:rel_hyp_structure}), Lemma \ref{lem:limit_set_support} says every $p \in \partial_{\Delta}(G,\mf{H})$ is either in some $g\Lambda_i$ or has $\supp(p)=\{S\}$, where $S$ is the $\nest$--maximal element of $\mf{S}$. Hence, the set  $$\partial_{\Delta}(G,\mf{S}) - G \cdot \left( \bigsqcup\limits_{i=1}^{k} \Lambda_i \right)$$ is a collection of isolated vertices in $\partial_\Delta(G,\mf{H})$ because each point in it has support $\{S\}$.	
\end{proof}

We now show that the only way for the boundary of a $G$--HHS to decompose as described in Theorem \ref{thm:boundary_of_rel_hyp_HHG} is for the group to be relatively hyperbolic.

\begin{thm}\label{thm:boundary_criteria_for_rel_hyp}
	Let $(G,\mf{S})$ be a $G$--HHS. Let $\Lambda_1,\dots, \Lambda_k$ be disjoint  subcomplexes of $\partial_\Delta(G,\mf{S})$, and let $H_i = \stab_G(\Lambda_i)$. Suppose
	\begin{enumerate}
		\item \label{item:infinite_index} each  $H_i$ is hierarchically quasiconvex and has infinite index in $G$;
		\item \label{item:limit_simplices} for each $i$, $\Lambda_i$  is the limit set of $H_i$ in $\partial(G,\mf{S})$;
		\item \label{item:distinct_cosets} for all $1 \leq i < j\leq k$ and $g,h \in G$, we have $ g\Lambda_i \cap h\Lambda_j =\emptyset$ unless $i=j$ and $g^{-1}h \in H_i$; and 
		\item \label{item:disjoint_union} $\partial_{\Delta}(G,\mf{S}) - G \cdot \left( \bigsqcup\limits_{i=1}^{k} \Lambda_i \right)$ is  a non-empty set of isolated vertices. 
	\end{enumerate}
	Then $G$ is hyperbolic relative to the subgroups $H_1, \dots, H_k$.
\end{thm}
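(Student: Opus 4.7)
My plan is to build a $G$--HHS structure $\mf{H}$ for $G$ in which the cosets of the $H_i$ are indexed by a set $\mf{Q}$ of domains and whose orthogonality is isolated by $\mf{Q}$ in the sense of Definition~\ref{defn:isolated_orthogonality}; relative hyperbolicity will then follow from Theorem~\ref{thm:isolated_orthogonality_implies_rel_hyp}. By Theorem~\ref{thm:maximization_invariance}, replacing $\mf{S}$ by its maximization does not alter any of the hypotheses (\ref{item:infinite_index})--(\ref{item:disjoint_union}), so I may and will assume $\mf{S}$ is maximized. To invoke Construction~\ref{con:HHG+hyp_embedded} and Theorem~\ref{thm:adding_in_hyperbolically_embedded_subgroups}, which build $\mf{H}$ for me, I first need $\{H_1,\dots,H_k\}$ to be hyperbolically embedded.

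\textbf{Hyperbolic embedding.} By Theorem~\ref{thm:hyperbolically_embedded_in_HHGs} this reduces to two facts. For almost malnormality, if $|gH_ig^{-1}\cap H_j|=\infty$, pick distinct $h_n$ in the intersection. Then $h_n\in H_j$ and $h_ng\in gH_i$ are at uniformly bounded distance $d_G(e,g)$, so by Lemma~\ref{lem:bounded_difference_convergence} a subsequence has a common limit in $\Lambda_j\cap g\Lambda_i$, forcing $i=j$ and $g\in H_i$ by (\ref{item:distinct_cosets}). For strong quasiconvexity, Theorem~\ref{thm:SQC_in_HHG} and (\ref{item:infinite_index}) reduce the task to establishing the $B$--orthogonal projection dichotomy for each $H_i$. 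If $\pi_W(H_i)$ is unbounded for some $W\in\mf{S}^\infty$, I take an escaping sequence $(h_n)$ in $H_i$ and pass to a subsequence converging to some $p\in\partial(G,\mf{S})$; Lemma~\ref{lem:convergence_of_interior_points} then produces a vertex $p_W\in\partial\mc{C}W\cap\Lambda_i$. For any $V\in\mf{S}^\perp_W\cap\mf{S}^\infty$ and any $q\in\partial\mc{C}V$, the $1$--simplex in $\partial_\Delta(G,\mf{S})$ spanned by $p_W$ and $q$ is not a vertex, so by (\ref{item:disjoint_union}) its midpoint lies in some $h\Lambda_\ell$. Subcomplex closure places $p_W\in h\Lambda_\ell$, so (\ref{item:distinct_cosets}) yields $h\Lambda_\ell=\Lambda_i$ and hence $q\in\Lambda_i$. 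Thus $\partial\mc{C}V\subseteq\Lambda_i$, and coarse surjectivity of $\pi_V(H_i)$ in $\mc{C}V$ follows by combining this full limit--set statement with the quasiconvexity of $\pi_V(H_i)$ and the coarse surjectivity of $\pi_V\colon G\to\mc{C}V$.

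\textbf{Isolating $\mf{Q}$.} With hyperbolic embedding established, Theorem~\ref{thm:adding_in_hyperbolically_embedded_subgroups} supplies the $G$--HHS structure $\mf{H}$ with index set $\mf{S}\cup\mf{Q}$. The $\nest$--maximal element $S$ lies in $\mf{S}\setminus\mf{Q}$, so condition (1) of Definition~\ref{defn:isolated_orthogonality} holds. For condition (2), take $V,W\in\mf{H}$ with $V\perp W$; both lie in $\mf{S}$. By maximization pick $U_V\in\mf{S}_V\cap\mf{S}^\infty$ and $U_W\in\mf{S}_W\cap\mf{S}^\infty$; iterated application of the orthogonality axiom (e.g., $U_V\nest V$ and $V\perp W$ give $U_V\perp W$, and $U_W\nest W$ with $W\perp U_V$ give $U_V\perp U_W$) allows me to pair vertices across the two limit sets and run the edge--midpoint argument of the previous paragraph, concluding that every vertex of $\Lambda(\P_V)\cup\Lambda(\P_W)$ lies in a single $h\Lambda_\ell$. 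Consequently, for every $U\in(\mf{S}_V\cup\mf{S}_V^\perp)\cap\mf{S}^\infty$ the projection $\pi_U(hH_\ell)$ has full limit set in $\partial\mc{C}U$ and is coarsely onto; the remaining $U$ have bounded $\mc{C}U$ so are automatic. Proposition~\ref{prop:product_regions}\eqref{P_prop:minimal_HQC} then gives $\P_V\subseteq\mc{N}_\nu(hH_\ell)$, and analogously $\P_W\subseteq\mc{N}_\nu(hH_\ell)$, so by Lemma~\ref{lem:h_nesting_and_product_regions} both $V$ and $W$ nest into the $Q\in\mf{Q}$ that indexes $hH_\ell$. For condition (3), if $V\nest Q$ and $V\nest R$ for $Q,R\in\mf{Q}$, Lemma~\ref{lem:h_nesting_and_product_regions} gives $\P_V\subseteq\mc{N}_B(\coset{Q})\cap\mc{N}_B(\coset{R})$; since maximization forces $\diam(\P_V)=\infty$ and $\{H_i\}$ is almost malnormal, Lemma~\ref{lem:intersection_of_almost_malnormal} yields $\coset{Q}=\coset{R}$ and so $Q=R$.

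\textbf{Conclusion and main obstacle.} Theorem~\ref{thm:isolated_orthogonality_implies_rel_hyp} then gives that $G$ is hyperbolic relative to $\{\P_Q:Q\in\mf{Q}\}$, and by Remark~\ref{rem:products_in_h} these product regions coarsely coincide with the cosets of $H_1,\dots,H_k$, proving the theorem. The chief technical obstacle is the passage from the boundary statement $\partial\mc{C}V\subseteq\Lambda_i$ to coarse surjectivity of $\pi_V(H_i)$ on $\mc{C}V$: this demands translating the full limit--set information into a coarse--geometric density statement inside the hyperbolic space $\mc{C}V$, which requires using the quasiconvexity of $\pi_V(H_i)$ together with the coarse surjectivity of $\pi_V$ supplied by the HHS axioms. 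Once this translation is carried out, the remainder of the argument is a direct assembly of the construction of $\mf{H}$ and the lemmas developed earlier in the paper.
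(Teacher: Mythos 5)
Your overall architecture matches the paper's: reduce to the maximized case, show $\{H_1,\dots,H_k\}$ is hyperbolically embedded, build the augmented structure $\mf{H}$ via Construction~\ref{con:HHG+hyp_embedded}, and verify that $\mf{Q}$ isolates orthogonality so that Theorem~\ref{thm:isolated_orthogonality_implies_rel_hyp} applies. The almost malnormality argument and the isolation argument are essentially the paper's. However, there is a genuine gap in your verification of strong quasiconvexity.

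The $B$--orthogonal projection dichotomy (Definition~\ref{defn:ortho_proj_dich}) asks for a \emph{uniform} threshold $B$ such that whenever $\diam(\pi_W(H_i)) > B$, the projections to all $V \perp W$ are $B$--coarsely onto. Your argument handles only the case where $\pi_W(H_i)$ is actually \emph{unbounded}: an escaping sequence produces a boundary point, Lemma~\ref{lem:convergence_of_interior_points} gives a vertex in $\partial\mc{C}W\cap\Lambda_i$, and the edge--midpoint argument propagates. But you have not excluded the possibility that there is a sequence of domains $W_n$, none nested into the support of $\Lambda_i$, with $\diam(\pi_{W_n}(H_i))$ finite but tending to infinity. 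Such a sequence would defeat the dichotomy even though no single projection is unbounded, and the limit--set argument is silent here since bounded projections contribute no boundary points. Your ``translation'' from $\partial\mc{C}V\subseteq\Lambda_i$ to coarse surjectivity, which you flag as the obstacle, is actually straightforward (it is Claim~4.4 in the paper); the real obstacle is this uniformity issue. The paper closes it with a substantial contradiction argument: assuming such a sequence exists, it invokes \cite[Proposition 4.24]{RST_Quasiconvexity} to produce hierarchy paths between $e$ and $x_n\in H_i$ with long subintervals traveling inside $\mc{N}_\nu(\P_{W_n})\subseteq\mc{N}_{\nu'}(g_nH_{j_n})$ and also, by hierarchical quasiconvexity, inside $\mc{N}_{\nu'}(H_i)$; it then uses bounded packing of hierarchically quasiconvex subgroups \cite[Corollary 3.13]{coarse_helly} to extract a single coset $h_{n_0}^{-1}g_{n_0}H_{j_{n_0}}\ne H_i$ whose coarse intersection with $H_i$ has infinite diameter, contradicting almost malnormality via Lemma~\ref{lem:intersection_of_almost_malnormal}. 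Without this step (or something equivalent), your proof of strong quasiconvexity --- and hence of hyperbolic embedding --- is incomplete.
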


\begin{proof}
	First we explain why we can assume $\mf{S}$ is maximized.  Let
	$\mf{T}$ be the maximization of $\mf{S}$.  By Theorem
	\ref{thm:maximization_invariance}\eqref{invar:HQC}, each $H_i$ is
	hierarchically quasiconvex with respect to both $\mf{S}$ and
	$\mf{T}$, and Theorem
	\ref{thm:maximization_invariance}\eqref{invar:boundary} provides a
	map $\Phi \colon G \cup \partial(G,\mf{S}) \to G \cup
	\partial(G,\mf{T})$ that is the identity on $G$ and both a
	homeomorphism and a simplicial isomorphism on the boundary.  In
	particular, $(G,\mf{T})$ satisfies the hypotheses of Theorem
	\ref{thm:boundary_criteria_for_rel_hyp} with respect to the
	complexes $\Phi(\Lambda_i)$.  Hence, without less of generality 
	we may assume $\mf{S}$ is already 
	maximized.

	The bulk of our proof will be showing that  $\{H_1, \dots, H_k\}$ is a hyperbolically embedded collection of subgroups. This will allow us to use Theorem \ref{thm:adding_in_hyperbolically_embedded_subgroups} to create an HHS structure for $G$ with isolated orthogonality. 
	
	For the remainder of the proof, let $\supp(g\Lambda_i)$ denote the union of the support sets of all the elements of $g\Lambda_i$, where $g \in G$.\\
	
\noindent {\bf Step 1:}	

		The set $\{H_1,\dots, H_k\}$ is an almost malnormal collection of subgroups.

		Suppose $H_i \cap g H_j g^{-1}$ is infinite. There then exists an infinite sequence $$(h_n) \subseteq H_i \cap g H_j g^{-1}$$ so that $(h_n)$ converges to a point in $\partial (G,\mf{S})$.  Because each $h_n$ is in $gH_jg^{-1} =\stab_G(gH_j)$, we have $h_ng \in g H_j$ for each $n$. Since $d_G(h_n,h_ng) = d_G(e,g)$, we have that $(h_n)$ and $(h_n g)$ converge to the same point in $\partial (G,\mf{S})$ by Lemma \ref{lem:bounded_difference_convergence}.  The limit of $(h_n)$ is in $\Lambda_i$, while the limit of $(h_n g)$ is in $g \Lambda_j$, so by Hypothesis \eqref{item:distinct_cosets}, we must have $i=j$ and $g\in P_i$. \\

	\noindent{\bf Step 2:}  Each $H_i$ is uniformly strongly quasiconvex. 

For this step we need several auxiliary claims.
	
	\begin{claim}\label{claim:suppport_closed_under_orthogonality}
		Suppose  $W \in \supp(\Lambda_i)$  is not $\nest$--maximal in $\mf{S}$. Then $\partial\mc{C} W$  is contained in $\Lambda_i$, as is $\partial \mc{C}V$ for any $V \in \mf{S}^{\infty}$ with $W \perp V$.
	\end{claim}
	
	\begin{proof}
		If  $W \in \supp(\Lambda_i)$, there is a point in $\Lambda_i$ whose support set includes $W$. Such a point is in a simplex that has a vertex $p$ with $\supp(p) = \{W\}$. Since $\Lambda_i$ is a subcomplex, the vertex $p$ must also be in $\Lambda_i$. Because $\mf{S}$ is maximized and $W$ is not $\nest$--maximal, there must exists $V \in \mf{S}^\infty$ with $V \perp W$. Let $q$ be any point in $\partial \mc{C}V$. The edge in $\partial_\Delta (G,\mf{S})$ between $p$ and $q$ is contained in some $g\Lambda_j$ by Hypothesis (\ref{item:disjoint_union}). Since this implies $p \in g\Lambda_j \cap \Lambda_i$, we must have $g\Lambda_j = \Lambda_i$ by Hypothesis (\ref{item:distinct_cosets}). Hence, $q \in \Lambda_i$ as well. Thus $\partial \mc{C}V \subseteq \Lambda_i$. By repeating the argument with the roles of $W$ and $V$ reversed we have that $\partial \mc{C}W \subseteq \Lambda_i$ as well. 
	\end{proof}
	
	\begin{claim}\label{claim:project_onto_support}
		If $W \in \supp(\Lambda_i)$ is not $\nest$--maximal in $\mf{S}$, then $\pi_W \vert _{H_i}$ is uniformly coarsely onto.
	\end{claim}
	
	\begin{proof}
		By Claim \ref{claim:suppport_closed_under_orthogonality}, if $W \in  \supp(\Lambda_i)$ is not $\nest$--maximal, then $\partial\mc{C} W \subseteq \Lambda_i$. Since $\Lambda_i$ is the limit set of $H_i$,  $\partial \mc{C}W$ must be the limit set of $\pi_W(H_i)$ in $\mc{C}W$ (Lemma \ref{lem:convergence_of_interior_points}). Since $\pi_W(H_i)$ is uniformly quasiconvex in $\mc{C}W$, the only way for this to happen is if some uniform neighborhood of $\pi_W(H_i)$ covers $\mc{C}W$.
	\end{proof}
	
	\begin{claim}\label{claim:support_and_product_regions}
		There exists $\nu \geq 0$ so that for any $W \in \supp(\Lambda_i)$,  if $W$ is not $\nest$--maximal in $\mf{S}$, then the product region $\P_W$ is contained in the $\nu$--neighborhood of $H_i$.
	\end{claim}
	
	\begin{proof}
		Let $W \in \supp(\Lambda_i)$ be non-$\nest$--maximal in $\mf{S}$. Because $\mf{S}$ is maximized, $\mf{S}_W \cap \mf{S}^\infty$ and  $\mf{S}_W^\perp \cap \mf{S}^\infty$ are both non-empty. Let $U \in \mf{S}_W \cap \mf{S}^\infty$ and $V \in \mf{S}_W^\perp\cap \mf{S}^\infty$. Since $W \perp V$ and $U \nest W$, we have $U \perp V$. Thus, by applying Claim \ref{claim:suppport_closed_under_orthogonality} twice, we have both $V,U \in \supp(\Lambda_i)$. By Claim \ref{claim:project_onto_support}, both $\pi_V$ and $\pi_U$ are uniformly coarsely onto when restricted to $H_i$. Since $H_i$ is hierarchically quasiconvex,  this implies $\P_W$ is contained in a uniform neighborhood of $H_i$ by Proposition \ref{prop:product_regions}\eqref{P_prop:minimal_HQC}.
	\end{proof}

	We are now ready to show that each $H_i$ is uniformly strongly quasiconvex. 	Let $S$ be the $\nest$--maximal element of $\mf{S}$.
		
		Since each $H_i$ is hierarchically quasiconvex, it suffices to show that each $H_i$ has the orthogonal projection dichotomy (Definition \ref{defn:ortho_proj_dich}). 
		In light of Claims \ref{claim:suppport_closed_under_orthogonality} and \ref{claim:project_onto_support} and the bounded domain dichotomy of $\mf{S}$, the subgroup $H_i$ will have the orthogonal projection dichotomy if the projection of $H_i$  to every element of $\mf{S}^{\infty}- (\supp(\Lambda_i) \cup \{S\})$ has uniformly bounded diameter. For the purposes of contradiction, suppose not. We can then find a sequence  of points $(x_n)$ in $H_i$ and a collection of unbounded domains $W_n \in \mf{S}^{\infty}- (\supp(\Lambda_i) \cup \{S\})$ so that $d_{W_n}(e,x_n) \to \infty$ as $n \to \infty$.
		
		Because $\mf{S}$ is maximized, $W_n \neq S$ implies there are domains $V_n \in \mf{S}^{\infty}$ with $W_n \perp V_n$. This means that for each $n$, the join $\partial \mc{C}W_n \star \partial \mc{C}V_n$ is a subcomplex of $\partial_\Delta(G,\mf{S})$. Hence, by Hypothesis \eqref{item:disjoint_union}, there is $g_n \in G$ so that $W_n \in \supp(g_n \Lambda_{j_n})$ for each $n$. By Hypothesis \eqref{item:limit_simplices}, either $j_n \neq i$  or $g_n \not \in H_i$ for each $n \in \mathbb{N}$.
		
		By \cite[Proposition 4.24]{RST_Quasiconvexity}, there exists constants $\lambda$, $\nu$, and $D$ depending only on $\mf{S}$, so that whenever $d_{W_n}(e,x_n) \geq D$, there is a $\lambda$--hierarchy path $\gamma_n$ connecting $e$ and $x_n$ with a subinterval $\alpha_n$ so that
		\begin{itemize}
			\item $\alpha_n$ is contained in the $\nu$--neighborhood of $\P_{W_n}$; and
			\item the diameter of $\alpha_n$ is bounded below by $\nu^{-1}\cdot d_{W_n}(e,x_n) - \nu$.
		\end{itemize}
		Because $d_{W_n}(e,x_n) \to \infty$, we can assume $n$ is large enough so that  $d_{W_n}(e,x_n) \geq D$, and hence such a hierarchy path $\gamma_n$ exists.

		Since $H_i$ is hierarchically quasiconvex and $x_n \in H_i$, the hierarchy path $\gamma_n$ stays uniformly close to $H_i$ by Proposition \ref{prop:HQC_and_hp}.  
		Because $W_n \in \supp(g_n \Lambda_{j_n})$, the product region $\P_{W_n}$ is also contained in some uniform neighborhood of $g_nH_{j_n}$ by Claim \ref{claim:support_and_product_regions}. Hence, there is a uniform constant $\nu'$ so that the interval $\alpha_n$ is  contained in  $$\mc{N}_{\nu'}(H_i) \cap \mc{N}_{\nu'}(g_n H_{j_n})$$ for each $n$. 
		
		It follows that there exists $h_n \in H_i$ so that each coset $h_n^{-1}g_n H_{j_n}$ is uniformly close to the identity $e \in H_i$. Since either $j_n \neq i$  or $g_n \not \in H_i$ for each $n \in \mathbb{N}$, we have $H_i \neq h_n^{-1}g_n H_{j_n}$ for each $n \in \mathbb{N}$.  Corollary 3.13 of \cite{coarse_helly} proved that hierarchically quasiconvex subgroups have bounded packing, hence $\{h_n^{-1}g_n H_{j_n}\}$ must be a finite collection of cosets. The  intersection of $\mc{N}_{\nu'}(H_i)$ and $\mc{N}_{\nu'}(h_n^{-1}g_{n}H_{j_n})$ contains $h_n^{-1}\alpha_n$, which gets arbitrarily large as $n \to \infty$. Thus, there is some $n_0$ so that $\mc{N}_{\nu'}(H_i)$ and $\mc{N}_{\nu'}(h_{n_0}^{-1}g_{n_0}H_{j_{n_0}})$ have infinite diameter  intersection and $H_i \neq h_{n_0}^{-1}g_{n_0}H_{j_{n_0}}$. However, this violates the fact that  $\{H_1,\dots, H_k\}$ is almost malnormal (Lemma \ref{lem:intersection_of_almost_malnormal}). Thus, there must a uniform bound on diameter of $\pi_W(H_i)$ for each  $W\in \mf{S}^{\infty}- (\supp(\Lambda_i) \cup \{S\})$, as desired.\\

	\noindent {\bf Step 3:} $G$ is hyperbolic relative to  $\{H_1, \dots, H_k\}$. 
 
	Since  $\{H_1, \dots, H_k\}$ is an almost malnormal collection of strongly quasiconvex subgroups, it is hyperbolically embedded in $G$ by Theorem \ref{thm:hyperbolically_embedded_in_HHGs}. Let $\mf{H}$ be the $G$--HHS structure  from Theorem \ref{thm:adding_in_hyperbolically_embedded_subgroups} that adds the cosets of the $H_i$ to $\mf{S}$. We will show that $\mf{H}$ has orthogonality isolated by $\mf{Q}$, the set indexing the cosets of  $H_1, \dots, H_k$. As in Construction \ref{con:HHG+hyp_embedded}, for each $Q\in \mf{Q}$, let $\coset{Q}$ denote the coset in $G$ indexed by $Q$.
	
	Suppose $V, W \in \mf{H}$ with $V \perp W$. Since the only orthogonal elements of $\mf{H}$ come from $\mf{S}$, we have $V,W \in \mf{S} - \{S\}$. Because $\mf{S}$ is maximized, there exist $V' \nest V$ and $W' \nest W$ with $V',W' \in \mf{S}^\infty$. Since $V'$ and $W'$ are orthogonal, the join $\mc{C}V' \star \mc{C}W'$ must be contained in some $g\Lambda_i$. Thus $\pi_{V'}\vert_{gH_i}$ and $\pi_{W'} \vert_{gH_i}$ are both coarsely onto. Since $V' \perp W$ and $W'\perp V$, this implies $V,W \nest Q$ where $Q$ is the element of $\mf{Q}$ with $\coset{Q} = gH_i$. 
	
	Now suppose there is $V\in\mf{H}$, and $Q,R \in\mf{Q}$ with  $V \nest Q$ and $V \nest R$. By Lemma \ref{lem:h_nesting_and_product_regions}, the infinite diameter product region $\P_V$ is then contained in a uniform neighborhood of both $\coset{Q}$ and $\coset{R}$ in $G$. By Lemma \ref{lem:intersection_of_almost_malnormal}, this can only happen if $\coset{Q} = \coset{R}$. Hence, $Q = R$.
	
	Since $ \mf{Q}$ does not contain the $\nest$--maximal element of
	$\mf{H}$ by construction, the above two paragraphs show that
	$\mf{Q}$ isolates the orthogonality of $\mf{H}$, making $G$
	hyperbolic relative to the product regions of the elements of
	$\mf{Q}$, by Theorem
	\ref{thm:isolated_orthogonality_implies_rel_hyp}.  However, for
	each $Q \in \mf{Q}$, the product region for $Q$ in $\mf{H}$ is
	within finite Hausdorff distance of the coset $\coset{Q}$ by
	Remark \ref{rem:products_in_h}.  Hence, by \cite[Theorem 
	1.5]{Drutu_Rel_Hyp_is_Geometric}, the group $G$ is hyperbolic
	relative to $\{H_1,\dots, H_k\}$.
	
\end{proof}

\section{The Bowditch boundary}\label{sec:bowditch_boundary}

If a finitely generated group $G$ is hyperbolic relative to a collection of subgroups $\mc{P}$, then the Gromov boundary of the hyperbolic space $\cusp(G,\mc{P})$ is called the \emph{Bowditch boundary} of the pair $(G,\mc{P})$.  In this section, we prove the following theorem, which establishes the Bowditch boundary of a relatively hyperbolic $G$--HHS as a quotient of the HHS boundary.

\begin{thm}\label{thm:bowditch_boundary}
	Let $(G,\mf{S})$ be a $G$--HHS, and suppose $G$ is hyperbolic relative to a finite collection of subgroups $\mc{P}$. There is a quotient map $\Psi \colon  \partial(G,\mf{S}) \to \partial \cusp(G,\mc{P})$ so that for distinct $p,q \in \partial(G,\mf{S})$, we have $\Psi(p) = \Psi(q)$ if and only if  there exists $g \in G$ and $H \in \mc{P}$ so that $p$ and $q$ are both in the limit set of $gH$ in $\partial(G,\mf{S})$. Moreover, the inclusion  $G \to \cusp(G,\mc{P})$ extends continuously to $\Psi$.
\end{thm}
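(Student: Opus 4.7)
The plan is to leverage the structure $\mf{H}$ from Construction \ref{con:HHG+hyp_embedded} to reduce the problem to a direct comparison between $\partial(G,\mf{H})$ and $\partial\cusp(G,\mc{P})$. First, I would replace $\mf{S}$ by its maximization and then apply Construction \ref{con:HHG+hyp_embedded} with the peripheral collection $\mc{P}$ to produce the structure $\mf{H}$. By Theorem \ref{thm:maximization_invariance}\eqref{invar:boundary} and Corollary \ref{cor:adding_hyp_embedded+boundary}, the HHS boundary is unchanged and the limit set of each coset of a peripheral is preserved, so it suffices to construct the required quotient $\Psi$ for $(G,\mf{H})$. Corollary \ref{cor:rel_hyp_structure} then guarantees that $\mf{H}$ has orthogonality isolated by $\mf{Q}$, and every non-$\nest$--maximal domain of $\mf{H}$ is $\nest$--nested into some $Q \in \mf{Q}$. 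By Lemma \ref{lem:limit_set_support}, every $p \in \partial(G,\mf{H})$ either has $\supp(p) = \{S\}$, or lies in $\Lambda(\coset{Q})$ for a unique $Q \in \mf{Q}$.

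Since the theorem requires $\Psi$ to extend the inclusion $G \hookrightarrow \cusp(G,\mc{P})$ continuously, its value is forced: for any sequence $x_n \in G$ with $x_n \to p$, we must have $\Psi(p) = \lim_n x_n$ in $\partial\cusp(G,\mc{P})$. The key task is therefore to show this limit exists and depends only on $p$, and to identify its fiber. I would split on the two cases above. For $p \in \Lambda(\coset{Q})$, I would prove that if $x_n \to p$ then $d_G(x_n,\coset{Q})$ is uniformly bounded; letting $y_n = \gate_{\coset{Q}}(x_n)$, I would use the $B$--orthogonal projection dichotomy (Corollary~\ref{cor:properties_of_hyp_embed_cosets} and Theorem~\ref{thm:SQC_in_HHG}) to argue that any domain on which $\pi_W(x_n)$ is unbounded must lie in $\mf{H}_Q$, and on such domains $\pi_W(y_n)$ uniformly coarsely agrees with $\pi_W(x_n)$ by Lemma~\ref{lem:hierarch_path_through_the_gate}; the uniqueness axiom then bounds $d_G(x_n,y_n)$. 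Since $y_n \in \coset{Q}$ and $x_n \to \infty$, the sequence $x_n$ ascends the combinatorial horoball attached to $\coset{Q}$ in $\cusp(G,\mc{P})$ and converges to the unique parabolic fixed point $q_{\coset{Q}}$ of that horoball; I set $\Psi(p) = q_{\coset{Q}}$.

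For $p$ an isolated vertex with $\supp(p) = \{S\}$, Lemma \ref{lem:convergence_of_interior_points} shows $\pi_S(x_n) \to p$ in the hyperbolic space $\mc{C}_{\mf{H}}S$. Because every non-$\nest$--maximal domain of $\mf{H}$ nests into some $Q \in \mf{Q}$, every product region of the maximized $\mf{S}$ lies in a bounded neighborhood of some peripheral coset by Lemma~\ref{lem:h_nesting_and_product_regions}; hence $\mc{C}_{\mf{H}}S$, built by electrifying these product regions together with the peripheral cosets, is quasi-isometric (via the identity on $G$) to the coned-off Cayley graph $\hat{G}$ in which only the cosets of $\mc{P}$ are coned. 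Since $G$ is hyperbolic relative to $\mc{P}$, Bowditch's work identifies $\partial\hat{G}$ with the conical limit points of $\partial\cusp(G,\mc{P})$, and the resulting map $\partial\mc{C}_{\mf{H}}S \to \partial\cusp(G,\mc{P})$ is the one compatible with sequences in $G$. Thus $x_n$ converges to a conical point in $\partial\cusp(G,\mc{P})$ depending only on $p$, defining $\Psi(p)$.

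With $\Psi$ so defined, well-definedness across choices of $x_n$ is built in, and Lemma~\ref{lem:bounded_difference_convergence} (together with the argument for the parabolic case) shows $\Psi$ is continuous on $G \cup \partial(G,\mf{H})$; surjectivity is immediate since every parabolic point arises from some $\coset{Q}$ and every conical point is approached by a sequence in $G$, which then converges in $\partial(G,\mf{H})$ after passing to a subsequence using compactness. The fiber description follows: two limit sets $\Lambda(\coset{Q}) \neq \Lambda(\coset{R})$ map to distinct parabolic points, and on isolated vertices $\Psi$ is an injection via the quasi-isometry $\mc{C}_{\mf{H}}S \simeq \hat{G}$. The main obstacle is the gate estimate $d_G(x_n,\gate_{\coset{Q}}(x_n)) = O(1)$ in the parabolic case: this is where all three of hierarchical quasiconvexity, the orthogonal projection dichotomy, and the uniqueness axiom must be deployed in concert, and where maximization of the HHS structure (so that $\mf{H}_V^\perp \cap \mf{H}^\infty \neq \emptyset$ for every non-maximal $V$) is essential.
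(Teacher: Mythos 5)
Your reduction to the structure $\mf{H}$ (via maximization, Construction \ref{con:HHG+hyp_embedded}, Corollary \ref{cor:rel_hyp_structure}, and Lemma \ref{lem:limit_set_support}) matches the paper exactly. After that, however, your strategy diverges in a way that introduces a genuine gap.

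You propose to \emph{define} $\Psi$ via sequences: for $x_n\to p$ in $G\cup\partial(G,\mf H)$, set $\Psi(p):=\lim_n\iota(x_n)$ in $\cusp(G,\mc P)$, and the linchpin of your parabolic case is the claim that if $x_n\to p\in\Lambda(\coset{Q})$ then $d_G(x_n,\coset{Q})$ is uniformly bounded. This claim is not established by the tools you cite, and it is most likely false in general. The orthogonal projection dichotomy tells you that domains with large $\coset{Q}$--projection lie in $\mf H_Q\cup\mf H_Q^\perp$; it does not tell you that the projections of $x_n$ to domains \emph{outside} $\mf H_Q$ are \emph{uniformly} bounded as $n\to\infty$. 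In particular, for the $\nest$--maximal domain $S$ (never in $\mf H_Q$) one should expect $d_S(x_0,x_n)$ to grow without bound in general, because $\mc C_{\mf H}S$ is the electrified space and ``close to $\pi_S(\coset{Q})$ in $\mc C_{\mf H}S$'' is far weaker than ``close to $\coset{Q}$ in $G$'' --- every peripheral coset is collapsed to diameter $1$ in $\mc C_{\mf H}S$. Without a uniform bound on $d_W(x_n,y_n)$ over \emph{all} $W$ simultaneously, the uniqueness axiom gives no bound on $d_G(x_n,y_n)$. So the gate estimate you flag as ``the main obstacle'' really is one, and the ingredients you list do not supply it.

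The paper sidesteps this entirely. It does not need $x_n$ to stay coarsely in $\P_I$; it only needs $\gate_{\P_I}(x_n)$ to diverge \emph{inside} $\P_I$. This follows immediately from Lemma \ref{lem:convergence_of_interior_points} (so $d_V(x_0,x_n)\to\infty$ for some $V\in\supp(p)$ with $V\nest I$) together with the coarse Lipschitzness of the gate map, and is then fed into Lemma \ref{lem:horoball_base_distance_to_boundary}. More fundamentally, the paper does not define the map by sequences at all: it writes down $\Phi$ explicitly, equips $\cusp(G,\mc P)$ with an HHS structure $\mf R=\{S\}\cup\mf Q$, proves continuity by showing that the ``basic'' sets $\mc A_{r'}(p)$ from Definition \ref{defn:"basic"_set} are mapped into the corresponding $\widehat{\mc A}_r(\Phi(p))$, and then invokes compactness of both spaces to conclude $\Phi$ is a quotient map. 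Your subcase for isolated vertices, which routes through a claimed quasi-isometry $\mc C_{\mf H}S\simeq\hat G$ and Bowditch's theory of conical points, is an interesting alternative, but it replaces a self-contained HHS argument with an appeal to external machinery and still leaves the well-definedness issue unresolved in the parabolic case. I would recommend reorienting the proof to define the map explicitly on boundary points and establish continuity via neighborhood bases rather than trying to force a sequence-by-sequence definition.
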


Before proving Theorem \ref{thm:bowditch_boundary}, we will collect some additional preliminary results on the distances in combinatorial horoballs (Section \ref{subsec:horoballs}) and on the topology on the HHS boundary (Section \ref{subsec:boundary_top}). We will then prove Theorem \ref{thm:bowditch_boundary} in the special case where $\mf{S}$ has isolated orthogonality (Section \ref{subsec:isolated_orthogonality_case}). Finally, we reduce the general case to the case of isolated orthogonality using Corollary \ref{cor:adding_hyp_embedded+boundary}, which adds hyperbolically embedded subgroups to the structure without changing the boundary (Section \ref{subsec:proof_of_bowditch_boundary}).

\subsection{Distances in combinatorial horoballs}\label{subsec:horoballs}

The following result of Mackay and Sisto provides a formula for 
computing distances in combinatorial horoballs.

\begin{lem}[{\cite[Lemma 3.2]{MacKay_Sisto_boundary_maps}}]\label{lem:horoball_distance_formula}
	Let $\Gamma$ be a graph  and $\mc{H}(\Gamma)$  the combinatorial horoball over $\Gamma$. There exist $c \geq 0$ so that for all $(x,n),(y,m) \in \mc{H}(\Gamma)$, we have 
	\[d_{\mc{H}(\Gamma)}((x,n),(y,m)) \stackrel{1,c}{\asymp} 2\log\left(d_\Gamma(x,y) e^{-\max\{n,m\}}+1\right) + |m -n|. \]
\end{lem}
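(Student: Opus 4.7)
The plan is to establish the formula by separately proving matching upper and lower bounds on $d_{\mc{H}(\Gamma)}((x,n),(y,m))$, each off by a uniform additive constant. Throughout, set $M = \max\{n,m\}$ and $D = d_\Gamma(x,y)\, e^{-M}$, so the target right-hand side equals $|m-n| + 2\log(D+1)$.

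For the upper bound, I would construct an explicit path. Given an integer $N \geq M$, go vertically from $(x,n)$ up to $(x,N)$, then traverse a lift to level $N$ of a $\Gamma$-geodesic from $x$ to $y$, and finally descend vertically from $(y,N)$ to $(y,m)$. The total length is
\[
(N-n) + d_\Gamma(x,y)\, e^{-N} + (N-m) \;=\; |m-n| + 2(N-M) + d_\Gamma(x,y)\, e^{-N}.
\]
Writing $k = N - M \in \mathbb{Z}_{\geq 0}$, this equals $|m-n| + 2k + D e^{-k}$, and choosing $k$ to minimize $2k + D e^{-k}$ gives the upper bound.

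For the lower bound, I would take an arbitrary path $\gamma$ from $(x,n)$ to $(y,m)$ and let $N$ denote the maximum level it reaches. The vertical edges of $\gamma$ must traverse a combined vertical displacement of at least $(N-n)+(N-m) = 2(N-M) + |m-n|$, since $\gamma$ climbs from level $n$ up to $N$ and then descends to level $m$. For the horizontal contribution, I would collapse all vertical moves to obtain a walk in $\Gamma$ from $x$ to $y$; this walk uses at least $d_\Gamma(x,y)$ edges, and each such horizontal edge of $\gamma$ sits at some level $k \leq N$, hence has length at least $e^{-N}$. Therefore
\[
\mathrm{length}(\gamma) \;\geq\; |m-n| + 2(N-M) + d_\Gamma(x,y)\, e^{-N},
\]
and minimizing over $N \geq M$ recovers the same expression $|m-n| + \min_{k \geq 0}(2k + D e^{-k})$ appearing in the upper bound.

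The technical crux is then an elementary one-variable calculus exercise: the function $f(k) = 2k + D e^{-k}$ is minimized over $k \geq 0$ either at $k = \log(D/2)$ when $D \geq 2$ (with value $2\log D + 2 - 2\log 2$) or at $k = 0$ when $D \leq 2$ (with value $D$). In both regimes, $\min_{k \geq 0} f(k)$ differs from $2\log(D+1)$ by a uniformly bounded additive constant, and the rounding error from restricting to integer $k$ contributes only an additional $O(1)$. These constants can all be absorbed into $c$, so the two bounds match up to an additive error of $c$, proving the claim. The main subtlety to get right is the horizontal lower bound, namely that projecting $\gamma$ to $\Gamma$ genuinely yields an edge-walk from $x$ to $y$ of combinatorial length at least $d_\Gamma(x,y)$, which is immediate once one observes that consecutive horoball vertices in $\gamma$ are either identical under projection (vertical edges) or adjacent in $\Gamma$ (horizontal edges).
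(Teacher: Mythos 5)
Your argument is correct, and it is the natural one: decompose any path into vertical and horizontal contributions, bound the vertical part by the total variation of the level function (at least $2(N-M)+|m-n|$ once the path reaches maximum level $N$), bound the horizontal part by $d_\Gamma(x,y)e^{-N}$ since each horizontal edge used has length at least $e^{-N}$ and the projection to $\Gamma$ is a walk from $x$ to $y$, and then show that $\min_{k\geq 0}\bigl(2k + D e^{-k}\bigr)$ agrees with $2\log(D+1)$ up to a uniform additive constant, with the integer-rounding loss absorbed by convexity of $k\mapsto 2k + D e^{-k}$. Note, however, that the paper does not actually prove this statement --- it imports it as a citation to MacKay--Sisto --- so there is no in-paper proof to compare against; your write-up is simply a correct, self-contained derivation of the cited distance estimate, following the same vertical/horizontal decomposition that is standard for combinatorial horoballs of this type.
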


Using this distance formula, we can show that as points in the base of the horoball move farther away from the basepoint they move closer to the single boundary point at infinity.

\begin{lem}\label{lem:horoball_base_distance_to_boundary}
	Let $Y$ be a $(\lambda,\lambda)$--quasi-geodesic space. Let $N$ be a  $10\lambda$--net in $Y$ and  $\Gamma$ be an approximation graph for $Y$ with vertex set $N$. Let $\mc{H}(Y)$ be the combinatorial horoball obtained by attaching each vertex $(v,0) \in \mc{H}(\Gamma)$ to $v \in N \subseteq Y$ by an edge of length 1. Let $\xi$ be the single boundary point of the hyperbolic space $\mc{H}(Y)$. There is a increasing function $f \colon [0,\infty) \to [0,\infty)$, depending only on $\lambda$, so that $$d_{Y}(x_0,x) \geq f(r) \implies \gprel{x}{\xi}{x_0} >r,$$ where the Gromov product is in $\mc{H}(Y)$. In particular, $d_{Y}(x_0,x) \geq f(r)$ implies  $x$ is contained in the basis neighborhood $M(r;\xi)$ for the compactification $\overline{\mc{H}(Y)}$ with basepoint $x_0$.
\end{lem}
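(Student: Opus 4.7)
The plan is to compute the Gromov product $(x\mid \xi)_{x_0}$ by evaluating it along an explicit representative sequence for $\xi$ and then applying the distance formula of Lemma~\ref{lem:horoball_distance_formula}. First, I would replace $x$ and $x_0$ by nearby net points. Choose $x', x_0' \in N$ with $d_Y(x,x'), d_Y(x_0,x_0') \leq 20\lambda$, which exist since $N$ is a $10\lambda$--net in $Y$. Because $Y$ is included in $\mc{H}(Y)$ and the base vertex $(x',0)$ is attached to $x'$ by an edge of length~$1$, we get $d_{\mc{H}(Y)}(x,(x',0)) \leq 20\lambda+1$, and analogously for $x_0$. Since the Gromov product is $1$--Lipschitz in each argument, it suffices, up to an additive error depending only on $\lambda$, to bound $((x',0)\mid \xi)_{(x_0',0)}$ from below.

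Next, I would use the sequence $y_n = (x_0',n)$ as a representative of $\xi$. The hyperbolic space $\mc{H}(Y)$ has the single boundary point $\xi$ by hypothesis, so every unbounded sequence converges to $\xi$; that $(y_n)$ is unbounded follows from Lemma~\ref{lem:horoball_distance_formula}, which gives $d_{\mc{H}(\Gamma)}((x_0',0),y_n)\stackrel{1,c}{\asymp} n$. Applying Lemma~\ref{lem:horoball_distance_formula} to the three relevant pairs yields
\begin{align*}
d_{\mc{H}(\Gamma)}((x_0',0),(x',0)) &\stackrel{1,c}{\asymp} 2\log(d_\Gamma(x_0',x')+1),\\
d_{\mc{H}(\Gamma)}((x_0',0),y_n) &\stackrel{1,c}{\asymp} n,\\
d_{\mc{H}(\Gamma)}((x',0),y_n) &\stackrel{1,c}{\asymp} 2\log(d_\Gamma(x_0',x')e^{-n}+1) + n.
\end{align*}
The last quantity tends to $n$ (up to additive error $c$) as $n\to\infty$. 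Substituting into the definition of the Gromov product, the $n$'s cancel, and taking the $\liminf$ gives
\[
((x',0)\mid \xi)_{(x_0',0)} \;\geq\; \liminf_n ((x',0)\mid y_n)_{(x_0',0)} \;\geq\; \log\!\bigl(d_\Gamma(x_0',x')+1\bigr) - C_1,
\]
where $C_1$ depends only on the constant $c$ from Lemma~\ref{lem:horoball_distance_formula}.

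Finally, I would translate this back to a bound on $d_Y(x_0,x)$. Every edge of $\Gamma$ joins a pair of net points at $Y$--distance at most $20\lambda$, so any $\Gamma$--path of length $k$ between $x_0'$ and $x'$ yields a path of $Y$--length at most $20\lambda k$, giving $d_\Gamma(x_0',x') \geq d_Y(x_0',x')/(20\lambda) \geq (d_Y(x_0,x) - 40\lambda)/(20\lambda)$. Combining this with the previous paragraph,
\[
(x\mid \xi)_{x_0} \;\geq\; \log\!\left(\frac{d_Y(x_0,x)-40\lambda}{20\lambda}+1\right) - C_2,
\]
where $C_2$ depends only on $\lambda$. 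Solving this inequality for $d_Y(x_0,x)$ produces an increasing function $f\colon [0,\infty)\to[0,\infty)$ depending only on $\lambda$ with the desired property, and the last statement about $M(r;\xi)$ is then immediate from Definition~\ref{def:nbhdbasis}. The main obstacle is purely bookkeeping: making sure the additive errors from changing basepoints, from the $\stackrel{1,c}{\asymp}$ estimates, and from the net-to-graph comparison all combine into a single constant depending only on $\lambda$, while also handling the trivial edge case where $d_Y(x_0,x) < 40\lambda$ by choosing $f(0)$ large enough.
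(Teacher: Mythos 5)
Your proof is correct and follows essentially the same route as the paper's: reduce to the approximation graph $\Gamma$ via the quasi-isometries $Y\simeq\Gamma$ and $\mc H(Y)\simeq\mc H(\Gamma)$, evaluate the Gromov product along the vertical sequence $(x_0,n)$ using the three-term expansion and Lemma~\ref{lem:horoball_distance_formula}, and observe the $n$'s cancel, leaving $\log(d_\Gamma(x_0,x)+1)$ up to additive error. The only cosmetic difference is that you spell out the net-point replacement and the $1$--Lipschitz comparison explicitly, while the paper folds this into a one-line quasi-isometry reduction; the substance is identical.
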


\begin{proof}
	For each $n \in \mathbb{Z}_{\geq 0}$, let $x_n$ be the vertex
	$(x_0,n) \in \mc{H}(\Gamma)$.  Because $\Gamma$ is quasi-isometric
	to $Y$ and $\mc{H}(\Gamma)$ is quasi-isometric to $\mc{H}(Y)$,
	each with constants depending only on $\lambda$, it suffices to
	prove the result for $\mc{H}(\Gamma)$.  In fact, this is the only
	source for the dependency of $f$ on $\lambda$.

	By definition,
	$\gprel{x}{\xi}{x_0}$ is the limit of $\gprel{x}{x_n}{x_0}$ as $n
	\to \infty$.  Letting $c\geq 0$ be the constant from Lemma
	\ref{lem:horoball_distance_formula}, which we apply to three 
	different pairs of points, we have:
	\begin{align*}
		d_{\mc{H}(\Gamma)}(x,x_0)  &\geq 2\log(d_\Gamma(x,x_0)+1) -c; \\
		d_{\mc{H}(\Gamma)}(x_n,x_0) &\geq 2\log(d_\Gamma(x_0,x_0) e^{-n} +1) +n -c =n-c;\\
		d_{\mc{H}(\Gamma)}(x_n,x) &\leq 2\log(d_\Gamma(x,x_0)e^{-n} +1) +n+c.
	\end{align*}
	\noindent  Which implies:
	\begin{align*}
		2\gprel{x}{x_n}{x_0} &= d_{\mc{H}(\Gamma)}(x,x_0) + d_{\mc{H}(\Gamma)}(x_n,x_0) - d_{\mc{H}(\Gamma)}(x_n,x)\\
		&\geq 2\log(d_\Gamma(x,x_0)+1)-c+(n-c)-\left( 2\log(d_\Gamma(x_0,x)e^{-n}+1)+n-c\right).
	\end{align*}
	Hence, for any $\varepsilon>0$, there exists a sufficiently large $n$ such that 
	\[
	2(x\mid \xi)_{x_0}\geq 2\log(d_\Gamma(x,x_0)+1)-\varepsilon -3c.
	\]
	Therefore,  $\gprel{x}{\xi}{x_0}$ is  bounded below by a function of $d_{\Gamma}(x_0,x)$ as desired.
\end{proof}

\subsection{Open sets in the HHS boundary}\label{subsec:boundary_top}
We now describe a way to construct open sets around points in the HHS boundary.  For each $p \in \partial(\mc{X},\mf{S})$ and $r \geq 0$, we will define a set $\mc{A}_r(p)$. While the sets  $\mc{A}_r(p)$ may not be open themselves, they are constructed so that they each contain a element of the basis of the topology on $\partial (\mc{X},\mf{S})$.

 To define $\mc{A}_r(p)$ we need to extend the HHS projection maps to points in the boundary.

\begin{defn}\label{def:boundaryprojections}
	Fix a point $q = \sum_{W \in \supp(q)}a_W q_W \in \partial (\mc X,\s)$.  For each $U\in \s$ such that there exists $W\in \supp(q)$ with $U\not\perp W$, we define the \emph{boundary projection $\partial \pi_U(q)$} of $q$ into $\mc CU$ as follows.
	\begin{itemize}
		\item If $W = U$,  define $\partial \pi_U(q) := q_U=q_W$.
		\item If $W\propnest U$ or $W\trans U$,  let $\mc{V} = \{V \in \supp(q) : V \trans U \text{ or } V \propnest U \}$, and define $$\partial \pi_U(q): = \bigcup_{V \in \mc{V}} \rho^V_U.$$
		\item If $W\sqsupsetneq U$,  we will use the map $\rho_U^W \colon \mc{C}W \to \mc{C}U$ from Lemma \ref{lem:downward_rhos} to define $\partial \pi_U(q)$. Let $\sigma\geq 0$ be the constant so that any two $(1,20E)$--quasi-geodesics with the same endpoints in a $E$--hyperbolic metric space are $\sigma$--close together. Let $Z\subseteq \mc C W$ be the set of  all points on all $(1,20E)$--quasi-geodesics from a point in $\rho^U_W\in\mc CW$ to $q_W\in\partial \mc CW$ that are at distance at least $2E+\sigma$ from $\rho^U_W$.  Define $$\partial \pi_U(q): =\rho^W_U(Z).$$
	\end{itemize} 
\end{defn}

The definition of $\mc{A}_r(p)$ is divided into two parts depending on the relationship with the support of $p$.

\begin{defn}\label{def:remote}
	Let $(\mc X,\mf{S})$ be a hierarchically hyperbolic space, and let $p\in \partial(\mc X,\s)$.  A point $q\in \partial (\mc X,\s)$ is \emph{remote to $p$} if:
	\begin{enumerate}
		\item $\supp(p)\cap \supp(q)=\emptyset$; and 
		\item for all $Q\in \supp(q)$, there exists $P\in \supp(p)$ so that $P$ and $Q$ are \emph{not} orthogonal.
	\end{enumerate}
\end{defn}

\begin{defn}\label{defn:"basic"_set}
	Given $r \geq 0$ and $p = \sum a_W p_W \in \partial(\mc{X},\mf{S})$ define two sets of points:
	\begin{itemize}
		\item  $\mc{A}^{rem}_r(p)$ is the set of points $q \in \partial (\mc{X},\mf{S})$ that are remote to $p$ and have $$\partial \pi_W(q) \subseteq M(r;p_w)$$  for all $W \in \supp(p)$;
		\item $\mc{A}^{non}_r(p)$ is the set of points $q \in \partial(\mc{X},\mf{S})$ that are not remote to $p$ and have  $$\partial \pi_W(q) \subseteq M(r;p_w)$$  for all $W \in \supp(p) \cap \supp(q)$.
	\end{itemize}
	Define $\mc{A}_r(p) :=  \mc{A}^{rem}_r(p) \cup \mc{A}^{non}_r(p)$.
\end{defn}

In \cite[Section 2]{HHS_Boundary}, Durham, Hagen, and Sisto describe a basis of neighborhoods for $\partial(\mc{X},\mf{S})$.  These basis sets are subsets of the $\mc{A}_r(p)$ defined by putting restrictions on the coefficients of points $q = \sum_{W \in \supp(q)}a_W q_W  \in \mc{A}_r(p)$; see \cite[Section 2]{HHS_Boundary} for details.  We therefore have Lemma \ref{lem:"basic"_sets} below. The hyperbolic case of Lemma \ref{lem:"basic"_sets} is a consequence of the fact that a hyperbolic HHS cannot have a pair of  unbounded domains that are orthogonal; see \cite[Lemma 4.1]{HHS_Boundary}.
\begin{lem}\label{lem:"basic"_sets}
	For each  $r \geq 0$ and $p \in \partial(\mc{X},\mf{S})$, the set $\mc{A}_r(p)$ contains an open set containing $p$. If $\mc{X}$ is hyperbolic, then the sets $\mc{A}_r(p)$ form a basis for the topology on $\partial(\mc{X},\mf{S})$.
\end{lem}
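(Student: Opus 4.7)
The plan is to deduce this directly from the construction of the boundary topology in \cite{HHS_Boundary}. Durham--Hagen--Sisto define a basis of neighborhoods at each $p = \sum_{W \in \supp(p)} a_W p_W$ by specifying, for each $r \geq 0$ and each sufficiently small $\varepsilon > 0$, the set of points $q = \sum_{V \in \supp(q)} b_V q_V \in \partial(\mc X,\s)$ satisfying two conditions: a projection condition that is exactly the one appearing in Definition \ref{defn:"basic"_set}, and a coefficient condition constraining the $b_V$ to be close to the corresponding $a_W$ (with $b_V$ forced to be small for $V \notin \supp(p)$ in the non-remote case). I would begin the proof by recalling these basis sets from \cite[Definitions 2.12--2.15]{HHS_Boundary} and observing that, by construction, every such basis neighborhood $\mc U_{r,\varepsilon}(p)$ is contained in $\mc A_r(p)$ and contains $p$. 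This gives the first assertion.

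For the second assertion, I would use \cite[Lemma 4.1]{HHS_Boundary}: in a hyperbolic HHS, $\mf S$ contains no pair of orthogonal unbounded domains. Consequently every point $p \in \partial(\mc X,\s)$ has $\supp(p)$ a singleton, so the unique coefficient $a_W$ in the expression $p = a_W p_W$ must equal $1$, and likewise for any $q \in \partial(\mc X,\s)$. In this setting, the $\varepsilon$--constraint on coefficients in the Durham--Hagen--Sisto basis becomes trivial: every $q$ automatically satisfies it. Hence the basis neighborhood $\mc U_{r,\varepsilon}(p)$ coincides with $\mc A_r(p)$ for any permissible $\varepsilon$, so the $\mc A_r(p)$ are themselves basis elements.

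To make the reduction clean I would record, as an intermediate observation, that the projection maps $\partial \pi_U$ from Definition \ref{def:boundaryprojections} agree with the extended projections used in \cite[Section 2]{HHS_Boundary} (the three cases in Definition \ref{def:boundaryprojections} exactly mirror DHS's conventions, with the downward projection defined via Lemma \ref{lem:downward_rhos} in the $W \sqsupsetneq U$ case). With that identification in place, the projection condition in Definition \ref{defn:"basic"_set} is literally the projection requirement in the DHS basis, so the containment $\mc U_{r,\varepsilon}(p) \subseteq \mc A_r(p)$ is immediate.

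The only mildly delicate point is the split into remote and non-remote parts in Definition \ref{defn:"basic"_set}: one must check that the DHS basis neighborhood of $p$ simultaneously handles both kinds of $q$, in the sense that the projection condition is enforced only on $W \in \supp(p)$ for remote $q$ and only on $W \in \supp(p)\cap\supp(q)$ for non-remote $q$. This matches DHS's definition, where the projection constraint is imposed on all $W \in \supp(p)$, but for non-remote $q$ with $W \in \supp(p) \setminus \supp(q)$ the existence of some $V \in \supp(q)$ orthogonal to $W$ forces $\partial\pi_W(q)$ to be handled separately via the coefficient bound rather than the projection bound. This is the one place a small bookkeeping check is required, but it is directly extractable from \cite[Section 2]{HHS_Boundary} and is the main (minor) obstacle to a purely formal quotation.
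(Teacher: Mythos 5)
Your proposal is correct and follows essentially the same route as the paper: both reduce the first assertion to the observation that the Durham--Hagen--Sisto basis neighborhoods are exactly the subsets of $\mc{A}_r(p)$ cut out by coefficient restrictions, and both handle the hyperbolic case via \cite[Lemma 4.1]{HHS_Boundary}, which forces every boundary point to have singleton support (so the coefficient constraints are vacuous and $\mc{A}_r(p)$ itself is a basis element). Your intermediate checks---matching Definition \ref{def:boundaryprojections} to the DHS conventions and verifying the remote/non-remote split against the DHS projection constraints---are the bookkeeping the paper elides by citing \cite[Section 2]{HHS_Boundary} directly.
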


\subsection{The case of isolated orthogonality}\label{subsec:isolated_orthogonality_case}

For this subsection, let $(\mc{X},\mf{S})$ be an HHS with the bounded domain dichotomy, and let $S$ be the $\nest$--maximal element of $\mf{S}$. Moreover, assume that $\mf{S}$ has orthogonality isolated by $\mf{I} \subseteq \mf{S}$ and that every non-$\nest$--maximal element of $\mf{S}$ is nested into a domain in $\mf{I}$.

By Theorem \ref{thm:isolated_orthogonality_implies_rel_hyp}, this implies $\mc{X}$ is hyperbolic relative to the collection $\{\P_I:I \in \mf{I}\}$. Let $\cusp(\mc{X})$ be the cusped space obtained by attaching a combinatorial horoball to $\P_I$ for each $I \in \mf{I}$. We will prove that $\partial \cusp(\mc{X})$ is the quotient of $\partial(\mc{X},\mf{S})$ formed by collapsing the limit set of each product region $\P_I$ to a point.

To define the quotient map, we equip $\cusp(\mc{X})$ with the following HHS structure $\mf{R}$; the fact that this is an HHS structure is a direct consequence of  \cite[Theorem 3.2 and 4.2]{Russell_Relative_Hyperbolicity}.
\begin{itemize}
	\item The index set is  $\mf{R} = \{S\} \cup \mf{I}$, where $S$ is the $\nest$--maximal element of $\mf{S}$. 
	\item The $\nest$--maximal element of $\mf{R}$ is $S$ and all elements of $\mf{I}$ are transverse to each other.
	\item The hyperbolic space for $I \in \mf{I}$ is the horoball $\mc{H}(\P_I)$ and the hyperbolic space for $S$ is $\mc{C}S$.
	\item The projection maps in $\mf{R}$ are denoted $\widehat{\pi}_\ast$. For $S$, the projection $\widehat{\pi}_S \colon \cusp(\mc{X}) \to \mc{C}S$ is an extension of $\pi_S$ to the horoballs over the $\P_I$  so that $\widehat{\pi}_S(\mc{H}(\P_I)) = \rho_S^I$ and $\widehat{\pi}_S(x) = \pi_S(x)$ for $x \in \mc{X}$. For each $I \in\mf{I}$, the projection $\widehat{\pi}_I \colon \cusp(\mc X) \to \mc{H}(\P_I)$ is  defined using the gate map, $\gate_{\P_I}$, from $(\mc{X},\mf{S})$ as follows:
	\begin{itemize}
		\item if $x \in \mc{X} \subseteq \cusp(\mc{X})$, then  $\widehat{\pi}_I(x) = \gate_{\P_I}(x)$, and
		\item if $x \not \in \mc{X}$, then $x \in \mc{H}(P_J)$ for a unique $J \in \mf{I}$. In this case, $\widehat{\pi}_I(x) = \gate_{\P_I}(\P_J)$.
	\end{itemize} 

	\item The relative projections in $\mf{R}$ are denoted by $\widehat{\rho}_\ast^\ast$.
	 For each $I,J \in \mf{I}$, we have $\widehat{\rho}^{I}_S = \rho_S^I$ and $\widehat{\rho}_I^J = \gate_{\P_I}(\P_J)$. 
	 
\end{itemize}

Since $\cusp(\mc{X})$ is hyperbolic, the Gromov boundary $\partial \cusp(\mc{X})$ is naturally homeomorphic to the HHS boundary $\partial(\cusp(\mc{X}),\mf{R})$ by \cite[Lemma 4.2]{HHS_Boundary}. Hence, we will build a quotient map from $\partial (\mc{X},\mf{S})$ to $\partial(\cusp(\mc{X}),\mf{R})$. For each $I\in \mf{I}$, let $\xi_I$ denote the single element of $\partial \mc{H}(\P_I)$.

Let $x_0 \in \mc{X}$ be the basepoint for $\partial (\mc{X},\mf{S})$ and choose $x_0$ to also be the basepoint of $\partial(\cusp(\mc{X}),\mf{R})$. If $p = \sum a_Wp_W \in \partial (\mc{X},\mf{S})$, then $M(r;p_W)$ will denote the standard basis neighborhood in $\overline{\mc{C}W}$ of $p_W$, and $\mc{A}_r(p)$, $\mc{A}^{rem}_{r}(p)$, and $\mc{A}^{non}_r(p)$ will denote the sets describe in Definition \ref{defn:"basic"_set}.    For $p \in \partial (\cusp(\mc X),\mf{R})$, the support of $p$ is a single domain $W \in \mf{R}$.
Thus,  we will use $\widehat{M}(r;p)$ to denote the basis neighborhood  for $p$ in $\mc CW$, which is either $\mc{C}S$ or $\mc{H}(\P_I)$ depending on whether $W= S$ or $W\in\mf I\subset\mf R$. Similarly $\widehat{\mc{A}}_{r}(p)$,  $\widehat{\mc{A}}^{rem}_{r}(p)$, and $\widehat{\mc{A}}^{non}_{r}(p)$ will denote the sets from Definition \ref{defn:"basic"_set} applied to the HHS $(\cusp(\mc{X}),\mf{R})$.  Since $\cusp(\mc{X})$ is hyperbolic, the sets $\widehat{\mc{A}}_r(p)$  form a basis for the topology on $\partial(\cusp(\mc{X}),\mf{R})$ by Lemma \ref{lem:"basic"_sets}.

We say a subset $\mf{U} \subseteq \mf{S}$ is \emph{entirely nested} into a domain $W \in \mf{S}$ if $V \nest W$ for each $V \in \mf{U}$. Because every domain of $\mf{S}- \{S\}$ is nested into an element of $\mf{I}$ and $\mf I$ isolates orthogonality, for each $p \in \partial (\mc{X},\mf{S})$ either $\supp(p) =\{S\}$ or $\supp(p)$ is entirely nested in some $I \in \mf{I}$.

\begin{prop}\label{prop:isolated_case_Phi_is_cont}
	The map  $\Phi \colon \partial (\mc{X},\mf{S}) \to \partial (\cusp(\mc X),\mf{R})$ given by 
	\[ \Phi(p) = \begin{cases} p & \text{if }\supp(p) = \{S\} \\ \xi_I & \text{if } \supp(p) \text{  is entirely nested in } I \in \mf{I}
		
	\end{cases}\]
	is continuous and surjective.  Moreover, if $\iota \colon \mc{X} \to \cusp(\mc{X})$ is the inclusion map and $(x_n)$ is a sequence of points in $\mc{X}$ that converges to $p$, then $(\iota(x_n))$ converges to $\Phi(p)$.
\end{prop}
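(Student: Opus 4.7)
The plan is to establish the ``moreover'' clause first and then derive continuity and surjectivity via a standard extension argument. Since $\mc{X}$ is proper, $\mc{X}\cup\partial(\mc{X},\mf{S})$ is compact Hausdorff with $\mc{X}$ dense, and $\cusp(\mc{X})\cup\partial\cusp(\mc{X})$ is likewise the Gromov compactification of a proper hyperbolic space. Once every sequence $x_n\to p$ in $\partial(\mc{X},\mf{S})$ is shown to have $\iota(x_n)$ converging in $\partial\cusp(\mc{X})$ to a single limit depending only on $p$, the extension principle for continuous maps into compact Hausdorff spaces produces a continuous extension of $\iota$ whose restriction to $\partial(\mc{X},\mf{S})$ must coincide with $\Phi$. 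Surjectivity then follows from density in both compactifications: each $\xi_I$ is a limit of a sequence in $\P_I$ with $d_{\P_I}(x_0,\cdot)\to\infty$ by Lemma \ref{lem:horoball_base_distance_to_boundary}, and every $p\in\partial\mc{C}S$ is a limit of a sequence $x_n\in\mc{X}$ with $\pi_S(x_n)\to p$, using coarse surjectivity of $\pi_S$ and compactness to extract a subsequence converging in $\partial(\mc{X},\mf{S})$.

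For the moreover clause, I work in the HHS structure $(\cusp(\mc{X}),\mf R)$, which has no orthogonality and so is a hyperbolic HHS whose boundary is naturally homeomorphic to $\partial\cusp(\mc{X})$. In Case A with $\supp(p)=\{S\}$, the identity $\widehat{\pi}_S\circ\iota=\pi_S$ combined with Lemma \ref{lem:convergence_of_interior_points} yields $\widehat{\pi}_S(\iota(x_n))\to p$ in $\mc{C}S\cup\partial\mc{C}S$. For each $I\in\mf{I}$ and each $W\nest I$, Lemma \ref{lem:hierarch_path_through_the_gate} gives $\pi_W(\gate_{\P_I}(x_n))\asymp_\kappa\mf{p}_{\pi_W(\P_I)}(\pi_W(x_n))$; combined with $\pi_W|_{\P_I}$ being coarsely onto (via partial realization and consistency, using $W\nest I$) and the support-minimality observation that $\pi_W(x_n)$ remains bounded for every non-maximal $W$ (since $\supp(p)=\{S\}$), this forces $\widehat{\pi}_I(\iota(x_n))=\gate_{\P_I}(x_n)$ to stay bounded in $\mc{H}(\P_I)$. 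These two facts—top-level convergence plus horoball boundedness—together yield $\iota(x_n)\to p=\Phi(p)$ in $\partial\cusp(\mc{X})$ by the definition of the HHS boundary topology for the hyperbolic structure $\mf R$.

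In Case B, $\supp(p)\subseteq\mf{S}_I$ for a unique $I\in\mf{I}$ by isolated orthogonality. I show $\gate_{\P_I}(x_n)\to\xi_I$ in $\mc{H}(\P_I)\cup\{\xi_I\}$. Pick any $W\in\supp(p)$; then $W\propnest I$ and $\pi_W|_{\P_I}$ is coarsely onto. Lemma \ref{lem:convergence_of_interior_points} gives $\pi_W(x_n)\to p_W\in\partial\mc{C}W$, and Lemma \ref{lem:hierarch_path_through_the_gate} propagates this to $\pi_W(\gate_{\P_I}(x_n))\to p_W$, so $d_W(x_0,\gate_{\P_I}(x_n))\to\infty$. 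The HHS uniqueness axiom then forces $d_{\mc{X}}(x_0,\gate_{\P_I}(x_n))\to\infty$, and since $\P_I$ is hierarchically quasiconvex it is undistorted in $\mc{X}$ (a standard consequence of Proposition \ref{prop:HQC_and_hp}), so $d_{\P_I}(x_0,\gate_{\P_I}(x_n))\to\infty$. Lemma \ref{lem:horoball_base_distance_to_boundary} then delivers $\gate_{\P_I}(x_n)\to\xi_I$. The support-minimality of convergence to $p$ further ensures $\pi_S(x_n)$ remains bounded near $\rho_S^I$, so that $d_{\cusp(\mc{X})}(\iota(x_n),\gate_{\P_I}(x_n))$ stays bounded, and Lemma \ref{lem:bounded_difference_convergence} gives $\iota(x_n)\to\xi_I=\Phi(p)$.

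The main obstacle is rigorously extracting the ``support-minimality'' property of the HHS boundary topology: that $x_n\to p$ in $\partial(\mc{X},\mf{S})$ forces $\pi_V(x_n)$ to remain bounded for domains $V$ unrelated to $\supp(p)$ by nesting or orthogonality. This is implicit in the neighborhood basis $\mc{A}_r(p)$ via the boundary-projection conditions in Definitions \ref{def:boundaryprojections} and \ref{defn:"basic"_set}, but requires a careful case analysis of remoteness and non-remoteness to extract. Controlling these non-supporting projections is precisely what prevents $\iota(x_n)$ from converging to a spurious boundary point of $\cusp(\mc{X})$: in Case A it prevents $\gate_{\P_I}(x_n)$ from escaping into a horoball, and in Case B it prevents $\pi_S(x_n)$ from escaping to a conical point of $\partial\mc{C}S$.
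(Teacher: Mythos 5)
Your plan inverts the paper's: you prove the ``moreover'' clause first and then try to recover continuity via an extension principle, whereas the paper proves continuity directly by comparing the basis sets $\mc{A}_{r'}(p)$ and $\widehat{\mc{A}}_r(\Phi(p))$ in a case analysis on the support of $p$ and of a test point $q$, and only afterward derives the moreover clause as an easier corollary. The inversion is where the trouble lies.

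The main gap is the step ``once every sequence $x_n\to p$ in $\partial(\mc{X},\mf{S})$ has $\iota(x_n)$ converging to $\Phi(p)$, the extension principle produces a continuous extension whose boundary restriction is $\Phi$.'' There is no such general principle. Sequential behavior determines continuity only when the domain is sequential (e.g.\ metrizable or first-countable), and the HHS boundary topology on $\partial(\mc{X},\mf{S})$ is defined via the rather complicated neighborhood basis of Definition~\ref{defn:"basic"_set}; the paper records compactness and Hausdorffness from \cite{HHS_Boundary} but nowhere claims metrizability or first countability, and you do not establish it. Even more to the point, continuity of $\Phi$ requires controlling what happens to \emph{boundary} points $q$ near $p$ in $\partial(\mc{X},\mf{S})$, not merely interior sequences $x_n\to p$: that is precisely what the paper's Claims~\ref{claim:supp(p)=S} and \ref{claim:supp(p)inI} do, case-splitting on whether $q$ is remote, whether $\supp(q)=\{S\}$, and whether $\supp(q)$ nests into $I$ or into some $J\neq I$. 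Your argument never confronts the remote/non-remote dichotomy or the boundary projections $\partial\pi_U(q)$ of Definition~\ref{def:boundaryprojections}, which are where all the real work is.

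There is also a secondary problem inside your moreover argument. In Case A you invoke ``support-minimality: $\pi_W(x_n)$ remains bounded for every non-maximal $W$ since $\supp(p)=\{S\}$,'' and in Case B you use a similar assertion to bound $d_{\cusp(\mc{X})}(\iota(x_n),\gate_{\P_I}(x_n))$. Convergence $x_n\to p$ in the HHS boundary does not force $\pi_W(x_n)$ to stay bounded for $W\notin\supp(p)$; it only controls the relative growth rates (and, for boundary test points, the boundary projections), so this claim needs a real proof and may fail. You flag this yourself as the ``main obstacle,'' which is exactly right, but you do not close it. The paper sidesteps it entirely: since $(\cusp(\mc{X}),\mf{R})$ has no orthogonality, convergence of $\iota(x_n)$ to $\Phi(p)$ in that boundary is governed just by the single relevant projection ($\widehat{\pi}_S$ if $\supp(p)=\{S\}$, $\widehat{\pi}_I$ if $\supp(p)$ nests into $I$), with no need to bound the other projections. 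Adopting that viewpoint would let you delete the support-minimality claims, but it would not repair the continuity gap.
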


\begin{proof}
	We first prove two claims that describe the images of $\mc{A}^{rem}_r(p)$ and $\mc{A}^{non}_r(p)$ under $\Phi$.  The claims are divided based on the support of $p$, which must either be equal to $\{S\}$ (Claim \ref{claim:supp(p)=S}) or entirely nested in some $I\in\mf I$ (Claim \ref{claim:supp(p)inI}). Let $E$ be the hierarchy constant for $\mf{S}$ and $\mf{R}$.
	
	\begin{claim}\label{claim:supp(p)=S}
	Suppose  $p \in \partial(\mc{X},\mf{S})$ with $\supp(p) = \{S\}$. For all $r \geq 0$, there exist $r' \geq 0$ so that     $\Phi(\mc{A}_{r'}(p)) \subseteq  \widehat{\mc{A}}_{r}(p)$.
	\end{claim}
	
	\begin{proof}
		Since $\supp(p) =\{S\}$, we have  $\widehat{M}(r;p) = M(r;p)$ for all $r \geq 0$. For each $r\ge 0 $, there exists $r'\geq r$ so that whenever $x \in M(r';p)\cap \mc{C}S $, then $\mc{N}_{3E}(x) \subseteq M(r;p)$. Such an $r'$ depends only on $r$ and $E$.
		
		Let $q \in\mc{A}_{r'}(p)$.
		If $\supp(q) = \{S\}$, then $q \in M(r';p) \subseteq M(r;p)$. However $\widehat{M}(r;p) = M(r;p)$, and so  $q \in \widehat{\mc{A}}_{r}(p)$. 
		If instead $\supp(q)$ is entirely nested in $I \in \mf{I}$, then for each $V \in \supp(q)$, we have $V \nest I \propnest S$. The consistency axiom in $\mf{S}$ ensures that each such $\rho_S^V$ is contained in $\mc{N}_{2E}(\rho_S^I)$. Since $\partial \pi_S(q)$ is the union of the $\rho_S^V$ over all $V \in \supp(q)$, we have $\rho_S^I \subseteq  \mc{N}_{2E}(\partial \pi_S(q))$. Since $q \in  \mc{A}_{r}(p)$ and $\supp(p) = \{S\}$, the set $\partial\pi_S(q)$ must be contained in $M(r';p)$. Thus $\mc{N}_{2E}(\partial \pi_S(q)) \subseteq M(r;p)$. Since $ M(r;p) = \widehat{M}(r;p)$, we have $$\rho_S^I = \widehat{\rho}_S^I \subseteq \widehat{M}(r;p),$$ and thus $\Phi(q) = \xi_I \in \widehat{\mc{A}}_r(p)$. 
	\end{proof}

	\begin{claim}\label{claim:supp(p)inI}
	Suppose $p \in \partial(\mc{X},\mf{S})$ with $\supp(p)$ entirely nested into $I \in \mf{I}$.  For all $r \geq 0$,   there exists $r' \geq0$, so  $\Phi(\mc{A}_{r'}(p)) \subseteq  \widehat{\mc{A}}_{r}(\xi_I) = \widehat{\mc{A}}_{r}(\Phi(p))$.
	\end{claim}
	
	\begin{proof}
	Let $q\in \mc A_{r'}(p)$ for some $r'>0$.   The proof  is divided into three cases. In each case, we will show that  if $r'$ is sufficiently large, then  $\Phi(q)\in \widehat{\mc A}_r(\xi_I)$. \\
	
	\noindent \textit{Case 1: $q\in \mc A^{non}_{r'}(p)$.} 
	Because of isolated orthogonality, if $q$ is not remote to $p$, then either $\supp(p) = \supp(q) = \{S\}$ or there is a single $I \in \mf{I}$ so that $\supp(p)$ and $\supp(q)$ are both entirely nested into $I$. Since we are working under the assumption that $\supp(p)$ is entirely nested in $I\in \mf I$, the same must be true of $\supp(q)$, and we conclude that $\Phi(q) = \xi_I\in \widehat{\mc A}_r(\xi_I)$. \\

	\noindent \textit{Case 2: $q\in \mc A^{rem}_{r'}(p)$ and $\supp(q)$ is entirely nested in $I$.}  In this case $\Phi(q)=\xi_I\in \widehat{\mc A}_r(\xi_I)$.
	\\

	\noindent \textit{Case 3: $q\in \mc A^{rem}_{r'}(p)$ and $\supp(q)$ is not entirely nested in $I$.} 
	Each $\P_I$ is uniformly a quasi-geodesic space by virtue of being uniformly hierarchically quasiconvex and Proposition \ref{prop:HQC_and_hp}.  Let $f\colon [0,\infty)\to [0,\infty)$ be the function from Lemma~\ref{lem:horoball_base_distance_to_boundary} for $\mc{H}(\P_I)$. Fix $W\in \supp(p)$.  By the assumptions of Claim \ref{claim:supp(p)inI}, $W\nest I$.
	Since $q$ is in $\mc A^{rem}_{r'}(p)$, we have $\partial \pi_W(q)\subseteq M(r';p_W)$.	Under the assumptions of this case, either the support of $q$ is  entirely nested into some $J\in\mf I-\{I\}$ or $\supp(q)=\{S\}$.  We will deal with each possibility in a separate subcase.
	
	In both subcases, the strategy of the proof is to show that $\partial\widehat{\pi}_I(\Phi(q))\subseteq \widehat{M}(r;\xi_I)$.  To do this, let $y$ be a point in $\mc X$ so that $\widehat{\pi}_I(y)\in \partial\widehat{\pi}_I(\Phi(q))$.   If we can show that $d_W(\pi_W(x_0),\pi_W(y))$ is sufficiently large, then since the maps $\pi_W$ are coarsely Lipschitz,  we can conclude that $$d_{\P_I}(\gate_{\P_I}(x_0),\gate_{\P_I}(y))>f(r).$$  By Lemma~\ref{lem:horoball_base_distance_to_boundary}, this would show that $\widehat{\pi}_I(y)\in \widehat{M}(r;\xi_I)$, as desired.
	
	\textit{Case 3a: $\supp(q)$ is  entirely nested into some $J\in\mf I-\{I\}$.}  
	In this case, $\Phi(q)=\xi_J$, and $\xi_J$ is remote to $\xi_I$ because $\supp(\xi_J) = \{J\}$, $\supp(\xi_I) = \{I\}$, and $J \trans I$.  By definition, $$\partial\widehat{\pi}_I(\xi_J)=\widehat{\rho}^J_I=\gate_{\P_I}(\P_J).$$ Since $\supp(q)$ is entirely nested in $J$, the projection $\rho^J_W$  is coarsely equal to $\partial\pi_W(q)$, which is contained in $M(r';p_W)$ by assumption.  Moreover, $\pi_W(\gate_{\P_I}(\P_J))$ is uniformly close to $\rho_W^J \subseteq M(r';p_W)$. 
 Therefore, letting $y$ be any point in $\P_J$ and choosing $r'$  large enough, we can ensure that $$d_W(\pi_W(x_0),\pi_W(y))\geq d_W(\pi_W(x_0),M(r';p_W))$$ is large enough so that $d_{\P_I}(\gate_{\P_I}(x_0),\gate_{\P_I}(y))>f(r)$.  Therefore, as described above, 
	\[
	\partial\widehat{\pi}_I(\Phi(q))=\partial\widehat{\pi}_I(\xi_J)=\gate_{\P_I}(\P_J)\subseteq \widehat{M}(r;\xi_I),\]
	and we conclude that $\Phi(q)\in \widehat{\mc A}_r(\xi_I)$, as desired.
		
	\textit{Case 3b: $\supp(q)=\{S\}$.}  
	In this case, $\Phi(q)=q$, and $q$ is remote to $\xi_I$ because $S$ is not orthogonal to $I$.  For any $U\neq S$, we  have $\widehat{\rho}^U_S=\rho^U_S$ and  $\widehat{\rho}^S_U(\pi_S(y)) \subseteq \partial\widehat{\pi}_U(q)$ for any $y \in \mc{X}$ where $\pi_S(y)$ lies on a quasigeodesic ray from $\widehat{\rho}^U_S$ to $q\in \partial CS$ that is sufficiently far from $\widehat{\rho}^U_S$.

	Since $W\nest I\nest S$, the upward projections $\rho^W_S$ and $\rho^I_S$ are coarsely equal.  Thus there exists $y\in \mc X$ such that $\pi_S(y)$ lies on a quasigeodesic from $\rho^I_S$ to $q$ and is sufficiently far from both $\rho^W_S$ and $\rho^I_S$ so that $\rho^S_W(\pi_S(y))\subseteq \partial \pi_W(q)$ and $\rho^S_I(\pi_S(y))\subseteq \partial\widehat{\pi}_I(q)$.  In particular, the first inclusion  implies that $\rho^S_W(\pi_S(y))\subseteq M(r';p_W)$.  
	
	By Lemma \ref{lem:downward_rhos}, $\pi_W(y)$ and $\rho^S_W(\pi_S(y))$ are uniformly coarsely equal. Since $W\nest I$, the projections $\pi_W(y)$ and $\pi_W(\gate_{\P_I}(y))$ are also uniformly coarsely equal.  Since $$\rho^S_W(\pi_S(y))\subseteq \partial \pi_W(q)\subseteq M(r';p_W),$$ there is some $c>0$ depending only on $E$ such that $\pi_W(\gate_{\P_I}(y))\subseteq M(r'-c;p_W)$.

	As in the previous subcase,  choosing $r'$ sufficiently large ensures that $d_{\P_I}(\gate_{\P_I}(x_0),\gate_{\P_I}(y))$ is greater than $f(r)$. Therefore, 
	\[
	\gate_{\P_I}(y)\in \widehat{M}(r; \xi_I).
	\]
By our choice of $y$, we have $\gate_{\P_I}(y)\subseteq \partial\widehat{\pi}_I(q)$, which has uniformly bounded diameter.  Thus by making $r'$ even larger, we can ensure that
\[
\partial\widehat{\pi}_I(q)\subseteq \widehat{M}(r; \xi_I).
\]
We conclude that $\Phi(q)=q\in \widehat{\mc A}_r(\xi_I)$, completing the proof of the claim.	
	\end{proof}
	
	The proof that $\Phi$ is continuous is now a direct application of the above claims. Let $O$ be an open subset  of $\partial(\cusp( \mc{X} ),\mf{R})$ and $p \in \Phi^{-1}(O)$.  Since the $\widehat{\mc{A}}_r(\cdot)$ sets form a basis for the topology on $\partial(\cusp(\mc{X}),\mf{R})$, there exists $r \geq 0$ so that $\widehat{\mc{A}}_r(\Phi(p)) \subseteq O$. By Claims \ref{claim:supp(p)=S} and \ref{claim:supp(p)inI}, there then exists $r' \geq 0$ so that $\Phi(\mc{A}_{r'}(p) ) \subseteq \widehat{\mc{A}}_r(\Phi(p)) \subseteq O$. This shows $\Phi^{-1}(O)$ is open, as  $\mc{A}_{r'}(p)$ contains an open set containing $p$ by Lemma \ref{lem:"basic"_sets}.

	Lastly, we prove the moreover claim of the proposition. Let $(x_n)$ be a sequence of points in $\mc{X}$ that converges to the boundary point $p \in \partial(\mc{X},\mf{S})$.
	
	Suppose first that $\supp(p) = \{S\}$. Lemma \ref{lem:convergence_of_interior_points} implies that for each $r \geq 0$, we have $\pi_S(x_n) \subseteq M(r;p)$ for all but finitely many $n$.   Since $\Phi(p) = p$, $M(r;p) = \widehat{M}(r;p)$, and $\widehat{\pi}_S(\iota(x_n)) = \pi_S(x_n)$, we have $\widehat{\pi}_S(\iota(x_n)) \subseteq \widehat{M}(r;p)$ for all but finitely many $n$. This shows that $(\iota(x_n))$ converges to $\Phi(p) = p$ in $\cusp(\mc{X}) \cup \partial (\cusp(\mc{X}),\mf{R})$ because the sets $\widehat{\mc{A}}_r(\cdot)$ form a basis for the topology on $\partial(\cusp(\mc{X}))$.
	
	Now suppose $\supp(p)$ is totally nested into $I \in \mf{I}$.  For each $V \in \supp(p)$, the distance $d_{V}(x_0,x_n)$ goes to infinity as $n \to \infty$. Since any such $V$ is nested into $I$, the  coarse Lipschitzness of the projection maps in $\mf{S}$ says $d_{\P_I}(\gate_{\P_I}(x_0), \gate_{\P_I}(x_n))$ also goes to infinity as $n \to \infty$. Hence by Lemma \ref{lem:horoball_base_distance_to_boundary}, for all but a finite number of $n$, we have $\gate_{\P_I}(x_n) \in \widehat{M}(r;\xi_I)$  for any $r$. Since $\widehat{\pi}_I(\iota(x_n)) = \gate_{\P_I}(x_n)$ and $\xi_I = \Phi(p)$, this shows $(\iota(x_n))$ converges to $\Phi(p)$  in $\cusp(\mc{X}) \cup \partial (\cusp(\mc{X}),\mf{R})$.
\end{proof}

\subsection{Proof of Theorem \ref{thm:bowditch_boundary}}\label{subsec:proof_of_bowditch_boundary}

Let $(G,\mf{S})$ be a $G$--HHS that is hyperbolic relative to the finite collection of subgroups $\mc{P}$. Let $\mf{T}$ be the maximization of $\mf{S}$,  and let $\mf{H}$ be the $G$--HHS structure for $G$ that comes from adding the cosets of the peripheral subgroups to  $\mf{T}$  as described in Construction \ref{con:HHG+hyp_embedded} and Theorem \ref{thm:adding_in_hyperbolically_embedded_subgroups}. By Corollary \ref{cor:rel_hyp_structure}, $\mf{H}$ has orthogonality isolated by $\mf{Q}$, the set of domains indexing the cosets of the peripheral subgroups. Moreover, every non-$\nest$--maximal element of $\mf{H}$ is nested into an element of $\mf{Q}$.

As described in Section \ref{subsec:isolated_orthogonality_case},  there is an HHS structure for $\cusp(G,\mc{P})$ with index set $\mf{R} = \{S\} \cup \mf{Q}$ and a continuous surjection  of HHS boundaries $\Phi \colon \partial (G,\mf{H}) \to \partial(\cusp(G,\mc{P}),\mf{R})$. 

Since the Cayley graph of $G$ is a proper metric space, $\cusp(G,\mc{P})$ is also proper. In particular, $ \partial (G,\mf{H})$ and $\partial(\cusp(G,\mc{P}),\mf{R})$ are both compact, Hausdorff spaces. Hence,  every surjective continuous map between these HHS boundaries is a quotient map.  In particular,  Proposition~\ref{prop:isolated_case_Phi_is_cont} shows that $\Phi$ is a quotient map. By construction, $\Phi(p) = \Phi(q)$ if either $p =q$ or $\supp(p)$ and $\supp(q)$ are both totally nested into a domain $ Q\in \mf{Q}$. By Lemma  \ref{lem:limit_set_support}, a point in $\partial(G,\mf{H})$ has support totally nested into $Q \in \mf{Q}$ if and only if that point lies in the limit set of the coset  $\coset{Q}$ indexed by $Q$.  This implies $\Phi(p) = \Phi(q)$ for distinct $p$ and $q$ precisely when $p$ and $q$ are in the limit set of the same coset of a group in $\mc{P}$. 

The homeomorphisms $\partial(G,\mf{S}) \to \partial(G,\mf{T})$ and $\partial(G,\mf{T}) \to \partial(G,\mf{H})$ from Theorem \ref{thm:maximization_invariance}\eqref{invar:boundary} and Corollary \ref{cor:adding_hyp_embedded+boundary} pointwise preserve the limit set of each coset of the peripheral subgroups because they are continuous extensions of the identity. By composing these maps and then following with the map $\Phi$, we have the desired quotient map $\Psi \colon\partial(G,\mf{S}) \to  \partial(\cusp(G,\mc{P}),\mf{R})$. Since $\cusp(G,\mc{P})$ is hyperbolic, the Bowditch boundary $\partial \cusp(G,\mc{P})$ is homeomorphic to $\partial(\cusp(G,\mc{P}), \mf{R})$.

Since the homeomorphisms from Theorem \ref{thm:maximization_invariance}\eqref{invar:boundary} and Corollary \ref{cor:adding_hyp_embedded+boundary} are continuous extensions of the identity map on $G$, the moreover clause of Proposition \ref{prop:isolated_case_Phi_is_cont} says that when a sequence of point in $G$ converges to a boundary point  $p \in \partial(G,\mf{S})$,  the inclusion of that sequence into $\cusp(G,\mc{P})$ will converge to the image of $p$ in the quotient of the boundary.  Hence, we have completed the proof of Theorem \ref{thm:bowditch_boundary}.

\section{The boundary of thick $G$--HHSs}\label{sec:thick_case}

In this section, we examine the connection between the simplicial
structure on the HHS boundary and a geometric obstruction to relative
hyperbolicity called thickness.  We start with some background on
thick metric spaces in Section \ref{subsec:thick_spaces}. We then use the HHS boundary to characterize when $G$--HHSs, and their
hierarchically quasiconvex subgroups, are thick of order 0 in Section
\ref{subsec:wide}.  Finally, we give a 
characterization of when a $G$--HHS is thick of order 1 in Section \ref{subsec:thick_order_1}.

\subsection{Thick metric space} \label{subsec:thick_spaces}

 Behrstock, Dru\c{t}u, and Mosher introduced the notion of thickness as
 a geometric obstruction to a space being relatively hyperbolic
 \cite{BDM_Thickness}.  Thickness is defined inductively with the
 following spaces forming the base level of the induction.

\begin{defn}[Wide metric space]\label{defn:wide}
	A quasi-geodesic metric space $X$ is \emph{wide} if none of its asymptotic cones
	have cut points. A subset $Y$ of $X$
	is \emph{wide} if the restriction of the metric of $X$ to $Y$
	makes $Y$ a wide metric space.  A finitely generated group is wide
	if the word metric with respect to a finite
	generating set is wide.
\end{defn}

A basic example of a wide space is one which is quasi-isometric to a product of two infinite diameter, 
quasi-geodesic metric spaces. A more subtle example is provided by 
Baumslag--Solitar groups. 

To every thick space there is an associated non-negative integer, which is 
its order of thickness. Wide spaces are the spaces that are thick of order 0.  Higher orders of thickness are
obtained by inductively chaining together thick spaces of lower order.
In the present paper, we only consider spaces that are thick of order 0 or
1; see \cite{BDM_Thickness} for further details about higher orders
of thickness.

\begin{defn}[Thick of order 1]\label{defn:thick_order_1}
	A quasi-geodesic metric space $X$ is \emph{thick of order 0} if it is wide.
	A quasi-geodesic metric space $X$ is \emph{thick of order $1$} if it is not wide and there exists a constant $C\geq 0$ and a collection of wide subsets $\{P_\alpha\}_{\alpha \in I}$ so that:
	\begin{enumerate}
		\item (Coarse Cover) The space $X$ is contained in the
		$C$--neighborhood of $\bigcup_{\alpha \in I} P_\alpha$.  
		\item (Thick Chains) For any $P_\alpha$ and $P_{\alpha'}$ 
		that both intersect 
		$\mc{N}_{3C}(x)$
		for some $x\in X$,
		there
		exists a sequence $$P_\alpha = P_0, P_1, \dots, P_k =
		P_{\alpha'}$$ such that  $\mc{N}_C(P_i) \cap \mc{N}_C(P_{i+1})$  
		has infinite diameter for all $0\leq i \leq k-1$. We call the sequence  $P_0, P_1, \dots, P_k$ a \emph{thick chain from $P_\alpha$ to $P_{\alpha'}$}.
	\end{enumerate}
	When $X$ is a  finitely generated group $G$ equipped with a word metric and the collection of subsets $\{P_\alpha\}$ is the set of left cosets of a finite number of undistorted subgroups $H_1,\dots,H_n$, then we say $G$ is \emph{thick of order 1 relative to} $H_1,\dots,H_n$
\end{defn}

While the above definition of thickness is sufficient to obstruct
relative hyperbolicity, the definition below of strongly thick was
introduced by Behrstock and Dru\c{t}u to yield lower
bounds on divergence from thickness; see \cite{BehrstockDrutu:thick2}.

For the remainder of the section, we say a
subset $Y$ of a metric space $X$ is \emph{quasiconvex} if there is
$\lambda \geq 1$ and $\varepsilon \geq0$ so that for every pair of
points $x,y \in Y$ there is a $(\lambda,\varepsilon)$--quasi-geodesic
$\gamma$ from $x$ to $y$ with $\gamma \subseteq
\mc{N}_\varepsilon(Y)$. 
This notion of quasiconvexity is preserved by quasi-isometries of the
space.  The original setting in which quasiconvexity was defined is
for hyperbolic spaces.  There,   since quasi-geodesics are
uniformly close to geodesics, it is equivalent to use geodesics rather
than quasi-geodesics when defining quasiconvexity, and indeed, this is the
standard way in which quasiconvexity is defined.  Outside of the
hyperbolic setting, using geodesics one would not obtain a notion which 
is preserved by quasi-isometries, which is why the definition using 
quasi-geodesics is more natural in the study of coarse geometry and thus what we use in this section.

\begin{defn}[Strongly thick of order 1]\label{defn:strong_thickness}
	Let $X$ be a metric space that is thick of order 1 with respect to the constant $C\geq 0$ and the collection of subsets 
	$\{P_\alpha\}_{\alpha \in I}$. We say $X$ is \emph{strongly thick of order 1} if  each $P_\alpha$ is uniformly quasiconvex and there exists a number $\tau \geq 0$ so that if $P_\alpha$ and $P_{\alpha'}$ intersect  $\mc{N}_{3C}(x)$ for some $x\in X$, then any  thick chain  $P_\alpha = P_0, P_1, \dots, P_k =
	P_{\alpha'}$ has  $k \leq \tau$ and each coarse intersection $\mc{N}_C(P_i) \cap \mc{N}_C(P_{i+1})$  is $\tau$--coarsely connected and intersects $\mc{N}_\tau(x)$.
\end{defn}

The next result gives some fairly general conditions for deducing
strong thickness from thickness.  The special case where the
collection $\mathcal{P}$ is the collection of left cosets of a finite
set of quasiconvex subgroups $\mc{H}$ follows immediately from
\cite[Proposition~4.4]{BehrstockDrutu:thick2}.

\begin{prop}\label{prop:thickimpliesstronglythick}
	Let $X$ be thick of order 1 with respect 
	to a collection $\mathcal{P}$. Let $G$ be a finitely generated 
	group acting coboundedly on $X$ by isometries   so that: 
	\begin{itemize}
		
		\item the elements of $\mathcal{P}$ are each uniformly quasiconvex;
		
		\item  the infinite diameter coarse intersection of any two elements of 
		$\mathcal{P}$  in the Thick Chains condition is uniformly coarsely 
		connected; and
		
		\item  $\mathcal{P}$ is $G$--invariant with respect to the action of $G$ on $X$. 
		
	\end{itemize}
	
\noindent Additionally, assume that either one of the following conditions are
	satisfied:
		\begin{enumerate}
		\item \label{item:finite_multiplicity} every closed ball in $X$ intersects a finite number of elements of $\mc{P}$.
		\item \label{item:finte_pair_orbits} the induced action of $G$ on $\mc{P} \times \mc{P}$ has finitely many orbits.
	\end{enumerate}
	
	\noindent Then $X$ is strongly thick of order 1 with respect to
	$\mathcal{P}$. 
\end{prop}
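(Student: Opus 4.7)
The plan is to reduce the verification of strong thickness to a uniform statement on finitely many orbit-representative configurations, using either finite multiplicity (hypothesis (\ref{item:finite_multiplicity})) or the finiteness of pair-orbits (hypothesis (\ref{item:finte_pair_orbits})) together with the cobounded $G$-action. Since the uniform quasiconvexity of elements of $\mathcal{P}$ and the uniform coarse connectedness of infinite-diameter coarse intersections are given, the essential content to establish is a uniform bound $\tau$ on the chain length $k$, together with a uniform bound on the distance from the witness $x$ to each coarse intersection along the thick chain.

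First, for a pair $(P_\alpha, P_{\alpha'})$ with both elements intersecting $\mc{N}_{3C}(x)$, I would fix points $p \in P_\alpha \cap \mc{N}_{3C}(x)$ and $p' \in P_{\alpha'} \cap \mc{N}_{3C}(x)$ and a discrete quasi-geodesic from $p$ to $p'$ of uniformly bounded length (since $d(p,p')\leq 6C$). Applying the Coarse Cover condition along this quasi-geodesic yields a sequence $P_\alpha = Q_0, Q_1, \dots, Q_m = P_{\alpha'}$ in $\mathcal{P}$ with $m$ uniformly bounded, such that each consecutive pair $Q_i, Q_{i+1}$ both intersects $\mc{N}_{3C}(y_i)$ for a common witness $y_i$ lying within uniformly bounded distance of $x$. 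Applying the Thick Chains property to each pair $(Q_i, Q_{i+1})$ provides a thick sub-chain between them; concatenating these gives a thick chain from $P_\alpha$ to $P_{\alpha'}$ whose ``framework'' already lies near $x$.

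The remaining task is to bound the length of each such sub-chain and the proximity of its coarse intersections to the corresponding $y_i$ uniformly. Under (\ref{item:finte_pair_orbits}), there are finitely many $G$-orbits of pairs in $\mathcal{P}\times\mathcal{P}$; I would fix once and for all a thick sub-chain for each orbit representative and record its length and the maximum distance of its coarse intersections from a chosen witness. Since $G$ acts on $X$ by isometries and preserves $\mathcal{P}$, these bounds transfer equivariantly to every pair in the orbit. Under (\ref{item:finite_multiplicity}), the cobounded action combined with finite multiplicity implies that for each $y_i$ only finitely many elements of $\mathcal{P}$ lie within a prescribed bounded neighborhood of $y_i$, uniformly in $y_i$; this constrains both the cardinality of a minimal sub-chain and the region it occupies, yielding uniform length and proximity bounds.

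The main obstacle I anticipate is maintaining uniform proximity of \emph{all} coarse intersections inside each thick sub-chain to the witness $y_i$, rather than only at the sub-chain endpoints. Under (\ref{item:finite_multiplicity}) this is enforced automatically by the finite-multiplicity constraint, which effectively confines the thick sub-chain to a bounded neighborhood of $y_i$. Under (\ref{item:finte_pair_orbits}) one needs to choose each orbit-representative sub-chain carefully, since the stabilizer of a pair need not act coboundedly on its witness set; the chain must be chosen so that every coarse intersection lies within uniform distance of a canonical witness, and $G$-equivariance then transfers this property. Taking $\tau$ as the maximum of the bounds obtained in this way, together with the given uniform quasiconvexity and coarse-connectedness constants, completes the verification of strong thickness.
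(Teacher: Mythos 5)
Your proposal identifies the correct core idea for hypothesis (\ref{item:finte_pair_orbits})---fix a thick chain for each representative pair of the finitely many $G$--orbits in $\mathcal{P}\times\mathcal{P}$, take $\tau$ to be the maximum of the resulting finitely many bounds, and transfer by $G$--equivariance---but the scaffolding you build around it is unnecessary, and your argument for hypothesis (\ref{item:finite_multiplicity}) contains a genuine error.

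The detour through a quasi-geodesic ``framework'' $Q_0,\dots,Q_m$ does not help. The points $p$ and $p'$ are already within $6C$ of each other, so there is nothing to gain by subdividing; and once you arrive at consecutive $Q_i,Q_{i+1}$ near a common witness $y_i$, you face exactly the original problem of uniformly bounding a thick chain between two elements near a common point. The reduction never bottoms out. The paper applies the Thick Chains condition directly to the pair $(P_\alpha,P_{\alpha'})$ (after translating by an appropriate $g\in G$) with no intermediate framework.

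More seriously, under hypothesis (\ref{item:finite_multiplicity}) you assert that finite multiplicity ``effectively confines the thick sub-chain to a bounded neighborhood of $y_i$.'' This is false. The Thick Chains condition only constrains coarse intersections of consecutive elements; a thick chain between $Q_i$ and $Q_{i+1}$ may pass through elements of $\mathcal{P}$ arbitrarily far from $y_i$. Finite multiplicity bounds how many elements of $\mathcal{P}$ meet a given ball---it says nothing about where an arbitrary chain wanders. The correct argument under (\ref{item:finite_multiplicity}) is structurally the same as the one you sketch for (\ref{item:finte_pair_orbits}): letting $B$ bound the diameter of $X/G$, enumerate the finitely many $R_1,\dots,R_m \in \mathcal{P}$ meeting $\mc{N}_{3C+2B}(x_0)$, fix one thick chain $\mf{C}_{i,j}$ for each pair $(R_i,R_j)$, and take $\tau$ to be a maximum of the lengths and proximity constants over these finitely many fixed chains. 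Given $P,P'$ meeting a common $\mc{N}_{3C}(x)$, coboundedness produces $g$ with $d(gx,x_0)\leq B$, so $gP$ and $gP'$ lie among the $R_i$'s and $g^{-1}\mf{C}_{i,j}$ is the desired chain, with proximity to $x$ controlled since $d(g^{-1}x_0,x)\leq B$. You do not need to confine the chain; you only need to reduce to finitely many reference configurations near a basepoint and take a maximum.
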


\begin{proof} 
	Let $C\geq0$ be the thickness constant, and let $B \geq0$ be the diameter of the quotient $X /G$.
	
		Two of the requirements of strong thickness hold by our bulleted assumptions:  uniform quasiconvexity of the subsets in $\mathcal{P}$ and uniformly coarse connectedness of the coarse intersections of successive elements of any thick chain. What remains to be shown is that there exists a uniform $\tau \geq 0$ so that for  any two elements $P,P' \in \mc{P}$ that intersect $\mc{N}_{3C}(x)$ for some $x \in X$,  there exists a thick chain $P=P_0,P_1\dots, P_k=P'$ with $k\leq \tau$ and where $\mc{N}_C(P_i) \cap \mc{N}_{C}(P_{i+1})$ intersects $\mc{N}_\tau(x)$ for each $i \in \{0,\dots, k-1\}$. For this we will need one of the two numbered hypotheses.

	Suppose first that we assume hypothesis 
	(\ref{item:finite_multiplicity}): every closed ball in $X$ intersects a finite number of elements of $\mc{P}$. Fix $x_0 \in X$ and let $R_1,\dots, R_m$ be all of the elements of $\mc{P}$ that intersect $\mc{N}_{3C+2B}(x_0)$. Since $X$ is thick of order 1, for each pair $R_i$, $R_j$ there exists a thick chain of subsets of $\mc{P}$ from $R_i$ to $R_j$. For each $i,j$ pair, 	fix one such chain, $\mf{C}_{i,j}$. Let $\tau \geq k$ be large enough so that   $\tau \geq |\mf{C}_{i,j}|$  and  the intersections of the $C$--neighborhood of consecutive elements of the chain $\mf{C}_{i,j}$ intersect $\mc{N}_\tau(x_0)$ for each $i,j$ pair.

	Now, let $P,P'$ be  elements of $\mc{P}$ that intersect $\mc{N}_{3C}(x)$ for some $x \in X$. There exists $g\in G$ so that $gP$ and $gP'$ intersect $\mc{N}_{3C +2B}(x_0)$. Hence $gP = R_i$ and $gP' = R_j$ for some $i,j$. Thus, $g^{-1} \mathfrak{C}_{i,j}$ is the desired thick chain from $P$ to $P'$.
	
	Now assume instead hypothesis (\ref{item:finte_pair_orbits}): the
	action of $G$ on $\mc{P} \times \mc{P}$ has finitely many orbits.
	Let $\{(R_1,Q_1),\dots,(R_m,Q_m)\}$ be representatives of the
	finitely many $G$--orbits in $\mc{P} \times \mc{P}$.  By the 
	equivariance in the third bullet point, without loss
	of generality, we can assume each $R_i$ is within $B$ of a fixed
	point $x_0 \in X$.  For each $i \in \{1,\dots, m\}$, there is a
	thick chain of elements of $\mc{P}$ from $R_i$ to $Q_i$.  For each
	$i$, fix one such thick chain $\mf{C}_i$.  Let $\tau$ be large
	enough so that $\tau \geq |\mf{C}_{i}|$ and the intersections of
	the $C$--neighborhood of consecutive elements of the chain
	$\mf{C}_{i}$ intersect $\mc{N}_\tau(x_0)$ for each $i$.
	
	Now let $P,P'$ be  elements of $\mc{P}$ that intersect $\mc{N}_{3C}(x)$ for some $x \in X$. There is $g \in G$ so that $gP = R_i$ and $gP' = Q_i$ for some $i$. Hence, $g^{-1}\mf{C}_i$ is the desired thick chain.
\end{proof}

\subsection{Wide hierarchically quasiconvex subgroups} \label{subsec:wide}
In this subsection, we characterize wide hierarchically quasiconvex 
subgroups as those whose limit sets are non-trivial joins. 
\begin{thm}\label{thm:wide_iff_join}
	Let $(G,\mf{S})$ be a $G$--HHS and let $H<G$ be an infinite, hierarchically quasiconvex subgroup. The group $H$ is wide if and only if the limit set $\Lambda(H)$ in $\partial_\Delta(G,\mf{S})$ is a non-trivial join. In particular, $G$ is wide if and only if $\partial_\Delta(G,\mf{S})$ is a join.
\end{thm}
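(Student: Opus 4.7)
My plan is to reduce to the maximized case and then mimic, for an arbitrary hierarchically quasiconvex subgroup, the chain of claims used in Section~\ref{sec:boundary_rel_hyp} to extract coarse product structure from a join in the boundary.

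First I would apply Theorem~\ref{thm:maximization_invariance} to replace $\mf{S}$ by its maximization: this preserves both the simplicial structure of $\partial_\Delta(G,\mf{S})$ and the class of hierarchically quasiconvex subsets, and under this assumption every standard product region with two orthogonal unbounded factors is wide by Proposition~\ref{prop:product_regions}\eqref{P_prop:wide}. Lemma~\ref{lem:HQC_undistorted} then ensures that wideness of $H$ is insensitive to whether we use its word metric or the induced subset metric from $G$.

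For \emph{$\Lambda(H)$ is a non-trivial join $\implies H$ is wide}, suppose $\Lambda(H) = A \ast B$ with vertices $p \in A \cap \partial\mc{C}W$ and $q \in B \cap \partial\mc{C}V$. Since these span an edge of $\Lambda(H) \subseteq \partial_\Delta(G,\mf{S})$, we have $W \perp V$ and $W, V \in \mf{S}^{\infty}$. I would then repeat the strategy of Claims~\ref{claim:suppport_closed_under_orthogonality}, \ref{claim:project_onto_support}, and \ref{claim:support_and_product_regions}: the join hypothesis, combined with the subcomplex structure of $\Lambda(H)$, forces $\partial\mc{C}W, \partial\mc{C}V \subseteq \Lambda(H)$ (and likewise for any further unbounded domains orthogonal to $W$ or $V$ whose boundaries contribute vertices to $\Lambda(H)$); this makes the projections of $H$ to the relevant hyperbolic spaces uniformly coarsely onto by Lemma~\ref{lem:convergence_of_interior_points}; and consequently Proposition~\ref{prop:product_regions}\eqref{P_prop:minimal_HQC} places a standard product region $\P_W$ (with unbounded perpendicular complement) in a finite neighborhood of $H$. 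Since $H$ is coarsely equal to the wide set $\P_W$, it is itself wide.

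For \emph{$H$ wide $\implies \Lambda(H)$ is a non-trivial join}, I would argue the contrapositive using the results of Petyt--Spriano~\cite{PS_Unbounded_domains}, as acknowledged in the introduction. If $\Lambda(H)$ is not a non-trivial join, then the unbounded domains supporting vertices of $\Lambda(H)$ cannot be partitioned into two mutually orthogonal families; applied to the induced hierarchical data on $H$, the results of \cite{PS_Unbounded_domains} produce an asymptotic cone of $H$ containing a cut point, violating wideness. The in-particular statement follows by taking $H = G$, since $\Lambda(G) = \partial_\Delta(G,\mf{S})$. The main obstacle is the backward direction, where the challenge is to adapt Claims~\ref{claim:suppport_closed_under_orthogonality}--\ref{claim:support_and_product_regions} from the very specific combinatorial setting of Theorem~\ref{thm:boundary_criteria_for_rel_hyp}, where the entire boundary is organized by an almost malnormal family of peripherals, to the present setting where we know only that $\Lambda(H)$ splits as a join; the essential observation is that a single join relation already provides enough orthogonal structure to run the same inductive argument and conclude coarse surjectivity of the projections of $H$ to $\mc{C}W$ and $\mc{C}V$, after which Proposition~\ref{prop:product_regions}\eqref{P_prop:minimal_HQC} finishes the job.
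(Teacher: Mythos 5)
There is a genuine gap in the forward direction (``$\Lambda(H)$ is a non-trivial join $\implies H$ is wide''). You claim that the join hypothesis, combined with the subcomplex structure of $\Lambda(H)$, forces $\partial\mc{C}W \subseteq \Lambda(H)$ for a support domain $W$ of a vertex of $\Lambda(H)$. That is false in general, and the analogy to Claims~\ref{claim:suppport_closed_under_orthogonality}--\ref{claim:support_and_product_regions} does not survive the change of setting: those claims crucially use Hypotheses~\eqref{item:distinct_cosets} and~\eqref{item:disjoint_union} of Theorem~\ref{thm:boundary_criteria_for_rel_hyp}, namely that the entire boundary is covered by translates of the $\Lambda_i$ with disjointness forcing coincidence of translates. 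Without that covering, an edge from $p \in \partial\mc{C}W$ to some $q \in \partial\mc{C}V$ inside $\Lambda(H)$ does not place all of $\partial\mc{C}V$ (or all of $\partial\mc{C}W$) into $\Lambda(H)$. Concretely, take $H = \langle a\rangle \times \langle c\rangle \cong \mathbb{Z}^2$ inside $G = \langle a,b\rangle \times \langle c,d\rangle \cong F_2 \times F_2$ with the standard RAAG HHS structure. Then $H$ is wide, hierarchically quasiconvex, and $\Lambda(H)$ is a non-trivial join, but $\Lambda(H)$ contains vertices supported on the factor domain $U_1$ (for $\langle a,b\rangle$) while $\partial\mc{C}U_1 \not\subseteq \Lambda(H)$ --- only the two ends of the $a$-axis lie in the limit set. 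Moreover, even in the cases where $\partial\mc{C}W \subseteq \Lambda(H)$ does hold, invoking Proposition~\ref{prop:product_regions}\eqref{P_prop:minimal_HQC} requires $\pi_V\vert_H$ to be coarsely onto for \emph{all} $V \in \mf{S}_W \cup \mf{S}_W^\perp$, including domains that do not support any vertex of $\Lambda(H)$; in the $\mathbb{Z}^2 \leq F_2 \times F_2$ example $\pi_{U_2}\vert_H$ is not coarsely onto, so $\P_{\langle a\rangle}$ is \emph{not} contained in any finite neighborhood of $H$. Wide hierarchically quasiconvex subgroups need not coarsely fill a product region of the ambient $(G,\mf{S})$.

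The correct mechanism, which the paper uses, is to pass to the \emph{induced} HHS structure $(H,\mf{S}_H)$ on $H$ (\cite[Proposition~5.6]{BHS_HHSII}), whose hyperbolic spaces are the quasiconvex hulls of $\pi_W(H)$. There the projections of $H$ are coarsely onto by construction, so wideness can be read off from the eyrie count via Proposition~\ref{prop:product_regions}\eqref{P_prop:wide} applied \emph{inside} $(H,\mf{S}_H)$, together with the distance formula. Your backward direction (``wide $\implies$ join'') correctly points toward Petyt--Spriano, but it stops at a level of vagueness that hides the real technical hurdle: one must show there is a \emph{uniform} bound on $\diam(\pi_V(H))$ for every $V$ not nested into an eyrie (this is Lemma~\ref{lem:bounded_proj_outside_eyries} in the paper, proved via the passing-up lemma and almost-malnormality-style growth arguments), and in the single-eyrie case one then concludes non-wideness via an acylindrical action on the eyrie's restricted hyperbolic space rather than directly exhibiting a cut point. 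Without that lemma, the single-eyrie case does not immediately give a cut point in an asymptotic cone, since the bounded projections outside the eyrie could a priori be arbitrarily large.
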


For the entire $G$--HHS, Theorem \ref{thm:wide_iff_join} is direct
consequence of the Rank Rigidity Theorem \cite[Theorem
9.13]{HHS_Boundary} (see also \cite[Corollary
4.7]{PS_Unbounded_domains}).  For subgroups, however, there are some
subtleties which need to be addressed before results
from the literature can be applied.  These arise from the fact that
unlike the entire group, the subgroup $H$ might have projections that
are bounded but arbitrarily large.

The starting point in our proof of Theorem \ref{thm:wide_iff_join} is
the theorem of Petyt and Spriano below, which applies to
hierarchically quasiconvex subgroups as they are always finitely
generated (Lemma \ref{lem:HQC_undistorted}).  In the sequel, we will
use $\mf{S}_H^\infty$ to denote the set of domains $\{V \in \mf{S}:
\diam(\pi_V(H)) = \infty\}$ for any subgroup of $H$ of a $G$--HHS
$(G,\mf{S})$. The set of
	domains $\{W_1,\dots, W_n\}$ obtained in Theorem~\ref{thm:eyries} 
	are called the \emph{eyries} for $H$.

\begin{thm}[{Special case of \cite[Theorem 5.1]{PS_Unbounded_domains}}]\label{thm:eyries}
	Let $(G,\mf{S})$ be a $G$--HHS. For every infinite, finitely
	generated subgroup $H<G$, there exists a non-empty, pairwise
	orthogonal set of domains $\{W_1,\dots,W_n\} \subseteq
	\mf{S}_H^\infty$ so that for all $V \in \mf{S}_H^\infty$ we have
	$V \nest W_i$ for some $i \in \{1,\dots,n\}$.  
\end{thm}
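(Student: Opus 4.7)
The plan is to build $\{W_1,\dots,W_n\}$ greedily by choosing $\nest$-maximal elements of $\mf{S}_H^\infty$ inside successively smaller orthogonal complements, then verify that the resulting family dominates. The construction has three stages: (i) show $\mf{S}_H^\infty \neq \emptyset$; (ii) run the greedy procedure using finite complexity to terminate it; (iii) show no $V \in \mf{S}_H^\infty$ evades all the $W_i$.

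For (i), since $H$ is infinite and finitely generated and balls in $G$ are finite, $H$ has infinite diameter in $G$. Choose $h_n \in H$ with $d_G(e,h_n) \to \infty$. The uniqueness axiom yields domains $V_n \in \mf{S}$ with $d_{V_n}(e,h_n) \to \infty$; each such $V_n$ lies in $\mf{S}^\infty$. Since $\mf{S}^\infty$ has finitely many $G$-orbits in a $G$-HHS, pigeonholing and translating by elements $g_n$ (using equivariance of the projections and of the relations) concentrates these unbounded projections in a single orbit; a further argument using that $H$ is a subgroup (so $H = hH$ for all $h\in H$) promotes the orbit-level unboundedness to actual unboundedness of $\pi_W(H)$ for some $W$. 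Thus $\mf{S}_H^\infty$ is non-empty.

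For (ii), use finite complexity to select a $\nest$-maximal $W_1 \in \mf{S}_H^\infty$. Set $\mf{A}_1 := \mf{S}_H^\infty \cap \mf{S}_{W_1}^\perp$, and if it is non-empty pick a $\nest$-maximal $W_2 \in \mf{A}_1$; iterate with $\mf{A}_i := \mf{A}_{i-1} \cap \mf{S}_{W_i}^\perp$. The resulting collection is pairwise orthogonal by construction, and termination in finitely many steps follows from the finite asymptotic rank of a $G$-HHS, which is encoded by finite complexity together with the partial realization axiom. For (iii), suppose for contradiction that some $V \in \mf{S}_H^\infty$ fails to nest into any $W_i$. By $\nest$-maximality of the $W_i$ in their respective $\mf{A}_{i-1}$, the domain $V$ cannot be orthogonal to every $W_i$ without having been selected; hence there is some $W_i$ with $V \trans W_i$. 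The consistency (Behrstock) inequality then gives
\[
\min\bigl\{\, d_V(h,\rho_V^{W_i}),\ d_{W_i}(h,\rho_{W_i}^V) \,\bigr\} \leq E \qquad \text{for every } h \in H,
\]
partitioning $H$ into a region $A$ where $\pi_V$ is close to the bounded set $\rho_V^{W_i}$ and a region $B$ where $\pi_{W_i}$ is close to $\rho_{W_i}^V$. Playing these regions off each other via left translation by elements of $H$ — which moves $H$ to itself while shifting the $\rho$'s equivariantly — must contradict both $\diam(\pi_V(H)) = \infty$ and $\diam(\pi_{W_i}(H)) = \infty$.

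The main obstacle is making the final dichotomy argument rigorous: the Behrstock split is pointwise, and alone it does not bound either $\pi_V(H)$ or $\pi_{W_i}(H)$, since the relative projections $\rho_V^{W_i}$ and $\rho_{W_i}^V$ are fixed targets but $H$ is not coarsely contained in any single Behrstock region. Closing the argument requires chaining bounded projections across translates using finite generation of $H$, the coarse Lipschitzness of $\pi_V$ and $\pi_{W_i}$, and the equivariance of the $\rho^\bullet_\bullet$ under the $G$-action, so as to propagate boundedness throughout $H$. This subgroup-dynamical chaining is precisely the technical core of \cite[Theorem 5.1]{PS_Unbounded_domains}, whose conclusion we invoke.
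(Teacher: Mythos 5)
The paper does not actually prove Theorem~\ref{thm:eyries}: it is stated verbatim as a special case of \cite[Theorem 5.1]{PS_Unbounded_domains} and simply cited. Since your proposal also ends by invoking that theorem for ``the technical core,'' at the level of logical dependency you and the paper are doing the same thing, and that is acceptable. But your surrounding sketch, read as an independent argument, has a genuine gap exactly where you flag it, and the gap is not merely one of missing details --- the mechanism you describe in step (iii) is the wrong one. The Behrstock inequality partitions $H$ pointwise into a region where $\pi_V$ is near $\rho_V^{W_i}$ and a region where $\pi_{W_i}$ is near $\rho_{W_i}^V$, but both $V$ and $W_i$ lie in $\mf{S}_H^\infty$, so $\diam(\pi_V(H))$ and $\diam(\pi_{W_i}(H))$ genuinely \emph{are} infinite; no amount of translating by elements of $H$ can contradict that, because there is nothing false to contradict. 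Transverse pairs of domains both carrying unbounded projections of a subgroup occur all the time (e.g.\ in mapping class groups), so the contradiction cannot target unboundedness --- it must target the $\nest$--maximality of $W_i$. The mechanism that actually works, and that this paper itself re-uses in the proof of Lemma~\ref{lem:bounded_proj_outside_eyries}, is the passing-up argument: if $V\trans W_i$ with both projections unbounded and coarsely connected, then Lemma~\ref{lem:pass-up_to_infinite} produces infinitely many domains in the $H$--orbits of $V$ and $W_i$ with large projections, and the Passing-up Lemma (Lemma~\ref{lem:passing-up}) plus an induction on level/finite complexity then forces a domain of $\mf{S}_H^\infty$ strictly above a translate of $V$ or $W_i$, contradicting maximality.

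Step (i) has a milder version of the same problem: pigeonholing the domains $V_n$ with $d_{V_n}(e,h_n)\to\infty$ into a single $G$--orbit does not produce a single domain with $\diam(\pi_W(H))=\infty$, because the translating elements $g_n$ need not lie in $H$, so $g_nH\neq H$ and the large projections do not accumulate on one fixed $\pi_W(H)$. Here too the standard repair is the Passing-up Lemma together with the finite-complexity level induction rather than orbit pigeonholing. Your step (ii) (greedy selection of maximal elements in successive orthogonal complements, termination by the bound on pairwise orthogonal families) is fine, and the case analysis reducing step (iii) to the transverse case is correct. In short: as a citation your proposal matches the paper; as a proof it does not close, and the missing ingredient is the passing-up machinery rather than the Behrstock dichotomy.
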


Since the vertices of the limit set of $H$ are supported on domains in $\mf{S}_H^\infty$, the limit set of $H$ is a join if and only if $H$ has multiple eyries. The challenge, then,  is to show that having multiple eyries is equivalent to the hierarchically quasiconvex subgroup being wide. The key technical step is to establish that there is a uniform bound for the diameter of the projection of $H$ onto any domain not nested into an eyrie. 

\begin{lem}\label{lem:bounded_proj_outside_eyries}
	Let $(G,\mf{S})$ be a $G$--HHS and $H <G$ be an infinite, hierarchically quasiconvex subgroup. There exists $D \geq 0$, depending on $H$, so that $\diam(\pi_V(H)) \leq D$ whenever $V \in \mf{S}$  is not nested into an eyrie for $H$.
\end{lem}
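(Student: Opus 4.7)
The plan is to argue by contradiction. Suppose no such $D$ exists; then there are sequences of domains $V_n \in \mathfrak{S}$, each not nested in any eyrie, together with $h_n, h_n' \in H$ satisfying $d_{V_n}(h_n, h_n') \to \infty$. The goal is to derive a contradiction with the maximality of the eyries stated in Theorem~\ref{thm:eyries}.

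The first step is to normalize using the $H$-action. Because the property of being an eyrie of $H$ depends only on $H$ (not on the rest of $\mathfrak{S}$), the set of eyries is preserved setwise by the action of $H$ on $\mathfrak{S}$; consequently the collection $\mathfrak{U}$ of domains not nested in any eyrie is also $H$-invariant. Translating by $h_n^{-1} \in H$, we may replace $V_n$ by $h_n^{-1}V_n$ and assume $h_n=e$, $k_n := h_n^{-1}h_n' \in H$, with $V_n \in \mathfrak{U}$ and $d_{V_n}(e, k_n) \to \infty$. The bounded domain dichotomy of $\mathfrak{S}$ allows us to assume further that $V_n \in \mathfrak{S}^\infty$ for all $n$.

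The second step is to exhibit a long segment of a hierarchy path from $e$ to $k_n$ that lies simultaneously close to $H$ and close to the product region $\mathbf{P}_{V_n}$. Specifically, choose a hierarchy path $\gamma_n$ joining $e$ to $k_n$. By the characterization of hierarchical quasiconvexity in Proposition~\ref{prop:HQC_and_hp}, $\gamma_n \subseteq \mathcal{N}_C(H)$. Applying \cite[Proposition~4.24]{RST_Quasiconvexity}---already invoked earlier in the paper in the proof of Theorem~\ref{thm:boundary_criteria_for_rel_hyp}---there is a subinterval $\alpha_n \subseteq \gamma_n$ with $\alpha_n \subseteq \mathcal{N}_\nu(\mathbf{P}_{V_n})$ and $\diam(\alpha_n) \geq \nu^{-1} d_{V_n}(e,k_n) - \nu$, so $\diam(\alpha_n) \to \infty$.

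The third step reduces to a single $G$-orbit of domains. Using that $G$ acts on $\mathfrak{S}^\infty$ with only finitely many orbits, pass to a subsequence and write $V_n = g_n W$ for some fixed $W \in \mathfrak{S}^\infty$ and $g_n \in G$. Translating $\alpha_n$ by $g_n^{-1}$ gives intervals $\beta_n := g_n^{-1} \alpha_n$ that lie in $\mathcal{N}_\nu(\mathbf{P}_W)$ and in a uniform neighborhood of the coset $g_n^{-1}H$. Pick points $a_n, b_n \in g_n^{-1}H$ close to the endpoints of $\beta_n$. Then $a_n, b_n$ lie in $\mathcal{N}(\mathbf{P}_W)$ with $d_G(a_n, b_n) \to \infty$, so by the definition of $\mathbf{P}_W$ the projections $d_U(a_n, b_n)$ are uniformly bounded for every $U \in \mathfrak{S}$ with $U \trans W$ or $W \propnest U$. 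Hence by the uniqueness axiom there is a domain $U_n \in \{W\} \cup \mathfrak{S}_W \cup \mathfrak{S}_W^\perp$ such that $d_{U_n}(a_n, b_n) \to \infty$. Pushing back by $g_n$ yields $g_n a_n, g_n b_n \in H$ and domains $\widetilde{U}_n := g_n U_n$ with $d_{\widetilde{U}_n}(g_n a_n, g_n b_n) \to \infty$. Applying finite $G$-orbits once more to the sequence $\widetilde{U}_n$ and passing to a further subsequence and translating by appropriate $H$-elements, we may arrange that $\widetilde{U}_n$ is constant equal to some $\widetilde{U}$; then $\widetilde{U} \in \mathfrak{S}_H^\infty$, so by Theorem~\ref{thm:eyries} there is an eyrie $W_i$ with $\widetilde{U} \nest W_i$. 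Combining $\widetilde{U} = g_n U_n \in \mathfrak{S}_{V_n} \cup \mathfrak{S}_{V_n}^\perp \cup \{V_n\}$ with $\widetilde{U} \nest W_i$, the consistency axiom and the structure of $\nest$ force $V_n$ itself to be nested in (a translate of) an eyrie for large $n$, contradicting $V_n \in \mathfrak{U}$.

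The main obstacle is the final step: converting the statement that a single domain $\widetilde{U}$ (obtained after double subsequencing) is nested in an eyrie into the statement that $V_n$ itself is nested in an eyrie. The subtlety is that $\widetilde{U}$ may be nested in, orthogonal to, or equal to $V_n$, and propagating nesting in an eyrie across these relations requires careful use of the HHS consistency axiom together with the bounded packing property of hierarchically quasiconvex subgroups (as invoked in the proof of Theorem~\ref{thm:boundary_criteria_for_rel_hyp}).
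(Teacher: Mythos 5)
Your proposal takes a genuinely different route from the paper's, but two steps do not go through, and neither is easily repaired. First, the claim that after passing to a subsequence and ``translating by appropriate $H$-elements'' the domains $\widetilde U_n = g_n U_n$ can be made constant is unjustified. Finitely many $G$-orbits gives $\widetilde U_n = g_n'\widetilde U$ for some $g_n'\in G$, but you may only translate by elements of $H$ if the large-projection points $g_na_n, g_nb_n$ are to remain in $H$, and nothing forces the $g_n'$ to lie in $H$. Without a constant $\widetilde U$, no single domain has $\diam(\pi_{\widetilde U}(H))=\infty$, so Theorem~\ref{thm:eyries} cannot be applied. Second, even granting a constant $\widetilde U\nest W_i$ for an eyrie $W_i$, the fact that $\widetilde U\in\{V_n\}\cup\mf S_{V_n}\cup\mf S_{V_n}^\perp$ does not propagate nesting upward: if $\widetilde U\propnest V_n$, the relation between $V_n$ and $W_i$ is unconstrained by the consistency axiom (they may be transverse, or $W_i$ may be nested in $V_n$), so there is no contradiction with $V_n$ not being nested in an eyrie.

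The paper's proof is built precisely to avoid these two issues. It first chooses a minimal ``level'' of bad domains, so that any $W$ with some $V_i\propnest W$ has $\diam(\pi_W(H))$ uniformly bounded (their condition \eqref{item:nest_max}). It then uses the Petyt--Spriano Lemma~\ref{lem:pass-up_to_infinite} --- rather than the uniqueness axiom --- to produce domains $U_i$ lying in the $H$-orbit of a transverse pair $V_0,V_1$ with large projections of points of $H$, so the eventual translation back to a fixed domain is by an element of $H$ by construction. Finally the Passing-up Lemma~\ref{lem:passing-up} produces a $W$ with large projection that \emph{properly contains} one of the $U_j$, and translating by $h\in H$ with $hU_j\in\{V_0,V_1\}$ contradicts the level-minimality. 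Your outline replaces both the level argument and the Petyt--Spriano lemma with the uniqueness axiom plus $G$-orbit subsequencing; that combination does not produce the $H$-equivariance or the strict nesting that the contradiction requires.
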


Our proof of Lemma \ref{lem:bounded_proj_outside_eyries} requires three tools from the literature. The first is a basic technique in the theory of hierarchically hyperbolic spaces that allows one to convert many large projections into a bigger projection higher up the $\nest$--lattice.

\begin{lem}[{Passing-up lemma, \cite[Lemma 2.5]{BHS_HHSII}}]\label{lem:passing-up} 
	Let $(\mc{X},\mf{S})$ be a hierarchically hyperbolic space with constant $E$. For every $C \geq 0$, there is a positive integer $p=p(C)$ so that for all 
	$x,y \in \mc{X}$, if there exist $p$ domains $\{U_1,\dots, U_p\} \subseteq \mf{S}$ with $d_{U_i}(x,y) >E$ for each $U_i$, then there is a domain $W \in \mf{S}$ so that $d_W(x,y) >C$ and there is some $U_i$ properly nested into $W$.   
\end{lem}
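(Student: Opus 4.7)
We argue by contradiction. Suppose there is a sequence $\{V_n\}_{n \geq 1} \subseteq \mf{S}$, each not nested into any eyrie of $H$, with $\diam(\pi_{V_n}(H)) \to \infty$. The plan is to use the passing-up lemma (Lemma~\ref{lem:passing-up}) to produce a domain whose projection of $H$ is genuinely unbounded and which forces some $V_n$ to nest into an eyrie via Theorem~\ref{thm:eyries}, contradicting our standing assumption.

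Fix a large constant $C$ and let $p = p(C)$ be the corresponding constant from the passing-up lemma. For each $n$, choose $x_n, y_n \in H$ realizing $d_{V_n}(x_n, y_n) = \diam(\pi_{V_n}(H))$, and apply the large links axiom to $V_n$ with the pair $(x_n, y_n)$. This yields a family of sub-domains $U_1, \dots, U_{m_n}$ properly nested in $V_n$, with $m_n$ growing linearly in $d_{V_n}(x_n, y_n)$ and, after discarding unnecessary entries, each $d_{U_i}(x_n, y_n) > E$. For $n$ sufficiently large, $m_n > p$, and the passing-up lemma produces a domain $W_n \in \mf{S}$ with $d_{W_n}(x_n, y_n) > C$ and some $U_i \propnest W_n$.

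Because $U_i$ is a common properly nested sub-domain of both $V_n$ and $W_n$, the orthogonality axiom rules out $V_n \perp W_n$. The remaining possibilities are $V_n \nest W_n$, $W_n \propnest V_n$, or $V_n \trans W_n$. In the first case, letting $C \to \infty$ along a subsequence forces $\diam(\pi_{W_n}(H)) = \infty$, so $W_n \in \mf{S}_H^\infty$; Theorem~\ref{thm:eyries} then says $W_n$, and hence $V_n$, nests into some eyrie, contradicting our hypothesis on $V_n$. In the second case, $W_n$ is a strictly smaller domain than $V_n$ that again has large projection of $H$-points, and we may re-feed it into the construction; finite complexity bounds the depth of such iterations and forces termination in one of the other two cases. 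In the transverse case, we invoke the consistency axiom: for each $z \in H$, either $d_{V_n}(z, \rho_{V_n}^{W_n}) \leq E$ or $d_{W_n}(z, \rho_{W_n}^{V_n}) \leq E$. By iterating the construction on $W_n$ if needed so that $\diam(\pi_{W_n}(H))$ becomes unbounded, most $z \in H$ fail the second alternative, placing $\pi_{V_n}(z)$ in a uniform neighborhood of the bounded set $\rho_{V_n}^{W_n}$ and bounding $\diam(\pi_{V_n}(H))$---contradiction.

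The primary technical obstacle is the transverse case, since passing-up does not, on the nose, produce a domain containing $V_n$. Overcoming this requires a careful bookkeeping of the iteration (bounded by finite complexity) and a use of hierarchical quasiconvexity of $H$ to ensure that the pairs $(x_n, y_n)$ stay inside $H$ at each step, so that the consistency-based estimate in the transverse case actually applies to $H$ itself. Once this bookkeeping is in place, the uniform bound $D$ emerges from the hierarchy constant $E$, the passing-up constant $p(C)$ for some fixed threshold $C$, and the finite complexity bound.
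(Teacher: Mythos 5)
Your proposal does not prove the statement in question. The statement is the Passing-up Lemma itself (Lemma~\ref{lem:passing-up}), which the paper does not prove but imports directly from \cite[Lemma 2.5]{BHS_HHSII}. What you have written is instead an attempted proof of Lemma~\ref{lem:bounded_proj_outside_eyries} (the uniform bound on $\diam(\pi_V(H))$ for domains $V$ not nested into an eyrie of $H$): your opening move --- assuming a sequence $(V_n)$ of non-eyrie domains with $\diam(\pi_{V_n}(H))\to\infty$ and deriving a contradiction --- is the setup of that lemma, not of this one. Worse, your argument explicitly invokes ``the passing-up lemma (Lemma~\ref{lem:passing-up})'' as a tool in its second paragraph, so read as a proof of the Passing-up Lemma it is circular on its face.

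A genuine proof of the stated lemma runs along entirely different lines and does not involve $H$, eyries, or Theorem~\ref{thm:eyries} at all. The standard argument is an induction on the level of domains using the large links axiom (Definition~\ref{defn:HHS}\eqref{axiom:large_link_lemma}): if $S$ is the $\nest$--maximal domain and $d_S(x,y)\leq C$, then large links produces at most $EC+E$ domains $V_1,\dots,V_m\propnest S$ such that every $U_i\neq S$ with $d_{U_i}(x,y)>E$ nests into some $V_j$; by pigeonhole some $V_j$ contains a definite fraction of the $U_i$, and one recurses inside $V_j$, either finding at some stage a domain $W$ with $d_W(x,y)>C$ properly containing some $U_i$, or exhausting the finite complexity bound --- which is impossible once $p$ is chosen larger than roughly $(EC+E)^E$. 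If your intent was to supply a proof of Lemma~\ref{lem:bounded_proj_outside_eyries}, your sketch is broadly in the spirit of the paper's argument for that lemma (which also combines the Passing-up Lemma with the level-minimization bookkeeping), but that is not the statement you were asked to prove.
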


The second result combines two technical lemmas from the work of Petyt and Spriano. We state the version of their work that we apply and describe how to translate from the statements in \cite{PS_Unbounded_domains} to the statement below.

\begin{lem}[{Special case of \cite[Lemmas 3.4 and 3.5]{PS_Unbounded_domains}}]\label{lem:pass-up_to_infinite}
	Let $(G,\mf{S})$ be a $G$--HHS with constant $E$ and $H<G$ a
	subgroup.  Suppose there exist domains $V_0,V_1 \in\mf{S}$ and
	$\varepsilon \geq 1$ so that: 
	\begin{itemize}
		\item  $V_1 \trans V_2$;
		\item $\pi_{V_i}(H)$ is $\varepsilon$--coarsely connected for $i=0,1$;
		\item  $\diam(\pi_{V_i}(H)) > 10^{E+1}(\varepsilon+ d_{V_i}(\rho_{V_i}^{V_j}, H))$ for $(i,j) =(0,1)$ or $(1,0)$; and 
		\item  $\diam(\pi_{V_0}(H)) >10E$.
	\end{itemize}
	Then there exist a sequence of domains $(U_i)_{i=1}^\infty$ and a sequence of points $(z_i)_{i=0}^\infty\subseteq H$ so that each $U_i$ is in the $H$--orbit of either $V_0$ or $V_1$ and $d_{U_j}(z_0, z_i) >8E$ for all $j \leq i$.
\end{lem}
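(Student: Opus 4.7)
The plan is to prove this by an inductive ping-pong construction between the transverse domains $V_0$ and $V_1$, using the consistency axiom to force projections into one domain whenever they are pushed outward in the other, and then using $H$-equivariance to translate the configuration and produce fresh domains in the $H$-orbit.

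The base step fixes $z_0 \in H$ and sets $U_1 = V_0$. Using the hypothesis $\diam(\pi_{V_0}(H)) > 10E$ together with $\varepsilon$-coarse connectedness of $\pi_{V_0}(H)$, a point $z_1 \in H$ can be chosen incrementally along $\pi_{V_0}(H)$ so that $d_{V_0}(z_0, z_1) > 8E$. The stronger quantitative hypothesis $\diam(\pi_{V_0}(H)) > 10^{E+1}(\varepsilon + d_{V_0}(\rho_{V_0}^{V_1}, H))$ additionally allows $z_1$ to be selected with $d_{V_0}(z_1, \rho_{V_0}^{V_1}) > E$, so that the consistency axiom (Behrstock inequality) forces $d_{V_1}(z_1, \rho_{V_1}^{V_0}) \leq E$. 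This last fact is the hinge that makes translation by $z_1$ productive.

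For the inductive step, suppose $z_0, \ldots, z_n \in H$ and $U_1, \ldots, U_n$ (each an $H$-translate of $V_0$ or $V_1$) have been constructed with $d_{U_j}(z_0, z_i) > 8E$ for all $j \leq i \leq n$. Alternating the roles of $V_0$ and $V_1$ at each step, the Behrstock inequality gives that $z_n$ is close to the relevant $\rho$-projection in the other domain. Set $U_{n+1}$ to be $z_n$ applied to whichever of $V_0, V_1$ has not just been used; because $z_n \in H$, the $G$--HHS equivariance together with $z_n^{-1} H = H$ yields $\pi_{U_{n+1}}(H) = z_n \pi_{V_1}(H)$ (or the analogous statement for $V_0$), so the coarse-connectedness and diameter hypotheses transfer verbatim to $U_{n+1}$. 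Choose $z_{n+1} \in H$ whose projection to $U_{n+1}$ satisfies $d_{U_{n+1}}(z_0, z_{n+1}) > 8E$ and simultaneously lies far from $\rho_{U_{n+1}}^{z_n V_0}$, again using coarse connectedness and the quantitative diameter bound.

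The main obstacle is the quantitative bookkeeping that certifies $d_{U_j}(z_0, z_{n+1}) > 8E$ for all $j \leq n$, not merely for $j = n+1$. Here the factor $10^{E+1}$ in the diameter hypothesis is essential: each application of consistency can shift a projection by up to $E$, and the projection of $z_{n+1}$ to a prior $U_j$ is controlled by iterated consistency estimates along the ping-pong trajectory, whose cumulative additive error is uniformly bounded by a constant depending only on $E$ and $\varepsilon$. Combined with the $\varepsilon$-coarse connectedness, this shows $\pi_{U_j}(z_{n+1})$ remains trapped near $\rho_{U_j}^{U_{j+1}}$, away from $\pi_{U_j}(z_0)$, for each $j \leq n$. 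Since this is precisely the special case of \cite[Lemmas 3.4 and 3.5]{PS_Unbounded_domains} (applied with the pair $\{V_0, V_1\}$ and the subgroup $H$ in place of the more general setup there), one may alternatively cite those results directly once the translation of hypotheses is verified; the core mechanism in either presentation is the ping-pong-with-consistency described above.
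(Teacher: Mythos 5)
The paper's proof of this lemma is purely a matter of bookkeeping with the citation: it observes that Lemmas 3.4 and 3.5 of \cite{PS_Unbounded_domains} are stated for HHGs but their proofs never use finiteness of orbits of domains (so they apply to $G$--HHSs), and then matches the four bullet points to the hypotheses of those lemmas --- the first three bullets give hypothesis (b) of Lemma 3.4 for each of $V_0,V_1$ with respect to the other, and the fourth bullet produces a point $z_0\in H$ with $d_{V_0}(z_0,\rho_{V_0}^{V_1})>2E$, which is what Lemma 3.5 needs to start. Your closing sentence (``one may alternatively cite those results directly once the translation of hypotheses is verified'') is therefore the intended proof; the bulk of your write-up instead attempts to reconstruct the ping-pong mechanism inside the cited lemmas. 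That reconstruction has the right flavor --- alternate between $V_0$ and $V_1$, use the Behrstock inequality to pin projections near $\rho$-sets, translate by elements of $H$ to generate new domains --- but the step that actually carries the proof is asserted rather than proved: you claim that $d_{U_j}(z_0,z_{n+1})>8E$ for \emph{all} $j\le n$ follows from ``iterated consistency estimates whose cumulative additive error is uniformly bounded,'' without verifying that the translated domains $U_j$ and $U_{n+1}$ are transverse (which is required before the consistency axiom can be applied to that pair at all), and without showing why the errors do not accumulate with $n$. This is precisely the content of the cited Lemma 3.5, so as written your argument is circular unless that bookkeeping is carried out.

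If you intend to go the citation route, the one substantive thing you must add is the observation the paper makes explicitly: the results of \cite{PS_Unbounded_domains} are stated for hierarchically hyperbolic \emph{groups}, whereas here $(G,\mf{S})$ is only a $G$--HHS, so one must check that their proofs do not use the finiteness of $G$--orbits on the full index set. Without that remark the citation does not literally apply.
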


\begin{proof}
	First we remark that while Lemmas 3.4 and 3.5 of \cite{PS_Unbounded_domains} are stated for HHGs and not $G$--HHSs, their proofs do not use the finiteness of orbits of domains. Hence the conclusions of both lemmas hold equally well for $G$--HHSs. The first three bullet points ensure that each of $V_0$ and $V_1$ satisfy hypothesis (b) of \cite[Lemma 3.4]{PS_Unbounded_domains} with respect to the other. The fourth bullet point ensures that there exist $z_0 \in H$ so that $d_{V_0}(z_0, \rho_{V_0}^{V_1}) > 2E$. Together, this implies $(H,V_0,V_1)$ satisfies the hypothesis of \cite[Lemma 3.5]{PS_Unbounded_domains} required to produce the desired sequences of domains and elements of $H$.
\end{proof}

The last tool implies that 
large projections for a hierarchically quasiconvex subset implies 
close proximity to the corresponding product region. This is a straightforward consequence of \cite[Proposition 4.24]{RST_Quasiconvexity} and Proposition \ref{prop:HQC_and_hp}.

	\begin{lem}\label{lem:big_proj_implies_close}
	Let $\mc{Y}$ be a $k$--hierarchically quasiconvex of an HHS
	$(\mc{X},\mf{S})$.  There exists $\nu \geq 0$, depending only on
	$k$ and the hierarchy constant of $(\mc{X},\mf{S})$, so that for
	any domain $V\in \mf{S}$, if $\diam(\pi_V(\mc{Y})) \geq \nu$, then $d_\mc{X}(\mc{Y},\P_V) \leq \nu$.
\end{lem}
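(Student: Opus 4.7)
The plan is to produce a hierarchy path between two points in $\mc Y$ that simultaneously tracks $\mc Y$ (by hierarchical quasiconvexity) and passes close to $\P_V$ (because of the large projection to $V$), and then conclude that the two sets are uniformly close.

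First, let $E$ be the hierarchy constant of $(\mc X,\mf S)$. By \cite[Proposition~4.24]{RST_Quasiconvexity}, there exist constants $\lambda\geq 1$, $\mu\geq 0$, and $D\geq 0$ depending only on $E$, such that whenever $x,y\in\mc X$ satisfy $d_V(x,y)\geq D$, there is a $\lambda$--hierarchy path $\gamma$ from $x$ to $y$ containing a subinterval $\alpha$ that is contained in $\mc N_\mu(\P_V)$ and whose diameter is at least $\mu^{-1}d_V(x,y)-\mu$. This is the same application that was used earlier in the proof of Theorem~\ref{thm:boundary_criteria_for_rel_hyp}, so we may quote it essentially verbatim.

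Next, since $\mc Y$ is $k$--hierarchically quasiconvex, Proposition \ref{prop:HQC_and_hp} provides a constant $Q=Q(\lambda)\geq 0$, depending only on $k$ and $E$, such that every $\lambda$--hierarchy path with endpoints in $\mc Y$ is contained in $\mc N_Q(\mc Y)$. The plan is to set $\nu$ to be any constant larger than $\max\{D,\mu,Q\}$ (plus some slack to absorb coarse Lipschitz constants and the definition of $\diam$); then whenever $\diam(\pi_V(\mc Y))\geq\nu$, coarse surjectivity of $\pi_V$ plus the fact that $\pi_V$ is coarsely Lipschitz lets us choose points $y_1,y_2\in \mc Y$ with $d_V(y_1,y_2)\geq D$. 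Applying the previous paragraph to this pair yields a $\lambda$--hierarchy path $\gamma$ from $y_1$ to $y_2$ with a non-empty subinterval $\alpha\subseteq \mc N_\mu(\P_V)$.

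Finally, since both endpoints of $\gamma$ lie in $\mc Y$, Proposition \ref{prop:HQC_and_hp} gives $\gamma\subseteq\mc N_Q(\mc Y)$, and in particular $\alpha\subseteq\mc N_Q(\mc Y)$. Any point of $\alpha$ therefore witnesses that $\mc N_Q(\mc Y)\cap \mc N_\mu(\P_V)\neq\emptyset$, so $d_\mc{X}(\mc Y,\P_V)\leq Q+\mu$. Enlarging $\nu$ if necessary so that $\nu\geq Q+\mu$ completes the proof. The only place any real work happens is in invoking \cite[Proposition~4.24]{RST_Quasiconvexity}; once that is in hand, the argument is purely a matter of chaining together the two ``hierarchy paths stay close'' statements from the quasiconvexity of $\mc Y$ and of $\P_V$.
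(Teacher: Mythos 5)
Your proof is correct and is exactly the argument the paper has in mind: the paper gives no details, simply asserting that the lemma ``is a straightforward consequence of \cite[Proposition 4.24]{RST_Quasiconvexity} and Proposition \ref{prop:HQC_and_hp},'' and your proof spells out precisely how those two results combine. The only small inaccuracy is the appeal to ``coarse surjectivity'' of $\pi_V$ when choosing $y_1,y_2$ --- that is not needed; the hypothesis $\diam(\pi_V(\mc Y))\geq\nu$ together with $\pi_V$ being an $E$--coarse map already gives $y_1,y_2\in\mc Y$ with $d_V(y_1,y_2)\geq\nu-2E$.
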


We now prove Lemma \ref{lem:bounded_proj_outside_eyries}.

\begin{proof}[Proof of Lemma \ref{lem:bounded_proj_outside_eyries}]
Let $H$ be a hierarchically quasiconvex subgroup of the $G$--HHS $(G,\mf{S})$. Let $E$ be the hierarchy constant for $(G,\mf{S})$. We want to show that there exist $D \geq 0$ so that for all $V \in\mf{S}$, if $V$ is not nested into an eyrie for $H$, then $\diam(\pi_V(H)) \leq D$.

	For the purposes of contradiction, assume that there exists a sequence of domains $(V_i)$ so that:
		\begin{enumerate}[(I)]
			\item\label{item:not_eyrie} no $V_i$ is  contained in an eyrie for $H$ 
			 (and hence  $\diam(\pi_{V_i}(H))<\infty$); and
			\item\label{item:growing_diam} $\diam(\pi_{V_i}(H)) \to 
			\infty$ as $i \to \infty$.
			\setcounter{list}{\value{enumi}}
		\end{enumerate}
	
	We then define the \emph{level}, $\ell(W)$, of the domain $W \in\mf{S}$
	to be the maximal length of a descending $\nest$--chain in
	$\mf{S}$ terminating at $W$ (i.e., the $\nest$--maximal domain has
	level 1, the domains one step down have level 2 and so forth).
	Because the length of $\nest$--chains are bounded by $E$, there
	must exist some level $\ell_0 \in \mathbb{N}$ where a sequence of
	domains satisfying \eqref{item:not_eyrie} and
	\eqref{item:growing_diam} exists, but no such sequence exists for
	any level strictly less than $\ell_0$.  In particular, there is a
	number $C\geq 0$ and a sequence of domains $(V_i)$ that satisfy
	\eqref{item:not_eyrie}, \eqref{item:growing_diam}, and also:

	\begin{enumerate}[(I)]
	\setcounter{enumi}{\value{list}}
			\item\label{item:nest_max} if  $W\in \mf{S}$  satisfies $V_i \propnest W$ for some $i$, then $\diam(\pi_W(H)) <C-1$.	
	\end{enumerate}

	Let $(V_i)$ and $C \geq 0$ be the sequence and constant 
	constructed above. The remainder of the proof by contradiction 
	proceeds as follows. First we use the sequence $(V_i)$ to produce 
	a pair of domains where we can apply Lemma 
	\ref{lem:pass-up_to_infinite}. We then use the Passing-up Lemma 
	(Lemma \ref{lem:passing-up}) to produce a domain $W$ that properly contains one of the $V_i$ and has $\diam(\pi_W(H)) >C$, contradicting \eqref{item:nest_max}.
	
	Let $\nu\geq 0$ be the constant from Lemma \ref{lem:big_proj_implies_close} for the hierarchically quasiconvex subset $H$  in the HHS $(G,\mf{S})$. Since $H$ is hierarchically quasiconvex, it is also finitely generated (Lemma \ref{lem:HQC_undistorted}). As the projection maps $\pi_W$ are $(E,E)$--coarsely Lipschitz,  there exists  $\varepsilon\geq0$ so that $\pi_W(H)$ is $\varepsilon$--coarsely connected for all $W \in\mf{S}$.
	
	By passing to a subsequence,  we can assume that for each $V_i$ both $$d_G(H, \P_{V_i}) \leq \nu  \text{ and }  \diam(\pi_{V_i}(H)) > 10^{E+1}(\varepsilon + E\nu +2E).$$ 
	Since every infinite set of domains contains a pair of transverse elements \cite[Lemma 2.2]{BHS_HHSII}, there exists $V_i$ and $V_j$ that are transverse. In particular $d_{V_i}(H,\rho_{V_i}^{V_j}) \leq E\nu +2E$ and $d_{V_j}(H,\rho_{V_j}^{V_i}) \leq E\nu +2E$.  Relabeling $V_i =V_0$ and $V_j = V_1$, we satisfy the hypothesis of Lemma \ref{lem:pass-up_to_infinite} and therefore have a sequence of  elements $(z_i)_{i=0}^\infty \subseteq H$ and a sequence of domain $(U_i)_{i=1}^\infty$ so that each $U_i$ is in the $H$--orbit of ether $V_0$ or $V_1$ and $d_{U_j}(z_0,z_i) >8E$ whenever $j \leq i$ .
	
	Let $p = p(C)$ be the natural number from the Passing-up Lemma
	(Lemma \ref{lem:passing-up}).  Because $d_{U_j}(z_0,z_p) >8E$ for
	each $j \in \{1,\dots, p\}$, the Passing-up Lemma says there is a
	domain $W \in \mf{S}$ so that $d_W(z_0,z_p) >C$ and $U_j \propnest
	W$ for some $j \in \{1,\dots, p\}$.  There exists $h \in H$ so
	that $h U_j$ is equal to either $V_0$ or $V_1$.  However, this
	creates a contradiction with 
	\eqref{item:nest_max} as
	$d_{hW}(hz_0,hz_p)>C$ and $hU_j \propnest hW$.  There must
	therefore exist a constant $D\geq 0$ so that $\diam(\pi_V(H)) \leq
	D$ whenever $V$ is not nested into a eyrie of $H$.
\end{proof}

To use Lemma \ref{lem:bounded_proj_outside_eyries} and Theorem
\ref{thm:eyries} to prove that wide hierarchically quasiconvex
subgroups must have multiple eyries, we use the induced hierarchically
hyperbolic structure on a hierarchically quasiconvex subset shown in
\cite[Propostion 5.6]{BHS_HHSII}.  This construction applies to any
hierarchically quasiconvex subset, but we will describe it for
subgroups for simplicity.  A hierarchically quasiconvex subgroup $H$
of a $G$--HHS $(G,\mf{S})$ has an HHS structure $\mf{S}_H$ that is the
following restriction of $\mf{S}$ to $H$:
\begin{itemize}
	\item the index set for $\mf{S}_H$ is $\mf{S}$ and the relations are the same as in $\mf{S}$;
	\item the hyperbolic spaces for $\mf{S}_H$ are the convex hulls of the quasiconvex subsets $\pi_W(H)$;
	\item the projection maps are the restriction of the projection maps to $H$;
	\item for $V\trans W$ or $V \propnest W$, the relative projection from $V$ to $W$ in $\mf{S}_H$ is the closest point projection of $\rho_W^V$ onto $\pi_W(H)$.
\end{itemize}
The set $\mf{S}_H^\infty= \{V\in\mf{S}:\diam(\pi_V(H)) = 
\infty\}$ is precisely the set of unbounded domains for the HHS 
structure $\mf{S}_H$ when $H$ is hierarchically quasiconvex. Thus, the notation $\mf{S}_H^\infty$  is consistent with our past usage of the superscript $\infty$ to denote the set of unbounded domains in an HHS structure.

Using the above structure and Lemma
\ref{lem:bounded_proj_outside_eyries}, we establish that wide
hierarchically quasiconvex subgroups are characterized by having multiple
eyries.

\begin{prop}\label{prop:wide=mult_eyries}
	Let $(G,\mf{S})$ be a $G$--HHS and $H <G$ be an infinite, hierarchically quasiconvex subgroup.
	\begin{enumerate}
		\item If $H$ has a single eyrie, then $H$ is not wide as it is either virtually $\mathbb{Z}$ or is acylindrically hyperbolic.
		\item If $H$ has multiple eyries, then $H$ is wide, and, 
		moreover, it is quasi-isometric to the product of two infinite diameter quasi-geodesic spaces.
	\end{enumerate}
\end{prop}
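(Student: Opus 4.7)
The proof treats the two cases separately.

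For (1), suppose $H$ has the single eyrie $W$. Since $\mf{S}_H^\infty$ is preserved by the $H$-action (as $\pi_{hV}(H)$ and $\pi_V(H)$ have the same diameter up to bounded error, by the equivariance of the $G$-HHS structure), and $W$ is the unique $\nest$-maximal element of $\mf{S}_H^\infty$, we have $H \subseteq \stab_G(W)$. Consequently, $H$ acts on the hyperbolic space $\mc{C}W$ by isometries. By the HHS theory, the action of $\stab_G(W)$ on $\mc{C}W$ is acylindrical, so the $H$-action is too. Since $W \in \mf{S}_H^\infty$, the orbit $\pi_W(H)$ has infinite diameter, so $H$ acts with unbounded orbits. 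By Osin's classification of acylindrical actions on hyperbolic spaces, the action is either lineal or of general type, making $H$ either virtually $\mathbb{Z}$ or acylindrically hyperbolic, respectively. In either case, $H$ is not wide, as both classes of groups have cut points in their asymptotic cones.

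For (2), suppose $H$ has eyries $W_1,\dots,W_n$ with $n \geq 2$. The plan is to show that $H$ is coarsely equal to the product region $\P^H_{W_1}$ in the induced HHS structure $(H,\mf{S}_H)$ of \cite[Proposition 5.6]{BHS_HHSII}, and then apply Proposition \ref{prop:product_regions}(\ref{P_prop:wide}). The inclusion $\P^H_{W_1} \subseteq H$ is automatic; for the reverse, we establish a uniform bound on $d_V^H(h,\rho_V^{W_1})$ for every $h \in H$ and every $V \in \mf{S}$ with $V \trans W_1$ or $W_1 \propnest V$. The key observation is that no such $V$ nests into any eyrie: if $V \nest W_1$ then $V$ is $\nest$-comparable to $W_1$, contradicting the constraint; if $V \nest W_j$ for $j \neq 1$, then $W_j \perp W_1$ forces $V \perp W_1$, again contradicting the constraint. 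Lemma \ref{lem:bounded_proj_outside_eyries} then supplies a uniform bound $\diam \pi_V(H) \leq D$, which in turn bounds $d_V^H(h,\rho_V^{W_1})$ since both points lie in $\pi_V(H)$. With the coarse equality established, Proposition \ref{prop:product_regions}(\ref{P_prop:wide}) applied to $(H,\mf{S}_H)$ completes the argument: $W_1 \in \mf{S}_{H,W_1} \cap \mf{S}_H^\infty$ and $W_2 \in \mf{S}_{H,W_1}^\perp \cap \mf{S}_H^\infty$ are the required non-empty orthogonal unbounded families, so $\P^H_{W_1}$, and hence $H$, is quasi-isometric to the product of two infinite diameter quasi-geodesic spaces.

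The principal obstacle is in (2): the induced structure $(H,\mf{S}_H)$ need not satisfy the bounded domain dichotomy with a uniform constant, so the uniform bound on $\diam \pi_V(H)$ must be extracted from Lemma \ref{lem:bounded_proj_outside_eyries} through a careful use of the orthogonality structure of the eyries, rather than from the hierarchy constants directly. A secondary wrinkle is that the product region is defined using specific hierarchy constants, but the ``enlarged'' product region allowed by our uniform bound is coarsely equal to the standard one, so no additional complications arise.
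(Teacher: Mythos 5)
Your argument for part (2) is essentially the paper's: you reduce to showing $H$ is coarsely equal to the product region $\mathbf P_{W_1}$ in the induced structure $(H,\mf{S}_H)$, using Lemma~\ref{lem:bounded_proj_outside_eyries} together with the fact that no domain transverse to or properly containing $W_1$ can nest into an eyrie. The paper reaches the same conclusion a touch more quickly via the distance formula of \cite{BHS_HHSII}, which also sidesteps the minor constant-chasing you flag about the ``enlarged'' product region; both routes are sound.

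Part (1), however, contains a genuine gap. You assert that ``the action of $\stab_G(W)$ on $\mc{C}W$ is acylindrical'' for an arbitrary domain $W\in\mf{S}$, and this is not a result the paper has at its disposal. The only acylindricity result available here is \cite[Theorem~14.3]{BHS_HHSI}, which applies to the $\nest$--maximal domain of an HHG structure. The paper's route to part (1) precisely works around this: it uses Lemma~\ref{lem:bounded_proj_outside_eyries} to show that all domains of $\mf{S}_H$ \emph{not} nested into the single eyrie $W$ have uniformly bounded projections, so the induced structure on $H$ can be truncated to index set $\mf{S}_H\cap\mf{S}_W$, in which $W$ is the $\nest$--maximal domain. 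Only then does \cite[Theorem~14.3]{BHS_HHSI} apply (to $H$ acting on the convex hull of $\pi_W(H)$, not on the original $\mc{C}W$), and \cite[Corollary~14.4]{BHS_HHSI} gives the virtually cyclic / acylindrically hyperbolic dichotomy. Your proposal never invokes Lemma~\ref{lem:bounded_proj_outside_eyries} in part (1), which is symptomatic: without it, you cannot reduce to a setting where the cited acylindricity theorem applies, and the stronger claim you lean on instead is unsupported. (Your observation that $H\subseteq\stab_G(W)$ when $W$ is the unique eyrie is correct and clean, but it does not substitute for the missing acylindricity input.)
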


\begin{proof}
	Assume first that $H$ has a single eyrie $W$.  By Lemma
	\ref{lem:bounded_proj_outside_eyries}, there is a number $D \geq
	0$ bounding the diameter of $\pi_V(H)$ for each $V\in \mf{S}$ not
	nested into $W$.  This implies that $H$ has an HHS structure with
	index set $\mf{S}_H \cap \mf{S}_W$ and not just all of $\mf{S}_H$;
	that is, we can remove all the domains from $\mf{S}_H$ that are
	not nested into $W$ without violating any of the HHS axioms.
	Importantly, the $\nest$--maximal domain of the structure
	$(H,\mf{S}_H \cap \mf{S}_W)$ is $W$.  Thus, \cite[Theorem
	14.3]{BHS_HHSI} says that $H$ acts acylindrically on the
	hyperbolic space associated to $W$ in $\mf{S}_H \cap \mf{S}_W$.
	As this space has infinite diameter by Theorem \ref{thm:eyries},
	this implies $H$ is either virtually cyclic or acylindrically
	hyperbolic \cite[Corollary 14.4]{BHS_HHSI}.  Either of these imply
	$H$ is not wide by \cite[Theorem
	1]{Sisto_quasiconveity_of_hyperbolically_embedded}.
	
	Now assume $H$ has multiple eyries $W_1,\dots, W_n$ with $n \geq 2$. Let $\P_{W_1}$ be the product region for $W_1$ in the HHS $(H,\mf{S}_H)$. By Theorem \ref{thm:eyries}, $W_1$ and $W_2$ are both unbounded domains in $\mf{S}_H$ and $W_1\perp W_2$. Thus $\P_{W_1}$ is quasi-isometric to the product of two infinite diameter quasi-geodesic spaces by Proposition \ref{prop:product_regions}\eqref{P_prop:wide}. Because there is a bounded $D \geq 0$ on the diameter of $\pi_V(H)$ for each $V \in \mf{S}$ that is not nested into one of the $W_i$, the distance formula for hierarchically hyperbolic spaces \cite[Theorem 4.5]{BHS_HHSII} says $H$ is quasi-isometric to $\P_{W_1}$ and hence wide.
\end{proof}

Combining Proposition \ref{prop:wide=mult_eyries} with Theorem \ref{thm:eyries} yields our proof of Theorem \ref{thm:wide_iff_join}.

\begin{proof}[Proof of Theorem \ref{thm:wide_iff_join}]
	Let $H$ be an infinite, hierarchically quasiconvex subgroup of a $G$--HHS $(G,\mf{S})$. We want to show that $H$ is wide if and only if its limit set $\Lambda(H)$ in $\partial_\Delta(G,\mf{S})$ is a non-trivial join. In the case when $H = G$, this implies $G$ is wide if and only if $\partial_\Delta(G,\mf{S})$ is a non-trivial join.
	
	Let $W_1,\dots, W_n$ be the eyries of $H$. If $V_p$ is the single domain in the support of a vertex $p \in \Lambda(H)$, then $\diam(\pi_{V_p}(H)) =\infty$. Thus, Theorem \ref{thm:eyries} says  $V_p \nest W_1$ or $V_p \perp W_1$. Since edges in $\partial_\Delta(G,\mf{S})$ correspond to orthogonality, this implies  $\Lambda(H)$ is a non-trivial join if and only if $n\geq 2$ (the two sides of the join are all vertices with support nested into $W_1$ and all vertices with support orthogonal to $W_1$).  By Proposition \ref{prop:wide=mult_eyries}, $n \geq 2$ if and only if $H$ is wide.
\end{proof}

\subsection{Thick of order 1}\label{subsec:thick_order_1}

We now turn characterize $G$--HHSs that are thick of order 1.
\begin{thm}\label{thm:thick_case}
	Let $(G,\frak S)$ be a $G$--HHS. 
	
	If $G$ is 
	 thick of order 1 relative to a collection
	of hierarchically quasiconvex wide subgroups, then $\partial_{\Delta}(G,\mf{S})$ is
	disconnected and contains a positive-dimensional $G$--invariant
	connected component.
	
	Conversely, if $\partial_{\Delta}(G,\mf{S})$ is disconnected and
	contains a positive-dimensional $G$--invariant connected component,
	then $G$ is  thick of order $1$ relative to a set of wide hierarchically quasiconvex
	subsets.
\end{thm}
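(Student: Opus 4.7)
The plan is to reduce both directions to the case where $\mf{S}$ is maximized (so that every non-$\nest$--maximal product region is wide by Theorem \ref{thm:maximization}\eqref{max:wide}, and $S$ has no orthogonal partner in $\mf{S}^\infty$), which is permitted by Theorem \ref{thm:maximization_invariance}. The central tool for both directions is Theorem \ref{thm:wide_iff_join}, which identifies wideness of a hierarchically quasiconvex set with its limit set being a non-trivial join in $\partial_\Delta(G,\mf{S})$.

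For the forward direction, suppose $G$ is thick of order $1$ relative to wide hierarchically quasiconvex subgroups $\{H_1,\dots,H_n\}$. By Theorem \ref{thm:wide_iff_join}, each $\Lambda(H_i)$ is a non-trivial join and hence positive-dimensional. Let $X = \bigcup_{g \in G,\,i} \Lambda(gH_i)$: this is a $G$-invariant union of positive-dimensional subcomplexes. Connectedness of $X$ follows from the Thick Chains condition: along a chain of wide HQC subgroups whose consecutive members have infinite-diameter coarse intersection, I extract (using that $G\cup\partial(G,\mf{S})$ is compact by \cite[Proposition 2.17]{HHS_Boundary}) a boundary-convergent subsequence that, via Lemma \ref{lem:bounded_difference_convergence}, lies in both successive limit sets, so successive joins meet; cocompactness of the $G$-action then reduces the general case to the case of cosets meeting a common neighborhood of a basepoint. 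For disconnection, since $G$ is thick of strict order $1$ it is not wide, so $\partial_\Delta(G,\mf{S})$ is not a non-trivial join; I then plan to invoke rank rigidity for HHGs (\cite[Theorem 9.13]{HHS_Boundary} or \cite[Corollary 4.7]{PS_Unbounded_domains}) to conclude that in the maximized structure the top-level space $\mc{C}S$ is unbounded, whence cocompactness of the $G$-action on the hyperbolic space $\mc{C}S$ yields $\partial \mc{C}S \neq \emptyset$. Each vertex in $\partial \mc{C}S$ has support $\{S\}$ and is therefore an isolated vertex of $\partial_\Delta(G,\mf{S})$; such a vertex cannot lie in any non-trivial join $\Lambda(gH_i)$, so $X$ is a proper subcomplex of $\partial_\Delta(G,\mf{S})$, yielding disconnection.

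For the converse, let $\mc{C}$ be the positive-dimensional $G$-invariant connected component, and set $\mf{W} = \{W \in \mf{S}^\infty : W \in \supp(p) \text{ for some } p \in \mc{C}\}$, which is $G$-invariant. Positive-dimensionality of $\mc{C}$ forces each $W \in \mf{W}$ to admit some $V \in \mf{S}^\infty$ with $V \perp W$, so Proposition \ref{prop:product_regions}\eqref{P_prop:HQC},\eqref{P_prop:wide} makes each $\P_W$ a wide hierarchically quasiconvex subset. Taking $\mc{P} = \{\P_W : W \in \mf{W}\}$, the disconnectedness hypothesis combined with Theorem \ref{thm:wide_iff_join} shows $G$ is not wide, so $G$ is not thick of order $0$. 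The Thick Chains condition follows from connectedness of $\mc{C}$: an edge-path in $\mc{C}$ from a vertex supported on $W$ to a vertex supported on $W'$ yields a sequence $W=W_0,\dots,W_m=W'$ in $\mf{W}$ with $W_i \perp W_{i+1}$, and the partial realization axiom produces an infinite-diameter quasi-flat in $\mc{N}_C(\P_{W_i})\cap \mc{N}_C(\P_{W_{i+1}})$ for each consecutive pair.

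The main obstacle is the Coarse Cover condition in the converse, namely that every $x \in G$ must lie in a uniform neighborhood of $\bigcup_{W \in \mf{W}} \P_W$. I plan to argue by contradiction: an uncovered $x$ would, via a hierarchy ray, yield a boundary limit point whose support avoids $\mf{W}$ and hence lies in a connected component of $\partial_\Delta(G,\mf{S})$ distinct from $\mc{C}$; combining cocompactness of the $G$-action with the $G$-invariance of $\mc{C}$ (so that any orbit-accumulation point participating in the bulk of $G$ must lie in $\mc{C}$, since the other components are not $G$-invariantly exhausted in the same way), this should force the contradiction that yields the required cover.
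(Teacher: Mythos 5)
Your forward direction is essentially on the right track, though the paper is more direct for the disconnection claim: rather than invoking rank rigidity, it applies Proposition \ref{prop:wide=mult_eyries} to conclude that $G$ (not being wide) has a \emph{single} eyrie $W$, and Theorem \ref{thm:eyries} then gives $\mc{C}W$ unbounded with no unbounded domain orthogonal to $W$, so $\partial\mc{C}W$ consists of isolated vertices. Your connectedness argument for $X=\bigcup_{g,i} g\Lambda(H_i)$ (extracting boundary limits of sequences in the infinite-diameter coarse intersections via Lemma \ref{lem:bounded_difference_convergence}, then walking through the joins) matches the paper.

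The genuine gap is the Coarse Cover step in the converse. Your plan -- contradict by producing an uncovered point, run a hierarchy ray to a boundary point, and argue it must land outside $\mc{C}$ while somehow invoking cocompactness and $G$--invariance -- does not close. A single point $x\in G$ is not a sequence, so there is no canonical boundary limit; and even for a sequence escaping every $\P_W$, the limit point's support need not miss $\mf{W}$ (being near $\P_U$ and having unbounded $\mc{C}U$--projection are not equivalent conditions), so you cannot conclude the limit lies outside $\mc{C}$. The phrase ``the other components are not $G$--invariantly exhausted in the same way'' has no mathematical content as written. In fact no contradiction argument is needed: the cover condition is immediate with $C=0$. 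Fix any vertex $\xi\in\Omega^{(0)}$ and any $h\in\P_{U_\xi}$. For any $g\in G$, the $G$--equivariance of the structure (Definition \ref{defn:nearlyHHG}) gives $gh^{-1}\P_{U_\xi}=\P_{gh^{-1}U_\xi}$, which contains $gh^{-1}h=g$; and since $\Omega$ is $G$--invariant, $gh^{-1}\xi\in\Omega^{(0)}$, so $\P_{gh^{-1}U_\xi}$ belongs to the chosen collection. This is the one place the $G$--invariance of the component is actually used, and it replaces your entire contradiction sketch with two lines. Incidentally, the reduction to a maximized structure you propose at the outset is not needed in either direction of the paper's argument.
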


\begin{proof}	
	Suppose first $G$ is  thick of order 1 relative to
	a collection $\{H_1,\dots, H_n\}$ of hierarchically quasiconvex
	wide subgroups.  As  $G$ is not wide,  $G$  has exactly one eyrie $W$ by Proposition \ref{prop:wide=mult_eyries}. Theorem \ref{thm:eyries} says that $\mc{C}W$ is  infinite diameter and no unbounded domain of $\mf{S}$ is orthogonal to $W$. Hence, $\partial_\Delta(G,\s)$ is
	disconnected as the points in $\partial \mc{C}W$ give isolated vertices of $\partial_\Delta(G,\mf{S})$.	
	
	Since each $H_i$ is hierarchically quasiconvex and wide, 
	Theorem \ref{thm:wide_iff_join} say the limit set, $\Lambda(H_i)$, of each $H_i$ is a non-trivial join.
	In particular, each $\Lambda(H_i)$ has positive dimension.

	Let $\Omega=\bigcup_{i=1}^n G\cdot \Lambda(H_i)$.  As $\Omega$ is
	a positive dimensional, $G$--invariant subset of $\partial_\Delta(G,\s)$, it remains to show that $\Omega$ is connected.  Fix
	points $\xi$ and $\zeta$ in $\Omega$.  We will exhibit a path in $\Omega$ from $\xi$ to $\zeta$. 
	
	 We have $\xi\in g\Lambda(H_i)$ and
	$\zeta\in g'\Lambda(H_j)$ for some $1\leq i,j\leq n$ and $g,g' \in G$.  Since $G$ is
	thick of order one relative to $\{H_1,\dots,H_n\}$, there is a constant $C \geq 0$ and
	sequence $g_0H_{i_0}=gH_i,g_1H_{i_1},\dots, g_{r-1}H_{i_{r-1}},
	g_rH_{i_r}=g'H_j$ so that $\mc{N}_C(g_kH_{i_k})\cap \mc{N}_C(g_{k+1}H_{i_{k+1}})$ 
	has infinite diameter for each $k=0,\dots, r-1$. So, for each
	such $k$, there is a sequence of points $(y^k_s)^{\infty}_{s=1}$ in $ \mc{N}_C(g_kH_{i_k})\cap \mc{N}_C(g_{k+1}H_{i_{k+1}})$ that limits to
	a point $\eta_k\in \Lambda (\mc{N}_C(g_kH_k))\cap \Lambda( \mc{N}_C(g_{k+1}H_{k+1}))$. By Lemma \ref{lem:bounded_difference_convergence}, $$\Lambda (\mc{N}_C(g_kH_k))\cap \Lambda( \mc{N}_C(g_{k+1}H_{k+1})) = g_k \Lambda(H_k) \cap g_{k+1}\Lambda(H_{k+1}).$$ 
	Each $g_k\Lambda(H_k)$ is connected as it is a non-trivial join. Hence,
	there is a path contained in $g_k\Lambda(H_k)$ from $\eta_{k}$
	to $\eta_{k+1}$.  The concatenation of these paths is a path from
	$\eta_0=\xi$ to $\eta_r=\zeta$ which is contained in $\Omega$, as
	desired.    Since $\Omega$ is a connected, positive dimensional, $G$--invariant subset of $\partial_\Delta( G,\s)$, it is contained in a positive dimensional, $G$--invariant connected component of $\partial_\Delta( G,\s)$.
	
	We now turn our attention to the backwards direction, and assume
	that $\partial_\Delta(G,\mf{S})$ is disconnected and contains a
	positive-dimensional $G$--invariant connected component $\Omega$. Now $\partial_\Delta(G,\mf{S})$  cannot be a join as it is disconnected. Thus, $G$ is not wide by Theorem \ref{thm:wide_iff_join}.
	
	 Each vertex $\xi\in \Omega^{(0)}$ is a point in $\partial
	\mathcal C U_{\xi}$ for some $U_{\xi}\in \s$.  Let
	\[
	\mc{P} =\{\mathbf P_{U_{\xi}}: \xi\in \Omega^{(0)}\}.
	\]  
	We claim that $G$ is thick of order one with respect to the 
	collection of subspaces $\mc{P}$. Note that the elements of 
	$\mc{P}$ are each uniformly hierarchically quasiconvex by 
	Proposition~\ref{prop:product_regions}.
	
	To see that each $\P_{U_\xi} \in \mc{P}$ is wide, observe that $\Omega$ being connected means the vertex $\xi$ is joined by an edge to a vertex $\zeta \in \Omega$. This implies $U_\xi \perp U_\zeta$. Since
	$\partial \mc C U_\xi$ and $\partial \mc C U_\zeta$ are non-empty, the domains $U_\xi$ and $U_\zeta$ are both unbounded. Thus, $\P_{U_{\xi}}$ is wide by Proposition \ref{prop:product_regions}\eqref{P_prop:wide}.
	
	That $\mc{P}$ satisfies the Thick Chains condition is a consequence of the fact that $\Omega$ is connected and the following claim: whenever $\xi,\zeta \in \Omega^{(0)}$ are joined by an edge,  the intersection $\mathbf
	P_{U_\xi}\cap\mathbf P_{U_\zeta}$ has infinite diameter. To prove this claim, choose sequences of points $(x_n)_{n=1}^\infty$ 		 in $\mc CU_\xi$ and $(y_n)^\infty_{n=1}$ in $\mc CU_\zeta$
	such that $d_{U_\xi}(x_0,x_n)>n$ and $d_{U_\zeta}(y_0,y_n)>n$.
	Applying the partial realization and uniqueness axioms
	(Definition~\ref{defn:HHS}\eqref{axiom:partial_realisation}\eqref{axiom:uniqueness}) to
	the pairs $\{x_0,y_0\}$ and $\{x_n,y_n\}$ yields points $p_0,p_n
	\in G$ so that $d_G(p_0,p_n)\to\infty$ as $n\to\infty$.  Moreover,
	partial realization ensures that $p_0,p_n\in \mathbf P_{U_\xi}\cap\mathbf
	P_{U_\zeta}$, which proves the claim.

	It remains to show that there exists $C\geq 0$ such that
	$G=\bigcup_{\xi \in \Omega^{(0)}}\mathcal N_C(\P_{U_\xi})$.  Let $g\in G$, fix
	any $\xi\in \Omega^{(0)}$, and let $h\in \mathbf P_{U_\xi}$.
	Then $g\in gh^{-1}\mathbf P_{U_\xi}=\mathbf P_{U_{gh^{-1}\xi}}$.
	Since $\Omega$ is $G$--invariant, $gh^{-1}\xi\in \Omega^{(0)}$, and the statement holds with $C=0$.  This completes
	the proof of thickness.
\end{proof}

The thick structure in the converse direction of Theorem 
\ref{thm:thick_case} consists of product regions in the $G$--HHS. We can therefore use Proposition \ref{prop:thickimpliesstronglythick} to state natural conditions where the converse direction can be promoted to strong thickness. 

\begin{cor}\label{cor:strong_thickness}
		Let $(G,\frak S)$ be a $G$--HHS and suppose $\partial_{\Delta}(G,\mf{S})$ is disconnected and
		contains a positive-dimensional $G$--invariant connected component $\Omega$. Let $\mf{U} \subseteq \mf{S}^\infty$ be the minimal $G$--invariant subset of domains so every point in $\Omega$ has support contained in $\mf{U}$.  If either
		\begin{enumerate}
			\item every closed ball in $G$ intersects at most finitely many elements of $\{ \P_U : U \in\mf{U}\}$, or
			\item the action of $G$ on $\mf{U} \times \mf{U}$  has finitely many orbits,
		\end{enumerate}
		 then $G$ will be strongly thick of order 1 relative to hierarchically quasiconvex subsets. In particular, $G$ will have quadratic divergence.
\end{cor}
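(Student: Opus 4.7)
The strategy is to verify the hypotheses of Proposition~\ref{prop:thickimpliesstronglythick} and apply it to the thick structure produced by Theorem~\ref{thm:thick_case}. First observe that since each vertex of $\Omega$ has support equal to a single domain in $\mf{S}^\infty$ and $\Omega$ is $G$--invariant, the set $\mf{U}$ coincides with the collection $\{U_\xi : \xi \in \Omega^{(0)}\}$ appearing in the proof of Theorem~\ref{thm:thick_case}. Hence that theorem shows $G$ is thick of order one relative to $\mc{P} = \{\P_U : U \in \mf{U}\}$ with some thickness constant $C \geq 0$, and the action of $G$ on its Cayley graph is cobounded (in fact transitive), so this is the setup for Proposition~\ref{prop:thickimpliesstronglythick}.

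Next I would verify the three bulleted conditions of that proposition. Uniform quasiconvexity of the elements of $\mc{P}$ follows from the uniform hierarchical quasiconvexity of product regions granted by Proposition~\ref{prop:product_regions}\eqref{P_prop:HQC}. The $G$--invariance of $\mc{P}$ is immediate from the $G$--invariance of $\mf{U}$ together with the equivariance identity $g \cdot \P_U = \P_{gU}$. For the uniform coarse connectedness of infinite-diameter coarse intersections, the plan is to use hierarchy paths: given $U, V \in \mf{U}$ with $\diam(\mc{N}_C(\P_U) \cap \mc{N}_C(\P_V)) = \infty$ and points $x,y$ in this coarse intersection, any uniform-quality hierarchy path from $x$ to $y$ has endpoints within $C$ of each of $\P_U$ and $\P_V$; since these product regions are uniformly hierarchically quasiconvex, Proposition~\ref{prop:HQC_and_hp} forces this hierarchy path to lie in $\mc{N}_{C'}(\P_U) \cap \mc{N}_{C'}(\P_V)$ for a constant $C'$ depending only on $C$ and the HHS constants. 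Since hierarchy paths are uniform quasi-geodesics, this yields a uniform bound on coarse connectedness.

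With the bulleted conditions in place, each of the two numbered hypotheses of the corollary precisely matches the corresponding numbered hypothesis in Proposition~\ref{prop:thickimpliesstronglythick} applied to $\mc{P}$, so that proposition delivers strong thickness of order one. The quadratic divergence conclusion then follows from the standard consequence of strong thickness of order one proved by Behrstock and Dru\c{t}u in~\cite{BehrstockDrutu:thick2}.

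The main technical point to get right is the uniform coarse connectedness of the infinite-diameter coarse intersections; although the hierarchy path argument is conceptually clean, care is needed to pass between the various neighborhood constants and to ensure the estimates depend only on the HHS data of $(G,\mf{S})$ and not on the specific pair $U,V$. Beyond that, the proof is essentially a bookkeeping exercise chaining Theorem~\ref{thm:thick_case}, Proposition~\ref{prop:thickimpliesstronglythick}, and the divergence consequence of strong thickness.
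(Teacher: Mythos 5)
Your proposal matches the paper's proof: both reduce the corollary to verifying the bulleted hypotheses of Proposition~\ref{prop:thickimpliesstronglythick} for the thick structure $\{\P_U : U\in\mf{U}\}$ produced by Theorem~\ref{thm:thick_case}, then invoke \cite{BehrstockDrutu:thick2} for quadratic divergence. The only cosmetic difference is that the paper dispatches coarse connectedness by noting coarse intersections of hierarchically quasiconvex sets are hierarchically quasiconvex (hence coarsely connected via Proposition~\ref{prop:HQC_and_hp}), whereas you unpack the same Proposition~\ref{prop:HQC_and_hp} hierarchy-path argument explicitly.
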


\begin{proof}
	The proof of Theorem \ref{thm:thick_case} established that $G$  is thick relative to $\{\P_U :U \in \mf{U}\}$. The bulleted assumptions from Proposition \ref{prop:thickimpliesstronglythick} hold as follows:
	\begin{itemize}
		\item Hierarchically quasiconvex subsets are all quasiconvex by Proposition \ref{prop:HQC_and_hp}.
		\item Proposition \ref{prop:HQC_and_hp} also implies the intersection of two hierarchically quasiconvex subsets are hierarchically quasiconvex, and  hence coarsely connected.
		\item The definition of a $G$--HHS ensures that  $g\P_W = \P_{gW}$ for all  $g\in G$ and $W\in \mf{S}$. Since $\mf{U}$ is $G$--invariant,  $\{\P_U :U \in \mf{U}\}$ is  $G$--invariant.
	\end{itemize}
	Thus, Corollary \ref{cor:strong_thickness} is just a special case of  Proposition \ref{prop:thickimpliesstronglythick}.
	
	The result about quadratic divergence now follows immediately, 
	as Behrstock and Dru\c{t}u showed that strongly thick of order $k$ implies the divergence is at least $n^{k+1}$ \cite[Corollary 4.17]{BehrstockDrutu:thick2}.
\end{proof}

\begin{rem}
	The converse direction of Theorem \ref{thm:thick_case}---and hence
	Corollary \ref{cor:strong_thickness}---do not need the full power
	of a $G$--HHS. The given proofs hold for any HHS $(\mc{X},\mf{S})$
	with a cobounded action by a group $G$ so that $G$ also acts on
	$\mf{S}$ by relation-preserving bijections that satisfy the
	equivariance properties of Definition \ref{defn:nearlyHHG}.  An
	example where this occurs is the action of the mapping class group
	on the pants graph of the surface.
\end{rem}

\bibliography{HHS_Boundary}{}

\begin{thebibliography}{BDM09}

\bibitem[ABD21]{ABD}
Carolyn Abbott, Jason Behrstock, and Matthew~G. Durham.
\newblock Largest acylindrical actions and stability in hierarchically
  hyperbolic groups. {W}ith an appendix by {D}aniel {B}erlyne and {J}acob
  {R}ussell.
\newblock {\em Trans. Amer. Math. Soc. Ser. B}, 8:66--104, 2021.

\bibitem[ABR]{ABR:maximization}
Carolyn Abbott, Jason Behrstock, and Jacob Russell.
\newblock Structure invariant properties of the hierarchically hyperbolic
  boundary.
\newblock {\em J. Topol. Anal.}
\newblock To appear. arXiv:2208.07930.

\bibitem[BD14]{BehrstockDrutu:thick2}
J.~Behrstock and C.~Dru\c{t}u.
\newblock Divergence, thick groups, and short conjugators.
\newblock {\em Illinois J. Math.}, 58(4):939--980, 2014.

\bibitem[BDM09]{BDM_Thickness}
Jason Behrstock, Cornelia Dru\c{t}u, and Lee Mosher.
\newblock Thick metric spaces, relative hyperbolicity, and quasi-isometric
  rigidity.
\newblock {\em Math. Ann.}, 344(3):543--595, 2009.

\bibitem[BF21]{Beyrer_Fioravanti_cross_ratio}
Jonas Beyrer and Elia Fioravanti.
\newblock Cross-ratios on {${\rm CAT}(0)$} cube complexes and marked
  length-spectrum rigidity.
\newblock {\em J. Lond. Math. Soc. (2)}, 104(5):1973--2015, 2021.

\bibitem[BH16]{BehrstockHagen:cubulated1}
Jason Behrstock and Mark~F. Hagen.
\newblock Cubulated groups: thickness, relative hyperbolicity, and simplicial
  boundaries.
\newblock {\em Geometry, Groups, and Dynamics}, 10(2):649--707, 2016.

\bibitem[BHS17]{BHS_HHSI}
Jason Behrstock, Mark~F. Hagen, and Alessandro Sisto.
\newblock Hierarchically hyperbolic spaces {I}: curve complexes for cubical
  groups.
\newblock {\em Geom. Topol.}, 21(3):1731--1804, 2017.

\bibitem[BHS19]{BHS_HHSII}
Jason Behrstock, Mark~F. Hagen, and Alessandro Sisto.
\newblock {Hierarchically hyperbolic spaces II: combination theorems and the
  distance formula}.
\newblock {\em Pacific J. Math.}, 299:257--338, 2019.

\bibitem[Bou95]{bourdon_quasiconformal}
Marc Bourdon.
\newblock Structure conforme au bord et flot g\'{e}od\'{e}sique d'un {${\rm
  CAT}(-1)$}-espace.
\newblock {\em Enseign. Math. (2)}, 41(1-2):63--102, 1995.

\bibitem[Bow14]{bowditch_uniform_hyperbolicity}
Brian~H. Bowditch.
\newblock Uniform hyperbolicity of the curve graphs.
\newblock {\em Pacific J. Math.}, 269(2):269--280, 2014.

\bibitem[BS00]{bonk_schramm_quasiconformal}
M.~Bonk and O.~Schramm.
\newblock Embeddings of {G}romov hyperbolic spaces.
\newblock {\em Geom. Funct. Anal.}, 10(2):266--306, 2000.

\bibitem[DGO17]{DGO_rotating_families}
F.~Dahmani, V.~Guirardel, and D.~Osin.
\newblock Hyperbolically embedded subgroups and rotating families in groups
  acting on hyperbolic spaces.
\newblock {\em Mem. Amer. Math. Soc.}, 245(1156):v+152, 2017.

\bibitem[DHS17]{HHS_Boundary}
Matthew~G. Durham, Mark~F. Hagen, and Alessandro Sisto.
\newblock Boundaries and automorphisms of hierarchically hyperbolic spaces.
\newblock {\em Geom. Topol.}, 21(6):3659--3758, 2017.

\bibitem[Dru09]{Drutu_Rel_Hyp_is_Geometric}
Cornelia Dru\c{t}u.
\newblock Relatively hyperbolic groups: geometry and quasi-isometric
  invariance.
\newblock {\em Comment. Math. Helv.}, 84(3):503--546, 2009.

\bibitem[DS05]{Drutu_Sapir_Rel_hyp}
Cornelia Dru{\c{t}}u and Mark Sapir.
\newblock Tree-graded spaces and asymptotic cones of groups.
\newblock {\em Topology}, 44(5):959--1058, 2005.
\newblock With an appendix by Denis Osin and Sapir.

\bibitem[Far98]{Farb_Rel_Hyp}
Benson Farb.
\newblock Relatively hyperbolic groups.
\newblock {\em Geom. Funct. Anal.}, 8(5):810--840, 1998.

\bibitem[Hag13]{Hagen_simplicial_boundary}
Mark~F. Hagen.
\newblock The simplicial boundary of a {CAT}(0) cube complex.
\newblock {\em Algebr. Geom. Topol.}, 13(3):1299--1367, 2013.

\bibitem[Hag14]{Hagen:quasi_arboreal}
Mark~F. Hagen.
\newblock Weak hyperbolicity of cube complexes and quasi-arboreal groups.
\newblock {\em J. Topol.}, 7(2):385--418, 2014.

\bibitem[HHP20]{coarse_helly}
Thomas Haettel, Nima Hoda, and Harry Petyt.
\newblock Coarse injectivity, hierarchical hyperbolicity, and
  semihyperbolicity.
\newblock arXiv:2009.14053, 2020.

\bibitem[HK05]{Hruska_Kleiner_Isolated_Flats}
G.~Christopher Hruska and Bruce Kleiner.
\newblock Hadamard spaces with isolated flats.
\newblock {\em Geom. Topol.}, 9:1501--1538, 2005.
\newblock With an appendix by the authors and Mohamad Hindawi.

\bibitem[HMS]{HagenMartinSisto:XLartinHHS}
Mark Hagen, Alexandre Martin, and Alessandro Sisto.
\newblock Extra-large type artin groups are hierarchically hyperbolic.
\newblock arXiv:2109.04387.

\bibitem[HRSS]{HRSS_3-manifolds}
Mark~F. Hagen, Jacob Russell, Alessandro Sisto, and Davide Spriano.
\newblock Equivariant hierarchically hyperbolic structures for 3-manifold
  groups via quasimorphisms.
\newblock arXiv:2206.12244.

\bibitem[Hru10]{Hruska_relative_quasiconvexity}
G.~Christopher Hruska.
\newblock Relative hyperbolicity and relative quasiconvexity for countable
  groups.
\newblock {\em Algebr. Geom. Topol.}, 10(3):1807--1856, 2010.

\bibitem[Man]{Manning_bowditch_boundary}
Jason~Fox Manning.
\newblock The bowditch boundary of $(g,\mathcal{H})$ when $g$ is hyperbolic.
\newblock arXiv:1504.03630.

\bibitem[MR99]{Mihalik_Ruane_non-locally_connected}
Michael Mihalik and Kim Ruane.
\newblock {$\rm CAT(0)$} groups with non-locally connected boundary.
\newblock {\em J. London Math. Soc. (2)}, 60(3):757--770, 1999.

\bibitem[MS20]{MacKay_Sisto_boundary_maps}
John~M. Mackay and Alessandro Sisto.
\newblock Maps between relatively hyperbolic spaces and between their
  boundaries.
\newblock arXiv:2012.11902, 2020.

\bibitem[Pau96]{paulin}
Fr\'ed\'eric Paulin.
\newblock Un groupe hyperbolique est d\'etermin\'e par son bord.
\newblock {\em J. London Math. Soc. (2)}, 54(1):50--74, 1996.

\bibitem[PS23]{PS_Unbounded_domains}
Harry Petyt and Davide Spriano.
\newblock Unbounded domains in hierarchically hyperbolic groups.
\newblock {\em Groups Geom. Dyn.}, 2023.

\bibitem[RST18]{RST_Quasiconvexity}
Jacob Russell, Davide Spriano, and Hung~C. Tran.
\newblock Convexity in hierarchically hyperbolic spaces.
\newblock {\em Algebr. Geom. Topol.}, arXiv:1809.09303, 2018.
\newblock To appear.

\bibitem[Rus22]{Russell_Relative_Hyperbolicity}
Jacob Russell.
\newblock From hierarchical to relative hyperbolicity.
\newblock {\em Int. Math. Res. Not. IMRN}, (1):575--624, 2022.

\bibitem[Sis12]{SistoMetRel}
Alessandro Sisto.
\newblock On metric relative hyperbolicity.
\newblock arXiv:1210.8081, 2012.

\bibitem[Sis16]{Sisto_quasiconveity_of_hyperbolically_embedded}
Alessandro Sisto.
\newblock Quasi-convexity of hyperbolically embedded subgroups.
\newblock {\em Math. Z.}, 283(3-4):649--658, 2016.

\bibitem[Spr18a]{Spriano_hyperbolic_I}
Davide Spriano.
\newblock Hyperbolic {HHS I}: Factor systems and quasiconvex subgroups.
\newblock arXiv:1711.10931, 2018.

\bibitem[Spr18b]{Spriano_hyperbolic_II}
Davide Spriano.
\newblock Hyperbolic {HHS II}: Graphs of hierarchically hyperbolic groups.
\newblock arXiv:1801.01850, 2018.

\bibitem[Tra13]{Tran_relation_between_boundaries}
Hung~Cong Tran.
\newblock Relations between various boundaries of relatively hyperbolic groups.
\newblock {\em Internat. J. Algebra Comput.}, 23(7):1551--1572, 2013.

\bibitem[Yam04]{Yaman_rel_hyp_dynamics}
Asli Yaman.
\newblock A topological characterisation of relatively hyperbolic groups.
\newblock {\em J. Reine Angew. Math.}, 566:41--89, 2004.

\end{thebibliography}
\bibliographystyle{alpha}
\end{document}